\documentclass[10pt,reqno]{amsart}

\usepackage[margin=10pt,font=small,labelfont=bf,justification=centering]{caption}
\usepackage{lipsum}
\usepackage{caption}
\usepackage{subcaption}

\usepackage{amscd}
\usepackage{amsthm}
\usepackage{amssymb}

\usepackage{enumerate,array,mathrsfs, a4wide}
\usepackage{graphicx}
\usepackage{sseq}
\usepackage{xcolor}

\usepackage{rotating}
\usepackage{bbm}
\usepackage[all,cmtip]{xy}

\usepackage[latin1]{inputenc}
\usepackage{dsfont}

\usepackage{hyperref}
\usepackage{accents}
\newcommand{\ubar}[1]{\underaccent{\bar}{#1}}

\newtheorem{theorem}{Theorem}[section]

\newtheorem{lemma}[theorem]{Lemma}
\newtheorem{proposition}[theorem]{Proposition}
\newtheorem{corollary}[theorem]{Corollary}

\newtheorem{definition}[theorem]{Definition}

\newtheorem{question}[theorem]{Question}

\newtheorem{remark}[theorem]{Remark}






\newcommand{\ow}{\omega}

\newcommand{\C}{{\mathbb{C}}}
\newcommand{\R}{{\mathbb{R}}}
\newcommand{\Q}{{\mathbb{Q}}}
\renewcommand{\P}{{\mathbb{P}}}
\newcommand{\Z}{{\mathbb{Z}}}
\newcommand{\N}{{\mathbb{N}}}

\renewcommand{\epsilon}{\varepsilon}
\renewcommand{\phi}{\varphi}
\renewcommand{\theta}{\vartheta}

\newcommand{\w}{\wedge}


\newcommand{\coker}{\mathop {coker}}

\newcommand{\Cont}{\mathop {Cont}}
\newcommand{\diag}{\mathop {diag}}

\newcommand{\id}{\mathop {Id}}
\newcommand{\im}{\mathop {Im}}
\newcommand{\ind}{\mathop {ind}}
\newcommand{\lcm}{\mathop {lcm}}

\newcommand{\OB}{\mathop {OB}}

\newcommand{\rk}{\mathop {rk}}

\newcommand{\sig}{\mathop {sig}}

\newcommand{\SO}{\mathop {SO}}

\newcommand{\tr}{\mathop{Tr}}

\newcommand{\Sp}{\mathop{Sp} }
\newcommand{\U}{\mathop U}

\begin{document}
\title{Brieskorn manifolds in contact topology}

\author{Myeonggi Kwon and Otto van Koert}




\begin{abstract}
In this survey, we give an overview of Brieskorn manifolds and varieties, and their role in contact topology. We discuss open books, fillings and invariants such as contact and symplectic homology. We also present some new results involving exotic contact structures, invariants and orderability.
The main tool for the required computations is a version of the Morse-Bott spectral sequence. We provide a proof for the particular version that is useful for us.
\end{abstract}
\maketitle

\section{A brief historical overview and introduction}
Brieskorn varieties are affine varieties of the form
$$
V_\epsilon(a_0,\ldots,a_n)=\{ (z_0,\ldots,z_n)\in \C^{n+1} ~|~\sum_{j=0}^n z_j^{a_j}=\epsilon \}
,
$$
and are a natural generalization of Fermat varieties.
They became popular after it was observed by Hirzebruch that links of singular Brieskorn varieties at $0$, meaning sets of the form
$$
\Sigma(a_0,\ldots,a_n)=\{ (z_0,\ldots,z_n)\in \C^{n+1}~|~\sum_{j=0}^n |z_j|^2=1 \text{ and }
\sum_{j=0}^n z_j^{a_j}=0
\}
,
$$
can sometimes be homeomorphic, but not diffeomorphic to spheres.
Some essential ingredients for the necessary computations were worked out by Pham, \cite{Pham}, and a rather complete picture was worked out by Brieskorn, \cite{Brieskorn:difftop}.
The above links $\Sigma(a_0,\ldots,a_n)$ are now known as Brieskorn manifolds.

In contact geometry, Brieskorn manifolds were first recognized as contact manifolds by Abe-Erbacher, Lutz-Meckert and Sasaki-Hsu around 1975-1976, \cite{Abe:Brieskorn,LM,Sasaki:Brieskorn}.
In particular, their work showed that at least some exotic spheres admit contact structures.
Around the same time, Thomas \cite{Thomas:almost_regular} also used Brieskorn manifolds to establish existence results for contact structures on simply-connected $5$-manifolds.
With hindsight, the role of Brieskorn manifolds in early constructions of contact $5$-manifolds can be clarified;
a classification result for simply-connected $5$-manifolds by Smale, \cite{Smale:5-mfd}, combined with a homology computation, see Section~\ref{sec:homology_Brieskorn}, shows that every simply-connected spin $5$-manifold is actually the connected sum of Brieskorn manifolds.

Since Brieskorn manifolds also include many spheres with the standard smooth structure, these manifolds can be used to show that there are non-standard contact structures on $S^{2n-1}$ following Eliashberg, \cite{eliashberg:nonstd}.
The main ingredient is a result of Eliashberg-Gromov-McDuff stating that aspherical symplectic fillings for the standard contact sphere $(S^{2n-1},\xi_0)$ are diffeomorphic to $D^{2n}$.
Smooth Brieskorn varieties, natural symplectic fillings for Brieskorn manifolds, are symplectically aspherical, but typically have a lot of homology. Hence the contact structure on the boundary, a Brieskorn manifold, is non-standard provided none of the exponents equals $1$.

Later on, Ustilovsky, \cite{Ustilovsky:contact}, showed that there are infinitely many non-isomorphic contact structures on $S^{4n+1}$ with the same formal homotopy class of almost contact structures as the standard contact sphere.

Because Brieskorn manifolds have a rich and rather understandable structure, they are still of interest, and in this note we will highlight some of their more recent applications to contact and symplectic topology, including some new results on orderability and exotic contact structures.
\subsection{Exotic contact structures and applications to contact topology}
In this survey, we will start by describing classical results concerning homology and exotic spheres, and apply these results to obtain some symplectic invariants of Brieskorn varieties, namely symplectic homology.
These invariants are in some cases invariants of the contact structure on the boundary, the Brieskorn manifold, and we will use this to describe a multitude of exotic contact structures on various kinds of manifolds.

To state the first result, we recall that symplectic homology, denoted by $SH$, is a Floer type invariant of symplectic manifolds with contact type boundary.
There are various versions of symplectic homology, including equivariant symplectic homology, denoted by $SH^{S^1}$.
The symplectic manifolds we will consider will be so-called Liouville domains with vanishing first Chern class. 
This means that there is an exact symplectic form and symplectic homology can be equipped with an integer grading.
We will briefly review symplectic homology in Section~\ref{sec:blackbox_SH}.
The mean Euler characteristic of the $+$-part of equivariant symplectic homology of a Liouville domain $(W,d\lambda)$ is then
$$
\chi_m(W,d\lambda)=\lim_{N\to \infty} \frac{1}{N}\sum_{k=-N}^N(-1)^k \rk SH^{+,S^1}_k(W,d\lambda).
$$
This limit is not always defined, and contains less information than the homology itself.
Also, by definition the mean Euler characteristic depends on the Liouville filling, and not only on the contact type boundary.
In Lemma~\ref{lemma:mec_invariant} we will give sufficient conditions on the contact manifold that guarantee that the mean Euler characteristic is an invariant of the contact structure on the boundary.
The precise conditions, which we will refer to as a contact manifold with \emph{convenient dynamics}, are somewhat technical.
As a particular case, we point out that a contact form with a periodic Reeb flow for which the mean index is non-zero is an example of a contact manifold with convenient dynamics.

In view of this, consider $\Xi_{nice}(S^5)$, the monoid of contact structures on $S^5$ that are convex fillable by simply-connected Liouville manifolds with vanishing first Chern class and that admit convenient dynamics.
The (boundary) connected sum is the monoid operation.
Then 
\begin{theorem}
Given any rational number $x$, there is a Stein fillable contact structure $\xi_x$ on $S^5$ whose mean Euler characteristic equals $\chi_m(W_x,\omega_x)=x$.
Furthermore, the map
\[
\begin{split}
\tilde \chi_m: (\Xi_{nice}(S^5),\#) & \longrightarrow (\Q,+) \\
(\xi,W) & \longmapsto \chi_m(W,\omega)-\frac{1}{2}.
\end{split}
\]
is a surjective monoid homomorphism.
\end{theorem}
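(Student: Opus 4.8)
The plan is to establish the two assertions in turn, obtaining surjectivity of $\tilde\chi_m$ from the first.

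\emph{Realizing every rational value.} For this I would use smooth Brieskorn varieties with four exponents. If $p,q,r,s$ are pairwise coprime, then $\Sigma(p,q,r,s)\subset\C^4$ is an integral homology $5$-sphere which is simply connected by the connectivity results for Brieskorn links recalled in Section~\ref{sec:homology_Brieskorn}, hence a homotopy $5$-sphere; since $\Theta_5=0$ it is diffeomorphic to $S^5$. Its natural filling is the smooth Brieskorn variety $W_x:=V_\epsilon(p,q,r,s)=\{z_0^p+z_1^q+z_2^r+z_3^s=\epsilon\}$ (intersected with a large ball), a smooth affine hypersurface; it carries a Stein structure, it is $2$-connected, being homotopy equivalent to a wedge of $3$-spheres, and it has trivial canonical bundle by adjunction, so $c_1=0$. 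Moreover the standard contact form on $\Sigma(p,q,r,s)$ has periodic Reeb flow with nonzero mean index, so its contact structure lies in $\Xi_{nice}(S^5)$. By the Morse--Bott spectral sequence proved in the paper, $\chi_m(W_x,\omega_x)$ is an explicit rational-valued function of $(p,q,r,s)$, and the remaining task is to check that this function attains every rational number as $(p,q,r,s)$ runs over pairwise-coprime quadruples. I expect this number-theoretic step, together with the input computation of $\chi_m$ for Brieskorn varieties, to be the main obstacle.

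\emph{$\tilde\chi_m$ is a monoid homomorphism.} First, $\Xi_{nice}(S^5)$ is closed under boundary connected sum: if $(\xi_i,W_i)$, $i=1,2$, are as in the definition, then the boundary connected sum $W_1\natural W_2$ (obtained from $W_1\sqcup W_2$ by attaching a subcritical $1$-handle) is a Liouville filling of the contact connected sum on $S^5\#S^5=S^5$, it is simply connected by van Kampen and has $c_1=0$, and one checks from the definition that it again has convenient dynamics. By Lemma~\ref{lemma:mec_invariant} the number $\chi_m$ is then independent of the nice filling, so $\tilde\chi_m$ is well defined on $\Xi_{nice}(S^5)$, and its surjectivity is precisely the first part. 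For additivity I would argue at the level of the mean Euler characteristic. Using the behaviour of $S^1$-equivariant symplectic homology under a subcritical handle attachment (reviewed in Section~\ref{sec:blackbox_SH}), $SH^{S^1}_*(W_1\natural W_2)\cong SH^{S^1}_*(W_1)\oplus SH^{S^1}_*(W_2)$; the remaining term in the tautological exact sequence for $SH^{+,S^1}$ is the equivariant homology $H^{S^1}_*(W)$ of the filling, which depends only on $H_*(W)$, so from $W_1\natural W_2\simeq W_1\vee W_2$ one gets $H^{S^1}_*(W_1\natural W_2)\oplus H^{S^1}_*(B^6)\cong H^{S^1}_*(W_1)\oplus H^{S^1}_*(W_2)$. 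Since the mean Euler characteristic is additive over direct sums and, when the relevant limits exist, over long exact sequences, feeding these into the tautological sequence yields
\[
\chi_m(W_1\natural W_2)=\chi_m(W_1)+\chi_m(W_2)-\chi_m(B^6).
\]

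Finally one computes the normalization $\chi_m(B^6)=\frac{1}{2}$: here $SH^{S^1}_*(B^6)=0$, while $SH^{+,S^1}_*(B^6)$ has rank one in exactly the even degrees $\ge 2$, so the Cesàro average of $(-1)^k\rk SH^{+,S^1}_k(B^6)$ equals $\frac{1}{2}$. Substituting into the displayed identity gives $\chi_m(W_1\natural W_2)-\frac{1}{2}=\big(\chi_m(W_1)-\frac{1}{2}\big)+\big(\chi_m(W_2)-\frac{1}{2}\big)$, i.e.\ $\tilde\chi_m$ is additive; and since the standard contact sphere $(S^5,\xi_0)$ with filling $B^6$ is the unit of the monoid and $\tilde\chi_m(\xi_0,B^6)=\frac{1}{2}-\frac{1}{2}=0$, the map $\tilde\chi_m$ is a monoid homomorphism, surjective by the first part. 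In summary the only genuinely hard input is the Brieskorn computation and the accompanying number theory; the rest is bookkeeping with the standard structural properties of (equivariant) symplectic homology.
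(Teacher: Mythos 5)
There is a genuine gap in your first step, which is the heart of the theorem. You propose to realize every rational $x$ as $\chi_m$ of a \emph{single} Brieskorn manifold $\Sigma(p,q,r,s)$ with pairwise coprime exponents, leaving "the number-theoretic step" of checking that this family exhausts $\Q$. That step cannot succeed: for $n=3$ the formula of Proposition~\ref{prop:mean_euler_Brieskorn} carries the sign $(-1)^{n+1}=+1$ and has positive numerator and denominator, so $\chi_m(\Sigma(p,q,r,s))$ is always positive (and in fact tends to $0$ as the exponents grow, and is below $\tfrac14$ once they are large). Negative rationals, and indeed most positive ones, are therefore out of reach for a single such link. The paper's proof of Theorem~\ref{thm:every_rational_number_is_MEC} necessarily takes $\xi_x$ to be a \emph{connected sum} of Brieskorn spheres and uses three different families, two of which have non--pairwise-coprime exponents and so are excluded by your setup: $\Sigma(2,2,3,5)$ with $\chi_m=1$ is used to cancel the $-\tfrac12$ correction in the sum formula~\eqref{eq:sum_formula}; sums of $\Sigma(p,q,r,s)$ with large exponents produce negative values; and the prime powers $1/p^{\ell}$ are hit by tuning $k=3p^{\ell}-2$ in the Ustilovsky spheres $\Sigma(k,2,2,2)$ (for $p$ odd) and by $\Sigma(2,2,2^{\ell}-3,2^{\ell}+3)$ (for $p=2$), after which additivity generates all of $\Q$. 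So the realization statement should be folded into, not separated from, the monoid structure; as written your first paragraph proves much less than claimed.

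Your second step is essentially correct and coincides with the paper's route: well-definedness via Lemma~\ref{lemma:mec_invariant}, additivity from the surgery exact triangle of Theorem~\ref{thm:les_connected_sum} (equivalently the Espina--Bourgeois--Oancea formula~\eqref{eq:sum_formula}), and the normalization $\chi_m(B^6)=\tfrac12$. Two small inaccuracies there: $SH^{+,S^1}_*(B^6)$ is $\Q$ in degrees $4,6,8,\dots$ rather than "even degrees $\ge 2$" (harmless for the Ces\`aro limit), and the clean statement in the paper is the exact triangle for $SH^{+,S^1}$ with the $tube$ term, from which the $(-1)^n\tfrac12$ correction drops out directly by taking Euler characteristics, without needing a direct-sum decomposition of the full equivariant theory.
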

This theorem is proved by computing this invariant for Brieskorn manifolds and applying a simple connected sum formula.
We have several other results about exotic contact structures and invariants, but due to their more technical nature we will refer to Sections~\ref{sec:examples},~\ref{sec:MEC},~\ref{sec:MEC_Brieskorn} and \ref{sec:not_full_invariants} for their statements.

We also investigate an invariant of the filling, namely symplectic homology.
\begin{theorem}
Let $(\Sigma(a_0,\ldots,a_n),\alpha_a)$ be a Brieskorn manifold with filling $V_\epsilon(a_0,\ldots,a_n)$, and suppose that none of the exponents $a_i$ equals $1$.
Then $SH_*(V_\epsilon(a_0,\ldots,a_n)\, )$ does not vanish.
Furthermore, $(\Sigma(a_0,\ldots,a_n),\alpha_a)$ is orderable.
\end{theorem}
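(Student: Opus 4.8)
The plan is to deduce both statements from the symplectic topology of the smooth Brieskorn variety $V_\epsilon:=V_\epsilon(a_0,\ldots,a_n)$, regarded as the given Liouville filling of $(\Sigma(a_0,\ldots,a_n),\alpha_a)$; the hypothesis that no $a_i$ equals $1$ enters through the Milnor number. We may assume $n\ge 2$ (for $n=1$ the Brieskorn manifold has dimension at most $1$ and there is nothing to prove). By Milnor's fibration theorem $V_\epsilon$ is homotopy equivalent to a wedge of $\mu=\prod_{i=0}^{n}(a_i-1)$ copies of $S^n$, so $H_*(V_\epsilon)$ is finitely generated and concentrated in degrees $0$ and $n$, and $\mu\ge 1$ precisely because every $a_i\ge 2$. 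Moreover $c_1(V_\epsilon)=0$ (its canonical bundle is trivial), and by Picard--Lefschetz theory --- Morsify the defining polynomial to obtain a Lefschetz fibration with fibre $V_\epsilon$ --- the variety $V_\epsilon$ contains an embedded Lagrangian sphere $L\cong S^n$, a vanishing cycle. Since $n\ge 2$ we have $H^1(L;\R)=0$, so the restriction of the Liouville form to $L$ is exact; that is, $L$ is an \emph{exact} Lagrangian in $V_\epsilon$.

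For the first assertion I would use the standard fact that $SH_*(W)=0$ forces $W$ to contain no closed exact Lagrangian: a Weinstein neighbourhood of such an $L$ is a Liouville subdomain exact-symplectomorphic to a disc cotangent bundle $D^*L$, and the Viterbo transfer map to $SH_*(D^*L)\cong H_*(\Lambda L)$ --- the homology of the free loop space, by Viterbo, Abbondandolo--Schwarz and Salamon--Weber --- is unital with non-zero target, hence its source is non-zero. Applied to our $L\cong S^n$, this already yields $SH_*(V_\epsilon)\ne 0$. An alternative route, and the one that produces the finer information used elsewhere in this survey, is to compute the positive part $SH^{+}_*(V_\epsilon)$ directly from the Morse--Bott spectral sequence developed in this survey: the Reeb flow of $\alpha_a$ is periodic, so the iterates of its closed orbits organise into Morse--Bott families whose Conley--Zehnder indices grow without bound (the mean index being non-zero), and the spectral sequence computation shows that $SH^{+}_*(V_\epsilon)$ is non-zero in infinitely many degrees. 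Feeding this into the tautological long exact sequence
\[
\cdots\longrightarrow H_{*+n}(V_\epsilon,\partial V_\epsilon)\longrightarrow SH_*(V_\epsilon)\longrightarrow SH^{+}_{*}(V_\epsilon)\longrightarrow H_{*+n-1}(V_\epsilon,\partial V_\epsilon)\longrightarrow\cdots
\]
and using that $H_*(V_\epsilon,\partial V_\epsilon)$ is finitely generated, one sees that $SH_*(V_\epsilon)=0$ would force $SH^{+}_*(V_\epsilon)$ to be finitely generated --- a contradiction --- so again $SH_*(V_\epsilon)\ne 0$.

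For orderability I would then invoke the by-now standard principle (Albers--Merry, building on Eliashberg--Polterovich; compare the related work on positive loops of contactomorphisms) that a contact manifold admitting a Liouville filling $W$ with $SH_*(W)\ne 0$ carries no positive contractible loop of contactomorphisms, and is therefore orderable. The mechanism is that a positive, contractible loop of contactomorphisms of $(\Sigma(a_0,\ldots,a_n),\alpha_a)$ lets one build on the completion of $V_\epsilon$ an admissible, slope-increasing Hamiltonian whose Floer homology simultaneously computes $SH_*(V_\epsilon)$ and is forced to vanish, the positivity and contractibility of the loop pushing every generator beyond every action window. Since $SH_*(V_\epsilon)\ne 0$ by the first part, no such loop exists, so $(\Sigma(a_0,\ldots,a_n),\alpha_a)$ is orderable.

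The conceptually transparent step is the non-vanishing via an exact Lagrangian: it reduces everything to Milnor's description of $V_\epsilon$ together with off-the-shelf Floer functoriality, and it is exactly here that the assumption $a_i\ne 1$ is used --- it is what makes $\mu\ge 1$, hence what produces the vanishing cycle; when some $a_i=1$ the variety is biholomorphic to $\C^n$ and the boundary is the standard contact sphere, so the hypothesis cannot be dropped. The genuinely technical ingredients lie elsewhere: the Morse--Bott computation of $SH^{+}_*$ needed for the quantitative statements in this survey (keeping track of the degenerate Reeb dynamics, the indices of the orbit strata, and --- if one insists on proving non-vanishing this way --- controlling the spectral-sequence differentials so that the high-degree classes are not all cancelled), and the orderability input itself, whose ``push-to-infinity'' argument requires a delicate choice of admissible Hamiltonians and action windows so that the two computations of the same Floer group can be compared; the latter I would quote rather than reprove.
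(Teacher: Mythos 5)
Your argument for $SH_*(V_\epsilon)\neq 0$ is essentially the paper's: both produce a closed exact Lagrangian sphere from a Morsification/vanishing-cycle construction and then use the unitality of the Viterbo transfer map to $SH(D^*S^n)\neq 0$. (The paper phrases this via a perturbed polynomial $P=\tilde p-\epsilon'$ and deformation invariance of $SH$, but the content is identical.) One small caveat on your ``alternative route'': the parenthetical ``the mean index being non-zero'' is false in general --- for $\sum_j 1/a_j=1$ (e.g.\ $\Sigma(4,4,4,4)$) the mean index of a principal orbit vanishes and the Conley--Zehnder indices do \emph{not} grow without bound --- but since this is only your backup argument it does not affect the main line.

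The orderability step is where your proposal falls short of a proof. The version of Albers--Merry that the paper actually uses (Theorem~\ref{thm:orderable_albers_merry}) does \emph{not} say ``$SH(W)\neq 0$ implies orderable''; it requires exhibiting a non-zero class $c\in SH(W)$ together with a \emph{uniform action bound} on all Floer chains representing $c$ at every slope. Verifying this bound is the genuinely non-trivial part of the paper's argument, and it is precisely the part you defer with ``I would quote rather than reprove.'' When $\mu_P\neq 0$ the bound follows because only finitely many orbits occupy any given degree, but in the index-indefinite case $\sum_j 1/a_j=1$ there are infinitely many periodic Reeb orbits of the same index with unbounded action, and the paper has to prove a dedicated lemma: using the Morse--Bott spectral sequence and a parity/positivity computation with formula~\eqref{eq_Maslov_index_Brieskorn}, it shows that every class in $SH_{-n+1}(V_1(a))$ has a \emph{unique} chain-level representative, which supplies the action bound. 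Your proposal neither states the action-bound hypothesis nor addresses the $\mu_P=0$ case, so as written the orderability conclusion rests on a citation strictly stronger than the one the paper (or, as far as the paper indicates, the literature) provides. To close the gap you would either need to justify the blanket implication ``$SH\neq 0\Rightarrow$ orderable'' (e.g.\ via non-vanishing of Rabinowitz--Floer homology, which itself requires an argument), or reproduce the case analysis on $\mu_P$ and the uniqueness-of-representative lemma.
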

Non-vanishing of symplectic homology follows from the fact that the Milnor number is positive. This guarantees the existence of Lagrangian spheres which in turn imply non-vanishing of symplectic homology. In particular, the same holds true for links of \emph{isolated singularities} with positive Milnor number, see Theorem~\ref{thm:SH_isolated_sing}.
The last statement on orderability is related to non-vanishing of symplectic homology and depends on recent work due to Albers and Merry, \cite{AM:orderability}, see Theorem~\ref{thm:orderable_albers_merry} for a more precise formulation.
We remind the reader that the standard contact sphere, which is realized by any Brieskorn manifold for which at least one of the $a_i$'s equal to $1$, is not orderable.
In fact, the filling of $\Sigma(1,a_1,\ldots,a_n)$ by the Brieskorn variety $V_0(1,a_1,\ldots,a_n)$, is the filling by a ball, which has vanishing symplectic homology.

Interestingly, the Brieskorn manifolds satisfying the above conditions, include contact manifolds that are diffeomorphic to spheres in dimension greater than $3$, so we obtain
\begin{corollary}
For $n\geq 3$, there are infinitely many non-contactomorphic contact structures on $S^{2n-1}$ that are orderable.
\end{corollary}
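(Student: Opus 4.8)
The plan is to combine three ingredients: Brieskorn's differential-topological classification, which supplies infinitely many Brieskorn manifolds diffeomorphic to the standard sphere $S^{2n-1}$ with every exponent at least $2$; a contact invariant that can be computed from the Morse--Bott spectral sequence and that separates these contact structures; and the orderability criterion, which applies precisely because no exponent equals $1$.

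First I would build the family. Recall from Section~\ref{sec:homology_Brieskorn} and \cite{Brieskorn:difftop} that whether $\Sigma(a_0,\dots,a_n)$ is a homotopy sphere, and in that case which element of $bP_{2n}$ it represents, is determined by the intersection form of the Milnor fibre $V_\epsilon(a_0,\dots,a_n)$: for $2n-1\equiv 1\pmod 4$ (that is, $n$ odd) by an Arf--Kervaire type parity, and for $2n-1\equiv 3\pmod 4$ ($n$ even) by the signature of $V_\epsilon$ modulo a suitable multiple of $\abs{bP_{2n}}$. For $n$ odd I would take the Ustilovsky tuples $(p,2,\dots,2)$ with $p\equiv\pm 1\pmod 8$, \cite{Ustilovsky:contact}: all of these are the standard $S^{2n-1}$ and all have exponents $\ge 2$. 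For $n$ even I would instead impose a congruence on one of the exponents forcing the signature of $V_\epsilon$ to lie in the correct residue class, which again leaves an infinite family with all exponents $\ge 2$.

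Next I would separate the contact structures. The Brieskorn contact form $\alpha_a$ carries a periodic Reeb flow---the weighted circle action with weights proportional to $\lcm(a_0,\dots,a_n)/a_j$---whose mean index is non-zero, so $(\Sigma(a_0,\dots,a_n),\alpha_a)$ has convenient dynamics, and by Lemma~\ref{lemma:mec_invariant} the mean Euler characteristic $\chi_m$ is an invariant of the contact structure on the boundary (alternatively one may use the full positive $S^1$-equivariant symplectic homology in a bounded range of degrees). Using the Morse--Bott spectral sequence---the main computational tool of this survey---I would evaluate $\chi_m\bigl(V_\epsilon(a_0,\dots,a_n)\bigr)$ along the family as an explicit function of the exponents, as in Section~\ref{sec:MEC_Brieskorn}, and check that it takes infinitely many distinct rational values. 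This produces infinitely many pairwise non-contactomorphic contact structures on $S^{2n-1}$.

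Finally, since no exponent equals $1$ and the Milnor number is positive, Theorem~\ref{thm:SH_isolated_sing} gives $SH_*\bigl(V_\epsilon(a_0,\dots,a_n)\bigr)\neq 0$, and Theorem~\ref{thm:orderable_albers_merry} then shows that each $(\Sigma(a_0,\dots,a_n),\alpha_a)$ is orderable, which finishes the corollary. The hard part will be the first step: the number-theoretic bookkeeping that guarantees an infinite, exponent-$\ge 2$ family inside the \emph{standard} smooth sphere in every odd dimension $2n-1\ge 5$ simultaneously, which is delicate for $n$ even, where $\abs{bP_{2n}}$ grows and the signature of the Milnor fibre must be controlled modulo it.
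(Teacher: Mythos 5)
Your proposal is correct and is essentially the argument the paper has in mind (the paper itself only gestures at it): orderability comes from Theorem~\ref{thm:SH_isolated_sing} since all exponents are $\geq 2$, and the contact structures are separated by the mean Euler characteristic, which is a contact invariant here because the periodic Reeb flow has $\mu_P\neq 0$ (Lemma~\ref{lemma:periodic_flow_convenient} together with Lemma~\ref{lemma:mec_invariant}). The one step you flag as open --- an infinite family of Brieskorn manifolds with all exponents $\geq 2$ carrying the \emph{standard} smooth structure when $n$ is even --- is classical: take $\Sigma(3,6j-1,2,\dots,2)$, which is a homotopy sphere because the vertices $3$ and $6j-1$ are both isolated in the Brieskorn graph, and whose Milnor fibre has signature $\pm 8j$, so that it represents $\pm j\cdot g_{2n}$ in the finite cyclic group $bP_{2n}$; restricting $j$ to multiples of $\lvert bP_{2n}\rvert$ yields infinitely many standard spheres, along which formula~\eqref{eq:MEC_general} is a non-constant rational function of $j$ and hence takes infinitely many values.
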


We also give explicit examples where equivariant symplectic homology with $\Q$-coefficients cannot distinguish certain contact structures on $S^2\times S^3$.
This is the family of Brieskorn manifolds $\Sigma(2k,2,2,2)$, where we may regard $\Sigma(0,2,2,2):=\OB(T^*S^2,\id)$ as a subcritically fillable structure on $S^2\times S^3$ (this notation is explained in Sections~\ref{sec:contact_open_book} and \ref{sec:monodromy_Brieskorn}).
To distinguish two of them, we use the fact that $\Sigma(2k,2,2,2)$ for $k>0$ does not embed into a subcritical Stein manifold.
\begin{proposition}
On $S^2\times S^3$, there are Stein-fillable contact structures $\xi_1$ and $\xi_2$ that are not contactomorphic.
However, the cylindrical contact homology/equivariant symplectic homology groups with $\Q$-coefficients of these contact structures are isomorphic. 
\end{proposition}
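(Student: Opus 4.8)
The plan is to realise $\xi_1$ as the (subcritically fillable) contact structure on $\Sigma(0,2,2,2)=\OB(T^*S^2,\id)$ and $\xi_2$ as the Brieskorn contact structure $\alpha_a$ on $\Sigma(2,2,2,2)$; the argument works equally with any $\Sigma(2k,2,2,2)$, $k\geq 1$, in place of $\Sigma(2,2,2,2)$. First I would check that both total spaces are diffeomorphic to $S^2\times S^3$. For $\Sigma(2k,2,2,2)$ this follows from the homology computation of Section~\ref{sec:homology_Brieskorn}, which yields $H_*(\Sigma(2k,2,2,2);\Z)\cong H_*(S^2\times S^3;\Z)$, together with simple connectedness and vanishing of the second Stiefel--Whitney class; Smale's classification of simply connected spin $5$-manifolds then forces the identification. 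For $k=0$ one uses instead that $\OB(T^*S^2,\id)=\partial(DT^*S^2\times\D^2)$ is the boundary of a subcritical Weinstein domain homotopy equivalent to $S^2$, hence again $S^2\times S^3$. Stein fillability holds in both cases: the affine variety $V_\epsilon(2k,2,2,2)$ is Stein and fills $(\Sigma(2k,2,2,2),\alpha_a)$, while the subcritical Stein domain $DT^*S^2\times\D^2$ fills $(\Sigma(0,2,2,2),\xi_1)$; both fillings have vanishing first Chern class.

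To see that $\xi_1$ and $\xi_2$ are not contactomorphic I would use the quoted fact that $\Sigma(2k,2,2,2)$ with $k>0$ admits no contact embedding into any subcritical Stein manifold, whereas $(\Sigma(0,2,2,2),\xi_1)$ does embed into one, namely as the contact boundary of $DT^*S^2\times\D^2$. A contactomorphism $\xi_1\cong\xi_2$ would transport the latter embedding to an embedding of $\xi_2$ into a subcritical Stein manifold, a contradiction. Hence $\xi_1$ and $\xi_2$ are not contactomorphic.

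For the final assertion I would compute the cylindrical contact homology with $\Q$-coefficients of both structures and check that the answers agree. Cylindrical contact homology is by construction an invariant of the contact structure; under the convenient-dynamics hypotheses relevant here (the pertinent Reeb flows are periodic, see Sections~\ref{sec:MEC} and \ref{sec:MEC_Brieskorn}) it is defined and isomorphic, up to a grading shift, to $SH^{+,S^1}_*(W;\Q)$ for $W$ any Liouville filling with vanishing first Chern class, so it suffices to evaluate $SH^{+,S^1}$ for one convenient filling of each. For $\xi_1$, subcriticality of $DT^*S^2\times\D^2$ gives $SH^{S^1}=0$, so the tautological long exact sequence identifies $SH^{+,S^1}_*(\xi_1;\Q)$ with the $S^1$-equivariant homology of the filling up to a shift; as the filling is homotopy equivalent to $S^2$, this is the explicit graded vector space $H_*(S^2;\Q)\otimes\Q[u]$, suitably shifted. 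For $\xi_2$ I would compute directly from the Reeb dynamics of $\alpha_a$: the Reeb flow is periodic, its closed orbits organise into Morse--Bott families supported on the sub-Brieskorn manifolds obtained by setting some of the coordinates $z_j$ to zero, and the Morse--Bott spectral sequence established later in the paper assembles $SH^{+,S^1}$ from the homologies of these orbit spaces shifted by their Robbin--Salamon indices. The crux is to verify that, after passing to $\Q$-coefficients and checking degeneration of the spectral sequence, the resulting graded group is independent of the first exponent, so that $\xi_2$ — and indeed every $\Sigma(2k,2,2,2)$, $k\geq 0$ — produces the same answer as the subcritical model $\xi_1$.

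The main obstacle is exactly this last computation: identifying the orbit spaces and their mean/Robbin--Salamon indices for the Brieskorn Reeb flow on $\Sigma(2k,2,2,2)$, confirming degeneration of the Morse--Bott spectral sequence over $\Q$, and checking that the output is genuinely $k$-independent — in particular that the degenerate convention $\Sigma(0,2,2,2):=\OB(T^*S^2,\id)$ is the correct $k\to 0$ limit of the family and that its $SH^{+,S^1}$ matches the $k\geq 1$ Brieskorn computation. A secondary technical point is to phrase everything through $SH^{+,S^1}$ in order to sidestep transversality issues in the definition of cylindrical contact homology, and then to invoke the comparison isomorphism.
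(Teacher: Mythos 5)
Your overall strategy is exactly the paper's: compare the subcritical structure $\xi_{sub}=\OB(T^*S^2,\id)=\partial(T^*S^2\times\C)$ with a Brieskorn structure $\Sigma(2k,2,2,2)$, distinguish them via the Oancea--Viterbo non-embedding criterion, and match the $+$-parts of equivariant symplectic homology using the Morse--Bott spectral sequence computation of Section~\ref{sec:examples} on one side and the (vanishing of $SH^{S^1}$ plus the) equivariant Viterbo sequence on the other. The computation you flag as the main obstacle is already carried out in Section~\ref{sec:examples}: $SH^{+,S^1}_\ell(V_\epsilon(2,2,2,2m))$ is $\Q$ in degree $2$, $\Q^2$ in even degrees $>2$, and $0$ otherwise, independently of $m$, and this agrees with $H^{S^1}_{\ell+2}(W,\partial W;\Q)$ for $W\simeq S^2$ the subcritical filling.

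There is, however, one genuine gap: your choice of $\xi_2$ on $\Sigma(2,2,2,2)$, together with the claim that ``the argument works equally with any $\Sigma(2k,2,2,2)$, $k\geq 1$''. The Oancea--Viterbo criterion \cite[Proposition 6.2(a)]{OV} requires the intersection form of the smoothing to be \emph{nonzero}. For $k=1$ the smoothing is $V_\epsilon(2,2,2,2)\cong T^*S^3$, whose middle homology has rank $\prod(a_i-1)=1$; since the middle (complex) dimension $n=3$ is odd, the intersection form is skew-symmetric and hence identically zero on a rank-one group (equivalently, Pham's formula gives $e\cdot e=(-1)^{n(n+1)/2}(1+(-1)^n)=0$). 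So the non-embedding argument does not apply to $ST^*S^3=\Sigma(2,2,2,2)$, and your proof of non-contactomorphy breaks down for $k=1$. This is precisely why the paper restricts to $k>1$: for $k\geq 2$ the Milnor number is $2k-1\geq 3$ and adjacent Pham generators intersect with multiplicity $\pm 1$, so the form is nonzero and the argument goes through. The fix is simply to take $\xi_2$ on $\Sigma(2k,2,2,2)$ with $k\geq 2$ (e.g.\ $\Sigma(4,2,2,2)$); distinguishing $\xi_{sub}$ from $ST^*S^3$ itself requires a different argument, which the paper explicitly declines to give.
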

For non-simply-connected contact manifolds, a related, and much stronger result is already known, namely that there are contact manifolds that are not even diffeomorphic, yet with symplectomorphic symplectizations, \cite{Courte:symplectizations}.

\section{Links of isolated singularities and contact structures}
Let $p:(\C^n,0)\to (\C,0)$ be a holomorphic function with an isolated singularity at $0$.
Define the singular variety $V_0(p):=p^{-1}(0)$.
The {\bf link} of the isolated singularity is defined as 
$$
L_{0,\delta}(p):=V_0(p)\cap S^{2n-1}_\delta.
$$
Put differently, if we define the function $f$ on $\C^n$ by putting $f(z)=|z|^2$, then $L_{0,\delta}(p)$ is the level set $f|_{V_0(p)}^{-1}(\delta^2)$.
As $0$ is an isolated singularity on $V_0(p)$, we find a small regular value $\delta^2$ of $f|_{V_0(p)}$ and we see that the link is a smooth manifold.

\begin{figure}[htp]
\def\svgwidth{0.35\textwidth}%
\begingroup\endlinechar=-1
\resizebox{0.35\textwidth}{!}{%
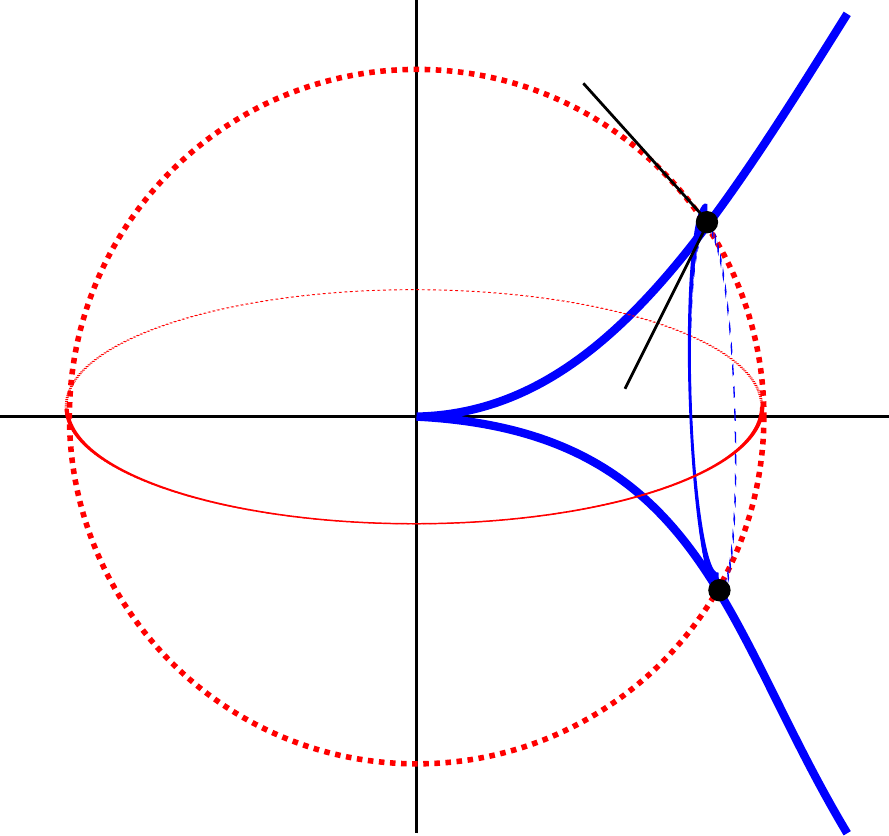%
}\endgroup
\caption{The cone represents the singular variety, and the intersection with the sphere is the link.
The associated contact structure is $\xi=T\cap iTL$.}
\label{fig:link}
\end{figure}
\subsubsection{Some basic definitions concerning contact manifolds}
By a {\bf contact structure} on an $2n-1$-dimensional manifold $Y$, we mean a field of hyperplanes $\xi^{2n-2} \subset TY$ that are maximally non-integrable.
We will always work with contact structures that are globally defined as the kernel of a smooth $1$-form $\alpha$, a so-called {\bf contact form}.
The condition that $\xi$ is maximally non-integrable is then equivalent to $\alpha\w d\alpha^{n-1}\neq 0$.
By a {\bf contactomorphism} between $(Y^{2n-1}_1,\xi_1)$ and $(Y^{2n-1}_2,\xi_2)$, we mean a diffeomorphism $\psi: Y_1 \to Y_2$ with the property $T\psi \xi_1 =\xi_2$.

A basic theorem tells us that deformations of contact structures on a compact manifold are always contactomorphic. More precisely.
\begin{theorem}[Gray stability theorem, \cite{Gr}]
Let $\xi_t$, $t \in [0,1]$ be a smooth family of contact structures on a closed manifold $Y$. Then there is an isotopy $\psi_t$ of $Y$ such that
$$
T \psi_t(\xi_0) = \xi_t \;\; \text{for each} \;\; t \in [0,1].
$$
In particular, $(Y, \xi_0)$ is contactomorphic to $(Y, \xi_1)$.
\end{theorem}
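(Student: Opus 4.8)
The plan is to use \emph{Moser's trick}, the standard argument for Gray stability. First I would write the family of contact structures as kernels of a smooth family of contact forms, $\xi_t=\ker\alpha_t$; this is legitimate here since we always assume our contact structures are coorientable, and on the closed manifold $Y$ a smooth family of defining $1$-forms exists. I would then look for the desired isotopy $\psi_t$ as the flow of a time-dependent vector field $X_t$ with $\psi_0=\id$, and try to arrange $\psi_t^*\alpha_t=\lambda_t\,\alpha_0$ for a smooth family of positive functions $\lambda_t$ on $Y$. The point is that this conformal equation is equivalent to $T\psi_t(\xi_0)=\xi_t$, so establishing it for all $t$ (with $\lambda_t>0$) proves the theorem.

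Differentiating $\psi_t^*\alpha_t=\lambda_t\alpha_0$ in $t$ and applying Cartan's formula $\lie{X_t}\alpha_t=d(\iota_{X_t}\alpha_t)+\iota_{X_t}d\alpha_t$ reduces the problem to the pointwise linear equation
$$
\dot\alpha_t+d(\iota_{X_t}\alpha_t)+\iota_{X_t}d\alpha_t=\mu_t\,\alpha_t,
\qquad \mu_t:=\Big(\tfrac{d}{dt}\log\lambda_t\Big)\circ\psi_t^{-1}.
$$
The key idea is to search for $X_t$ tangent to $\xi_t$, i.e.\ $\iota_{X_t}\alpha_t=0$, which kills the middle term. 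Pairing the remaining identity with the Reeb vector field $R_t$ of $\alpha_t$ (so $\iota_{R_t}\alpha_t=1$ and $\iota_{R_t}d\alpha_t=0$) forces $\mu_t=\dot\alpha_t(R_t)$, and one is left to solve $\iota_{X_t}d\alpha_t=\mu_t\alpha_t-\dot\alpha_t=:\beta_t$ with $X_t\in\xi_t$.

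To do this, I would use that $d\alpha_t$ restricts to a nondegenerate (symplectic) bilinear form on each hyperplane $\xi_t$, so $X\mapsto\iota_Xd\alpha_t|_{\xi_t}$ is a fiberwise isomorphism $\xi_t\to\xi_t^*$; this yields a unique smooth $X_t\in\xi_t$ with $\iota_{X_t}d\alpha_t|_{\xi_t}=\beta_t|_{\xi_t}$. A one-line check shows that both sides of $\iota_{X_t}d\alpha_t=\beta_t$ also agree on $R_t$ — both vanish there, by the choice of $\mu_t$ and since $\iota_{R_t}d\alpha_t=0$ — hence they agree on all of $TY$. Because $Y$ is closed, $X_t$ is complete, so its flow $\psi_t$ exists for all $t\in[0,1]$; solving the scalar ODE $\dot\lambda_t=(\mu_t\circ\psi_t)\,\lambda_t$ with $\lambda_0=1$ gives $\lambda_t>0$, and one verifies $\psi_t^*\alpha_t=\lambda_t\alpha_0$ by checking that the $t$-derivative of $\psi_t^*\alpha_t-\lambda_t\alpha_0$ vanishes identically. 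Taking $t=1$ gives the final assertion.

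I expect no serious obstacle. The only genuinely subtle points are (i) recognizing that one should require $X_t\in\xi_t$, which is what makes $\mu_t$ determined rather than a free unknown, and (ii) the completeness of $X_t$, which is exactly where the hypothesis that $Y$ is closed enters — on a noncompact manifold the flow need not exist for all time and Gray stability can fail.
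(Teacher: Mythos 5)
Your proof is correct: it is the standard Moser-trick argument for Gray stability, and the paper itself offers no proof, citing only Gray's original article. The two subtle points you flag — imposing $X_t\in\xi_t$ so that pairing with the Reeb field determines $\mu_t=\dot\alpha_t(R_t)$, and using closedness of $Y$ for completeness of the flow — are exactly the right ones, and the verification that both sides of $\iota_{X_t}d\alpha_t=\mu_t\alpha_t-\dot\alpha_t$ agree on $R_t$ closes the argument. (As a historical aside, Gray's own 1959 proof used deformation theory rather than Moser's method, which postdates it; the argument you give is the one found in all modern references.)
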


\subsubsection{Contact structures on links}
The simplest way to define a contact structure on a link of a singularity is to observe that, away from $0$, $f|_{V_0(p)}$ defines a strictly plurisubharmonic function on the variety $V_0(p)$.
Here we call a function $f$ {\bf strictly plurisubharmonic} if $-d(df\circ i)(\cdot,i \cdot)$ defines a Riemannian metric on $V$.
The link is a regular level set of this function, namely $L_{0,\delta}(p)=f|_{V_0(p)}^{-1}(\delta^2)$, so this link carries a contact structure.
To see this, put $\beta=-df|_{V_0(p)}\circ i$.
Then $\alpha:=\beta|_{L_{0,\delta}(p)}$ is a contact form with contact structure $\xi=\ker \alpha$.
Put differently, the contact planes are the complex tangencies to the link  $L_{0,\delta}(p)$, so $\xi=TL_{0,\delta}(p) \cap iTL_{0,\delta}(p)$.

Links of holomorphic functions are examples of particularly nice contact manifolds.
They are symplectically fillable.
To make this precise, we will recall three different forms of fillings.
\begin{definition}
A {\bf Liouville domain}, or {\bf compact Liouville manifold}, is a compact, exact symplectic manifold $(W,\omega=d\lambda)$ with boundary and a globally defined Liouville vector field $X$, defined by $i_X\omega=\lambda$, such that $X$ points outward along the boundary of $W$.

A {\bf Weinstein domain} $(W,\omega=d\lambda)$ is a Liouville domain for which the corresponding Liouville vector field is gradient-like for some Morse function $f:W\to\R$.

A {\bf Stein domain} is a compact, complex manifold with a strictly plurisubharmonic function $f$ such that its boundary is a regular level set of $f$.
\end{definition}
By the above construction with plurisubharmonic functions the boundary of a Stein domain is a contact manifold. We call the resulting contact manifold Stein fillable.

In fact, all of the above domains carry a contact structure on the boundary: in the two remaining cases we simply insert the Liouville vector field in the symplectic form $\omega$ obtaining a Liouville form $\lambda$. The restriction of $\lambda$ to the boundary is a contact form.
We call the resulting contact manifolds Liouville or Weinstein fillable depending on the type of the domain.

One can show that a Stein domain is always a Weinstein domain. A Liouville domain is the weakest of these three forms of fillability, and by a theorem of Eliashberg one can deform a Weinstein domain into a Stein domain. The monograph \cite{CE:Stein_book} explains this in detail.

\begin{remark}
Links of holomorphic functions as defined above, are always Stein fillable. Indeed, consider the variety $V_\epsilon(p):=p^{-1}(\epsilon)$. If $\epsilon>0$ is sufficiently small, the perturbed link $\tilde L_{0,\delta}(p)=V_\epsilon(p)\cap S^{2n-1}_\delta$ carries a contact structure that is isomorphic to the one constructed earlier by Gray stability.
\end{remark}

\subsection{Weighted homogeneous polynomials}
A particularly nice class of links of singularities is formed by weighted homogeneous polynomials.
\begin{definition}
A function $p:\C^{n+1} \to \C$ is called {\bf weighted homogeneous} with weights $(w_0,\ldots,w_n,d)$, which are all assumed to be positive integers, if for all $\lambda\in \C$ we have $p(\lambda^{w_0}z_0,\ldots,\lambda^{w_n}z_n)=\lambda^d p(z_0,\ldots,z_n)$.
\end{definition}
Usually, the weights are normalized by requiring $\gcd_i w_i=1$.
Note that the singular variety $V_0(p)$ carries a $\C^*$-action:
\begin{eqnarray*}
V_0(p)\times \C^* & \longrightarrow & V_0(p) \\
(z_0,\ldots,z_n;\lambda) & \longrightarrow & (\lambda^{w_0}z_0,\ldots,\lambda^{w_n}z_n).
\end{eqnarray*}

Links of weighted homogeneous polynomials carry a natural contact structure by the above argument. The contact form provided by this argument is not the best one though, and the one provided by the following proposition provides more insight into the structure.
\begin{proposition}
\label{prop:BW_contact_form}
Assume that $p$ is a weighted homogeneous polynomial with an isolated singularity at $0$.
Then the manifold $L_{0,\delta}(p)$ carries the contact form
$$
\alpha_1=\frac{i}{2}\sum_{j=0}^n \frac{1}{w_j}(z_jd\bar z_j -\bar z_j d z_j).
$$
whose Reeb flow is periodic,
$$
Fl^R_t(z_0,\ldots,z_n)=(e^{iw_0t}z_0,\ldots,e^{iw_nt}z_n).
$$
Furthermore, the contact manifolds $(L_{0,\delta}(p),\ker \alpha)$ and  $(L_{0,\delta}(p),\ker \alpha_1)$ are contactomorphic.
\end{proposition}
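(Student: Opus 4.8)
The plan is to establish the three assertions — that $\alpha_1$ restricts to a contact form on $L:=L_{0,\delta}(p)$, that the displayed $S^1$-action is its Reeb flow, and that $\ker\alpha_1$ is contactomorphic to $\ker\alpha$ — by a direct computation in $\C^{n+1}$ followed by Gray's stability theorem. Write $\omega_1:=d\alpha_1 = i\sum_{j=0}^n \frac1{w_j}\,dz_j\w d\bar z_j$; this is a positive $(1,1)$-form on $\C^{n+1}$, so its restriction to the complex submanifold $V:=V_0(p)\setminus\{0\}$ (smooth, since $0$ is an isolated singularity) is symplectic, $\omega_1|_{T_zV}$ is non-degenerate for every $z\in V$, and $L\subset V$ is a compact hypersurface. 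Let $R:=\sum_{j=0}^n w_j\,(x_j\partial_{y_j}-y_j\partial_{x_j})$ be the infinitesimal generator of the flow $z\mapsto(e^{iw_0t}z_0,\dots,e^{iw_nt}z_n)$; since $p$ is weighted homogeneous one has $p(e^{iw_0t}z_0,\dots,e^{iw_nt}z_n)=e^{idt}p(z)$, so this flow preserves $V_0(p)$, and it obviously preserves the sphere $\{|z|^2=\delta^2\}$, whence $R$ is tangent to $L$. The computational input consists of the two identities $\alpha_1(R)=|z|^2$ and $\iota_R\,\omega_1 = -\,d\,|z|^2$, both checked directly on $\C^{n+1}$.

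Granting these, on $L$ we get $\alpha_1(R)=\delta^2\neq0$, and since $TL$ lies in the sphere $\{|z|^2=\delta^2\}$ we get $\iota_R\,d\alpha_1|_{TL}=0$. Next I would show that $\ker\big(d\alpha_1|_{T_zL}\big)=\R R$ for every $z\in L$: if $u\in T_zL$ and $\iota_u\,d\alpha_1|_{T_zL}=0$, then the $1$-form $\iota_u\omega_1|_{T_zV}$ vanishes on the hyperplane $T_zL=\ker\big(d|z|^2|_{T_zV}\big)$ of $T_zV$ (here one uses that $\delta^2$ is a regular value of $|z|^2|_V$), hence equals $c\,d|z|^2|_{T_zV}$ for some $c\in\R$; together with $\iota_R\omega_1=-d|z|^2$ this gives $\iota_{u+cR}\,\omega_1|_{T_zV}=0$, and non-degeneracy of $\omega_1|_{T_zV}$ forces $u=-cR$. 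So $d\alpha_1|_{T_zL}$ has one-dimensional kernel $\R R$, which, since $\alpha_1(R)\neq0$, is transverse to $\ker\big(\alpha_1|_{T_zL}\big)$; therefore $\alpha_1\w(d\alpha_1)^{n-1}\neq0$ along $L$, so $\alpha_1|_L$ is a contact form and $\delta^{-2}R$ is its Reeb vector field, whose flow is the asserted $S^1$-action (equal to the stated formula when $\delta=1$; a general $\delta$ merely rescales time). I expect this part to be routine once the two ambient identities are recorded.

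For the remaining statement I would deform $\alpha$ into $\alpha_1$ over the fixed manifold $L$. Put $\alpha_0:=-d|z|^2\circ i$, so that $\alpha_0|_L=\alpha$, and for $t\in[0,1]$ set $\tilde\alpha_t:=(1-t)\alpha_0+t\alpha_1 = -\tfrac12\,dg_t\circ i$ with $g_t=\sum_j\big(2(1-t)+t/w_j\big)|z_j|^2$ a positive-definite Hermitian form; then $d\tilde\alpha_t$ is again a positive $(1,1)$-form, so $d\tilde\alpha_t|_{T_zV}$ is non-degenerate. Running the argument above verbatim, with $R$ replaced by the intrinsic Liouville vector field $X_t$ of $\big(V, d\tilde\alpha_t|_V, \tilde\alpha_t|_V\big)$ — which one computes to be $\tfrac12\grad^{g_t|_V}(g_t|_V)$ — shows that $\ker\big(d\tilde\alpha_t|_{T_zL}\big)$ is spanned by $X_t$, and hence that $\tilde\alpha_t|_L$ is a contact form exactly when $X_t$ is transverse to $L$. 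For $\delta$ small enough this transversality holds for all $t\in[0,1]$ at once, so $t\mapsto\ker\big(\tilde\alpha_t|_L\big)$ is a smooth path of contact structures on the closed manifold $L$, and Gray's stability theorem \cite{Gr} yields the contactomorphism between $(L,\ker\alpha)$ and $(L,\ker\alpha_1)$. The main obstacle is precisely this last transversality claim; I would settle it either by a direct estimate for small $\delta$ — using that the weighted Euler field $\mathcal E=\sum_j w_j(x_j\partial_{x_j}+y_j\partial_{y_j})$ is tangent to $V$ and strictly increases every $g_t$ along its flow, which pins down how the gradient flows of the $g_t|_V$ cross $L$ — or by appealing to the standard fact that the contact structure on the link of an isolated singularity is, up to contactomorphism, independent of the exhausting strictly plurisubharmonic function used to cut it out.
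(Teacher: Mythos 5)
Your handling of the first two assertions is correct and rests on the same two identities that drive the paper's proof, namely $\alpha_1(R)=|z|^2$ and $\iota_R\,d\alpha_1=-d|z|^2$; whether one then argues via the kernel of $d\alpha_1|_{T_zL}$ inside the symplectic vector space $(T_zV,d\alpha_1)$, as you do, or via transversality of the Liouville field $X_1$ defined by $\iota_{X_1}d\alpha_1=\alpha_1$, as the paper does, is a matter of packaging, and your bookkeeping of the factor $\delta^2$ in $\alpha_1(R)$ is actually more careful than the paper's. One small slip: the kernel of $d\tilde\alpha_t|_{T_zL}$ cannot be spanned by the Liouville field $X_t$, which is in general not tangent to $L$; the statement you want is that $\tilde\alpha_t|_L$ is contact if and only if $X_t$ is transverse to $L$, via $\tilde\alpha_t\wedge(d\tilde\alpha_t)^{n-1}=\tfrac1n\,\iota_{X_t}\bigl((d\tilde\alpha_t)^{n}\bigr)$.

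The genuine gap is exactly where you flag it: the claim that $\tilde\alpha_t|_L$ is a contact form for all intermediate $t$. Your path $\tilde\alpha_t=(1-t)\alpha_0+t\alpha_1$ changes the symplectic form and the potential simultaneously, so $X_t$ is the gradient of $\tfrac12 g_t$ with respect to a $t$-dependent K\"ahler metric on $V$, while $L$ is a level set of the \emph{different} function $|z|^2$; positivity of $d(|z|^2)(X_t)$ along $L$ is therefore not automatic, and neither of your proposed fixes closes it. The small-$\delta$ estimate is not backed by any scaling symmetry: the only dilation preserving $V$ is the weighted one, which rescales the summands $|z_j|^2$ by the different powers $\lambda^{2w_j}$, so the configuration of $g_t$ relative to $|z|^2$ on $V$ does not simplify as $\delta\to 0$. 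The appeal to independence of the contact structure from the plurisubharmonic function is a legitimate fact in the literature, but it is essentially the content of the assertion being proved, so invoking it reduces the proof to a citation. The paper sidesteps the problem by choosing a path along which transversality is automatic at every stage: it first deforms the complex structure through a family $J_s$ while keeping the function $f=|z|^2$ fixed, so the Liouville field, being the $J_s$-gradient of $f$, is positively transverse to the level set of $f$ for free; it then joins the endpoint to $\alpha_1$ by a convex combination of two Liouville vector fields for the \emph{same} symplectic form $\omega$, both already known to be positively transverse to $L$, and transversality survives convex combination. To complete your argument you must either verify $d(|z|^2)(X_t)>0$ on $L$ for all $t$ by an explicit computation, or replace the linear interpolation of primitives by a two-stage path of this kind.
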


\begin{proof}
The proof is essentially backwards, since we start by guessing the contact form and Reeb field, and reconstruct the Liouville vector field from there.
Once one obtains the Liouville vector field, the contact and Reeb conditions are easily checked.
Define a symplectic form on $\C^n$ by $\omega:=i\sum_j \frac{1}{w_j} dz_j \w d\bar z_j$.
A primitive is given by the $1$-form $\alpha=\frac{i}{2}\sum_j \frac{1}{w_j}(z_jd\bar z_j -\bar z_j d z_j)$.
We shall show that $\alpha$ restricts to a contact form on the link of $p$.
Let $R$ denote the vector field generating the $S^1$-action induced by the $\C^*$-action on $V_0(p)$, so
$$
R=i\sum_{j=0}^n \left( w_jz_j\frac{\partial}{\partial z_j}
-w_j\bar z_j\frac{\partial}{\partial \bar z_j}
\right)
.
$$
This vector field is going to be the Reeb vector field, but this is not clear at the moment.
We claim that $R$ is tangent to $L_{0,\delta}(p)$. To see this observe that $R$ lies in $\ker dp$ (and in the kernel $\ker d\bar p$) as well as in the kernel of $df$, where
$$
f=|z|^2=z\bar z.
$$ 
Now define the vector field $X$ by the linear equation $i_X\omega|_{V_0(p)}=\alpha|_{V_0(p)} $.
Since $d\alpha=\omega$, it follows that $X$ is Liouville on $V_0(p)$.
We claim that $X$ is transverse to $L_{0,\delta}(p)$. To see why, note first of all that $L_{0,\delta}(p)=f^{-1}(\delta^2)\cap V_0(p)$, so we only need to check that $X$ is transverse to level sets of $f$ on $V_0(p)$.
Since
$$
i_R \omega=-\frac{1}{2}\sum_j \left(  z_j d\bar z_j+\bar z_j d z_j \right) =-\frac{1}{2}df,
$$
we have $df(X)=-2\omega(R,X)$.
On the other hand, if $z\neq 0$, then $\omega(X,R)=\alpha(R)\neq 0$ as a short computation shows. It follows that $X$ is a Liouville vector field that is transverse to $L_{0,\delta}(p)$.
Hence $\alpha|_{V_0(p)}=i_X\omega|_{V_0(p)}$ is a contact form after restricting to a regular level set of $f$.
It is now also clear that $R$ is the Reeb vector field for $\alpha$, since $d\alpha(R,\ldots)|_{L_{0,\delta}(p)}=\omega(R,\ldots)|_{L_{0,\delta}(p)}=0$. On $L_{0,\delta}(p)$, we also have $\alpha(R)=1$. Finally, the flow of $R$ is given by the above formula.

We now come to the claim that the contact forms $\alpha$ and $\alpha_1$ have contactomorphic contact structures.
The main ingredient is the Gray stability theorem.
First of all, we may assume that $\frac{1}{w_j} \geq 2$ for all $j$ by rescaling $\alpha$. 
Then define a family of almost complex structures $J_s$, $s \in [0,1]$, on $\C^{n+1}$ by
$$
J_s : = \sum_{j=0}^n \left\{ c_j(s) \frac{\partial}{\partial y_j} \otimes dx_j - \frac{1}{c_j(s)} \frac{\partial}{\partial x_j} \otimes dy_j \right\}
$$ 
where $c_j(s)$'s are nonzero real numbers satisfying $c_j(0)=1$, $c_j(1) + \frac{1}{c_j(1)} = \frac{1}{w_j}$ for all $j$. Such choice of real number $c_j(1)$ can be made because of the assumption $\frac{1}{w_j} \geq 2$.
By a direct computation we see that the function $f(z) = |z|^2$ is strictly plurisubharmonic with respect to $J_s$ for all $s$ and hence  we obtain a family of contact forms $\tilde \alpha_s : = - df \circ J_s|_{L_{0,\delta}(p)}$ on $L_{0,\delta}(p)$ with $\tilde \alpha_0 = \alpha$ and $d \tilde \alpha_1 = d\alpha = \omega$ with $\omega$ above.

We claim that $\alpha_1$ is isotopic to $\tilde \alpha_1$ which completes the proof. Note that the equation $ \iota_{X_1} \ow|_{V_0(p)}  = \alpha_1|_{V_0(p)}$ defines a Liouville vector field $X_1$ on $V_0(p) \backslash \{0\}$ and it is positively transverse to $L_{0,\delta}(p)$, i.e., $X_1(f) >0$.
As $X$ is also a Liouville vector field for $\omega$ that is positively transverse to $L_{0,\delta}(p)$, their convex sum $X_t : = t X_1 + (1-t)X$, $t \in [0,1]$, gives a family of Liouville vector field positively transverse to $L_{0,\delta}(p)$.
The family of contact forms $\iota_{X_t}\omega$ gives the desired isotopy, so Gray stability gives the conclusion.
\end{proof}

The advantage of the above contact form is that the Reeb flow is periodic, so we get a locally free circle action on the link $L_{0,\delta}(p)$. If the defining polynomial $p$ is homogeneous, then the resulting action is free, and the quotient of $Q(p):=L_{0,\delta}(p)/S^1$  carries the structure of a symplectic manifold.
In general, the action has non-trivial, but finite isotropy and the quotient $Q(p):=L_{0,\delta}(p)/S^1$  will be a symplectic orbifold.
\begin{proposition}
Links of weighted homogeneous polynomials are prequantization bundles over symplectic orbifolds. If the defining polynomial is homogeneous, then $L_{0,\delta}(p)$ is a prequantization bundle over a symplectic manifold.
\end{proposition}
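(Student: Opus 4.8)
The plan is to read off everything from the periodic Reeb flow produced in Proposition~\ref{prop:BW_contact_form}. That proposition equips $L_{0,\delta}(p)$ with a contact form $\alpha_1$ whose Reeb field $R$ has the $2\pi$-periodic flow $Fl^R_t(z_0,\dots,z_n)=(e^{iw_0t}z_0,\dots,e^{iw_nt}z_n)$; since all $w_j\in\Z$ this descends to an action of $S^1=\R/2\pi\Z$ on $L_{0,\delta}(p)$ with infinitesimal generator $R$. The first step is to check that this action is \emph{locally free}: as $0\notin L_{0,\delta}(p)$, every point $z$ of the link has some coordinate $z_j\neq 0$, and the isotropy group at $z$ is contained in $\{t\in\R/2\pi\Z: e^{iw_jt}=1\}\cong\Z/w_j\Z$, which is finite. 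Consequently $Q(p):=L_{0,\delta}(p)/S^1$ is naturally a closed orbifold, $\pi\colon L_{0,\delta}(p)\to Q(p)$ is a Seifert (orbifold-principal) $S^1$-bundle, and $\alpha_1$, which satisfies $\alpha_1(R)=1$, is a connection $1$-form for it.

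Next I would build the symplectic form on the base. By the Reeb identities recorded in Proposition~\ref{prop:BW_contact_form}, namely $\alpha_1(R)=1$ and $\iota_R\,d\alpha_1=0$, the two-form $d\alpha_1$ is $S^1$-invariant (Cartan: $\lie{R}\alpha_1=d\iota_R\alpha_1+\iota_R d\alpha_1=0$, hence $\lie{R}\,d\alpha_1=0$) and horizontal, hence basic; therefore it descends to a closed two-form $\omega_Q$ on $Q(p)$ with $\pi^*\omega_Q=d\alpha_1$. Non-degeneracy of $\omega_Q$ is immediate from the fact that for a contact form the kernel of $d\alpha_1$ along $L_{0,\delta}(p)$ is exactly the line $\R\cdot R$, i.e.\ the vertical distribution of $\pi$. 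Thus $(Q(p),\omega_Q)$ is a symplectic orbifold and $\pi$, with connection $1$-form $\alpha_1$, is a prequantization bundle over it --- a Seifert principal $S^1$-bundle whose connection has curvature $d\alpha_1=\pi^*\omega_Q$; in particular $[\omega_Q]$ is, up to the usual normalization, the orbifold Euler class of $\pi$ and hence an integral orbifold class, as one expects of a prequantization.

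For the homogeneous case, a homogeneous polynomial of degree $d$ is weighted homogeneous with weights $(1,\dots,1,d)$, so we may take all $w_j=1$. Then $Fl^R_t(z)=(e^{it}z_0,\dots,e^{it}z_n)$ is the restriction to the link of the diagonal Hopf $S^1$-action on $\C^{n+1}\setminus\{0\}$, and it is \emph{free}: if $e^{it}z_j=z_j$ with $z_j\neq 0$ then $e^{it}=1$. Hence $Q(p)$ is a genuine smooth symplectic manifold, and the argument of the previous paragraph exhibits $L_{0,\delta}(p)$ as an honest prequantization circle bundle over it.

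The only genuinely delicate point is the orbifold bookkeeping in the general case: one must make precise that a locally free $S^1$-action has an orbifold quotient carrying a Seifert $S^1$-bundle structure, and that notions such as \emph{basic form}, \emph{connection $1$-form} and \emph{orbifold Euler class} behave as expected --- this is handled by passing to local uniformizing charts in which the circle action is linear. Granted that framework, the proof reduces to the chain of identities $\alpha_1(R)=1$, $\iota_R\,d\alpha_1=0$, $d\alpha_1=\omega$ already established in Proposition~\ref{prop:BW_contact_form}.
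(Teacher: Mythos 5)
Your proposal is correct and is essentially the argument the paper has in mind: the paper simply asserts, in the discussion surrounding the proposition, that the periodic Reeb flow of $\alpha_1$ from Proposition~\ref{prop:BW_contact_form} gives a locally free $S^1$-action (free in the homogeneous case) whose quotient carries the symplectic structure induced by $d\alpha_1$, and you have filled in exactly those details (finite isotropy from a nonvanishing coordinate, descent and non-degeneracy of $d\alpha_1$ via $\alpha_1(R)=1$ and $\iota_R d\alpha_1=0$). The only item the paper adds that you omit is the identification of the quotient as the hypersurface $\{p=0\}$ in the weighted projective space $\C P^n(w)$, which is not needed for the statement itself.
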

Let us say more precisely which symplectic orbifold or manifold we get.
The link $L_{0,\delta}(p)$ lies inside a sphere $S^{2n+1}$. The circle action induced by the Reeb flow on $L_{0,\delta}(p)$ extends to the sphere, and the quotient is a weighted projective space $\C P^n(w)$. The quotient of the link $L_{0,\delta}(p)$ is a hypersurface in weighted projective space, given by the zeroset of the polynomial $p$.

In particular, we see that the quotient is a K\"ahler orbifold.
On the other hand, we have an explicit nice model for the symplectization as well, namely by removing $0$ from the variety $V_0(p)$, so the symplectization is also a K\"ahler manifold.
Contact manifolds with this property are called {\bf Sasaki}, and form an area of research by themselves, \cite{BG:Sasakian_geometry}.
We will not pursue this line of thought, though.

\subsection{Brieskorn polynomials}
We now consider an even more special class of singularities, namely weighted homogeneous polynomials of the form
$$
p(z_0,\ldots,z_n)=\sum_j z_j^{a_j},
$$
where $a_0,\ldots,a_n$ are positive integers.
We will call these polynomials {\bf Brieskorn-Pham} or just {\bf Brieskorn} polynomials.
If the exponents satisfy $a_i>1$ for all $i$, then a Brieskorn polynomial has an isolated singularity at $0$, so its link is a contact manifold.
If one of the exponents $a_i$ of the Brieskorn polynomial equals $1$, then we are looking at the link of a smooth point, which is the standard contact sphere. 
\begin{definition}
The link of a Brieskorn polynomial is called a {\bf Brieskorn manifold}.
\end{definition}
Since we will also look the variety $p^{-1}(0)$ and deformations, it is useful to reserve the word {\bf Brieskorn variety} for sets of the form
$$
V_\epsilon(a_0,\ldots,a_n):=\{ (z_0,\ldots,z_n) ~|~\sum_j z_j^{a_j}=\epsilon \} ,
$$
where $\epsilon$ is a (small) deformation parameter.
For $\epsilon\neq 0$, this variety is non-singular. If all exponents $a_j>1$, and $\epsilon=0$, then we obtain the singular variety $V_0(a_0,\ldots,a_n)$ with an isolated singularity at $0$.

Instead of taking the usual contact form on a link induced by the plurisubharmonic function $|z|^2$, we will use Proposition~\ref{prop:BW_contact_form} to obtain a nicer contact form.
Indeed, Brieskorn polynomials are weighted homogeneous with weights $(\frac{\lcm_j a_j}{a_0},\ldots,\frac{\lcm_j a_j}{a_n},\lcm_j a_j)$.
For later convenience, we rescale the contact form from Proposition~\ref{prop:BW_contact_form} and always take the contact form
$$
\alpha=\frac{i}{2}\sum_j a_j(z_jd\bar z_j -\bar z_j d z_j).
$$
As mentioned before, the Reeb flow of this contact form is periodic, and this makes Brieskorn manifolds tractable.

To simplify notation, we will often write
$$
V_\epsilon(a):=V_\epsilon(a_0,\ldots,a_n), \quad
\text{and} \quad
\Sigma(a):=\Sigma(a_0,\ldots,a_n).
$$

\section{Topology of Brieskorn manifolds}
As a starter, we point out that Brieskorn manifolds in dimension $1$ are links in the sense of knot theory.
In fact, $\Sigma(a,b)$ is a torus link with $\gcd(a,b)$ components. If $a$ and $b$ are relatively prime, then $\Sigma(a,b)$ is an $(a,b)$ torus knot.
Some other Brieskorn manifolds can also be explicitly identified with well-known manifolds.
\begin{lemma}
\label{lemma:A1-singularity}
The Brieskorn manifold $\Sigma(2,\ldots,2)$ is contactomorphic to $(ST^*S^n,\lambda_{can}|_{ST^*S^n})$, and the smoothed Brieskorn variety $V_\epsilon(2,\ldots,2)$ is symplectomorphic to $(T^*S^n,d\lambda_{can})$.
\end{lemma}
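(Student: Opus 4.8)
The plan is to exhibit an explicit symplectomorphism $V_\epsilon(2,\ldots,2)\cong (T^*S^n,d\lambda_{can})$ and then deduce the contactomorphism on the boundary by restricting to the appropriate level sets. First I would set $\epsilon$ real and positive (using the $\C^*$-action, or a rescaling, to reduce to this case) and write $z_j=x_j+iy_j$ with $x=(x_0,\ldots,x_n)$, $y=(y_0,\ldots,y_n)\in\R^{n+1}$. The equation $\sum_j z_j^2=\epsilon$ then becomes the pair of real equations $|x|^2-|y|^2=\epsilon$ and $\langle x,y\rangle=0$. The second equation says $y$ is orthogonal to $x$; the first, after renormalizing $x$ to the unit sphere, identifies the solution set with the total space of the cotangent bundle $T^*S^n$: explicitly, the map $(x,y)\mapsto\bigl(q,p\bigr)$ with $q=x/|x|\in S^n$ and $p=-|x|\,y$ (viewed as a covector at $q$ via the round metric) is a diffeomorphism onto $T^*S^n$, with inverse reconstructing $|x|$ from the constraint $|x|^2=\epsilon+|y|^2=\epsilon+|p|^2/|x|^2$, i.e.\ $|x|^2=\tfrac12\bigl(\epsilon+\sqrt{\epsilon^2+4|p|^2}\bigr)$.

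Next I would check that this diffeomorphism intertwines the symplectic forms. The variety $V_\epsilon$ inherits the restriction of $\omega_{\C^{n+1}}=\sum_j dx_j\wedge dy_j$ (up to the positive constant coming from our normalized contact form $\alpha$), while $T^*S^n$ carries $d\lambda_{can}=d(\sum p_i\,dq_i)$. The computation is a direct but slightly tedious substitution; the cleanest route is to pull back the Liouville form: one shows that $\sum_j y_j\,dx_j$ restricted to $V_\epsilon$ pulls back to $\sum_i p_i\,dq_i$ plus an exact term, using $\langle x,y\rangle=0$ and $|x|^2-|y|^2=\epsilon$ to kill the unwanted pieces. Differentiating then gives the symplectomorphism. (Alternatively, one invokes the standard fact — e.g.\ from the theory of $A_1$-singularities or Picard–Lefschetz theory — that the Milnor fiber of $\sum z_j^2$ is the standard plumbing/cotangent bundle of the Lagrangian vanishing sphere $S^n=\{y=0,\ |x|^2=\epsilon\}$, but I would prefer to keep the argument self-contained.)

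For the contact statement, I would observe that our chosen contact form on $\Sigma(2,\ldots,2)=V_\epsilon\cap S^{2n+1}$ is (a constant multiple of) $\alpha=\tfrac{i}{2}\sum_j 2(z_j\,d\bar z_j-\bar z_j\,dz_j)=2\sum_j(x_j\,dy_j-y_j\,dx_j)$ restricted to the link. Under the diffeomorphism above this pulls back to a contact form on $ST^*S^n$; by Gray stability (Theorem on Gray stability quoted in the excerpt) it suffices to connect it through contact forms to $\lambda_{can}|_{ST^*S^n}$, which follows since both are restrictions of Liouville forms for the same symplectic form $d\lambda_{can}$ that are positively transverse to the relevant hypersurface — exactly the convex-interpolation argument used in the proof of Proposition~\ref{prop:BW_contact_form}. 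Here one must be slightly careful: the unit cosphere bundle $ST^*S^n$ relative to the round metric need not coincide with the image of $V_\epsilon\cap S^{2n+1}$, but the two hypersurfaces are isotopic through hypersurfaces transverse to a common Liouville vector field, so they carry contactomorphic contact structures.

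The main obstacle I expect is the symplectic form computation in the middle step: verifying that the naive real coordinates $(q,p)$ genuinely realize $d\lambda_{can}$ and not some rescaled or twisted version requires bookkeeping of the factor $|x|$ (which depends on $p$) and the constraint equations, and it is easy to lose a sign or a factor of $2$. Everything else — the diffeomorphism type, the reduction to $\epsilon>0$, and the Gray-stability deformation on the boundary — is comparatively routine given the tools already assembled in the excerpt.
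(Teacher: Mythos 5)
Your proposal is correct and is essentially the paper's own argument: the paper proves the lemma by writing down exactly the map $q+ip\mapsto(|q|^{-1}q,|q|p)$ (your $(x,y)\mapsto(x/|x|,-|x|y)$ up to a sign convention for $\lambda_{can}$) and asserting that a computation verifies it is a symplectomorphism, resp.\ contactomorphism. You have simply filled in the real-coordinate bookkeeping and the Gray-stability step that the paper leaves to the reader.
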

\begin{proof}
A computation shows that the map
\[
\begin{split}
\Sigma(2,\ldots,2) & \longrightarrow (ST^*S^n\subset T^*\R^{n+1},\lambda_{can}|_{ST^*S^n} ) \\
q+ip & \longmapsto (|q|^{-1}q,|q|p)
\end{split}
\]
is a contactomorphism and that the map
\[
\begin{split}
V_1(2,\ldots,2) & \longrightarrow (T^*S^n\subset T^*\R^{n+1},d \lambda_{can} ) \\
q+ip & \longmapsto (|q|^{-1}q,|q| p)
\end{split}
\]
is a symplectomorphism.
\end{proof}

In the next section, we will dive into homology computations involving Brieskorn manifolds and varieties.
The essential point is that the computation of the homology groups can be reduced to a combinatorial problem. 
Furthermore, there are explicit formulas for this data.
These formulas can be rather complicated though.

A few words about the organization of the proofs. 
If the argument is short, we give an immediate proof.
Many results will not be directly used though.
We will omit the proofs of such results, and instead just refer to the literature.
This is indicated by a direct \qed.
The proof of the formula for the Betti numbers of Brieskorn manifolds is diverted to Section~\ref{sec:proofs_top}.

\subsection{Homology of Brieskorn varieties and manifolds}
We start with some facts, and give arguments using Lefschetz fibrations later in Section~\ref{sec:lefschetz}.
These arguments are not the most efficient, but they illustrate the geometric structure nicely.
\begin{proposition}[Milnor, Theorem 6.5, page 57 in \cite{Milnor:singular_points}] 
\label{prop:homotopy_type_smoothed_var}
The Brieskorn variety $V_{\epsilon}(a_0,\ldots,a_n)$ is homotopy equivalent to a wedge of $\prod_i (a_i-1)$ spheres.
\end{proposition}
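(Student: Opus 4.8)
The plan is to recover this from classical Morse theory applied to the Milnor fibration, following Milnor's monograph. First I would consider the function $\re(p)$ (or equivalently $\re(p/\epsilon)$ after normalizing so that $\epsilon > 0$) restricted to the smooth affine variety $V_\epsilon(a)$. Near $\epsilon = 0$ the singular fiber $V_0(a)$ has an isolated singular point at the origin, and the Milnor fibration expresses a closed piece of $V_\epsilon(a)$ inside a small ball as the Milnor fiber $F$ of the singularity. The first step is to argue that $V_\epsilon(a)$ is homotopy equivalent to this Milnor fiber $F$: since $p$ is weighted homogeneous, the $\C^*$-action retracts all of $\C^{n+1}$ (hence the whole affine variety) onto the part inside any fixed ball, so there is no difference between the "local" Milnor fiber and the "global" affine variety up to homotopy. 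This is the same weighted-homogeneity retraction used implicitly elsewhere in the paper.

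Next I would show $F$ is $(n-1)$-connected. The standard argument: $F$ is a smooth affine variety of complex dimension $n$, so by the Andreotti--Frankel / Milnor theorem on the homotopy type of Stein manifolds it has the homotopy type of a CW complex of real dimension $\leq n$; combined with the vanishing of $\pi_i$ for $i<n$ coming from the Milnor fibration long exact sequence (the complement $S^{2n+1}_\delta \setminus L_{0,\delta}(p)$ fibers over $S^1$ with fiber $F$, and the total space and base are highly connected), one concludes $F \simeq \bigvee S^n$, a wedge of $n$-spheres. The only remaining task is to count the spheres, i.e. to compute $\mu := \rk H_n(F)$, the Milnor number.

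For the count I would use the Brieskorn--Pham structure directly: the Milnor fiber of $\sum_j z_j^{a_j} = \epsilon$ is, up to homotopy, the join of the Milnor fibers of the one-variable singularities $z_j^{a_j}$. Each one-variable fiber $\{z^{a_j} = \epsilon\}$ is $a_j$ points, i.e. homotopy equivalent to a wedge of $(a_j - 1)$ copies of $S^0$. Since reduced homology of a join multiplies (the reduced homology of $X * Y$ is the shifted tensor product $\widetilde H_*(X)\otimes\widetilde H_*(Y)$ up to a degree shift), taking the $(n+1)$-fold join of these zero-dimensional spaces yields a wedge of $S^n$'s whose count is $\prod_j (a_j - 1)$. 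Alternatively one can quote Pham's explicit computation, \cite{Pham}, or Milnor's formula $\mu = \dim_\C \C[[z_0,\dots,z_n]]/(\partial p/\partial z_0,\dots,\partial p/\partial z_n) = \prod_j(a_j-1)$, which is immediate for $p = \sum z_j^{a_j}$ since the Jacobian ideal is $(z_0^{a_0-1},\dots,z_n^{a_n-1})$.

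The main obstacle is making the join description honest: one must verify that the Milnor fiber of a sum of polynomials in disjoint variables is genuinely the join of the individual Milnor fibers, not merely that their homologies are related. This is Sebastiani--Thom / the Thom--Sebastiani theorem, and a careful statement requires choosing compatible representatives (e.g. using the fibers over a common small value and checking the join map is a homotopy equivalence after suitable shrinking). Since the paper is a survey and Proposition~\ref{prop:homotopy_type_smoothed_var} is attributed to Milnor, I would not reprove Sebastiani--Thom but instead cite \cite{Milnor:singular_points} for the $(n-1)$-connectedness and wedge-of-spheres structure and \cite{Pham} (or the Jacobian-ideal computation) for the identification of the number of spheres with $\prod_i(a_i-1)$, flagging that the cleanest self-contained route is the Lefschetz-fibration picture developed later in Section~\ref{sec:lefschetz}.
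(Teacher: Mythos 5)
Your proposal is correct and follows essentially the same routes the paper itself indicates: identifying the affine variety $V_\epsilon(a)$ with the Milnor fiber (Milnor's Theorem 5.11), Pham's join description $\Z_{a_0}*\cdots*\Z_{a_n}$ of the deformation retract $U(a)$, and the Morsification/Lefschetz-fibration handle count of Section~\ref{sec:lefschetz}. The only imprecision is cosmetic: the $\C^*$-action does not preserve $V_\epsilon$ for $\epsilon\neq 0$ (it rescales $\epsilon$ by $\lambda^d$), so the retraction onto the part inside a ball should instead be run along the radial/Liouville vector field, which is transverse to the spheres $S^{2n+1}_r$ on $V_\epsilon$; with that adjustment everything you wrote is standard and correct.
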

The homotopy equivalence from the proposition can also be seen with Pham's description of Brieskorn varieties in terms of joins.
\begin{definition}
Suppose that $\{ A_i \}_{i=1}^N$ is a collection of topological spaces.
The {\bf join} of the $A_i$'s, denoted $A_1*\ldots *A_N$ is the quotient space
$$
\{ (a_1,t_1,\ldots, a_N,t_N)\in A_1\times [0,1]\times\ldots A_N\times [0,1]~|~\sum_{i=1}^N t_i=1 \} /\sim
$$
where $\prod_{i=1}^N(a_i,t_i)\sim \prod_{i=1}^N(a_i',t_i')$ if and only if $t_i=t_i'$ for all $i$ and $x_j=x_j'$ if $t_j>0$.
\end{definition}
The construction of Pham provides a nice basis of $H_n(V_{\epsilon}(a);\Z)$.
For this consider the action of $G(a)=\Z_{a_0}\oplus \ldots \oplus \Z_{a_n}$ on $V_{\epsilon}(a_0,\ldots,a_n)$ given by multiplication with $a_i$-th roots of unity.
We will denote these roots of unity by $\zeta_{a_i}$.
One can show, see \cite[Section 12]{HM}, that the set
$$
U(a)=\{ (z_0,\ldots,z_n)\in \C^{n+1} ~|~ \sum_j z_j^{a_j}=1,~z_j^{a_j}\in \R \text{ with }z_j^{a_j}\geq 0 \text{ for }j=0,\ldots,n \}
$$
is a deformation retract of $V_1(a)$.
Furthermore, there is a deformation that is compatible with the group action.
By rescaling, one can identify $U(a)$ with the join $\Z_{a_0}*\ldots *\Z_{a_n}$. In fact, we get a simplicial structure.
To see this, define an $n$-simplex as the set
$$
\Delta_n:=\{ (x_0,\ldots,x_n) \in [0,1]^{n+1}\subset \R^{n+1}~|~ \sum_j x_j=1\}
.
$$
Write $W:=\{ (y_0,\ldots,y_n)\in \R^{n+1}~|~\sum_j y_j=0 \}$, so $\Delta_n\times W\cong T^*\Delta_n$.
Then consider the map
\begin{equation}
\label{eq:param_Brieskorn}
\begin{split}
\psi:\Delta_n \times W & \longrightarrow V_1(a_0,\ldots,a_n) \\
(x;y) & \longmapsto ( (x_0+iy_0)^{\frac{1}{a_0} },\ldots,(x_n+iy_n)^{\frac{1}{a_n} })
\end{split}
\end{equation}
The image of $\Delta_n\times \{ 0 \}$ is precisely the set $e$, defined by
$$
e=\{ (z_0,\ldots,z_n)\in \C^{n+1} ~|~ \sum_j z_j^{a_j}=\epsilon,~z_j\in \R_{\geq 0} \text{ for }j=0,\ldots,n \}
,
$$
see Figure~\ref{fig:simplex}.
To get the other simplices of $U(a)$ we act with $G(a)$. This gives a basis of homology:
\begin{equation}
\label{eq:basis_homology}
e_{i_0 \ldots i_n}:=(\zeta_{a_0}^{i_0},\ldots,\zeta_{a_n}^{i_n})\cdot (1-\zeta_{a_0})\ldots (1-\zeta_{a_n})\cdot e \quad \text{for }0\leq i_j\leq a_j-2.
\end{equation}

For example, we obtain the zero section of $T^*S^n=V_1(2,\ldots,2)$ as
$$
\{ ( \pm x_0^{1/2},\ldots, \pm x_n^{1/2})~|~(x_0,\ldots,x_n)\in \Delta_n \}
$$
In general, the Lagrangian simplices $(\zeta_{a_0}^{i_0},\ldots,\zeta_{a_n}^{i_n})\cdot \psi(x,0)$ have matching boundaries, but several simplices can be glued along a single edge, and $U(a)$ is not a smooth Lagrangian as a result.
\begin{figure}[htp]
\def\svgwidth{0.65\textwidth}%
\begingroup\endlinechar=-1
\resizebox{0.65\textwidth}{!}{%
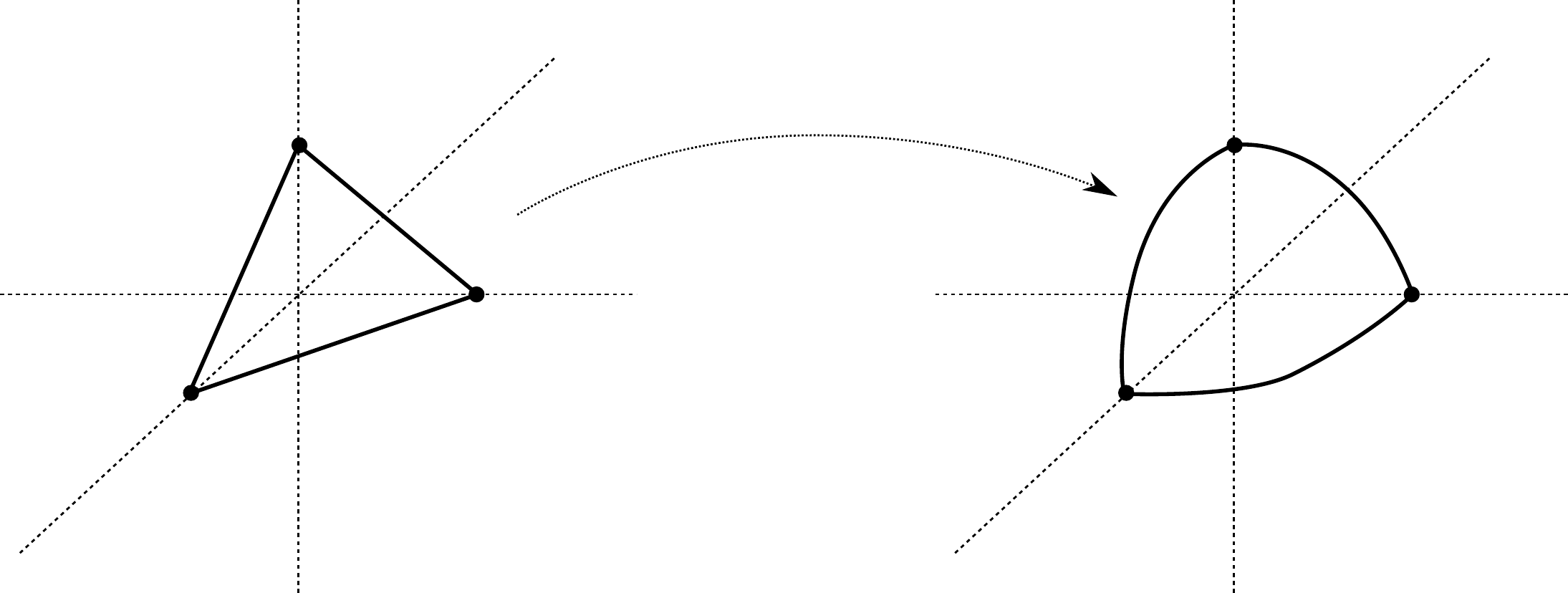%
}\endgroup
\caption{A Lagrangian $n$-simplex in a Brieskorn variety}
\label{fig:simplex}
\end{figure}

\begin{proposition}[Pham, \cite{Pham}]
\label{prop:intersection_form}
With respect to the basis \eqref{eq:basis_homology}, the intersection form $S_{V_{\epsilon}(a)}$ is given by
\[
e_{i_0 \ldots i_n} \cdot e_{j_0 \ldots j_n}=
\begin{cases}
(-1)^{n(n+1)/2}(1+(-1)^{n}) & \text{if }j_k=i_k \text{ for all }k\\
(-1)^{n(n+1)/2}(-1)^{\sum_k(j_k-i_k)} & \text {if } i_k\leq j_k\leq i_k+1 \text{ for all }k  \\
0 & \text{for those cases that cannot be obtained from the}\\
&   \text{previous one by symmetry.}
\end{cases}
\]

\end{proposition}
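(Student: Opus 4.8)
The plan is to exploit that the Brieskorn polynomial $p=\sum_j z_j^{a_j}$ is a Thom--Sebastiani (direct) sum of the one--variable polynomials $p_j(z_j)=z_j^{a_j}$, and that both the homotopy type and the middle--dimensional intersection/Seifert structure of a Milnor fibre behave well under such sums. By the Sebastiani--Thom theorem the Milnor fibre $V_\epsilon(a_0,\ldots,a_n)$ is homotopy equivalent to the join $F_0*\cdots*F_n$ of the one--variable Milnor fibres $F_j=\{z_j^{a_j}=\epsilon\}$ --- this is exactly the join description $U(a)\simeq \Z_{a_0}*\cdots*\Z_{a_n}$ recalled above, underlying Proposition~\ref{prop:homotopy_type_smoothed_var} --- and, more to the point, the Seifert (variation) form of the join is, up to an explicit sign, the tensor product of the Seifert forms of the factors. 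From the Seifert form $L$ one recovers the intersection form as $S_{V_\epsilon(a)}=L+(-1)^nL^{T}$, so it suffices to (i) compute the one--variable Seifert forms, (ii) take the tensor product, and (iii) symmetrise.

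First I would treat the one--variable case. The fibre $F_j$ consists of the $a_j$ points $\zeta_{a_j}^k\epsilon^{1/a_j}$, its reduced homology has rank $a_j-1$, and the restriction of the basis \eqref{eq:basis_homology} to a single variable is $e_i=[\zeta_{a_j}^{i}]-[\zeta_{a_j}^{i+1}]$ for $0\le i\le a_j-2$. Connecting $\zeta_{a_j}^{i}$ to $\zeta_{a_j}^{i+1}$ by a short arc and pushing it off itself in the normal direction, a direct computation shows $L^{(j)}$ is the bidiagonal matrix with $\pm1$ on the diagonal and $\mp1$ on the superdiagonal and $0$ elsewhere, the precise signs depending only on an orientation convention fixed once and for all. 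Multiplying these out, the tensor product $L=\pm\,L^{(0)}\otimes\cdots\otimes L^{(n)}$ has $(I,J)$--entry $\prod_k L^{(k)}_{i_k j_k}$, which is non-zero precisely when for every $k$ either $j_k=i_k$ or $j_k=i_k+1$, i.e.\ $i_k\le j_k\le i_k+1$ for all $k$, in which case it equals a global sign times $(-1)^{\#\{k\,:\,j_k=i_k+1\}}=(-1)^{\sum_k(j_k-i_k)}$.

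Finally I would symmetrise. If $I\neq J$ and $L_{I,J}\neq 0$ then $L_{J,I}=0$, since $i_k\le j_k\le i_k+1$ for all $k$ together with the reversed inequalities forces $I=J$; hence $S_{I,J}=L_{I,J}=(\text{sign})\cdot(-1)^{\sum_k(j_k-i_k)}$. For $I=J$ the two terms survive and $S_{I,I}=(1+(-1)^n)\cdot(\text{sign})$, which is exactly why the diagonal must be split off as a separate case. All remaining entries vanish, matching the three cases of the proposition. It then remains to identify the global sign with $(-1)^{n(n+1)/2}$: this is the Koszul/shuffle sign produced by iterating the Thom--Sebastiani tensor formula over $n+1$ factors, which I would pin down either by tracking orientations through the join construction or, more cheaply, against a known instance --- e.g.\ $n=2$, $a=(2,2,2)$, where $V_\epsilon(2,2,2)\cong T^*S^2$ has one generator of self--intersection $-2$, consistent with $(-1)^{2\cdot 3/2}(1+(-1)^2)=-2$. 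The main obstacle is precisely this sign bookkeeping: making the one--variable Seifert matrix, the Thom--Sebastiani sign, and the join orientations mutually consistent; the non-sign content (which entries are nonzero, and their absolute values) is pure combinatorics once the join picture is in place. As an alternative to the Thom--Sebastiani route one can obtain the same matrix by an inductive Picard--Lefschetz computation: projecting $V_\epsilon(a_0,\ldots,a_n)$ to the $z_n$--coordinate exhibits it as a Lefschetz fibration with fibre $V_\epsilon(a_0,\ldots,a_{n-1})$ and $a_n$ vanishing thimbles, with the join factor $*\,\Z_{a_n}$ corresponding to these thimbles --- the geometrically transparent but longer argument of the kind alluded to for Section~\ref{sec:lefschetz}.
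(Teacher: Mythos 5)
The paper gives no proof of this proposition: it is quoted from Pham with a pointer to \cite[Section 3]{GZ}, so there is nothing internal to compare your argument against. On its own terms, your Thom--Sebastiani/Seifert-form route is a standard and essentially correct way to establish the result, and it is genuinely different from Pham's original computation (which works directly with the simplicial join structure of $U(a)$ and computes the intersection numbers of the cells \eqref{eq:basis_homology} geometrically, as in \cite{Pham} and \cite{HM}). The skeleton is sound: the one-variable Seifert matrices are bidiagonal, the tensor product reproduces exactly the support condition $i_k\le j_k\le i_k+1$ with value $(-1)^{\sum_k(j_k-i_k)}$ up to a global sign, and your symmetrisation argument (off-diagonal entries of $L$ and $L^T$ have disjoint support, the diagonal picks up the factor $1+(-1)^n$) correctly explains the case structure of the statement. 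The one place you should be more careful is the global sign: Sakamoto's theorem gives the Seifert form of a join as the tensor product only up to a sign depending on the numbers of variables of the factors, and iterating it over $n+1$ factors is exactly where $(-1)^{n(n+1)/2}=(-1)^{1+2+\cdots+n}$ comes from. A single numerical check such as $V_\epsilon(2,2,2)\cong T^*S^2$ cannot determine a sign that is an unknown function of $n$; it only confirms one value. So you must either carry out the orientation bookkeeping in the iterated join (your first option) or quote the precise sign in Sakamoto's formula, after which the check at $n=2$ becomes a genuine consistency test rather than the definition of the sign. With that caveat, the proof is complete in outline, and your closing remark that the same matrix falls out of an inductive Picard--Lefschetz computation for the fibration by $z_n$ is also correct and matches the geometric picture used elsewhere in Section~\ref{sec:lefschetz}.
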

This result as well as many other related ones can also be found in \cite[Section 3]{GZ}.

\begin{figure}[htp]
\def\svgwidth{0.25\textwidth}%
\begingroup\endlinechar=-1
\resizebox{0.25\textwidth}{!}{%
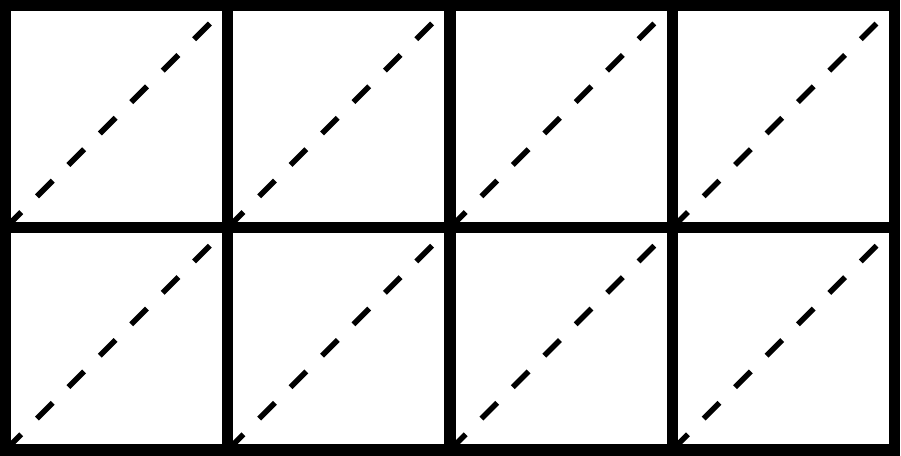%
}\endgroup
\caption{Intersection form for $V_\epsilon(a_0,a_1,2)$ with $a_0=6$ and $a_1=4$. Dashed line indicates intersection number $-1$: the non-dashed line indicates intersection number $+1$. Vertices all have self-intersection $-2$}
\label{fig:intersection_form}
\end{figure}

\begin{proposition}
A Brieskorn manifold $\Sigma(a)^{2n-1}$ is $(n-2)$-connected, so $\pi_i(\Sigma(a)^{2n-1}\,)=0$ for $i\leq n-2$.
\end{proposition}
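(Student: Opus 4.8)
The plan is to deduce the connectivity of $\Sigma(a)$ from that of its natural Stein filling, together with the fact that passing to the boundary of a Stein domain of complex dimension $n$ does not change $\pi_i$ in degrees $i\leq n-2$. First dispose of the degenerate case: if some $a_i=1$, then $0$ is a smooth point of $p^{-1}(0)$ and $\Sigma(a)=S^{2n-1}$, which is $(2n-2)$-connected and hence $(n-2)$-connected. So assume $a_j\geq 2$ for all $j$ and let $F:=V_\epsilon(a)\cap \bar B^{2n+2}_\delta$ be the compact Milnor fibre for suitably small $\epsilon,\delta$; by Milnor's fibration theorem (and the perturbed-link construction mentioned in the Remark above) $F$ is a Stein domain of complex dimension $n$ with $\partial F=\Sigma(a)$. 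By Proposition~\ref{prop:homotopy_type_smoothed_var} we have $F\simeq \bigvee^{\mu}S^n$ with $\mu=\prod_i(a_i-1)\geq 1$, so $F$ is $(n-1)$-connected, i.e.\ $\pi_i(F)=0$ for $i\leq n-1$. (Alternatively the $(n-1)$-connectivity of $F$ can be read off from Pham's retract $U(a)\simeq \Z_{a_0}*\cdots *\Z_{a_n}$, since a join of $n+1$ nonempty spaces, each of connectivity $-1$, has connectivity at least $(n+1)(-1)+2n=n-1$.)

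The heart of the argument is the claim that the pair $(F,\partial F)$ is $(n-1)$-connected, i.e.\ $\pi_i(F,\partial F)=0$ for $i\leq n-1$. I would get this from Morse theory for a strictly plurisubharmonic exhausting function $\phi$ on $F$ having $\partial F$ as a regular maximal level set: because $F$ is a Stein (equivalently Weinstein) domain of complex dimension $n$, $\phi$ has critical points only of index $\leq n$, and after handle cancellation we may assume a single index-$0$ critical point. Thus $F$ admits a handle decomposition with one $0$-handle and all other handles of index $\leq n$. Passing to $-\phi$ dualizes this decomposition: $F$ is obtained from a collar $\partial F\times[0,1]$ by successively attaching handles of index $\geq 2n-n=n$ (the handle dual to the $0$-handle having index $2n$). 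Up to homotopy this is the attachment of cells of dimension $\geq n$ to $\partial F$, which gives precisely $\pi_i(F,\partial F)=0$ for $i\leq n-1$.

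Finally, feed this into the long exact sequence of the pair $(F,\partial F)$. Since $\pi_{i+1}(F,\partial F)=0$ and $\pi_i(F,\partial F)=0$ for all $i\leq n-2$, the map $\pi_i(\partial F)\to \pi_i(F)$ is an isomorphism for $i\leq n-2$; combined with $\pi_i(F)=0$ for $i\leq n-1$ this yields $\pi_i(\Sigma(a))=\pi_i(\partial F)=0$ for $i\leq n-2$, as claimed. No basepoint subtleties intervene, since the isomorphism in low degrees already shows $\partial F$ is simply connected once $n\geq 3$, while for $n\leq 2$ the assertion only concerns $\pi_0$ (and attaching cells of dimension $\geq n\geq 1$ does not change the set of path components for $n\geq 2$; the case $n\leq 1$ is vacuous).

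The main obstacle is the dualization step: one must handle the index shift $k\mapsto 2n-k$ correctly and keep track of the fact that the dual decomposition builds $F$ \emph{up from} $\partial F$ rather than down to it. This is standard handle trading, but it is where the real content lies, the homotopy type of $F$ being quoted. If one prefers to avoid Weinstein handle bodies, the same conclusion is Milnor's classical statement for links of isolated hypersurface singularities, proved via the Milnor fibration on $S^{2n+1}\setminus\Sigma(a)$, a tubular neighbourhood of $\Sigma(a)$, and van Kampen together with Mayer--Vietoris; one could simply cite that. A third route, matching the Lefschetz-fibration viewpoint taken later in the paper, is to use the Lefschetz fibration $V_\epsilon(a_0,\dots,a_n)\to \C$ with fibre $V_\epsilon(a_0,\dots,a_{n-1})$ to re-derive the connectivity of $F$ inductively, still finishing with the handle argument for the boundary.
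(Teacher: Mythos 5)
Your argument is correct and is essentially the paper's own proof: the paper likewise observes that $V_\epsilon(a)\cap B_R$ is obtained from $\Sigma(a)$ by attaching handles of index at least $n$ (you supply the dualization of the Stein handle decomposition that justifies this) and then transfers the $(n-1)$-connectivity of the filling to the boundary. Your version just makes the handle-duality step and the long exact sequence of the pair explicit where the paper leaves them implicit.
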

To see this, we observe that $V_\epsilon(a)$ is a Stein manifold, which we can see as the completion of $V_\epsilon(a) \cap B_R$, where $B_R$ is a ball of some large radius $R$.
Since $V_\epsilon(a)$ has the homotopy type of an $n$-dimensional cell complex, we can obtain $V_\epsilon(a) \cap B_R$ from $\Sigma(a)$ by attaching handles of index at least $n$.
These handle attachments do not affect the homotopy groups $\pi_k$ for $k=0,\ldots,n-2$. Since $\pi_k(V_\epsilon(a)\,)$ vanishes for $k=0,\ldots,n-1$, the claim follows.

\subsection{Methods to compute homology}
Since smooth Brieskorn varieties are highly-connected manifolds, one can compute the homology of the ``boundary'' $\Sigma(a)$ by working out the cokernel of the intersection form of the filling $V_{\epsilon}(a)$.
If we denote the intersection form by $S_{V_{\epsilon}(a)}$, we find
$$
H_{n-1}(\Sigma(a);\Z)\cong \coker S_{V_{\epsilon}(a)}.
$$
However, it is more efficient to use the Milnor fibration structure (or open book decomposition in more contact geometric language).

A complete algorithm to get the above homology group was found by Randell, \cite{Randell:Brieskorn}. 
We will describe it below, but let us first look at the most interesting case, namely that in which $\Sigma(a)$ is homeomorphic to a sphere.

\subsection{Homotopy spheres}
Milnor and Brieskorn found the following appealing criterion to detect spheres.
Given a Brieskorn manifold $\Sigma(a)$, define a graph $\Gamma$ with $n+1$ vertices by the following procedure:
\begin{itemize}
\item label the $j$-th vertex by $a_j$.
\item draw an edge between the $i$-th and $j$-th vertex if $\gcd(a_i,a_j)>1$.
\end{itemize}

\begin{proposition}[Milnor, Brieskorn,\cite{HM}, page 100]
If $n\geq 3$, then $\Sigma(a)$ is homeomorphic to a sphere if and only if the graph $\Gamma$ satisfies one of the following two conditions.
\begin{itemize}
\item{} the graph $\Gamma$ has at least two isolated points.
\item{} the graph $\Gamma$ has an isolated point and a connected component $C$ with an odd number of points such that for $a_i,a_j \in C$ with $i\neq j$, one has $\gcd(a_i,a_j)=2$.
\end{itemize}
\end{proposition}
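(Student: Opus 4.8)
The plan is to convert the topological question into an arithmetic one about roots of unity, and then unravel that arithmetic condition into the combinatorics of $\Gamma$. Since $n\ge 3$ we have $\dim\Sigma(a)=2n-1\ge 5$, and by the previous proposition $\Sigma(a)$ is $(n-2)$-connected, so $H_i(\Sigma(a);\Z)=0$ for $0<i<n-1$; Poincar\'e duality and the universal coefficient theorem then force $H_i(\Sigma(a);\Z)=0$ for $n<i<2n-1$ and $H_n(\Sigma(a);\Z)\cong\mathrm{Hom}(H_{n-1}(\Sigma(a);\Z),\Z)$. Hence $\Sigma(a)$ is a homotopy sphere exactly when $H_{n-1}(\Sigma(a);\Z)=0$, and by Smale's solution of the generalized Poincar\'e conjecture in dimensions $\ge 5$ this is the same as being homeomorphic to $S^{2n-1}$. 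So the task is to decide when $H_{n-1}(\Sigma(a);\Z)$ vanishes.

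I would compute $H_{n-1}(\Sigma(a);\Z)$ from the Milnor (open book) fibration, which identifies it with $\coker(\id-h_*)=\coker S_{V_\epsilon(a)}$, where $h_*$ is the monodromy acting on $H_n$ of the page (a wedge of $\mu=\prod_i(a_i-1)$ copies of $S^n$). Thus $H_{n-1}(\Sigma(a);\Z)=0$ iff $\det(\id-h_*)=\pm1$, i.e. iff the characteristic polynomial $\Delta(t)=\det(t\,\id-h_*)$ satisfies $\Delta(1)=\pm1$. By Pham's join description of the Milnor fibre (equivalently, the Sebastiani--Thom formula for the monodromy), $h_*$ is the iterated tensor product of the monodromies of the one-variable maps $z_j\mapsto z_j^{a_j}$, so its eigenvalues are exactly the products $\zeta_{a_0}^{k_0}\cdots\zeta_{a_n}^{k_n}$ with $1\le k_j\le a_j-1$ and $\zeta_{a_j}=e^{2\pi i/a_j}$. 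Since these are roots of unity, $\Delta$ is a product of cyclotomic polynomials; using $\Phi_1(1)=0$, $\Phi_{p^k}(1)=p$ for prime powers $p^k>1$ and $\Phi_d(1)=1$ otherwise, we get
\[
H_{n-1}(\Sigma(a);\Z)=0 \iff \text{no product }\zeta_{a_0}^{k_0}\cdots\zeta_{a_n}^{k_n}\ (1\le k_j\le a_j-1)\text{ equals }1\text{ or has prime-power order.}
\]

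It remains to match this with the graph conditions, which I would do by grouping indices according to the connected components of $\Gamma$: labels in different components are pairwise coprime, so the order of a product $\prod_j\zeta_{a_j}^{k_j}$ is the product of the orders of its partial products over the components. If $\Gamma$ has two isolated vertices $a_j,a_\ell$, then every eigenvalue has order divisible by $\mathrm{ord}(\zeta_{a_j}^{k_j})\cdot\mathrm{ord}(\zeta_{a_\ell}^{k_\ell})$, a product of two coprime integers $>1$, hence neither $1$ nor a prime power — so $\Sigma(a)$ is a homotopy sphere. If $\Gamma$ has an isolated vertex $a_0$ together with a component $C$ of odd cardinality all of whose pairwise gcd's equal $2$, one proves the arithmetic lemma that $\prod_{i\in C}\zeta_{a_i}^{k_i}\ne1$ for all admissible exponents (the odd cardinality and the exact value $2$ of the gcd's force the relevant $2$-adic obstruction to $\sum_{i\in C}k_i/a_i\in\Z$ to be nonzero); since $\zeta_{a_0}^{k_0}\ne1$ has order coprime to everything else and the complementary factor is likewise nontrivial (again by coprimality across components), every eigenvalue once more has order a product of at least two coprime integers $>1$. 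Conversely, when neither graph condition holds one must exhibit an offending eigenvalue: the relevant situations are a component all of whose labels share a common prime $p$ (take each corresponding $\zeta$ to be a $p$-th root), an even-cardinality component with pairwise gcd $2$ (its partial product can be made trivial), and, more generally, a pair of labels sharing an odd prime; in each case one arranges the remaining partial products to be trivial, leaving an eigenvalue equal to $1$ or of prime-power order.

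The first two paragraphs are bookkeeping on top of known results (the high-dimensional Poincar\'e conjecture, the Milnor fibration, Pham's eigenvalue computation, elementary facts about cyclotomic polynomials). The real content — and the step I expect to be the main obstacle — is the last one: the equivalence between the root-of-unity condition and the two combinatorial conditions is a finite but genuinely fiddly case analysis on orders of products of roots of unity, organized by the components of $\Gamma$ and their prime factorizations. Within it, the two hardest points are the arithmetic lemma on odd-cardinality components with pairwise gcd $2$ (which is exactly what makes the Kervaire-type families $\Sigma(2,\dots,2,3)$ come out as spheres) and, in the converse direction, checking that failure of \emph{both} graph conditions always lets one force an eigenvalue equal to $1$ or of prime-power order.
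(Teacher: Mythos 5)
The paper itself offers no proof of this proposition (it is one of the results deferred to Hirzebruch--Mayer with a bare \qed), so the only meaningful comparison is with the classical argument in that source, and your outline follows it. The reduction is sound: $(n-2)$-connectedness together with Poincar\'e duality and the generalized Poincar\'e conjecture in dimension $2n-1\ge 5$ reduce everything to $H_{n-1}(\Sigma(a);\Z)=0$; the Wang sequence identifies this group with $\coker(h_*-\id)$; and since the eigenvalues of $h_*$ are the products $\zeta_{a_0}^{k_0}\cdots\zeta_{a_n}^{k_n}$ with $1\le k_j\le a_j-1$ (Pham/Sebastiani--Thom; the paper reaches the same characteristic polynomial via the Weil zeta function instead), the evaluations $\Phi_1(1)=0$, $\Phi_{p^k}(1)=p$, $\Phi_d(1)=1$ otherwise give exactly your criterion: $\Sigma(a)$ is a sphere iff no eigenvalue equals $1$ or has prime-power order. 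Your forward direction is essentially complete: two isolated vertices contribute two coprime factors $>1$ to the order of every eigenvalue, and the $2$-adic lemma you invoke for an odd component with pairwise $\gcd$ equal to $2$ is correct (writing $a_i=2^{e_i}c_i$, the odd parts force $c_i\mid k_i$, after which $\sum_i k_i/a_i$ is forced to equal $\#C/2\notin\Z$).

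The gap is in the converse. Your list of ``relevant situations'' does not correctly organize the negation of the two graph conditions, and the phrase ``one arranges the remaining partial products to be trivial'' is exactly what can fail: the partial product over an isolated vertex is never $1$, and neither is the partial product over an odd component with pairwise $\gcd$ equal to $2$. The statement you actually need, and neither state nor prove, is the following lemma: a connected component $C$ with $\#C\ge 2$ admits exponents with $\prod_{i\in C}\zeta_{a_i}^{k_i}=1$ \emph{unless} $\#C$ is odd and all pairwise gcds equal $2$, in which case the partial product can nevertheless be made equal to $-1$. Granting this, the converse is clean: a second exceptional component is impossible (a second isolated vertex is condition one, and two odd-pairwise-$\gcd$-$2$ components cannot coexist since all their labels are even and they would merge), so when both graph conditions fail one trivializes every component's partial product except at most one, which is then arranged to equal $1$ or a root of unity of prime order. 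Proving that lemma is the genuinely fiddly induction on $C$ (cancel along an edge with $\gcd>2$ or an odd common prime, pair off an even number of $\gcd$-$2$ vertices, etc.), and it --- not the situations you list, such as ``a component all of whose labels share a common prime,'' which is not what connectivity of $\Gamma$ gives you --- is the missing heart of the converse.
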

See \cite{HM}, page 100.
\begin{flushright}
\qed
\end{flushright}
\begin{figure}[htp]
\def\svgwidth{0.35\textwidth}%
\begingroup\endlinechar=-1
\resizebox{0.35\textwidth}{!}{%
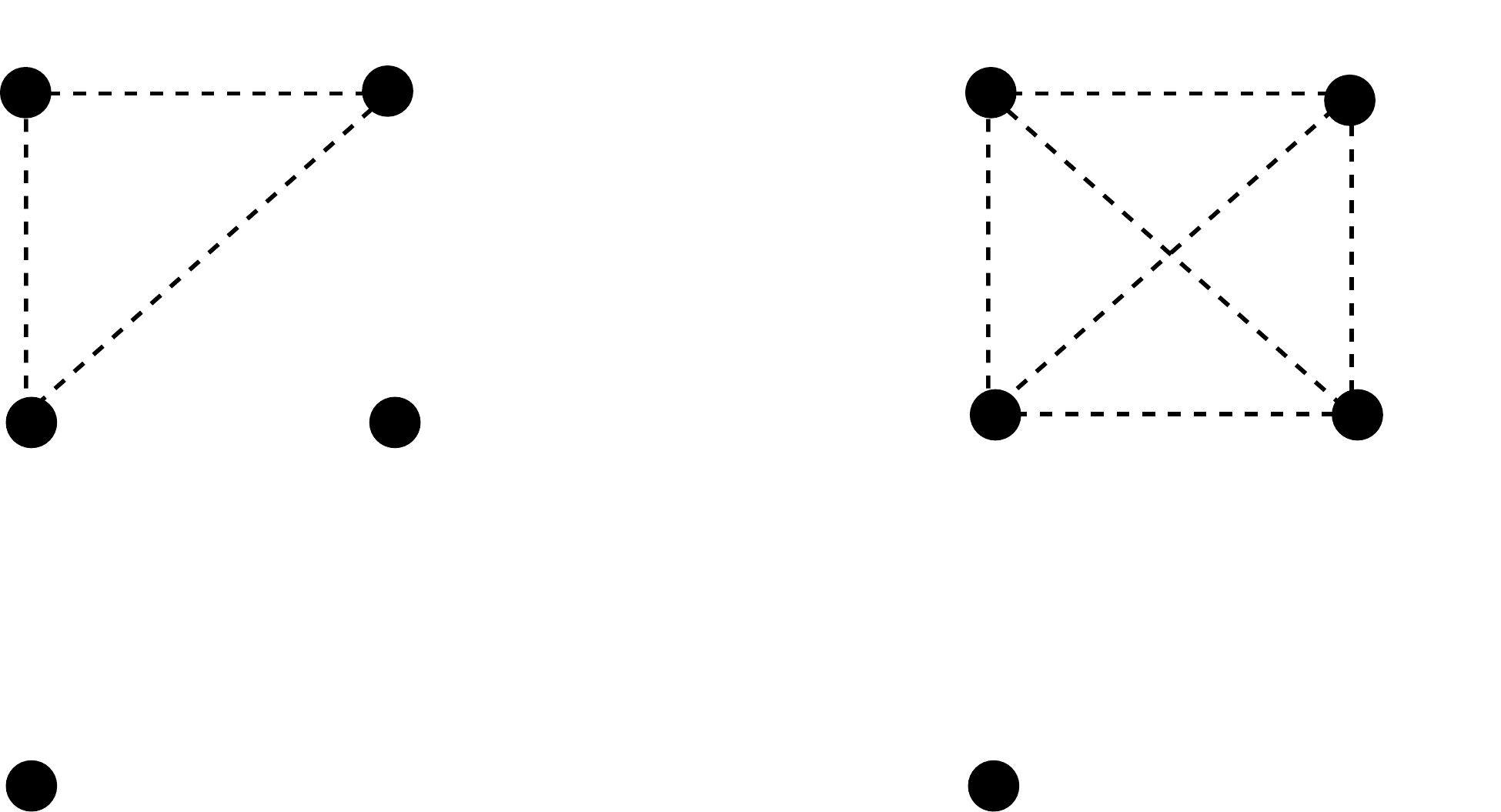%
}\endgroup
\caption{Brieskorn graph for a sphere $\Sigma(2,2,2,3,5)$ and a space that is not a sphere, $\Sigma(2,2,4,8,5)$}
\label{fig:graph_Brieskorn}
\end{figure}

As a special case, we see that $\Sigma(a)$ is homeomorphic to a sphere if all exponents are pairwise relatively prime.

\subsection{Randell's algorithm}
\label{sec:homology_Brieskorn}
To give general results for the homology of Brieskorn manifolds, we need some notation. 
The notation is meant to keep track of embeddings of Brieskorn manifolds into Brieskorn manifolds of higher dimension.
We will explore this more in Section~\ref{sec:decompositions}.

For $I:=\{0,1,\ldots,n\}$ and any subset $I_s$ with $s$ elements of the form $I_s=\{ i_1,\ldots, i_s\}$ define the Brieskorn submanifolds
\begin{equation}
\label{eq:K_notation}
\begin{split}
K(I_s):=&\Sigma(a_{i_1},\ldots,a_{i_s} ) \\
K(I):=&\Sigma(a_0,\ldots,a_n).
\end{split}
\end{equation}
Note that $K(I_s)$ has dimension $2s-3$.
Let $\kappa(I_s):=\rk H_{s-2}(K(I_s)\,)$.
The following formula can be found in \cite{Randell:Brieskorn}; we will present an argument following Milnor in Section~\ref{sec:homology_via_Milnor_fibration}.
\begin{equation}
\label{eq:rk_middle_dim_hom}
\kappa(I_s\,)=\sum_{I_t \subset I_s} (-1)^{s-t}\frac{\prod_{i\in I_t}a_i}{\lcm_{j\in I_t}a_j}.
\end{equation}
\subsubsection{Torsion in $H_{n-1}$}
Define
\[
k(I_s\,):=
\begin{cases}
\kappa(I_s\,) & \text{ if }n+1-s\text{ is odd} \\
0, &\text{otherwise.}
\end{cases}
\]
The function $C$ will send index subsets of $I$ to some integer. We prescribe the value on the empty set and use induction to get the other values:
\begin{equation}
\label{eq:Randell_torsion_C}
\begin{split}
C(\emptyset )&=\gcd_{i\in I} a_i \\
C(I_s)&=\frac{\gcd_{i\in I-I_s}a_i}{\prod_{I_t\subsetneq I_s} C(I_t)}.
\end{split}
\end{equation}
Define 
$$
d_j:=\prod_{\substack{I_s\\k(I_s\,)\geq j}} C(I_s) \quad \text{ and }\quad
r:=\max_{I_s\subset I} k(I_s\,).
$$

\begin{theorem}[Randell, \cite{Randell:Brieskorn}]
The homology group $H_{n-1}(\Sigma(a_0,\ldots,a_n);\Z )$ is isomorphic to
\begin{equation}
\label{eq:homology_Brieskorn}
H_{n-1}(\Sigma(a_0,\ldots,a_n);\Z )\cong
\Z^{\kappa(I)}\oplus \Z_{d_1}\oplus \ldots \oplus \Z_{d_r}.
\end{equation}
\end{theorem}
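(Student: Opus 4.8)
The plan is to avoid Pham's intersection form of Proposition~\ref{prop:intersection_form}, which is awkward to put in Smith normal form directly, and to compute instead with the Milnor monodromy, which for a Brieskorn polynomial has a transparent tensor structure. Let $F:=V_\epsilon(a)$, which by Proposition~\ref{prop:homotopy_type_smoothed_var} is a wedge of $\mu:=\prod_i(a_i-1)$ spheres $S^n$, let $\bar F$ be its compact model with $\partial\bar F=\Sigma(a)$, and let $h\colon\bar F\to\bar F$ be the geometric monodromy of the Milnor fibration, which is the identity near $\partial\bar F$. Combining the identification $H_{n-1}(\Sigma(a);\Z)\cong\coker S_{V_\epsilon(a)}$ recorded above with the classical fact that the variation map $\mathrm{Var}_h\colon H_n(\bar F,\partial)\to H_n(\bar F)$ is an isomorphism satisfying $h_\ast-\id=\mathrm{Var}_h\circ j$, where $j\colon H_n(\bar F)\to H_n(\bar F,\partial)$ is the natural map and corresponds to $S_{V_\epsilon(a)}$ under Lefschetz duality (equivalently, quoting \cite{Milnor:singular_points} directly), one gets $H_{n-1}(\Sigma(a);\Z)\cong\coker\bigl(h_\ast-\id\colon H_n(F;\Z)\to H_n(F;\Z)\bigr)$. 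Everything is now an algebra problem about one endomorphism of a free abelian group of rank $\mu$.

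Next I would make that group and endomorphism completely explicit using Pham's join picture. The retraction of $V_1(a)$ onto $U(a)\cong\Z_{a_0}\ast\cdots\ast\Z_{a_n}$, compatible with the $G(a)$-action, identifies $H_n(F;\Z)$ with the tensor product $\bigotimes_{j=0}^n\widetilde H_0(\Z_{a_j};\Z)$, where each factor is the augmentation ideal $I_{a_j}=\ker(\Z[\Z_{a_j}]\to\Z)\cong\Z[t_j]/(\Phi_j(t_j))$ with $\Phi_j(t)=(t^{a_j}-1)/(t-1)$, and where $h_\ast$ acts as the diagonal generator, i.e.\ as multiplication by $t_0t_1\cdots t_n$. (As a sanity check the $\Z$-rank is $\prod(a_j-1)=\mu$, and the eigenvalues of $h_\ast$ are the products $\zeta_0\cdots\zeta_n$ with $\zeta_j^{a_j}=1$, $\zeta_j\neq 1$.) Setting $R:=\Z[t_0,\dots,t_n]/(\Phi_0,\dots,\Phi_n)$, we must compute the abelian group $R/(t_0\cdots t_n-1)R$; solving for $t_0=(t_1\cdots t_n)^{-1}$ rewrites this as $\Z[t_1,\dots,t_n]/(\Phi_1,\dots,\Phi_n,\Phi_0(t_1\cdots t_n))$, that is, the cokernel of the operator $1+h'_\ast+\cdots+(h'_\ast)^{a_0-1}$ acting on the middle homology of the Milnor fibre of the $n$-variable Brieskorn polynomial $\sum_{j\ge1}z_j^{a_j}$, whose monodromy $h'_\ast$ has finite order $\lcm_{j\ge1}a_j$. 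This recursion drives an induction on the number of exponents, and it is also what makes the Brieskorn submanifold notation $K(I_s)$ and the sums over subsets $I_t\subset I_s$ appear.

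To finish I would separate rank from torsion. Tensoring $R$ with $\Q$ splits it as a product of number fields $\Q(\zeta_{d_0})\otimes\cdots\otimes\Q(\zeta_{d_n})$ over tuples $(d_j)$ with $1<d_j\mid a_j$; on such a factor $t_0\cdots t_n-1$ is invertible unless one can choose primitive $d_j$-th roots of unity with product $1$, and counting the surviving factors with their degrees and reorganising via inclusion--exclusion over subsets yields precisely the rank formula \eqref{eq:rk_middle_dim_hom}, hence the free summand $\Z^{\kappa(I)}$ --- this being the computation postponed to Section~\ref{sec:proofs_top}, which we take here as given. For the torsion one localises at each prime $p$ and runs the induction above: adjoining the exponent $a_0$ multiplies the $\lambda$-part of $H_{n-1}(F';\Z_{(p)})$ by the scalar $(\lambda^{a_0}-1)/(\lambda-1)$, and a careful accounting of how these scalars meet the (non-cyclic) integral $\Z[h'_\ast]$-module structure --- i.e.\ how the ramified factorisations of the various $t^{a_j}-1$ overlap --- produces a new tier of cyclic summands whose orders are exactly the gcd-corrections $C(I_s)$ of \eqref{eq:Randell_torsion_C}, each appearing with the multiplicity recorded by $k(I_s)$. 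Collecting these into elementary divisors gives $d_j=\prod_{k(I_s)\ge j}C(I_s)$ and the torsion summand $\Z_{d_1}\oplus\cdots\oplus\Z_{d_r}$.

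The main obstacle is entirely in the last step. Over $\Q$ everything diagonalises eigenspace by eigenspace and the combinatorics is routine M\"obius inversion, but the integral module $\bigotimes_j I_{a_j}$ is \emph{not} a direct sum of cyclic modules $\Z[t]/(\Phi_d(t))$, so one cannot simply diagonalise over $\Z$, and the inclusion--exclusion that is transparent rationally must be shown to compute the Smith normal form over each $\Z_{(p)}$, keeping track of ramification when $p$ divides several of the $a_j$. This prime-by-prime bookkeeping, rather than any single trick, is the substance of Randell's argument in \cite{Randell:Brieskorn}; the one structural simplification worth exploiting is the multiplicativity of the whole package (Milnor fibre, monodromy, middle homology) under joins, which is what lets the induction on the exponents close and what organises the $C(I_s)$ into a coherent hierarchy.
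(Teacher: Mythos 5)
Your reduction of the theorem to a problem about a single endomorphism of a free abelian group is exactly the paper's: Section~\ref{sec:homology_via_Milnor_fibration} also passes through the Milnor fibration to $H_{n-1}(\Sigma(a);\Z)\cong\coker(h_*-\id)$ (via Poincar\'e--Alexander duality and the Wang sequence rather than the variation map, but these are interchangeable here). Your tensor description $H_n(F;\Z)\cong\bigotimes_j I_{a_j}$ with $h_*$ acting as $t_0\cdots t_n$, and the resulting recursion identifying $\coker(h_*-\id)$ with $\coker\bigl(1+h'_*+\cdots+(h'_*)^{a_0-1}\bigr)$ on the Milnor fibre with one fewer variable, is correct and is a genuinely cleaner module-theoretic organization than the paper's route to the rank, which instead goes through Lefschetz numbers, the Weil zeta function, M\"obius inversion and the Milnor--Orlik multiplicativity identity in the group ring $\Q[\C^*]$. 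Both methods compute the same thing --- the multiplicity of the eigenvalue $1$ --- and since you explicitly defer to the paper's Section~\ref{sec:proofs_top} for Formula~\eqref{eq:rk_middle_dim_hom}, the free summand $\Z^{\kappa(I)}$ is accounted for.

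The gap is where you say it is, and it is genuine: nothing in your last paragraph actually produces the elementary divisors $d_j=\prod_{k(I_s)\ge j}C(I_s)$. The phrase ``adjoining the exponent $a_0$ multiplies the $\lambda$-part of $H_{n-1}(F';\Z_{(p)})$ by $(\lambda^{a_0}-1)/(\lambda-1)$'' has no integral meaning, precisely because (as you note) $\bigotimes_j I_{a_j}$ admits no eigenspace decomposition over $\Z_{(p)}$ when $p$ divides several of the $a_j$; and the assertion that the ensuing bookkeeping ``produces a new tier of cyclic summands whose orders are exactly the $C(I_s)$'' of \eqref{eq:Randell_torsion_C} is a restatement of the theorem, not a derivation. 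So the torsion summand $\Z_{d_1}\oplus\cdots\oplus\Z_{d_r}$ remains unproved in your proposal. You should be aware, however, that the paper is in exactly the same position: its proof of this theorem consists of the single sentence that the ranks will be explained in Section~\ref{sec:homology_via_Milnor_fibration}, with the integral statement cited from \cite{Randell:Brieskorn}. Your plan therefore establishes precisely what the survey establishes (the free part) and, like the survey, must ultimately invoke Randell's Smith-normal-form analysis for the torsion; if you want a self-contained proof, that prime-by-prime computation is the part you still have to write.
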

\begin{proof}
We will explain how to obtain the ranks of these groups in Section~\ref{sec:homology_via_Milnor_fibration}.
\end{proof}

\begin{remark}
Note that simply-connected, spin $5$-manifolds are classified by their second homology group $H_2(M)$, see \cite{Smale:5-mfd}.
Here a spin structure for $M$ consists of a principal fiber bundle $P_{Spin(n)}(M)\to M$ with structure group $Spin(n)$ together with an equivariant double covering map $p:P_{Spin(n)}(M) \to P_{SO(n)}(M)$, where $P_{SO(n)}(M)$ is the principal fiber bundle with structure group $SO(n)$ that is associated with $TM$.
A spin structure exists if and only if the second Stiefel-Whitney class $w_2(M)$ vanishes.

Since Brieskorn manifolds are spin as we shall see in the proof of Corollary~\ref{cor:spin5}, Brieskorn $5$-manifolds are determined up to diffeomorphism by Randell's algorithm.
\end{remark}

\begin{corollary}
\label{cor:spin5}
Every simply-connected spin $5$-manifold can be written as the connected sum of Brieskorn manifolds.
Furthermore, since connected sums can be defined in the contact category, every simply-connected spin $5$-manifold carries a Stein fillable contact structure.
\end{corollary}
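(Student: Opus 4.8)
The plan is to reduce the statement about connected sums to the classification of simply-connected spin $5$-manifolds together with Randell's formula for $H_{n-1}(\Sigma(a))$ in the case $n=3$. By Smale's theorem (quoted in the preceding remark) a closed simply-connected spin $5$-manifold $M$ is determined up to diffeomorphism by the isomorphism type of $H_2(M;\Z)$, which is a finitely generated abelian group; moreover, by the same classification, the only constraint on $H_2$ is that the torsion subgroup splits as $G\oplus G$ for some finite abelian group $G$ (this is forced by Poincar\'e duality and the nondegeneracy of the linking form, together with $w_2=0$). So it suffices to exhibit, for each such $H_2$, a connected sum of Brieskorn $5$-manifolds realizing it, and to know that each Brieskorn $5$-manifold is itself simply-connected and spin.

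First I would record that $\Sigma(a_0,a_1,a_2,a_3)$ is simply-connected: by the $(n-2)$-connectedness proposition with $n=3$ it is $1$-connected. The spin property I would prove directly as announced for Corollary~\ref{cor:spin5}: a Brieskorn manifold bounds the parallelizable Milnor fiber $V_\epsilon(a)$ (its tangent bundle is stably trivial since $V_\epsilon(a)$ is a smooth affine complete intersection that is homotopy equivalent to a wedge of spheres), hence $\Sigma(a)$ has stably trivial tangent bundle and in particular $w_2(\Sigma(a))=0$. Next I would assemble the building blocks: the sphere $S^5=\Sigma(1,a_1,a_2,a_3)$ contributes the trivial group; for a free summand $\Z$ one uses $\Sigma(2,2,2,k)$-type examples (or more simply $S^2\times S^3$, itself a Brieskorn manifold by Lemma~\ref{lemma:A1-singularity}-style identifications), which has $H_2=\Z$; and for a torsion summand $\Z_m\oplus\Z_m$ one chooses exponents so that Randell's formula \eqref{eq:homology_Brieskorn} yields exactly $\Z_m\oplus\Z_m$ in degree $2$ — for instance $\Sigma(m,m,\ast,\ast)$ with the remaining exponents coprime to everything, where the combinatorial formulas \eqref{eq:rk_middle_dim_hom} and \eqref{eq:Randell_torsion_C} can be evaluated by hand. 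Taking the connected sum of one such block for each summand in the chosen decomposition of $H_2(M)$ produces a Brieskorn connected sum $N$ with $H_2(N)\cong H_2(M)$; since $N$ is simply-connected and spin, Smale's theorem gives $N\cong M$ diffeomorphically.

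The contact statement then follows formally: each Brieskorn manifold $\Sigma(a)$ is Stein fillable by the smoothed variety $V_\epsilon(a)$ (Remark on links of holomorphic functions, plus Proposition~\ref{prop:BW_contact_form}), and the boundary connected sum of Stein (equivalently Weinstein) domains is again a Stein domain whose contact boundary is the connected sum of the contact boundaries; hence the connected sum $N$ carries a Stein fillable contact structure, and transporting along the diffeomorphism $N\cong M$ gives the desired structure on $M$.

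The main obstacle is the bookkeeping in the torsion realization step: one must choose exponent tuples $(a_0,a_1,a_2,a_3)$ so that $d_1,\ldots,d_r$ in \eqref{eq:homology_Brieskorn} come out to be precisely the prescribed invariant factors, and verify via \eqref{eq:rk_middle_dim_hom}–\eqref{eq:Randell_torsion_C} that no spurious free rank or extra torsion appears. A clean way to handle this uniformly is to note that every finite abelian group arising as the torsion of such an $H_2$ is a direct sum of groups of the form $\Z_m\oplus\Z_m$, and to give a single family of Brieskorn $5$-manifolds realizing each $\Z_m\oplus\Z_m$; I expect the required computation to be an elementary gcd/lcm manipulation, but it is the only place where real care is needed. (A standard reference-level shortcut, if one prefers to avoid reproving it, is to cite the explicit realization already present in the literature on Brieskorn $5$-manifolds, e.g.\ \cite{HM}.)
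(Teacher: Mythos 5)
Your overall strategy coincides with the paper's: invoke Smale's classification, check that Brieskorn $5$-manifolds are simply-connected and spin, realize each prime summand by an explicit Brieskorn manifold via Randell's algorithm, and then get the contact statement from Stein fillability of links plus boundary connected sums. (Your spin argument via parallelizability of the Milnor fiber is a harmless variant of the paper's $w_2=c_1(\xi)\bmod 2$ argument.)

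However, the one step you defer as ``elementary gcd/lcm bookkeeping'' is precisely where your proposal breaks. The family you suggest for the torsion blocks, $\Sigma(m,m,a_2,a_3)$ with $a_2,a_3$ coprime to $m$ and to each other, does \emph{not} have $H_2\cong\Z_m\oplus\Z_m$: by the Milnor--Brieskorn graph criterion its graph has the two vertices labelled $a_2$ and $a_3$ as isolated points, so this manifold is homeomorphic to $S^5$. One can also see this directly from Randell's algorithm: formula~\eqref{eq:rk_middle_dim_hom} gives $\kappa(I)=m-(2m+2)+(m+5)-4+1=0$, and all the quantities $k(I_s)$ for $s$ odd vanish as well, so $r=0$ and there is no torsion. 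The correct building blocks (used in the paper) are $\Sigma(p,3,3,3)$ for $p$ coprime to $3$ and $\Sigma(q,4,4,2)$ for $q$ odd, where the key contribution $C(\{1,2,3\})=p$ comes from the three \emph{non}-coprime exponents and yields $\Z_p\oplus\Z_p$; between these two families one covers all prime powers $q$ required by Smale's prime decomposition. So the gap is not mere bookkeeping: your chosen exponent pattern is the wrong one, and the verification you postponed would have revealed this.
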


\begin{proof}
By Smale's theorem, simply-connected spin $5$-manifolds admit a prime decomposition of the form
$$
M^5\cong \#_{m} S^2\times S^3 \# M_{q_1}\# \ldots \# M_{q_\ell}, 
$$
where the $q_i$ are powers of primes and we use the following conventions:
\begin{itemize}
\item the empty connected sum is $S^5$
\item the manifold $M_{k}$ is a spin manifold with $H_2(M_k;\Z)\cong \Z_k \oplus \Z_k$. We may think of $M_\infty$ as $S^2\times S^3$.
\end{itemize}
We will show that each of the $M_k$ have models in the form of Brieskorn manifolds.

First observe that Brieskorn manifolds are indeed spin.
Indeed, $w_2(\Sigma)=c_1(\xi_{\Sigma})\mod 2$, and we know that $c_1(\xi_\Sigma)=0$ since the Brieskorn variety is a hypersurface in $\C^{n+1}$.

Since we already know that $\Sigma(1,2,2,2)\cong S^5$ and $\Sigma(2,2,2,2)\cong ST^*S^3 \cong S^2\times S^3$, we only need to find Brieskorn models for those $5$-manifolds with pure torsion in homology.
We claim that $H_2(\Sigma(p,3,3,3);\Z)\cong \Z_p \oplus \Z_p$ for $p$ relatively prime to $3$ and $H_2(\Sigma(q,4,4,2);\Z)\cong \Z_q \oplus \Z_q$ for $q$ relatively prime to $2$. 
To check this, we can just apply Randell's formula \eqref{eq:homology_Brieskorn}.
We will do the case of $\Sigma(p,3,3,3)$.

The set $I=\{ 0,1,2,3 \}$.
Formula~\eqref{eq:Randell_torsion_C} gives
\[
\begin{split}
C(\emptyset)=1, 
\quad C(\{0\} )=3, \quad C(\{1\} )=C(\{2\} )=C(\{3\} )=1 \\
C(\{0,1,2\})=C(\{0,1,3\})=C(\{0,2,3\})=1,\quad C(\{1,2,3\})=p
\quad C(\{0,1,2,3\})=1.
\end{split}
\]

We conclude that
$$
r=\max_{I_s \subset I} k(I_s)=2.
$$
We also have
$$
d_1=\prod_{\substack{I_s\\k(I_s\,)\geq 1}}C(I_s)=C(\{ 1,2,3 \})=p,
\text{ and }
d_2=\prod_{\substack{I_s\\k(I_s\,)\geq 2}}C(I_s)=C(\{ 1,2,3 \})=p,
$$
so the claim follows.
As we now have realized all prime manifolds from Smale's theorem by Brieskorn manifolds, we obtain the statement of the corollary.
\end{proof}

\subsubsection{Equivariant homology}
Using the Gysin sequence, we can compute the equivariant homology $H^{S^1}_*(\Sigma(a_0,\ldots,a_n);\Q)$: we can think of this homology as being isomorphic to the rational homology of $\Sigma(a_0,\ldots,a_n)/S^1$ seen as a hypersurface in a weighted projective space.
Here is the upshot.
\begin{theorem}[Randell, \cite{Randell:Brieskorn}]
\label{thm:equivariant_homology_Brieskorn}
\[
\begin{split}
H^{S^1}_*(K(I)=\Sigma(a_0,\ldots,a_n);\Q)
\cong&
\left\{
\begin{array}{ll}
\Q, &\text{ if }*\text{ even; }0\leq * \leq \dim K(I) -1 \\
0, &\text{ otherwise}
\end{array}
\right\} \\
&
\oplus
\left\{
\begin{array}{ll}
\Q^{\kappa(I)}, &\text{ if }*=\frac{1}{2}(\dim K(I) -1) \\
0, &\text{ otherwise}
\end{array}
\right\}
.
\end{split}
\]
\end{theorem}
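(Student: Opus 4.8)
The plan is to replace equivariant homology by the honest rational homology of the quotient orbifold and to extract the latter from a Gysin sequence. By Proposition~\ref{prop:BW_contact_form}, applied to the Brieskorn contact form $\alpha$, the Reeb flow on $\Sigma(a)$ is periodic, so $S^1$ acts on $\Sigma(a)$ with finite isotropy groups and the quotient $\bar V:=\Sigma(a)/S^1$ is a closed Kähler orbifold of real dimension $2n-2$ --- concretely the hypersurface $\{p=0\}$ in the weighted projective space $\C P^n(w)$ --- with $\Sigma(a)\to\bar V$ a prequantization bundle. The Borel construction $\Sigma(a)\times_{S^1}ES^1$ fibres over $\bar V$ with homotopy fibres the classifying spaces $BG_x$ of the (finite) isotropy groups, and since $H_*(BG_x;\Q)\cong H_*(\mathrm{pt};\Q)$, a Leray spectral sequence argument gives $H^{S^1}_*(\Sigma(a);\Q)\cong H_*(\bar V;\Q)$. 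Being a closed orientable $\Q$-homology manifold, $\bar V$ satisfies Poincar\'e duality over $\Q$ and has $H^k(\bar V;\Q)=0$ for $k>2n-2$. I would also record the rational cohomology of $\Sigma(a)$ itself: since $\Sigma(a)^{2n-1}$ is $(n-2)$-connected and orientable, Poincar\'e duality gives $H^*(\Sigma(a);\Q)\cong\Q$ in degrees $0$ and $2n-1$, $\cong\Q^{\kappa(I)}$ in degrees $n-1$ and $n$ (note that $\kappa(I)$ is by definition $\rk H_{n-1}(\Sigma(a))$), and $0$ otherwise; I will assume $n\ge 3$ so that these ranges are disjoint, the low-dimensional cases being handled by Lemma~\ref{lemma:A1-singularity}.

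Next I would run the Gysin sequence of the prequantization bundle $\pi:\Sigma(a)\to\bar V$ with $\Q$-coefficients and Euler class $e\in H^2(\bar V;\Q)$. Since $e$ is a positive multiple of the orbifold Kähler class of $\bar V$ (the curvature form of the prequantization structure), one has $\int_{\bar V}e^{\,n-1}>0$, and hence $\cup e$ is injective whenever source and target are one-dimensional and spanned by powers of $e$. For $1\le j\le n-2$ the vanishing $H^j(\Sigma(a);\Q)=0$ forces $\cup e\colon H^{j-2}(\bar V)\to H^j(\bar V)$ to be an isomorphism, so $H^j(\bar V;\Q)$ coincides with $H^j(\C P^n(w);\Q)$ in that range; Poincar\'e duality propagates this to $j\ge n$. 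Thus $H^j(\bar V;\Q)$ is $\Q$ for $j$ even and $0$ for $j$ odd whenever $0\le j\le 2n-2$ and $j\ne n-1$.

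It remains to identify $H^{n-1}(\bar V;\Q)$, which is read off from the segment
\[ H^{n-3}(\bar V)\xrightarrow{\cup e}H^{n-1}(\bar V)\xrightarrow{\pi^*}H^{n-1}(\Sigma(a))\xrightarrow{\pi_*}H^{n-2}(\bar V)\xrightarrow{\cup e}H^{n}(\bar V)\xrightarrow{\pi^*}H^{n}(\Sigma(a)). \]
If $n-1$ is odd, then $H^{n-3}(\bar V)=0$ and $\cup e\colon H^{n-2}(\bar V)\to H^{n}(\bar V)$ is an isomorphism, which forces $\pi^*\colon H^{n-1}(\bar V)\xrightarrow{\ \sim\ }H^{n-1}(\Sigma(a))=\Q^{\kappa(I)}$. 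If $n-1$ is even, then $H^{n-2}(\bar V)=0$ while $\cup e\colon H^{n-3}(\bar V)\to H^{n-1}(\bar V)$ is injective, yielding a short exact sequence $0\to\Q\to H^{n-1}(\bar V)\to\Q^{\kappa(I)}\to 0$. In either case $\dim_\Q H^{n-1}(\bar V;\Q)=\kappa(I)+\delta$, with $\delta=1$ when $n-1$ is even and $\delta=0$ when $n-1$ is odd; since $\tfrac12(\dim K(I)-1)=n-1$, this is exactly the stated decomposition, and dualizing finite-dimensional $\Q$-vector spaces converts the cohomological statement into the homological one.

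The main obstacle is the orbifold bookkeeping: making the Gysin sequence and Poincar\'e duality rigorous for the singular quotient $\bar V$. I would handle this either by treating $\bar V$ as a rational homology manifold throughout, or by phrasing the whole argument on the space $\Sigma(a)\times_{S^1}ES^1$ and its fibration over $BS^1$, using only that $H^k_{S^1}(\Sigma(a);\Q)$ is finite-dimensional and vanishes for $k>2n-2$. The only other input is the positivity $e^{\,n-1}\ne0$, which is immediate from the Kähler structure on $\bar V$ noted after Proposition~\ref{prop:BW_contact_form}.
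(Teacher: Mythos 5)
Your argument is correct and is exactly the route the paper intends: the paper's entire ``proof'' is the remark that the result follows from the Gysin sequence with rational coefficients after identifying $H^{S^1}_*(\Sigma(a);\Q)$ with the rational homology of the quotient hypersurface in weighted projective space, and you supply precisely the missing details (the key one being injectivity of $\cup\, e$ in middle degree via $e^{n-1}\neq 0$). The only caveat is your standing assumption $n\geq 3$; the paper also invokes the theorem for $n=1,2$ (e.g.\ for $\Sigma(2,2)$ and $\Sigma(p,q)$ in the mean Euler characteristic tables), where the $(n-2)$-connectivity bookkeeping degenerates and a separate, elementary check is needed.
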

This result follows from Formula~\eqref{eq:rk_middle_dim_hom}, and the Gysin sequence with rational coefficients.
\begin{flushright}
\qed
\end{flushright}

\subsection{Standard spheres and exotic spheres}
With Milnor's criterion we have an effective method to see when a Brieskorn manifold is homeomorphic to a sphere.
Let us call such manifolds Brieskorn spheres.
Interestingly, it turns out that many Brieskorn spheres are not diffeomorphic to the standard sphere.
The technology to prove this, is completely classical by now, but the arguments are still rather lengthy, so we refer to the literature.
A particularly good reference is the book of Hirzebruch and Mayer, \cite{HM}.
First we describe some restrictions.
\begin{theorem}[Hirzebruch-Mayer, section 14.8]
Let $\Sigma(a)$ be a Brieskorn manifold that is homeomorphic to a sphere.
If $n=1,3,7$, then $\Sigma(a)$ is diffeomorphic to the standard sphere.
\end{theorem}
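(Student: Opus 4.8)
The plan is to exploit the fact that a Brieskorn manifold $\Sigma(a_0,\ldots,a_n)$ of dimension $2n-1$ that is a homotopy sphere bounds the parallelizable Milnor fibre $V_\epsilon(a)$, so it represents an element of the group $bP_{2n}$ of homotopy $(2n-1)$-spheres bounding parallelizable manifolds. When $2n-1$ is of the form $2(2m)-1$, i.e. $n$ even, the relevant obstruction is the signature of the filling, measured modulo the order of $bP_{4m}$; when $n$ is odd the obstruction is an Arf/Kervaire invariant living in $\Z/2$. In the three exceptional cases $n=1,3,7$ one has $2n-1=1,5,13$, and the point is that $bP_{2n}$ is trivial in exactly these cases: $bP_2=bP_6=bP_{14}=0$ (and $\Sigma$ in dimension $1$ is a knot, so the statement is essentially vacuous there). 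Hence any homotopy sphere bounding a parallelizable manifold of dimension $2n$ with $n=1,3,7$ is the standard sphere.

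The key steps, in order, would be: (1) Recall from Proposition~\ref{prop:homotopy_type_smoothed_var} (Milnor) that $V_\epsilon(a)$ is homotopy equivalent to a wedge of $\prod_i(a_i-1)$ copies of $S^n$, in particular it is $(n-1)$-connected, and note that as a smooth affine hypersurface in $\C^{n+1}$ it is parallelizable; moreover, after intersecting with a large ball it becomes a compact parallelizable manifold with boundary $\Sigma(a)$. (2) Invoke the Milnor--Brieskorn criterion (already quoted in the excerpt) so that we are in the situation where $\Sigma(a)$ is a topological sphere of dimension $2n-1\geq 5$; then $\Sigma(a)$ defines a class $[\Sigma(a)]\in bP_{2n}$. (3) Quote the Kervaire--Milnor computation of $bP_{2n}$: it is cyclic, and for $2n=2,6,14$ (equivalently $n=1,3,7$) it is the trivial group — this is the case-by-case fact, tied to the nonexistence of the relevant framed obstruction in those dimensions (for $4m$ it is a statement about Bernoulli numbers being trivial in low degree, for $4m+2$ it is the Kervaire invariant story). (4) Conclude $[\Sigma(a)]=0$, i.e. $\Sigma(a)$ is diffeomorphic to $S^{2n-1}$.

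Since the arguments here are entirely classical and somewhat lengthy, the cleanest exposition is to cite the literature rather than reproduce them; the reference of choice is Hirzebruch--Mayer~\cite{HM}, section 14.8, where the statement is made precisely in this form, built on the surgery-theoretic machinery of Kervaire--Milnor. Thus:
\begin{flushright}
\qed
\end{flushright}

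The main obstacle, were one to insist on a self-contained proof, is step (3): establishing that $bP_2$, $bP_6$, $bP_{14}$ all vanish requires the full strength of Kervaire--Milnor surgery theory, including for the Kervaire-invariant-one phenomenon in dimension $13$ the fact that $6=\binom{1}{1}\cdot\ldots$ — more precisely that the Kervaire invariant is realized by a framed manifold in dimension $14$, so that $bP_{14}=0$ rather than $\Z/2$. These are deep input theorems; in a survey it is entirely appropriate to take them as black boxes, which is the approach adopted here.
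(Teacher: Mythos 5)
Your proposal is correct and takes essentially the same route as the paper, which offers no argument beyond the citation to Hirzebruch--Mayer \S 14.8; your sketch correctly identifies the underlying mechanism, namely that $\Sigma(a)$ bounds the parallelizable Milnor fibre and hence defines a class in $bP_{2n}$, and that $bP_2=bP_6=bP_{14}=0$ because the Kervaire invariant one is realized by framed manifolds in dimensions $2$, $6$ and $14$. (The parenthetical aside involving the binomial coefficient in your final paragraph is garbled, but nothing in the argument depends on it.)
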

\begin{flushright}
\qed
\end{flushright}

\begin{theorem}[Hirzebruch-Mayer, Satz 14.4]
Smooth Brieskorn varieties are parallelizable.
\end{theorem}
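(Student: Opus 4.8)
The plan is to argue in two moves: first that $V_\epsilon(a)$ is \emph{stably} parallelizable, essentially because it is a regular level set, and then to upgrade this to honest parallelizability using that $V_\epsilon(a)$ has the homotopy type of a CW complex of dimension strictly below its real dimension.

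For the first move, fix $\epsilon\neq 0$, write $V:=V_\epsilon(a_0,\ldots,a_n)\subset\C^{n+1}$ and $p(z)=\sum_j z_j^{a_j}$. (If instead $\epsilon=0$, smoothness of $V_0$ forces some $a_i=1$, in which case $V_0$ is biholomorphic to $\C^n$ and the claim is trivial; so $\epsilon\neq 0$ is the only case to treat.) Since $dp$ vanishes only at the origin and $0\notin V$, the variety $V=p^{-1}(\epsilon)$ is a regular level set, hence a smooth complex hypersurface of $\C^{n+1}$, and the normal bundle of a regular level set of a $\C$-valued map is trivial, being trivialised by the gradient of $p$, so $\nu_V\cong V\times\C$. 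Splitting off this trivial normal bundle inside $T\C^{n+1}|_V$, which is itself trivial, I would obtain
\[
TV\oplus\underline{\R}^2\;\cong\;\underline{\R}^{2n+2},
\]
so $V^{2n}$ is stably parallelizable. (For $n=0$, $V$ is a finite set of points and there is nothing to prove; assume $n\geq 1$ henceforth.)

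For the second move, recall from Proposition~\ref{prop:homotopy_type_smoothed_var} that $V$ is homotopy equivalent to a wedge of $n$-spheres, in particular to a CW complex of dimension $n<2n=\dim_{\R}V$. I would then invoke the standard destabilization principle: a real vector bundle of rank $r$ over a CW complex $B$ with $\dim B<r$ is trivial as soon as it is stably trivial. The cleanest proof of this principle is obstruction-theoretic: the classifying map $B\to BO(r)$ composes to a nullhomotopic map $B\to BO$, hence lifts to the homotopy fibre $O/O(r)$ of $BO(r)\to BO$; but $O/O(r)$ is $(r-1)$-connected, so any map into it from a complex of dimension $\leq r-1$ — in particular that lift, and therefore the original classifying map — is nullhomotopic. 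Applying the principle with $B\simeq V$ (so $\dim B=n$), $r=2n$, and $\xi=TV$, which is stably trivial by the first move and satisfies $n<2n$ since $n\geq 1$, I conclude that $TV$ is trivial.

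I do not anticipate a genuine obstacle here. The two places to be careful are (i) that the normal bundle of the regular level set is honestly trivial, which is immediate, and (ii) pinning down the homotopy type of $V$ precisely enough that its dimension drops strictly below $\dim_{\R}V$ — this is exactly Milnor's theorem recalled above, and could alternatively be replaced by the general fact that a non-compact $m$-manifold has the homotopy type of a complex of dimension $\leq m-1$. The same argument applies verbatim to the Milnor fibre of any isolated hypersurface singularity, but I would only spell out the Brieskorn case.
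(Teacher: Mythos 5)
Your proof is correct. The paper itself gives no argument for this statement — it simply cites Hirzebruch--Mayer, Satz 14.4 — and your two-step argument (stable parallelizability from the trivial normal bundle of the regular level set $p^{-1}(\epsilon)$ in $\C^{n+1}$, then destabilization using that $V_\epsilon(a)$ is homotopy equivalent to an $n$-complex while $TV_\epsilon(a)$ has rank $2n$) is exactly the standard proof found in that reference and in Kervaire--Milnor. Your handling of the edge cases ($\epsilon=0$ forcing some $a_i=1$, and $n=0$) and the obstruction-theoretic justification of the destabilization principle via the $(r-1)$-connectivity of $O/O(r)$ are all sound, so there is nothing to add.
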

\begin{flushright}
\qed
\end{flushright}
An immediate corollary is that Brieskorn spheres are always {\bf boundary parallelizable}, meaning that they are the boundary of parallelizable manifolds.
Therefore, Brieskorn spheres cannot, in general, generate the full group of exotic spheres.
Now let $bP_{n+1}$ denote the group of boundary parallelizable homotopy spheres of dimension $n$; its unit element is the standard sphere and the group operation is the connected sum.
We have the following result, which can be found in \cite{HM}, page 107-108.
\begin{theorem}
For $k>1$, the group $bP_{4k}$ is a cyclic group of order
$$
2^{2k-2}(2^{2k-1}-1)\cdot Numerator(\frac{4B_k}{k}),
$$
where $B_k$ is the $k$-th Bernoulli number, which we number by the convention $B_1=\frac{1}{6},B_2=\frac{1}{30}$, and so on.
Furthermore, a generator $g_{4k}$ of the group $bP_{4k}$ is given by  the boundary of the plumbing of copies of $T^*S^{2k}$ on the $E_8$-graph, see Figure~\ref{fig:plumbings}, and a Brieskorn manifold $\Sigma_a$ which is homeomorphic to a sphere represents the class
$$
\frac{\tau}{8}g_{4k},
$$
where $\tau$ denotes the signature of the intersection form of the Brieskorn variety $V_\epsilon(a)$.
\end{theorem}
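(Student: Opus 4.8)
The plan is to deduce the statement from the Kervaire--Milnor theory of boundary-parallelizable homotopy spheres, using the parallelizability of $V_\epsilon(a)$ quoted above to reduce everything about the Brieskorn class to its signature $\tau$.

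First I would recall the surgery-theoretic picture of $bP_{4k}$. Every class is represented by $\partial W$ for a compact stably parallelizable $W^{4k}$ whose boundary is a homotopy sphere, and conversely every such $W$ determines a class; the compact Milnor fibre $V_\epsilon(a)\cap B_R$ is exactly such a $W$ when $\Sigma(a)$ is a homotopy sphere. Given two fillings $W_1,W_2$ of the same $\Sigma$, gluing along the boundary produces a closed almost-parallelizable manifold $W_1\cup_\Sigma(-W_2)$, and Novikov additivity gives $\sigma(W_1\cup_\Sigma(-W_2))=\sigma(W_1)-\sigma(W_2)$. The key input is then that a closed almost-parallelizable $4k$-manifold has signature divisible by $8a_k$, where $a_k=|bP_{4k}|$ is the integer appearing in the theorem: this comes from the Hirzebruch signature formula (the $L$-genus reduces to a multiple of $p_k$ since the lower Pontryagin classes vanish), from the fact that the top Pontryagin number of an almost-parallelizable manifold is constrained by the order of the image of $J\colon\pi_{4k-1}(SO)\to\pi_{4k-1}^{s}$ (Adams), and from the arithmetic of the Bernoulli number $B_k$. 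Consequently $W\mapsto\sigma(W)/8$ descends to a well-defined surjection onto $\Z/a_k$, which is precisely $bP_{4k}$; this gives the order and cyclicity. For the generator, the $E_8$-plumbing of copies of $T^*S^{2k}$ has intersection form the $E_8$ lattice, which is even, unimodular and of signature $8$, so its boundary $g_{4k}$ is a homotopy sphere and corresponds to $8/8=1$, i.e.\ it generates. These are the lengthy classical arguments, and I would cite \cite{HM}, pp.\ 107--108, rather than reprove them.

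For the Brieskorn statement: by the Hirzebruch--Mayer theorem quoted above $V_\epsilon(a)$ is parallelizable, and under the hypothesis that $\Sigma(a)$ is a homotopy sphere its compact piece is an admissible filling, so $[\Sigma(a)]\in bP_{4k}$ is defined. By the previous paragraph this class equals $\sigma(V_\epsilon(a))/8$ times $g_{4k}$, and $\sigma(V_\epsilon(a))=\tau$; divisibility of $\tau$ by $8$ is automatic, since the intersection form of a parallelizable $4k$-manifold with homotopy-sphere boundary is even and unimodular. To make the formula effective one still needs $\tau$ itself, which is extracted by diagonalizing the Pham form of Proposition~\ref{prop:intersection_form}; this reduces to Brieskorn's combinatorial count of the tuples $(j_0,\dots,j_n)$ with $0<j_i<a_i$, sorted according to the class of $\sum_i j_i/a_i$ modulo $2$.

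The main obstacle is the first paragraph: the precise order of $bP_{4k}$ and the proof that the $E_8$-boundary generates genuinely require the full Kervaire--Milnor surgery machinery together with Adams' $J$-homomorphism theorem, and in a survey it is honest to cite these. The Brieskorn-specific content is then the comparatively soft point that parallelizability of $V_\epsilon(a)$ plus Novikov additivity pins the boundary class down to $\tau/8$, together with the purely combinatorial (but potentially messy) evaluation of the signature of the Pham form.
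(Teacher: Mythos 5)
Your proposal is correct and follows exactly the route the paper itself takes: the paper gives no proof of this theorem but simply cites Hirzebruch--Mayer \cite{HM}, pp.~107--108, and your sketch (Kervaire--Milnor surgery description of $bP_{4k}$, Novikov additivity plus the divisibility of the signature of closed almost-parallelizable $4k$-manifolds coming from the Hirzebruch signature theorem and Adams' computation of the image of $J$, the $E_8$-plumbing as the signature-$8$ generator, and parallelizability of $V_\epsilon(a)$ pinning the Brieskorn class down to $\tau/8\cdot g_{4k}$) is precisely the classical argument contained in that reference. Deferring the heavy surgery-theoretic input to the literature, as you do, is exactly what the survey does.
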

\begin{flushright}
\qed
\end{flushright}

\begin{definition}
The {\bf Kervaire sphere} is the smooth $4k+1$-manifold obtained as the boundary of the plumbing of two cotangent bundles of spheres along fibers. 
\end{definition}
Put differently, the Kervaire sphere is the link of the so-called $A_2$-singularity in dimension $4k+2$, which is another way of saying that the Kervaire sphere is the Brieskorn manifold
$$
\Sigma(3,2,\ldots,2)=\{ (z_0,z_1,\ldots,z_{2k+1})~|~z_0^3+\sum_{j=1}^{k+1}z_j^2=0 \} \cap S^{4k+3}.
$$
The plumbing graph for this singularity is depicted in Figure~\ref{fig:plumbings} on the right.

\begin{figure}[htp]
\def\svgwidth{0.60\textwidth}%
\begingroup\endlinechar=-1
\resizebox{0.60\textwidth}{!}{%
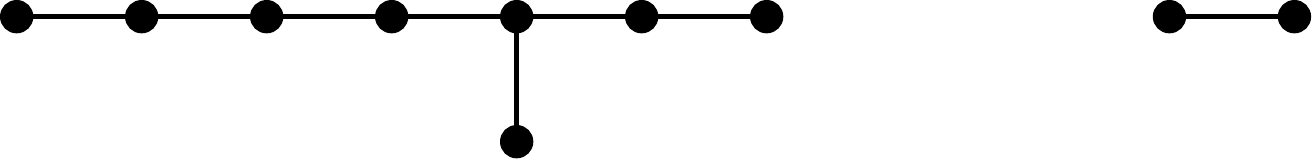%
}\endgroup
\caption{Plumbing graphs for $E_8$ and $A_2$}
\label{fig:plumbings}
\end{figure}

\begin{theorem}
\label{thm:std_and_kervaire}
Let $\Sigma(a_0,\ldots,a_n)$ be a Brieskorn manifold that is homeomorphic to a sphere, and assume that $n\neq 1,3,7$.
If $\det S_{V_{\epsilon}(a_0,\ldots,a_n,2)}=\pm 1 \mod 8$, then $\Sigma(a_0,\ldots,a_n)$ is diffeomorphic to the standard sphere.
If $\det S_{V_{\epsilon}(a_0,\ldots,a_n,2)}=\pm 3 \mod 8$, then $\Sigma(a_0,\ldots,a_n)$ is diffeomorphic to the Kervaire sphere (which may actually be diffeomorphic to the standard sphere depending on the dimension).
\end{theorem}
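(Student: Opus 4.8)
The plan is to reduce to the Kervaire--Arf invariant of the Milnor fibre, following Brieskorn's original approach as presented in \cite{HM}. The mention of the Kervaire sphere places us in dimension $\dim\Sigma(a)=2n-1\equiv 1\pmod 4$, i.e.\ $n$ odd, and the values $n=1,3,7$ are excluded because the preceding theorem already handles them; so assume $n$ odd, $n\geq 5$. By the quoted parallelizability theorem of Hirzebruch--Mayer, $W:=V_\epsilon(a_0,\ldots,a_n)$ is a parallelizable, $(n-1)$-connected $2n$-manifold with $\partial W=\Sigma(a)$, and since $\Sigma(a)$ is a homotopy $(2n-1)$-sphere it represents a class in $bP_{2n}=bP_{4k+2}$, which vanishes iff $\Sigma(a)$ is diffeomorphic to $S^{2n-1}$. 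By the theorem of Kervaire--Milnor this group is $0$ or $\Z/2$, the Arf invariant of the framing-induced quadratic refinement on $H_{2k+1}(M;\Z_2)$ depends only on $\partial M$ and detects the class $[\partial M]$, and the non-trivial class (when present) is the Kervaire sphere $\Sigma(3,2,\ldots,2)$. For $M=W$ the intersection form is skew-symmetric (as $n$ is odd) and its refinement is $q\colon H_n(W;\Z_2)\to\Z_2$, $q(x)=L(x,x)\bmod 2$, where $L$ is the Seifert/variation form of $W$; thus $\Sigma(a)$ is diffeomorphic to $S^{2n-1}$ iff $c(W):=\operatorname{Arf}(q)=0$, and to the Kervaire sphere iff $c(W)=1$. (The Kervaire sphere is itself standard exactly in the dimensions with $4k+2\in\{6,14,30,62,126\}$, which is what the parenthetical records.)

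It remains to identify $c(W)$ with $\det S_{V_\epsilon(a_0,\ldots,a_n,2)}$ modulo $8$. Stabilize to $f'=z_0^{a_0}+\cdots+z_n^{a_n}+z_{n+1}^2$, with Milnor fibre $V_\epsilon(a_0,\ldots,a_n,2)$ of complex dimension $n+1$, now even, so that $S':=S_{V_\epsilon(a_0,\ldots,a_n,2)}$ is symmetric. The Sebastiani--Thom (join) theorem gives that the monodromy of $f'$ is $-h$, with $h$ the monodromy of $f=\sum_j z_j^{a_j}$, and that the variation form of $f'$ is $L$ tensored with the rank-one form of $z^2$; hence $S'=\pm(L+L^T)$ and $\det S'=\pm\Delta(-1)$, where $\Delta(t)=\prod_{0<i_j<a_j}(t-\zeta_{a_0}^{i_0}\cdots\zeta_{a_n}^{i_n})$ is the characteristic polynomial of $h$, i.e.\ the Alexander polynomial of the fibred knot $\Sigma(a)\subset S^{2n+1}$. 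Since $L+L^T\equiv L-L^T\pmod 2$ and $\det(L-L^T)=\pm 1$ (as $\Sigma(a)$ is a homology sphere), $\det S'$ is odd, so the hypothesis of the theorem always holds. Finally invoke the classical arithmetic criterion for the Arf invariant of a simple fibred knot (Levine; see also \cite{HM}): $c(W)=0$ if $\Delta(-1)\equiv\pm 1\pmod 8$, and $c(W)=1$ if $\Delta(-1)\equiv\pm 3\pmod 8$. Combined with $\det S'=\pm\Delta(-1)$ and the first paragraph, this proves the theorem.

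The one substantial ingredient is the arithmetic criterion of the last step. I would prove it by computing $\operatorname{Arf}(q)$ from the sign of the Gauss sum $\sum_{x\in H_n(W;\Z_2)}(-1)^{q(x)}$---well defined since $L+L^T$ reduces mod $2$ to a nondegenerate alternating form (again using $\det(L-L^T)=\pm 1$), of even rank $\prod_i(a_i-1)$---and relating that sign, modulo $8$, to $\det(L+L^T)$ by van der Blij's congruence: the signature of an integral symmetric bilinear form is congruent modulo $8$ to the self-pairing of a characteristic element. Here Pham's description of the intersection form (Proposition~\ref{prop:intersection_form}, applied with the extra exponent $2$, so that every diagonal entry is $\pm 2$) furnishes the explicit symmetric matrix for this computation, and one must check that its $\Z_2$-quadratic reduction is indeed the framing refinement $q$. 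This Arf-invariant/discriminant congruence is where the real work lies; by contrast the Sebastiani--Thom identity is needed only up to sign (harmless, since the statement allows both signs), and the dimensions with $bP_{4k+2}=0$ are accounted for by the parenthetical remark.
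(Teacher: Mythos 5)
Your outline is correct and follows essentially the same route as the paper, which for this statement offers no argument of its own but defers to the classical proof in Hirzebruch--Mayer \cite{HM} (reduction to the Arf invariant of the framed Milnor fibre, identification of $\det S_{V_\epsilon(a_0,\ldots,a_n,2)}$ with $\pm\Delta(-1)$ via the suspension, and Levine's congruence $\operatorname{Arf}=0\iff\Delta(-1)\equiv\pm1\bmod 8$ proved by the Gauss-sum/van der Blij computation you describe). You also correctly supply the implicit hypothesis that $n$ is odd, which the paper's statement omits but which is forced by the appearance of the Kervaire sphere.
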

\begin{flushright}
\qed
\end{flushright}
There is a practical way to compute the determinant of the intersection form needed in the theorem using the Alexander polynomial, which we describe in more detail in Section~\ref{sec:homology_via_Milnor_fibration}.
The result is
$$
\det S_{V_{\epsilon}(a_0,\ldots,a_n,2)}=\pm \prod_{0<k_j<a_j}(1+\zeta_{a_0}^{k_0}\cdot \ldots \cdot \zeta_{a_n}^{k_n}).
$$

\subsection{Some proofs}
\label{sec:proofs_top}
The results listed above are all classical in the sense that they were obtained more than 40 years ago.
The proofs and tricks involved are still very nice to see, so we will review the proofs of those results that we use directly.
The basic references are Milnor, Dimca, Hirzebruch-Mayer and of course Pham and Brieskorn, \cite{Milnor:singular_points,Dimca:singularities_book,HM,Brieskorn:difftop,Pham}.
The literature on Brieskorn manifolds is vast, so alternative proofs may be found elsewhere.

\subsubsection{Homology via Milnor fibration}
\label{sec:homology_via_Milnor_fibration}
We essentially follow Milnor's proof from \cite[Chapter 9]{Milnor:singular_points} combined with some arguments from \cite{Milnor_Orlik:weighted}.
Interestingly, Milnor actually investigates periodic points under the Reeb flow, although he never calls it that way.
Anyway, to see the so-called Milnor fibration or open book structure, note that $\Sigma(a):=\Sigma(a_0,\ldots,a_n)$ embeds into $S^{2n+1}$, and in fact $S^{2n+1}-\Sigma(a)$ fibers over $S^1$: the  argument of the Brieskorn polynomial provides the projection to $S^1$.
Hence we have $pr:S^{2n+1}-\Sigma(a)\to S^1$, so by the clutching construction we get a ``monodromy map'', which we denote by $h$.
The name monodromy will be clarified in Remark~\ref{rem:monodromy}.
We will denote the fiber of $pr$ by $V$.

By Poincar\'e and Alexander duality, we have 
$$
H_{n-1}(\Sigma(a_0,\ldots,a_n)\, )\cong H^n(\Sigma(a_0,\ldots,a_n)\, )\cong H_n(S^{2n+1}-\Sigma(a_0,\ldots,a_n)\,).
$$
Abbreviate $E=S^{2n+1}-\Sigma(a_0,\ldots,a_n)$, and compute $H_n(E)$ with the Wang sequence,
$$
0 \longrightarrow H_{n+1}(E)
\longrightarrow H_n( V )
\stackrel{h_*-\id}{\longrightarrow} H_n( V )
\longrightarrow H_{n}(E)
\longrightarrow 0.
$$
We conclude
$$
H_{n-1}(\Sigma(a_0,\ldots,a_n);\Z)\cong \coker(h_*-\id).
$$

\subsubsection{The Alexander polynomial and a proof of formula~\eqref{eq:rk_middle_dim_hom}}
To compute the full homology we need to find $\coker(h_*|_{H_n}-\id)$, but since we shall only need the free part, it is enough to find $\dim \ker(h_*|_{H_n}-\id)$.
First define the characteristic polynomial
$$
\Delta(t):=\det( t\id-h_*|_{H_n}),
$$
which is the definition of the {\bf Alexander polynomial}.
To rework this Alexander polynomial, we make some observations.

First note that $V$, the fiber of $pr$, is diffeomorphic to the smoothed Brieskorn variety $V_\epsilon(a_0,\ldots,a_n)$. 
This last fact was proved by Milnor, \cite[Theorem 5.11]{Milnor:singular_points}, but one can also see it using the Lefschetz fibration we will describe in Section~\ref{sec:lefschetz}.
For the ``monodromy'' $h$ we have
\begin{lemma}
\label{lemma:observations}
The map $h$ is given by the time-$2\pi$ flow of the Reeb vector field,
\[
\begin{split}
h: V_\epsilon(a_0,\ldots,a_n) & \longrightarrow V_\epsilon(a_0,\ldots,a_n) \\
(z_0,\ldots,z_n) &\longmapsto (e^{i2\pi/a_0}z_0,\ldots,e^{i2\pi/a_n}z_n).
\end{split}
\]
This map is periodic with period $p:=\lcm_j a_j$.
Hence $h_*-t \id$ is diagonalizable over $\C$, and all zeroes of $\Delta$ lie on the unit circle.
Also $\Delta$ is symmetric, meaning that $\Delta(1/t)t^{\deg \Delta}=\pm \Delta(t)$.
\end{lemma}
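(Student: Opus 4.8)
The plan is to establish the four claims of Lemma~\ref{lemma:observations} in the order stated, since each feeds into the next.

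First I would identify the monodromy $h$ with the Reeb time-$2\pi$ map. Recall that the Milnor fibration $pr: S^{2n+1}\setminus\Sigma(a)\to S^1$ is given by $z\mapsto p(z)/|p(z)|$, where $p(z)=\sum_j z_j^{a_j}$. The fiber over $1$ is (after smoothing / radial rescaling) the Brieskorn variety $V_\epsilon(a)$ via Milnor's Theorem 5.11. The monodromy from the clutching construction is obtained by following the natural horizontal lift of the loop $e^{2\pi it}$, $t\in[0,1]$, in the base. The key observation is that the $\C^*$-action $z_j\mapsto \lambda^{w_j}z_j$ with $w_j=\lcm_k a_k/a_j$ rescales $p$ by $\lambda^{\lcm_k a_k}$; restricting to $\lambda=e^{i\theta}$ with $\theta$ chosen so that $\lambda^{\lcm_k a_k}$ traverses the unit circle once, i.e. $\theta$ going from $0$ to $2\pi/\lcm_k a_k$, one sees the argument of $p$ advance by $2\pi$. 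Tracking the resulting map on the fiber gives exactly $(z_0,\ldots,z_n)\mapsto(e^{i2\pi/a_0}z_0,\ldots,e^{i2\pi/a_n}z_n)$, which by Proposition~\ref{prop:BW_contact_form} (with the rescaled weights $w_j$) is the time-$2\pi$ flow of the Reeb vector field. The main subtlety here — and the step I expect to be the genuine obstacle — is matching the \emph{clutching} monodromy (which a priori is only defined up to isotopy) with this explicit $\C^*$-orbit construction, so I would be careful to present the horizontal lift concretely and note that any other choice is isotopic to it and hence induces the same $h_*$.

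Next, periodicity with period $p=\lcm_j a_j$ is immediate from the explicit formula: $h^p$ multiplies each $z_j$ by $e^{i2\pi p/a_j}=1$ since $a_j\mid p$, so $h^p=\id$. Consequently $h_*$ on $H_n(V;\C)$ satisfies $h_*^p=\id$, hence $h_*$ is diagonalizable over $\C$ with eigenvalues among the $p$-th roots of unity; in particular $h_*-t\,\id$ is diagonalizable and every zero of $\Delta(t)=\det(t\,\id - h_*|_{H_n})$ is a root of unity, so lies on the unit circle.

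Finally, for the symmetry $\Delta(1/t)\,t^{\deg\Delta}=\pm\Delta(t)$, I would invoke the fact that $h_*$ preserves (up to sign) the intersection form $S_{V_\epsilon(a)}$ on $H_n(V;\Z)$, which is a Poincar\'e–Lefschetz duality pairing and is unimodular on the free part (or, over $\C$, nondegenerate on the relevant summand). A variation-structure argument — $h$ being the geometric monodromy of an isolated singularity — shows $h_*^{\mathrm{T}} S h_* = S$ up to the usual sign $(-1)^n$, so $h_*$ is conjugate to $\pm(h_*^{-1})^{\mathrm{T}}$. Therefore the eigenvalues of $h_*$ are invariant under $\lambda\mapsto 1/\lambda$ (again up to the global sign), which translates directly into the stated functional equation for the characteristic polynomial $\Delta$. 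Alternatively one can simply read off the symmetry from the explicit eigenvalue description of $\Delta$ coming from Pham's basis, which will be set up in the following subsection; I would mention both routes but carry out the short duality argument.
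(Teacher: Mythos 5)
The paper itself offers no proof of this lemma --- it is presented as a package of classical observations drawn from Milnor's book --- so your proposal is judged on its own merits. Your arguments for the first three claims are correct and standard: the circle part of the weighted $\C^*$-action preserves $S^{2n+1}$, rotates the base of the Milnor fibration, and its time-$(2\pi/\lcm_j a_j)$ map is the return map to the fiber, giving $z_j\mapsto e^{2\pi i/a_j}z_j$, which matches the time-$2\pi$ Reeb flow of $\alpha_a$; periodicity and diagonalizability (minimal polynomial divides $t^p-1$) follow immediately.

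The duality argument you commit to for the symmetry of $\Delta$, however, has a genuine gap. The intersection form $S_{V_\epsilon(a)}$ on $H_n(V_\epsilon(a);\Z)$ is \emph{not} unimodular, and is not even nondegenerate over $\Q$ in general: its cokernel is $H_{n-1}(\Sigma(a);\Z)$, which has positive rank $\kappa$ for many Brieskorn manifolds (e.g. $V_\epsilon(2,2,2,2)\cong T^*S^3$, where the form is identically zero). So the implication ``$h_*^{\mathrm T}Sh_*=\pm S$ with $S$ nondegenerate, hence $h_*\sim\pm(h_*^{-1})^{\mathrm T}$'' is not available. The pairing that is always perfect is the Lefschetz duality pairing $H_n(F)\otimes H_n(F,\partial F)\to\Z$; using it one gets that $h_*$ on $H_n(F,\partial F)$ is conjugate to the inverse transpose of $h_*$ on $H_n(F)$, and one must then separately argue that the two characteristic polynomials coincide. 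This follows from the exact sequence $0\to H_n(\partial F)\to H_n(F)\to H_n(F,\partial F)\to H_{n-1}(\partial F)\to 0$, on which $h_*$ acts as the identity on the boundary terms (the restriction of $h$ to $\partial F$ is the time-$2\pi$ Reeb flow, isotopic to the identity), together with Poincar\'e duality on $\partial F$ to equate $\rk H_n(\partial F)$ and $\rk H_{n-1}(\partial F)$. Your fallback route --- reading the symmetry off Pham's explicit eigenvalues $\zeta_{a_0}^{k_0}\cdots\zeta_{a_n}^{k_n}$, $0<k_j<a_j$, a set closed under inversion --- is valid and in this weighted-homogeneous setting is the cleaner choice; I would carry that out, or repair the duality argument as indicated, rather than the version based on the intersection form.
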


Define the {\bf Lefschetz number} of the $\ell$-th iterate of $h$ by
$$
L(h^\ell):=
\sum_{k\geq 0} (-1)^k \tr( h^\ell_*|_{H_k(V_\epsilon(a);\Q)} )
.
$$
Here $h^\ell_*|_{H_k(V_\epsilon(a);\Q)}$ denotes the induced map on the $k$-th homology group of $V_\epsilon(a)$.
With the Lefschetz trace formula (or Lefschetz fixed point theorem) one can deduce $L(h^\ell)=\chi(Fix(h^\ell)\,)$, see \cite[Theorem 9.5]{Milnor:singular_points}.
We explain the idea in the case of an isometry $f:X\to X$. Consider a $\delta$ neighborhood of the fixed point locus of $f$, which we call $U$.
The restriction $f|_{U}:U\to U$ induces the identity map on the homology $H_*(U)\cong H_*(Fix(f)\,)$, so in this neighborhood we have $L(f|_{U})=\chi(Fix(f)\, )$.
Outside this neighborhood U the map $f$ has no fixed points.
The Lefschetz fixed point theorem states that any continuous map $f:X\to X$ with $L(f)\neq 0$ has fixed points, so we see that $L(f|_{X-U})=L(f|_{\partial U})=0$. By the Mayer-Vietoris sequence we conclude that $L(f)=L(f|_{U})=\chi(Fix(f)\,)$.

We will write $L(h^\ell)=\chi(Fix(h^\ell)\,)$ more briefly as $\chi_\ell$ from now on.
Note that the fixed point set of $h^\ell$ is a Brieskorn variety.
Indeed, if $e^{2\pi i \ell/a_j}z_j=z_j$, then either $z_j=0$ or $e^{2\pi i \ell/a_j}=1$.
From Proposition~\ref{prop:homotopy_type_smoothed_var} we see that the Euler characteristic $\chi_\ell$ satisfies
\begin{equation}
\label{eq:multiplicative_property}
1-\chi_\ell=\prod_{a_j | \ell}(1-a_j).
\end{equation}

Define integers $s_d$ by
\begin{equation}
\label{eq:chi_decomp}
\chi_i=\sum_{d|i}s_d.
\end{equation}
By the M\"obius inversion theorem, we can write
\[
s_i=\sum_{d|i}\mu(i)\chi_{i/d},
\]
where $\mu: \N \to \Z$ is the M\"obius function, which can be defined as follows.
If $k=p_1 \cdot \ldots \cdot p_r$ is a product of distinct primes, then $\mu(k)=(-1)^r$. If an integer $k$ contains a power of a prime, then $\mu(k)=0$. By convention $\mu(1)=1$.

It now follows from the above formula for $s_i$ that $s_i=0$ if $i$ does not divide the period $\lcm_j a_j$, and that $s_i$ is divisible by $i$, so we will write $s_i=ir_i$.

We will collect the Euler characteristics $\chi_i$ in a so-called Weil zeta function.
To manipulate some identities the following lemma is useful.
\begin{lemma}
Let $A\in Mat_{n\times n}(\C)$.
Then
$$
\det(\id -tA)=\exp\left(
\tr \log (\id-tA)
\right)
:=
\exp\left( 
-\sum_{i=1}^\infty\frac{\tr(A^i) t^i}{i}
\right)
$$
\end{lemma}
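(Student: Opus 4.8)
The plan is to reduce to the diagonalizable case by density and then compute both sides eigenvalue by eigenvalue. First I would note that both sides of the claimed identity are polynomial (in fact, the left side) and analytic (the right side, as a formal power series in $t$ with coefficients that are polynomials in the entries of $A$) functions of the entries of $A$, so it suffices to verify the identity on the dense subset of diagonalizable matrices; continuity then extends it to all of $Mat_{n\times n}(\C)$. Alternatively, and perhaps more cleanly for a survey, one can observe that both sides are invariant under conjugation $A \mapsto PAP^{-1}$, so we may as well put $A$ into upper-triangular form (Schur or Jordan), which always exists over $\C$.

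Once $A$ is upper triangular with diagonal entries (eigenvalues) $\lambda_1, \ldots, \lambda_n$, the left-hand side is immediate: $\det(\id - tA) = \prod_{j=1}^n (1 - t\lambda_j)$, since $\id - tA$ is upper triangular with diagonal entries $1 - t\lambda_j$. For the right-hand side, I would use that $\log(\id - tA)$, defined by its power series $-\sum_{i\geq 1} t^i A^i / i$, is again upper triangular with diagonal entries $\log(1 - t\lambda_j) = -\sum_{i\geq 1} (t\lambda_j)^i / i$ (because powers of an upper-triangular matrix are upper triangular, with diagonal entries the corresponding powers of the original diagonal entries). Hence $\tr\log(\id - tA) = \sum_{j=1}^n \log(1 - t\lambda_j) = -\sum_{i\geq 1} \frac{t^i}{i}\sum_{j=1}^n \lambda_j^i = -\sum_{i\geq 1} \frac{\tr(A^i) t^i}{i}$, using $\tr(A^i) = \sum_j \lambda_j^i$. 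Exponentiating, $\exp(\tr\log(\id - tA)) = \prod_{j=1}^n \exp(\log(1-t\lambda_j)) = \prod_{j=1}^n (1 - t\lambda_j)$, which matches the left-hand side.

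The only genuine subtlety — and the step I would be most careful about — is the handling of convergence versus formal power series: the identity should be read either as an identity of formal power series in $t$ over $\C$ (in which case $\log$ and $\exp$ are defined termwise and no convergence is needed, but one must check that $\exp$ and $\log$ are mutual inverses as formal series, and that $\tr\log$ of a triangular matrix is the sum of the scalar $\log$'s), or as an analytic identity valid for $|t|$ small enough that $\|tA\| < 1$, extended to all $t$ by analytic continuation since both sides are polynomials in $t$. Either reading works; I would state explicitly which one is meant (the formal power series reading is cleanest and matches the way the lemma is used, namely to collect the $\chi_i$ into a zeta function), and note that the passage from the diagonal entries of $\log(\id - tA)$ to $\tr\log(\id-tA)$ only uses that the trace is the sum of diagonal entries together with the fact that matrix multiplication preserves upper-triangularity and acts on the diagonal by componentwise multiplication.
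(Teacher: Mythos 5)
Your proof is correct and follows essentially the same route as the paper, which simply remarks that the lemma can be proved using the Jordan normal form for $A$; your reduction to upper-triangular form and eigenvalue-by-eigenvalue computation is exactly that argument spelled out, with the formal-power-series caveat being a sensible addition.
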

This lemma can be proved by using the Jordan normal form for $A$.
Define the {\bf Weil zeta function} by
$$
\zeta_h(t)=\prod_j \det(\id-th_*|_{H_j(V)})^{(-1)^{j+1}}.
$$
This is an unusual way to define this zeta function, but by applying the above lemma to each determinant in the product, we quickly obtain the standard definition as found in Milnor, \cite[Chapter 9]{Milnor:singular_points},
\[
\begin{split}
\zeta_h(t)&=\prod_j \exp\left( 
(-1)^j\sum_{i=1}^\infty \frac{\tr({h_*|_{H_j}}^i)t^i}{i}\right)=\exp\left(
\sum_{i=1}^\infty \frac{L(h^i)t^i}{i}
\right).
\end{split}
\]
Here $\chi_i=L(h^i)$ is the Lefschetz number of the iterate $h^i$.
Using the standard power series for the logarithm, this can be rewritten.
Indeed, the map $h$ is periodic with period $p=\lcm_j a_j$, so we can write
\[
\begin{split}
\sum_{j=1}^{\infty} \frac{\chi_j}{j} t^j
&=
\frac{\chi_1}{1} t^1+\frac{\chi_2}{2} t^2+\ldots +\frac{\chi_p}{p} t^p
+\frac{\chi_1}{p+1} t^{p+1}+\ldots\\
&=\frac{r_1}{1} t^1+\frac{r_1}{2} t^2+\frac{r_2}{1} t^2+\ldots +\frac{r_1}{p+1}t^{p+1}+\frac{r_1}{p+2}t^{p+2}+\frac{r_2}{(p+2)/2}t^{(p+2)/2}+\ldots \\
&=\sum_{d=1}^p \sum_{j=1}^{\infty} \frac{r_d}{j} ({t^d})^j =-\sum_{d=1}^p r_d \log(1-t^d).
\end{split}
\]
Thus we obtain the following lemma.
\begin{lemma}
The Weil zeta function satisfies
$$
\zeta_h(t)=\prod_{d|\lcm_j a_j} (1-t^d)^{-r_d}.
$$
Furthermore
\begin{equation}
\label{eq:Alexander_poly}
\Delta(t)=
\begin{cases}
\pm (t-1)^{-1}\prod_{d|p}(1-t^d)^{r_d} & n \text{ even}\\
\pm (t-1)\prod_{d|p}(1-t^d)^{-r_d} & n \text{ odd}.
\end{cases}
\end{equation}
\end{lemma}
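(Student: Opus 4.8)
The first identity is essentially already in hand. The plan is to combine the formula $\zeta_h(t)=\exp\!\left(\sum_{i\geq 1}\frac{\chi_i}{i}t^i\right)$ with the rearrangement $\sum_{j\geq 1}\frac{\chi_j}{j}t^j=-\sum_{d\mid p}r_d\log(1-t^d)$ established just above (recall that $s_d=dr_d$ vanishes unless $d\mid p$, so only such $d$ contribute), and then exponentiate the sum of logarithms term by term to read off $\zeta_h(t)=\prod_{d\mid p}(1-t^d)^{-r_d}$.

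For the Alexander polynomial I would proceed in two steps: first isolate the relevant factor of $\zeta_h$, and then convert $\det(\id-th_*|_{H_n})$ into $\Delta$. By Proposition~\ref{prop:homotopy_type_smoothed_var} the Milnor fiber $V\simeq V_\epsilon(a)$ is homotopy equivalent to a wedge of $\mu:=\prod_i(a_i-1)$ copies of $S^n$, so $H_j(V;\Q)=0$ for $j\neq 0,n$, while $H_0(V;\Q)=\Q$ carries the trivial $h_*$-action (as $V$ is connected) and $H_n(V;\Q)\cong\Q^{\mu}$. Substituting this into $\zeta_h(t)=\prod_j\det(\id-th_*|_{H_j(V)})^{(-1)^{j+1}}$ leaves only the $j=0$ and $j=n$ factors, and since the $j=0$ factor is $(1-t)^{-1}$ one gets
\[
\zeta_h(t)=(1-t)^{-1}\cdot\det\bigl(\id-th_*|_{H_n}\bigr)^{(-1)^{n+1}}.
\]
Next I would set $M:=h_*|_{H_n}$, which is invertible because $h$ is periodic (Lemma~\ref{lemma:observations}); then $\deg\Delta=\mu=\rk H_n(V)$ and $\det(\id-tM)=t^{\mu}\Delta(1/t)$, so the symmetry $\Delta(1/t)\,t^{\deg\Delta}=\pm\Delta(t)$ from Lemma~\ref{lemma:observations} turns this into $\det(\id-tM)=\pm\Delta(t)$. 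Plugging this back into the displayed identity, solving for $\Delta(t)$, splitting according to the parity of $n$ (which fixes whether the exponent $(-1)^{n+1}$ is $-1$ or $+1$), and using $(1-t)^{-1}=-(t-1)^{-1}$ to fold an extra sign into the overall $\pm$, yields precisely the two lines of \eqref{eq:Alexander_poly}.

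I do not anticipate a real obstacle: once the homotopy type of $V$ (Proposition~\ref{prop:homotopy_type_smoothed_var}) and the periodicity and symmetry facts (Lemma~\ref{lemma:observations}) are invoked, everything reduces to formal manipulation of power series and characteristic polynomials. The only thing needing care is the sign and degree bookkeeping — that $h_*$ acts trivially on $H_0$ of the connected space $V$, that $\deg\Delta$ genuinely equals $\rk H_n(V)=\mu$ (which is why invertibility of $M$ is used), and that the interchange of $(1-t)^{\pm1}$ with $(t-1)^{\pm1}$ is harmless because the conclusion is asserted only up to sign.
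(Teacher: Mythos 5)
Your proposal is correct and follows essentially the same route as the paper: the first identity is exactly the exponentiation of the logarithmic rearrangement carried out just before the lemma, and the second is obtained, as the paper indicates, by isolating the $H_0$ and $H_n$ factors of $\zeta_h$ and invoking the symmetry $\Delta(1/t)\,t^{\deg\Delta}=\pm\Delta(t)$ from Lemma~\ref{lemma:observations}. You merely make explicit the bookkeeping (vanishing of $H_j(V)$ for $j\neq 0,n$, the identity $\det(\id-tM)=t^{\mu}\Delta(1/t)$, and the sign absorption) that the paper leaves to the reader, and your sign and parity analysis checks out.
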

For the last observation, we use the symmetry property of the Alexander polynomial mentioned in Lemma~\ref{lemma:observations}.

Since $h_*$ is periodic, $h_*$ is diagonalizable over $\C$.
This implies that $\dim \ker(\id -h_*|_{H_n})$  equals the multiplicity of the (generalized) eigenvalue $1$ appearing in the characteristic polynomial $\Delta$.
In other words $\kappa=\dim \ker(\id -h_*|_{H_n})$ is equal to the maximal degree for which $t-1$ divides $\Delta$.
From Formula~\eqref{eq:Alexander_poly} for $\Delta$, we directly conclude that
$$
\kappa=\sum_{d|\lcm_i a_i} r_d -1.
$$
Together with the M\"obius inversion formula and the formula for the Milnor number, this is enough information to obtain an explicit formula for $\kappa$.
To obtain the nicer formula~\eqref{eq:rk_middle_dim_hom}, we need to work a little more.

For $j\in \Z_{>0}$, define the element $E_j$ in the group ring $\Q[\C^{*}]$ by
$$
E_j=\frac{1}{j}\left( [1]+[\zeta_j]+\ldots +[\zeta_j^{j-1}]\right) ,
$$
where $\zeta_j=e^{2\pi i/j}$ is a $j$-th root of unity.
In the group ring $\Q[\C^{*}]$ the following identity for these $E_j$'s holds,
$$
E_k E_\ell =E_{\lcm(k,\ell)}.
$$
Define the function $\delta$ by
$$
\delta(a_0,\ldots,a_m)=(-1)^{m+1}\left( 
1-\sum_{j\geq 1} s_j(a_0,\ldots,a_m)E_j
\right)
.
$$
Note that the coefficient of $[1]$ in this expression for $\delta$ equals the multiplicity $\kappa$ we are interested in, up to sign.
The following lemma from \cite{Milnor_Orlik:weighted} shows that $\delta$ has nice multiplicative properties.
\begin{lemma}[Milnor-Orlik]
In $\Q[\C^*]$ the following identity holds true.
\[
\delta(a_0,\ldots,a_m) \cdot \delta(a_0',\ldots,a_{m'}')
=
\delta(a_0,\ldots,a_m,a_0',\ldots,a_{m'}')
\]
\end{lemma}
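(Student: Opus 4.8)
The plan is to reduce the claimed identity to a statement about the generating functions $\zeta_\ell$ attached to each exponent list, and then to verify it coefficient by coefficient in $\Q[\C^*]$. The key point is that $\delta(a_0,\ldots,a_m)$ is, up to the sign $(-1)^{m+1}$, obtained from the sequence of Euler characteristics $\chi_\ell$ (equivalently the integers $s_j$) via the formula $\delta(a_0,\ldots,a_m)=(-1)^{m+1}\bigl(1-\sum_{j\geq 1}s_j E_j\bigr)$. So first I would rewrite $\delta$ in terms of the $\chi_\ell$'s directly. From $\chi_i=\sum_{d\mid i}s_d$ and the orthogonality relation expressing $E_j$ in the group ring, one checks that $\sum_{j\geq 1}s_j E_j$ can be resummed as a single expression in the $\chi_i$. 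Concretely, using $E_k E_\ell = E_{\lcm(k,\ell)}$ together with $\chi_\ell=\sum_{d\mid\ell}s_d$, one obtains a closed formula of the shape $1-\sum_j s_j E_j = \prod_k(\text{something involving }[\zeta]'s)$, or more usefully, a formula for the coefficient of each $[\eta]$, $\eta\in\C^*$, purely in terms of the $\chi_\ell$.

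Next I would use the multiplicative property of the Euler characteristics themselves. Recall from \eqref{eq:multiplicative_property} that $1-\chi_\ell(a_0,\ldots,a_m)=\prod_{a_j\mid\ell}(1-a_j)$; this is manifestly multiplicative in the sense that concatenating two exponent lists multiplies the quantities $1-\chi_\ell$. The strategy is then: express $\delta$ so that it becomes (a sign times) a product over $\ell$ — or over roots of unity $\eta$ — of local factors each depending only on $1-\chi_\ell$, so that concatenation of lists corresponds to multiplying these local factors, which is exactly what $E_k E_\ell=E_{\lcm(k,\ell)}$ is designed to encode. The cleanest route is probably to pass to the associated Weil-zeta-type generating function $\tilde\zeta(t)=\exp\bigl(\sum_{i\geq 1}\chi_i t^i/i\bigr)=\prod_{d}(1-t^d)^{-r_d}$ (Lemma in the excerpt), note that $\tilde\zeta$ for a concatenated list is the product of the two $\tilde\zeta$'s because the $\chi_i$'s... wait — they are \emph{not} additive, they satisfy the multiplicative relation on $1-\chi_i$ — so instead one works with the ``plethystic logarithm''-type object whose exponential is $\tilde\zeta$, and matches the $E_j$'s against the $r_d$'s. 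Tracking this carefully, $\sum_j s_j E_j$ is identified with an element of $\Q[\C^*]$ built from the $r_d$, and the concatenation formula \eqref{eq:multiplicative_property} for $1-\chi$ translates, after taking the appropriate log-type transform, into the additive-over-$E_j$ relation that makes $\delta(a)\delta(a')=\delta(a,a')$ drop out, the sign $(-1)^{m+1}(-1)^{m'+1}=(-1)^{(m+m'+1)+1}$ being consistent with a list of $m+m'+2$ exponents (indices $0,\ldots,m+m'+1$).

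The main obstacle I expect is bookkeeping: getting the precise dictionary between the three descriptions of $\delta$ — via $s_j$ and the $E_j$, via the $r_d$ and factors $(1-t^d)$, and via the raw $\chi_\ell$ — and in particular making sure the multiplicativity statement \eqref{eq:multiplicative_property}, which is a product formula for $1-\chi_\ell$ rather than for $\chi_\ell$, is fed through the right transform. The identity $E_kE_\ell=E_{\lcm(k,\ell)}$ is exactly the algebraic shadow of $\lcm$ appearing in \eqref{eq:multiplicative_property}, so the proof should be a short manipulation once the transform is set up correctly; the risk is sign errors and off-by-one errors in the exponent count. An alternative, more pedestrian approach that sidesteps the generating-function juggling: verify the identity by evaluating the coefficient of each $[\eta]$ on both sides. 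For a primitive $q$-th root of unity $\eta$, the coefficient of $[\eta]$ in $\sum_j s_j E_j$ is $\sum_{q\mid j}s_j/j$, which by Möbius inversion is an explicit linear combination of the $\chi_i/i$ with $q\mid i$; then plug in \eqref{eq:multiplicative_property} for both lists and for the concatenation and check the resulting elementary identity among finite sums. I would present whichever of these two is shorter once the details are in hand, but expect the generating-function version to be the cleaner write-up.
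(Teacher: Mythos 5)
Your core strategy is the paper's: feed the multiplicative relation $(1-\chi_j'')=(1-\chi_j)(1-\chi_j')$ from Formula~\eqref{eq:multiplicative_property} through the divisor-sum relation $\chi_j=\sum_{i\mid j}s_i$ and then use $E_kE_\ell=E_{\lcm(k,\ell)}$; the sign count $(-1)^{m+1}(-1)^{m'+1}=(-1)^{(m+m'+1)+1}$ you give is also exactly right. The detour through the Weil zeta function is unnecessary (and, as you notice mid-stream, the $\chi_i$ are not additive, so that transform buys you nothing here); the paper never invokes it.

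The genuine gap is that you stop just before the one step that carries all the content. From $\chi_j''=\chi_j+\chi_j'-\chi_j\chi_j'$ and the divisor sums one gets
$\sum_{i\mid j}s_i''=\sum_{i\mid j}s_i+\sum_{i\mid j}s_i'-\sum_{k\mid j}\sum_{\ell\mid j}s_ks_\ell'$,
and one must extract from this (by induction over divisors, or equivalently M\"obius inversion) the convolution identity
$s_j''=s_j+s_j'-\sum_{\lcm(a,b)=j}s_as_b'$.
Only after this is established does multiplying by $E_j$, summing over $j$, and applying $E_aE_b=E_{\lcm(a,b)}$ give $1-\sum_n s_n''E_n=(1-\sum_a s_aE_a)(1-\sum_b s_b'E_b)$, which is the lemma. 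Your ``pedestrian'' alternative --- extracting the coefficient of each $[\eta]$ for $\eta$ a primitive $q$-th root of unity and comparing via M\"obius inversion --- would work and is essentially a re-derivation of the same convolution identity one coefficient at a time, but as written neither route is executed, and ``a short manipulation once the transform is set up correctly'' is precisely where the proof lives.
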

\begin{proof}
To abbreviate the notation, we write $\chi_j=\chi_j(a_0,\ldots,a_m)$, $\chi_j'=\chi_j(a_0',\ldots,a_{m'}')$ and $\chi_j''=\chi_j(a_0,\ldots,a_m,a_0',\ldots,a_{m'}')$. The variables $s_j$, $s_j'$ and $s_j''$ are defined similarly.
With formula~\eqref{eq:multiplicative_property} for $\chi_j$, we find
$$
(1-\chi_j'')=(1-\chi_j)(1-\chi_j'),
$$
so $\chi_j''=\chi_j+\chi_j'-\chi_j\chi_j'$ holds. By substituting formula~\eqref{eq:chi_decomp} for the $\chi$'s, we find
$$
\sum_{i|j}s_i''=\sum_{i|j}s_i+\sum_{i|j}s_i'-\sum_{k|j}\sum_{\ell|j}s_k s_\ell '.
$$
By an induction argument this can be reworked into
$$
s_j''=s_j +s_j' -\sum_{\lcm ( a,b )=j} s_a s_b'.
$$
By multiplying with $E_j$ and using the above identities, we obtain
\begin{equation}
\label{eq:delta_expression_mult}
1-\sum_n s_n''E_n=(1-\sum_a s_a E_a)(1-\sum_b s_b' E_b),
\end{equation}
which is the identity we were looking for.
\end{proof}
By an induction on the number of terms on the right-hand side of Equation~\eqref{eq:delta_expression_mult}, we find
$$
\delta(a_0,\ldots,a_m)=\prod_i (a_i E_{a_i}-1)=\sum_{I_s \subset I} (-1)^{m+1-s} a_{i_1}\cdot \ldots \cdot a_{i_s}\cdot E_{\lcm_{i\in I_s}a_i}.
$$
The coefficient of $[1]$ on the right-hand side equals $\sum_{I_s \subset I} (-1)^{m+1-s} \frac{a_{i_1}\cdot \ldots \cdot a_{i_s}}{\lcm_{i\in I_s}a_i}$, and this is the multiplicity of $t-1$ in $\Delta$, so we have proved formula~\eqref{eq:rk_middle_dim_hom}.

\section{Covering tricks, open books and Lefschetz fibrations}
\label{sec:decompositions}

\subsection{Covering tricks and nesting of Brieskorn manifolds}
\label{sec:covering_tricks}
By definition, a Brieskorn manifold $\Sigma(a_0,\ldots,a_n)$ is a submanifold of $S^{2n+1}$. Besides the standard embedding, we can also take
\begin{eqnarray*}
\Sigma(a_0,\ldots,a_n) & \longrightarrow & \Sigma(a_0,\ldots,a_n,1)\cong S^{2n+1} \\
(z_0,\ldots,z_n) & \longmapsto & (z_0,\ldots,z_n,0)
\end{eqnarray*}
Similarly, we can embed $\Sigma(a_0,\ldots,a_n)$ as a Brieskorn submanifold in $\Sigma(a_0,\ldots,a_n,a_{n+1})$ by using the same type of map.
We hence consider the relation between $\Sigma(a_0,\ldots,a_n,1)$ and $\Sigma(a_0,\ldots,a_n,a_{n+1})$.
\begin{proposition}
The Brieskorn manifold $\Sigma(a_0,\ldots,a_n,a_{n+1})$ is an $a_{n+1}$-fold branched cover over $\Sigma(a_0,\ldots,a_n,1)$.
Furthermore, $\Sigma(a_0,\ldots,a_n,1)$ is diffeomorphic to a sphere.
\end{proposition}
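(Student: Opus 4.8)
The plan is to handle the two assertions separately; the second is essentially immediate, and the first needs only a small amount of care. For the sphere statement, observe that $\Sigma(a_0,\ldots,a_n,1)$ is the link of $q(z_0,\ldots,z_n,z_{n+1})=z_0^{a_0}+\cdots+z_n^{a_n}+z_{n+1}$, and $dq|_0=dz_{n+1}\neq 0$, so $V_0(q)$ is smooth at $0$ — locally the graph $z_{n+1}=-\sum_j z_j^{a_j}$ over a ball in $\C^{n+1}$. Intersecting transversally with a small sphere about $0$ in $\C^{n+2}$ gives $S^{2n+1}$, and since $q$ is weighted homogeneous this is diffeomorphic to the intersection with the unit sphere; this is just the observation, already made above, that the link of a smooth point of a hypersurface is the standard sphere.

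For the branched cover I would use $\phi\colon\C^{n+2}\to\C^{n+2}$, $\phi(z_0,\ldots,z_n,z_{n+1})=(z_0,\ldots,z_n,z_{n+1}^{a_{n+1}})$, which sends $V_0(a_0,\ldots,a_n,a_{n+1})$ into $V_0(a_0,\ldots,a_n,1)$ since $\sum_j z_j^{a_j}+z_{n+1}^{a_{n+1}}=\sum_j z_j^{a_j}+w$ with $w=z_{n+1}^{a_{n+1}}$. The one genuinely delicate point, which I expect to be the main obstacle, is that $\phi$ does not preserve the unit sphere and so is not directly a map of Brieskorn manifolds. To get around this I would pass to the model $\Sigma(p)\cong(V_0(p)\setminus\{0\})/\R_{>0}$, valid for any weighted homogeneous $p$ with positive integer weights $(w_0,\ldots,w_m)$, where $\R_{>0}$ acts freely by $t\cdot(z_0,\ldots,z_m)=(t^{w_0}z_0,\ldots,t^{w_m}z_m)$; since $t\mapsto\sum_k t^{2w_k}|z_k|^2$ is a strictly increasing bijection of $\R_{>0}$ for every fixed $z\neq 0$, each orbit meets the unit sphere exactly once and transversally, so this quotient is diffeomorphic to $\Sigma(p)$ with its usual smooth structure.

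With $M=\lcm_{0\le j\le n}a_j$ and $N=\lcm_{0\le j\le n+1}a_j$, the relevant weights are $(N/a_0,\ldots,N/a_n,N/a_{n+1})$ upstairs and $(M/a_0,\ldots,M/a_n,M)$ downstairs, and a short computation gives $\phi(t\cdot v)=t^{N/M}\cdot\phi(v)$ with $N/M=a_{n+1}/\gcd(M,a_{n+1})$ a positive integer. Thus $\phi$ is equivariant up to the automorphism $t\mapsto t^{N/M}$ of $\R_{>0}$ and descends to a smooth map $\bar\phi\colon\Sigma(a_0,\ldots,a_n,a_{n+1})\to\Sigma(a_0,\ldots,a_n,1)$ (equivalently, compose $\phi$ with radial reprojection to the unit sphere).

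It remains to read off the branching. Over a point with $w\neq 0$ (where $z\neq 0$ automatically), $\phi$ has exactly the $a_{n+1}$ preimages $(z,\zeta)$ with $\zeta^{a_{n+1}}=w$, lying on distinct $\R_{>0}$-orbits, so $\bar\phi$ is an $a_{n+1}$-sheeted covering over the complement of $\{z_{n+1}=0\}$. On $\{z_{n+1}=0\}$ the map is one-to-one and in the normal direction is the standard model $\zeta\mapsto\zeta^{a_{n+1}}$, and $\{z_{n+1}=0\}\cap\Sigma(a_0,\ldots,a_n,1)$ is precisely $\Sigma(a_0,\ldots,a_n)$ (the binding of the Milnor open book on $S^{2n+1}$), and likewise upstairs. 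Hence $\bar\phi$ is an $a_{n+1}$-fold cyclic branched cover branched over $\Sigma(a_0,\ldots,a_n)$. Everything past the quotient model and the equivariance check is bookkeeping, together with the standard local normal form for cyclic branched covers.
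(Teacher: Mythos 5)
Your proof is correct and follows essentially the same route as the paper: the paper uses the very same map $(z_0,\ldots,z_n,z_{n+1})\mapsto(z_0,\ldots,z_n,z_{n+1}^{a_{n+1}})$, makes the very same observation that it sends variety to variety but not link to link, and then defers the correction to Orlik. Your $\R_{>0}$-quotient/radial-reprojection argument supplies precisely the detail the paper outsources, and it checks out, including the equivariance $\phi(t\cdot v)=t^{N/M}\cdot\phi(v)$ with $N/M=a_{n+1}/\gcd(M,a_{n+1})\in\Z_{>0}$, the count of sheets away from $\{z_{n+1}=0\}$, and the identification of the branch locus with $\Sigma(a_0,\ldots,a_n)$.
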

Intuitively, this is almost obvious. We ``almost'' obtain a branched cover by considering
\[
\begin{split}
(z_0,\ldots,z_n,z_{n+1}) & \longmapsto (z_0,\ldots,z_n,z_{n+1}^{a_{n+1}}).
\end{split}
\]
Unfortunately, this map only sends the Brieskorn variety $V(a_0,\ldots,a_{n},a_{n+1})$ to the Brieskorn variety $V(a_0,\ldots,a_{n},1)$.
It does \emph{not} send the Brieskorn manifold $\Sigma(a_0,\ldots,a_{n},a_{n+1})$ to $\Sigma(a_0,\ldots,a_{n},1)$.
This can be corrected for, see \cite[Section 5]{Orlik:Brieskorn}, but we just need the existence of a branched cover.
We now see that Brieskorn manifolds are stacked in a tower of branched covers of the following form,
\[
\entrymodifiers={+!!<0pt,\fontdimen22\textfont2>}
\xymatrix@R=20pt@C=15pt{
& & \Sigma(a_0,a_1,a_2,a_3) \ar[r] \ar[d] & \Sigma(a_0,a_1,a_2,a_3,1)\cong S^7 \\
& \Sigma(a_0,a_1,a_2) \ar[r] \ar[d] & \Sigma(a_0,a_1,a_2,1)\cong S^5 & \\
\Sigma(a_0,a_1) \ar[r] & \Sigma(a_0,a_1,1)\cong S^3 & & \\
}
\]
The vertical arrows are branched covers, and the horizontal arrows give codimension $2$ embeddings.
Furthermore, these embeddings give bindings of open books as we shall see.

\subsection{Open books}
\label{sec:open_books}

We recall the notion of an open book.
\begin{definition}
  A \textbf{(concrete) open book} on a manifold $M$ is a pair $(B,\Theta)$, where
\begin{enumerate}
\item{} the manifold $B$ is a codimension $2$ submanifold of $M$ with trivial
  normal bundle, and
\item{} the map $\Theta:\, M-B\to S^1$ endows $M - B$ with the structure of a
  fiber bundle over $S^1$ such that $\Theta$ gives the angular
  coordinate of the $D^2$--factor on a neighborhood $B\times D^2$ of
  $B$.
\end{enumerate}
\end{definition}

The set $B$ is called the \textbf{binding} of the open book. A fiber
of $\Theta$ together with the binding is called a \textbf{page} of the
open book.
Alternatively, one can ask for a map $\tilde \Theta:M\to \C$ such that $B=\tilde \Theta^{-1}(0)$, and put $\Theta=\frac{\tilde \Theta}{|\tilde \Theta|}$.
Figure~\ref{fig:open_book} explains the name ``open book''.
\begin{figure}[htp]
\def\svgwidth{0.2\textwidth}%
\begingroup\endlinechar=-1
\resizebox{0.2\textwidth}{!}{%
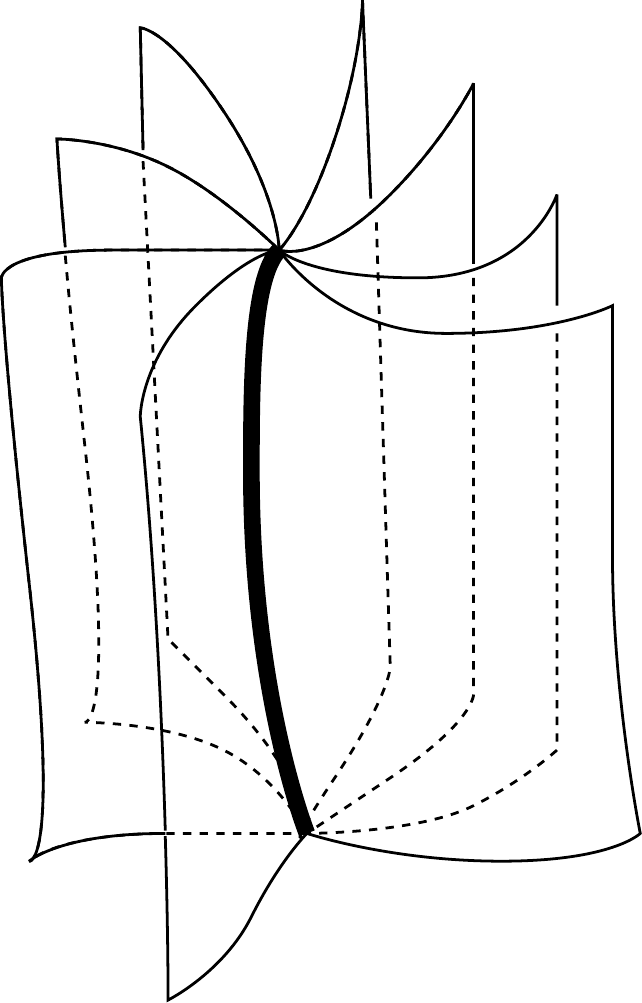%
}\endgroup
\caption{An open book with its binding}
\label{fig:open_book}
\end{figure}

Some historical remarks are in order.
The name ``open book'' was introduced by Winkelnkemper in \cite{Winkelnkemper:openbook}, although the notion had been around for much longer.
For instance, Alexander in \cite{Alexander} had shown that every compact, oriented $3$-manifold admits such an open book without using any special word for this notion.
Even earlier, Poincar\'e and Birkhoff \cite{Birkhoff:dynamical} had defined the notion of a global surface of section, a surface that is transverse to the flow away from the boundary and such that all flow lines return to the surface.

In singularity theory, the notion of open book comes under the name Milnor fibration, due to Milnor's work \cite{Milnor:singular_points}.
Independently, this notion was used
by several Japanese researchers under the name spinnable structure, starting with Tamura, \cite{Tamura:spinnable}; other words describing this notion include fibered knots.

By a result of Quinn, \cite{Quinn}, invoking the $s$-cobordism theorem it is now known that every odd-dimensional manifold admits an open book decomposition.

In contact topology the notion of open book was first used fruitfully by Thurston and Winkelnkemper in \cite{Thurston}. They used open books to show that every compact, orientable $3$-manifold admits a contact structure.
Later on, it was realized by Loi-Piergallini that Stein-fillable contact $3$-manifolds admit open books with a special monodromy, namely the product of right-handed Dehn twists, \cite{Loi:Stein}. We will explain the notions of  monodromy and Dehn twists below in more detail.
Giroux brought open books to the forefront in contact topology in his work \cite{Giroux:ICM2002}. See Theorem~\ref{thm:giroux_open_book} below.

One nice aspect of Brieskorn manifolds is that the above structure can be made very explicit.
Indeed, we obtain an open book by projecting to one of the coordinates.
\begin{lemma}
\label{lemma:Brieskorn_open_book}
Let $\Sigma(a_0,\ldots,a_n)$ be a Brieskorn manifold.
Then the map
\[
\begin{split}
\tilde \Theta:~(z_0,\ldots,z_n) & \longmapsto z_n
\end{split}
\]
defines a open book with binding $B=\{z_n=0\}\cong \Sigma(a_0,\ldots,a_{n-1})$ and page $V_\epsilon(a_0,\ldots,a_{n-1})$.
\end{lemma}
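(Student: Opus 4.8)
The plan is to verify the three defining properties of an open book in turn: that $B=\tilde\Theta^{-1}(0)$ is a codimension-$2$ submanifold with trivial normal bundle near which $\tilde\Theta$ serves as a $D^2$-coordinate; that $\Theta=\tilde\Theta/|\tilde\Theta|$ fibres $\Sigma(a)\setminus B$ over $S^1$; and that the binding and the page are the asserted Brieskorn manifold and variety. For the binding, setting $z_n=0$ in the equations $|z|^2=1$, $\sum_{j=0}^n z_j^{a_j}=0$ cutting out $\Sigma(a)$ visibly produces a copy of $\Sigma(a_0,\dots,a_{n-1})$, embedded by $(z_0,\dots,z_{n-1})\mapsto(z_0,\dots,z_{n-1},0)$. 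To see this is a smooth codimension-$2$ submanifold I would check that along it the differentials $d|z|^2,d\re p,d\im p,d\re z_n,d\im z_n$ are independent: writing $p=q(z_0,\dots,z_{n-1})+z_n^{a_n}$ one has $dp=dq$ along $\{z_n=0\}$ (or $dq+dz_n$ when $a_n=1$) and $d|z|^2=d(|z_0|^2+\dots+|z_{n-1}|^2)$ there, both involving only the first $n$ coordinates, so independence reduces to smoothness of $\Sigma(a_0,\dots,a_{n-1})$. It then follows that $\tilde\Theta=z_n$ is a submersion near $B$ with $B=\tilde\Theta^{-1}(0)$, and the local normal form of a submersion near the (compact) fibre $B$ yields a tubular neighbourhood $B\times D^2_\delta$ on which $\tilde\Theta$ is the projection to $D^2_\delta$; in particular the normal bundle is trivial and $\Theta$ is the $D^2$-angular coordinate there.

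For the fibration I would use the periodic Reeb flow of Proposition~\ref{prop:BW_contact_form}: its generating vector field $R$ is tangent to $\Sigma(a)$, nowhere zero, and rotates the coordinate $z_n$ at a constant \emph{nonzero} angular speed, so $d\Theta(R)\neq0$ at every point of $\Sigma(a)\setminus B$. Since this flow is complete (indeed periodic), flowing a page $\Theta^{-1}(\phi_0)$ along $R$ for the time it takes $\arg z_n$ to traverse $S^1$ once sweeps out $\Sigma(a)\setminus B$ bijectively and compatibly with $\Theta$; this exhibits $\Sigma(a)\setminus B$ as the mapping torus of a return map of the Reeb flow on $\Theta^{-1}(\phi_0)$, and hence $\Theta$ is a locally trivial fibre bundle over $S^1$ (with, incidentally, finite-order monodromy). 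I should emphasise that this step really does need the Reeb flow rather than Ehresmann applied directly to $\tilde\Theta$: the map $\tilde\Theta=z_n$ fails to be a submersion of $\Sigma(a)$ exactly where $|z_n|$ attains its maximum, which occurs away from $B$.

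It then remains to identify one fibre, say $P:=\Theta^{-1}(1)=\{z\in\Sigma(a):z_n\in\R_{>0}\}$. Taking the positive real $a_n$-th root in $z_n^{a_n}=-q(z_0,\dots,z_{n-1})$ identifies $P$ diffeomorphically with $\{z'\in\C^n:q(z')\in\R_{<0},\ |z'|^2+(-q(z'))^{2/a_n}=1\}$. With $m:=\lcm(a_0,\dots,a_{n-1})$, the weighted $\C^*$-action $\rho_\lambda(z')=(\lambda^{m/a_0}z'_0,\dots,\lambda^{m/a_{n-1}}z'_{n-1})$ multiplies $q$ by $\lambda^m$, and $(\lambda,\zeta)\mapsto\rho_\lambda(\zeta)$ is a diffeomorphism from $(0,\infty)\times V_{-1}(a_0,\dots,a_{n-1})$ onto $\{z'\in\C^n:q(z')\in\R_{<0}\}$. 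In these coordinates the sphere condition becomes $\sum_{j}\lambda^{2m/a_j}|\zeta_j|^2+\lambda^{2m/a_n}=1$; for fixed $\zeta$ the left-hand side increases strictly from $0$ to $\infty$ as $\lambda$ runs over $(0,\infty)$, so there is a unique, smoothly varying solution $\lambda=\Lambda(\zeta)$, and $\zeta\mapsto\rho_{\Lambda(\zeta)}(\zeta)$ is a diffeomorphism from $V_{-1}(a_0,\dots,a_{n-1})$ onto $P$. Finally $V_{-1}(a_0,\dots,a_{n-1})$ is diffeomorphic to $V_\epsilon(a_0,\dots,a_{n-1})$ for every $\epsilon\neq0$ via a further rescaling $\rho_\mu$, which completes the proof.

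The step I expect to be the main obstacle is this identification of the page: because the monomials $z_j^{a_j}$ have different degrees, no single rescaling normalizes the value of $q$, so one is forced through the weighted $\C^*$-action and must then verify that the sphere constraint cuts out a graph over $V_{-1}(a_0,\dots,a_{n-1})$. The failure of $\tilde\Theta$ to be a global submersion off $B$, noted above, is a smaller but related point that makes the Reeb flow indispensable for the fibration step.
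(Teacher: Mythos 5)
Your argument is correct, and it is genuinely different from what the paper does: the paper states this lemma without proof, deferring implicitly to Milnor's fibration theorem (the identification of the fibre with the smoothing is quoted elsewhere as \cite[Theorem 5.11]{Milnor:singular_points}) and to the Lefschetz-fibration/Morsification picture of Section~\ref{sec:lefschetz}. Your proof instead exploits weighted homogeneity twice, and each use is a real simplification over the general Milnor argument. For the fibration step, Milnor has to integrate a carefully constructed vector field $v$ with $\mathrm{Re}\,\langle v, i\,\overline{\nabla\log p}\rangle>0$ to trivialize $\Theta=p/|p|$; you get such a vector field for free from the periodic Reeb flow of Proposition~\ref{prop:BW_contact_form}, which rotates $\arg z_n$ at constant speed, and the mapping-torus description (with finite-order return map) drops out immediately. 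Your observation that $\tilde\Theta=z_n$ itself is not a submersion away from $B$ — so that one must fibre $\Theta$ rather than $\tilde\Theta$ and cannot simply invoke Ehresmann — is exactly the subtlety Milnor's curve-integration is designed to handle, and it is good that you flagged it. For the page, Milnor's identification of the fibre with the smoothing $V_\epsilon$ is a nontrivial theorem in general; your graph-plus-weighted-$\R_{>0}$-action argument (solving $\sum_j\lambda^{2m/a_j}|\zeta_j|^2+\lambda^{2m/a_n}=1$ uniquely for $\lambda$ by monotonicity) proves it directly in the Brieskorn case and is correct as written. The only caveats are cosmetic: for $a_n=1$ the submersion computation at the binding needs the extra half-line you parenthesize, and the ``page'' of the lemma is, strictly, the compact fibre together with the binding while $V_\epsilon$ is the open affine variety — but this looseness is already present in the statement of the lemma itself.
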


Given a concrete open book $(B,\Theta)$ on a manifold $M$, we can define a monodromy by the following procedure.
Take a Riemannian metric on $M-B$ such that, near $B$, the pages are orthogonal to the vector field $\partial_\theta$ on $B\times (D^2-\{ 0 \})$, where $(r,\theta)$ are polar coordinates on $D^2-\{ 0 \}$.
This uses condition~(ii) of the definition of a concrete open book.
Define the vertical space $Vert$ to be the tangent space to the fibers of the projection $\Theta: M-B \to S^1$.
Form a connection on $T(M-B)$ by declaring the horizontal space $Hor$ to be the orthogonal complement of $Vert$ with respect to the chosen metric.

Now take the path $t\mapsto e^{it}$ in $S^1$, and lift its tangent vector field to a horizontal vector field $X$ on $M-B$.
Near the binding, this horizontal lift $X$ is given by $\partial_\theta$.
The time $2\pi$-flow of $X$, denoted by $h$, defines therefore a diffeomorphism of the fiber $F=\Theta^{-1}(1)$ which is the identity near the binding.
We call this map $h$ the {\bf monodromy} of the open book, and this is well-defined up to conjugation and isotopy.

\begin{remark}
\label{rem:monodromy}
Contrast this with the earlier construction used in Section~\ref{sec:homology_via_Milnor_fibration} or with the following construction.
Consider $M-B$ as a fiber bundle over $S^1$ with fiber $F$.
Remove one fiber to get a fiber bundle over an interval, which is trivial.
Hence we obtain a gluing map, say $\tilde h$ by which we can recover $M-B=F\times I/(\tilde h(x),0)\sim(x,1)$ up diffeomorphism.

This gluing map is similar to the monodromy, but it doesn't fix the boundary.
From the point of view of contact geometry (but even from a topological viewpoint) a lot of information is lost in this construction.
We will discuss how to get the monodromy in the contact setting in the next section.
\end{remark}

Going back to the earlier construction of the monodromy, we get a pair $(F,h)$ which actually determines the concrete open book $(B,\Theta)$ on $M$.
This leads to the following definition.
\begin{definition}
An {\bf abstract open book} is a pair $(F,h)$, where $F$ is a smooth manifold with boundary, and $h:F\to F$ a diffeomorphism that is the identity in a neighborhood of the boundary of $F$.
\end{definition}
As claimed, we can reconstruct $M$ as
$$
M=B\times D^2 \cup_{\partial} F\times I/(h(x),t)\sim(x,t+2\pi)
,
$$
where $B=\partial F$.

\subsection{Contact open books and monodromy}
\label{sec:contact_open_book}
We begin by discussing some orientation issues.
Suppose $M$ is an oriented manifold with a concrete open book $(B,\Theta)$.
We will give $S^1$ its natural orientation as the boundary of the unit disk in the complex plane, and this gives each page $F$ an induced orientation by the convention that the orientation of $M-B$ as a bundle over $S^1$ matches the one coming from $M$; a different choice of orientation of $S^1$ will affect this orientation on the page and the other orientations discussed below. We will stick to the above natural choice.

In the following, we will have a symplectic form $\omega$ on each page.
If the orientation of a page $F$ induced by the open book structure matches the orientation as a symplectic manifold, we call the symplectic form $\omega$ on $F$ {\bf positive}.
We shall orient the binding $B$ as the boundary of a page $F$ using the outward normal.
If this orientation matches the one coming from a contact form $\alpha$, meaning the orientation from the volume form $\alpha\w d\alpha^n$, then we say that $\alpha$ induces a {\bf positive contact structure}.
We now come to Giroux' definition of a contact structure that is supported by an open book.
\begin{definition}
  A positive contact structure $\xi$ on an oriented manifold $M$ is
  said to be \textbf{carried by an open book} $(B, \Theta)$ if $\xi$
  admits a defining contact form $\alpha$ satisfying the following conditions.
  \begin{enumerate}
  \item{} the form $\alpha$ induces a positive contact structure on $B$, and
  \item{} the form $d\alpha$ induces a positive symplectic structure on each
    fiber of $\Theta$.
  \end{enumerate}
  A contact form $\alpha$ satisfying these conditions is said to be
  \textbf{adapted} to $(B, \Theta)$.
\end{definition}
Now let $(M,\xi)$ be a contact manifold carried by an open book $(B,\Theta)$ with adapted contact form $\alpha$.
Define a connection by splitting $T(M-B)$ into $Vert\oplus Hor$, where 
\begin{itemize}
\item the bundle $Vert=TF$, the tangent space to the fiber.
\item the bundle $Hor=\{ X\in T(M-B)~|~i_X d\alpha=0 \}$, so the horizontal space is spanned by the Reeb field.
\end{itemize}
We claim that this is a symplectic connection, meaning that the holonomy along a path is a symplectomorphism.
To see why, observe that with the Cartan formula $\mathcal L_X d\alpha=0$, so the flow of $X$ induces a symplectomorphism between fibers.

Now take the path $t\mapsto e^{it}$ in $S^1$, and compute the holonomy of this path by lifting the tangent vector $\partial_\theta$ to a horizontal vector field $X$ on $M-B$.
Denote the time $2\pi$-flow of this vector field by $\tilde \Psi$.
\begin{proposition}
The map $\tilde \Psi:\Theta^{-1}(1)\cong F\to \Theta^{-1}(1)$ is a symplectomorphism of $(F,d\alpha)$.
\end{proposition}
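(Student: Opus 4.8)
The plan is to show that the horizontal lift $X$ of $\partial_\theta$ preserves the two-form $d\alpha$ on $M-B$, and then to obtain the statement by restricting to a single page.

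First I would use the defining property of the connection: since $Hor=\{X\in T(M-B)~|~i_X d\alpha=0\}$, the horizontal lift satisfies $i_X d\alpha=0$ identically on $M-B$. As $d\alpha$ is closed, the Cartan formula gives
\[
\mathcal L_X d\alpha=i_X\, d(d\alpha)+d(i_X d\alpha)=0,
\]
so the (local) flow $\varphi_t$ of $X$ satisfies $\varphi_t^*\, d\alpha=d\alpha$ wherever it is defined. This is the only computation in the proof.

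Next I would check that $\tilde\Psi=\varphi_{2\pi}$ is a well-defined self-map of the page $F=\Theta^{-1}(1)$. Because $X$ is a lift of $\partial_\theta$, the projection $\Theta$ carries each integral curve of $X$ to the rotation loop $t\mapsto e^{it}$ on $S^1$, which has period $2\pi$; hence an integral curve starting on $\Theta^{-1}(1)$ returns to $\Theta^{-1}(1)$ at time $2\pi$, as long as it exists that long. Completeness up to time $2\pi$ follows from the structure near the binding, where $X$ coincides with $\partial_\theta$ on $B\times(D^2-\{0\})$, whose flow plainly exists for all time and never reaches $B$; on the complement of a fixed neighbourhood of $B$ the relevant region is compact (assuming, as usual, that $M$ is closed), so the flow is complete there as well. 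Hence $\tilde\Psi:F\to F$ is a diffeomorphism.

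Finally, since restriction to a submanifold commutes with pullback, $\tilde\Psi^*(d\alpha|_F)=(\varphi_{2\pi}^* d\alpha)|_F=d\alpha|_F$, and by the second condition in the definition of an open book carrying $\xi$ the form $d\alpha|_F$ is symplectic on $F$. Therefore $\tilde\Psi$ is a symplectomorphism of $(F,d\alpha)$. The only point that needs any care is the completeness of the flow of $X$ together with the fact that it does not degenerate near the binding; once $X$ is identified with $\partial_\theta$ there, this is routine, so the essential content of the proposition is the one-line identity $\mathcal L_X d\alpha=0$.
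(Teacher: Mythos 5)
Your argument is correct and is essentially the paper's own: the paper also observes that $i_Xd\alpha=0$ by the definition of $Hor$, applies the Cartan formula to get $\mathcal L_Xd\alpha=0$, and concludes that the holonomy is a symplectomorphism between fibers. The extra care you take with completeness of the flow near the binding is a reasonable addition but not a different method.
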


However, $\tilde \Psi$ is \emph{not} the monodromy as defined in the previous section, since it does not fix the binding.
Still, it is possible to associate a {\bf symplectic monodromy} with this open book: 
by removing a small neighborhood of the binding $\nu_M(B)$, we can arrange the situation such that $X$ is standard on the boundary of this neighborhood $\partial \nu_M(B)$.
By this we mean that $X=\partial_\theta$ when using polar coordinates for the $D^2$-factor of $B\times D^2$.
If we put $\Psi=Fl^X_{2\pi}$, then $\Psi$ is the identity on a neighborhood of the boundary of $F-\nu_F(B)$, and we have a monodromy in the sense we defined earlier, which is also symplectic.

\begin{figure}[htp]
\def\svgwidth{0.5\textwidth}%
\begingroup\endlinechar=-1
\resizebox{0.5\textwidth}{!}{%
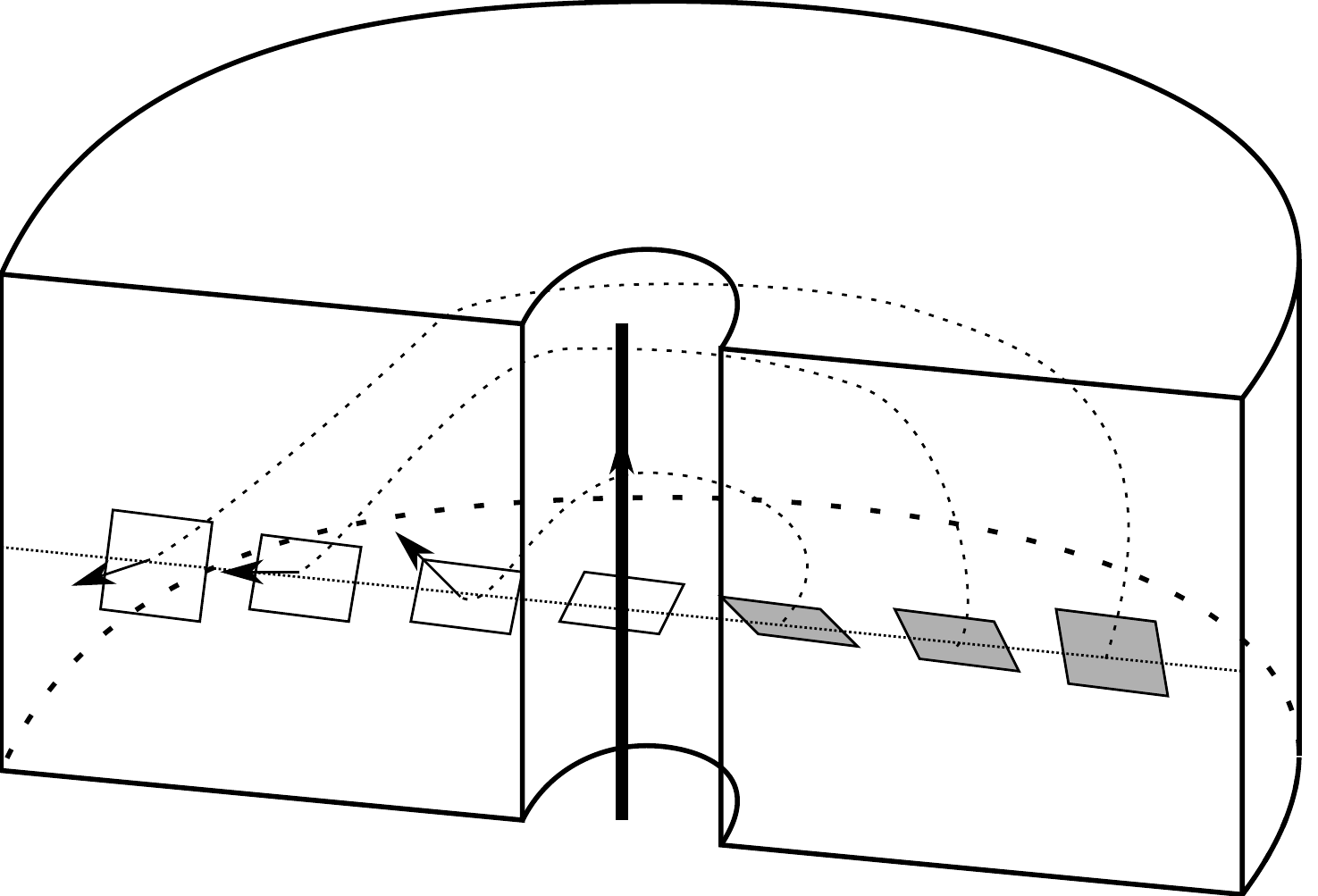%
}\endgroup
\caption{An open book carrying a contact structure: the Reeb flow of an adapted contact form is positively transverse to interior of the pages}
\label{fig:carrying}
\end{figure}

Giroux has shown that every cooriented contact manifold is carried by an open book, see his announcement and sketch in \cite{Giroux:ICM2002}.
\begin{theorem}[Giroux]
\label{thm:giroux_open_book}
Let $(Y,\xi)$ be a compact coorientable contact manifold.
Then there is an open book $(B,\Theta)$ carrying the contact structure whose pages carry the structure of a Weinstein domain.
\end{theorem}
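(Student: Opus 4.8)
The plan is to follow Donaldson's approximately holomorphic technique, adapted to the contact setting by Ibort--Mart\'inez-Torres--Presas and by Giroux--Mohsen. The case $\dim Y=3$ can be handled separately and classically by convex surface theory (there the pages are surfaces with boundary, hence automatically Weinstein), so I would concentrate on $\dim Y=2n+1\geq 5$. Fix a contact form $\alpha$ for $\xi$ with Reeb field $R$ and an almost complex structure $J$ on $(\xi,d\alpha|_\xi)$ compatible with $d\alpha$, and observe that on a closed manifold the two-form $d\alpha$ is exact: hence the \emph{trivial} Hermitian line bundle $E=Y\times\C$ with the connection $\nabla=d-i\alpha$ has curvature $-i\,d\alpha$, and $E^{\otimes k}$ has curvature $-ik\,d\alpha$, with no integrality hypothesis needed in contrast with the symplectic case.

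The main step is to construct, for all $k\gg 0$, a section $s_k$ of $E^{\otimes k}$ --- that is, a complex-valued function on $Y$ --- which is \emph{approximately holomorphic} in the $\xi$-directions, \emph{uniformly transverse to zero}, and whose covariant derivative obeys a stratified transversality estimate forcing $\partial_\nabla s_k$ not to degenerate relative to $|s_k|$. These are assembled by Donaldson's globalisation procedure from local Gaussian peak sections, with the quantitative input coming from Donaldson's effective Sard lemma. What I expect to be the crux is that the closed two-form $d\alpha$ is only \emph{presymplectic}, having the Reeb line in its kernel; one must therefore run the approximately holomorphic machinery transversally to $R$ while retaining enough control of $s_k$ along the flow of $R$. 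This is the technical heart of the work cited above, and I would import it rather than reprove it.

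Granting such $s_k$, put $B:=s_k^{-1}(0)$ and $\Theta:=s_k/|s_k|\colon Y\setminus B\to S^1$. Uniform transversality makes $B$ a smooth codimension-two submanifold with a trivialization of its normal bundle induced by $s_k$, and approximate holomorphicity forces $T_pB$ to be $C^0$-close to a $J$-complex subspace; for $k$ large this makes $B$ transverse to $\xi$ with $\alpha|_B$ contact. The stratified derivative estimate makes $\Theta$ a submersion on $Y\setminus B$ with the standard angular model near $B$, so $(B,\Theta)$ is a concrete open book, and, once more by approximate holomorphicity, $\ker d\Theta$ is $C^0$-close to a $J$-complex subspace on which $d\alpha$ is close to the compatible K\"ahler form, hence symplectic. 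Thus $\alpha$ is adapted to $(B,\Theta)$ in Giroux's sense.

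It remains to upgrade each page $F=\Theta^{-1}(\mathrm{pt})$ to a Weinstein domain. The page carries the exact symplectic form $d\alpha|_F$, and near the binding the adapted contact form can be put in a standard model in which the Liouville field dual to $\alpha|_F$ points outward along $B=\partial F$; what is then needed is a Morse function on $F$ for which this field is gradient-like. Here I would exploit the approximately holomorphic picture once more, realising $F$ as a domain in an almost complex manifold carrying a strictly plurisubharmonic exhaustion built from $|s_k|$ and its covariant derivative, so that after a deformation of the Liouville structure in the sense of \cite{CE:Stein_book} the page becomes Stein, hence Weinstein as recalled earlier. This last point, together with the presymplectic version of Donaldson's construction in the previous step, is where essentially all the difficulty of the theorem resides; performing it for an arbitrary compact coorientable $(Y,\xi)$ yields the statement.
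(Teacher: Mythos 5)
The paper does not prove this theorem; it defers entirely to Giroux's ICM announcement, and your sketch is precisely the Giroux--Mohsen approximately holomorphic strategy (with the Ibort--Mart\'inez-Torres--Presas contact adaptation of Donaldson's technique) that that announcement outlines, so you are taking essentially the same route as the cited source. Your outline of the key points --- no integrality needed since $d\alpha$ is exact, uniform transversality of $s_k$ giving the binding and fibration, and plurisubharmonicity of the resulting pages giving the Weinstein structure --- is accurate, with the genuinely hard analysis correctly identified and deferred to the literature, just as the paper itself does.
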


There is also a construction providing a converse.
Given a Liouville domain $W$ (defined in Section~\ref{sec:blackbox_SH}) and a symplectomorphism $\psi$ that is the identity on the boundary of $W$, we can follow the construction of an abstract open book which was mentioned earlier.
In this case, the abstract open book can be given a contact structure, see for instance \cite{Giroux:ICM2002}.
We will call the resulting contact manifold with the underlying abstract open book a {\bf contact open book}, and write $\OB(W,\psi)$.

\subsection{Monodromies for Brieskorn manifolds}
\label{sec:monodromy_Brieskorn}
Using the covering trick from Section~\ref{sec:covering_tricks}, we can understand the monodromy of the contact open books from Lemma~\ref{lemma:Brieskorn_open_book} somewhat better.
If we denote the monodromy of such an open book by $\psi_{a_0,\ldots,a_{n-1},a_n}$, then we have the following corollary.
\begin{corollary}
The monodromy $\psi_{a_0,\ldots,a_{n-1},a_n}$ is symplectically isotopic relative to the boundary to $\psi_{a_0,\ldots,a_{n-1},1}^{a_n}$.
\end{corollary}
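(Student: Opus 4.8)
The plan is to compare the two monodromies directly through the branched cover $p\colon \Sigma(a_0,\ldots,a_n)\to\Sigma(a_0,\ldots,a_{n-1},1)\cong S^{2n-1}$ from Section~\ref{sec:covering_tricks}, which corrects the naive map $(z_0,\ldots,z_n)\mapsto(z_0,\ldots,z_{n-1},z_n^{a_n})$ so that it carries the Brieskorn manifold to the Brieskorn manifold. By Lemma~\ref{lemma:Brieskorn_open_book}, both $\psi_{a_0,\ldots,a_{n-1},a_n}$ and $\psi_{a_0,\ldots,a_{n-1},1}$ are symplectic monodromies of the open books obtained by projecting to the last coordinate $z_n$; in each case the binding is $\Sigma(a_0,\ldots,a_{n-1})$ and the page is $V_\epsilon(a_0,\ldots,a_{n-1})$. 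Writing $\Theta$ and $\Theta'$ for the two $S^1$-valued open book projections (the normalized last coordinate, upstairs and downstairs), the geometric input I would extract from Section~\ref{sec:covering_tricks} is that $p$ is an honest $a_n$-fold covering away from the binding, that it intertwines the projections, $\Theta'\circ p=\Theta^{a_n}$, and that it is compatible with the chosen contact structures; in particular $p$ should restrict to a symplectomorphism from the page $\Theta^{-1}(1)$ onto the page $(\Theta')^{-1}(1)$, which I would use to identify both pages with $V_\epsilon(a_0,\ldots,a_{n-1})$.

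Granting this, the comparison of monodromies is short. Recall from Section~\ref{sec:contact_open_book} that the symplectic monodromy is the time-$2\pi$ holonomy of the connection whose horizontal distribution is $\{X : i_Xd\alpha=0\}$, spanned by the Reeb field. Since $p$ is compatible with the contact structures, the two horizontal distributions correspond under $p$; and because $\Theta'\circ p=\Theta^{a_n}$, the horizontal lift $X$ of $\partial_\theta$ upstairs is $p$-related to $a_n$ times the horizontal lift $X'$ of $\partial_\theta$ downstairs, i.e. $dp\circ X=a_nX'\circ p$ (one checks this in the local model $(b,re^{i\phi})\mapsto(b,r^{a_n}e^{ia_n\phi})$ near the binding, and it propagates globally from $\Theta'\circ p=\Theta^{a_n}$). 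Hence $p\circ Fl^X_t=Fl^{X'}_{a_nt}\circ p$, and evaluating at $t=2\pi$ shows that, under the identification of pages by $p$, the time-$2\pi$ flow of $X$ becomes the time-$2\pi a_n$ flow of $X'$, that is, the $a_n$-th iterate $\psi_{a_0,\ldots,a_{n-1},1}^{a_n}$. Near the binding $X$ and $X'$ are the angular vector fields of the respective $D^2$-factors, so both maps restrict to the identity there, and I obtain an equality $\psi_{a_0,\ldots,a_{n-1},a_n}=\psi_{a_0,\ldots,a_{n-1},1}^{a_n}$ of symplectomorphisms of $V_\epsilon(a_0,\ldots,a_{n-1})$ that are the identity near the boundary. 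Since the symplectic monodromy is in any case only defined up to symplectic isotopy rel boundary and conjugation, and different identifications of the pages differ by conjugation, this yields the stated conclusion.

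I expect the main obstacle to be the first step: arranging that $p$ genuinely is compatible with the contact structures near the binding. The naive branched map $z_n\mapsto z_n^{a_n}$ does not preserve the unit sphere, so one must invoke the correction of \cite[Section 5]{Orlik:Brieskorn}, and then check that after this correction -- and, if necessary, a further Moser-type isotopy of adapted contact forms supported away from the binding -- the horizontal distributions and the pages of the two open books correspond under $p$. Away from the binding this is a routine covering-space computation; the subtle point is the local model at the branch locus $\{z_n=0\}$, where $p$ degenerates from an $a_n$-fold covering to the branched map $(b,re^{i\phi})\mapsto(b,r^{a_n}e^{ia_n\phi})$, and one must bring the adapted contact forms into standard form there so that the holonomy argument is valid on a full neighborhood of the binding. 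Once that local normalization is in place, the remainder is the elementary holonomy computation sketched above.
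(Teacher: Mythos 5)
Your argument is correct and is essentially the paper's own: the text disposes of the corollary by citing the single fact that a contact branched cover along the binding of an open book multiplies the monodromy, and your holonomy computation (deriving $dp\circ X=a_nX'\circ p$ from $\Theta'\circ p=\Theta^{a_n}$ and flowing for time $2\pi$) is precisely the proof of that fact. The caveat you flag about normalizing the branched cover and the adapted contact forms near the binding is the only point requiring care, and it is the same correction, via \cite[Section 5]{Orlik:Brieskorn}, that the paper itself invokes.
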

Indeed, this follows from the observation that a contact branched cover along the binding of an open book multiplies the monodromy.
Hence we can restrict ourselves to studying the monodromy $\psi_{a_0,\ldots,a_{n-1},1}^{a_n}$ of the Brieskorn manifold $\Sigma(a_0,\ldots,a_{n-1},1)$.
We remind the reader that $\Sigma(a_0,\ldots,a_{n-1},1)$ is contactomorphic to the standard sphere $(S^{2n-1},\xi_0)$.
Furthermore, note that $\tilde \Theta$ extends to a map
\[
\begin{split}
\tilde \Theta: V(a_0,\ldots,a_{n-1},1) & \longrightarrow \C \\
(z_0,\ldots,z_{n-1},z_n) & \longmapsto z_{n}=-\left( \sum_{j=0}^{n-1} z_j^{a_j} \right) .
\end{split}
\]
This map can deformed into a Lefschetz fibration by Morsifying the right hand side.
We also have a (concrete) open book for $\Sigma(a_0,\ldots,a_{n-1},1)$ with binding given by $z_n=0$, and the map to $S^1$, given by $\frac{z_n}{|z_n |}$.
This open book gives rise to an abstract open book $\OB(W,\psi)$.
The page $W$ of the open book is the Milnor fiber, which can be identified with the smooth affine variety
$$
V_\epsilon(a_1,\ldots,a_n)=\{ (z_1,\ldots,z_n)\in \C^n ~|~\sum_{i=1}^n z_i^{a_i}=\epsilon \}
.
$$
By a Morsification argument, one can see that $\psi$ can be written as the product of Dehn twists.
Since $\Sigma(a_0,\ldots,a_{n-1},a_n)$ is $a_n$-fold branched cover along the binding, we also obtain an open book for this Brieskorn manifold, namely
$\Sigma(a_0,\ldots,a_{n-1},a_n) \cong \OB(W,\psi^{a_n})$.

In the case of the Brieskorn manifolds $\Sigma(2,\ldots,2,N)$, the Morsification is much easier. Indeed, $\Sigma(2,\ldots,2,1)$ is the boundary of the Lefschetz fibration 
\[
\begin{split}
V_0(2,\ldots,2,1) & \longrightarrow \C \\
(z_0,z_1,\ldots,z_n) & \longmapsto z_n=-\sum_{i=0}^{n-1} z_i^2.
\end{split}
\]
This is the prototype of a Lefschetz fibration: the regular fiber of this fibration is the Brieskorn variety $V_\epsilon(2,\ldots,2)$ which is symplectomorphic to $(T^*S^{n-1},d\lambda_{can})$, and the monodromy is a right-handed Dehn twist $\tau$ along the zero section (one sometimes defines a right-handed Dehn twist as the monodromy of this particular fibration).
The observation that this monodromy is symplectic was first made by Arnold, \cite{Arnold:monodromy_A1}.
Detailed computations/hints to understand this monodromy are in \cite[Exercise 6.20]{McDuff_Salamon:introduction}.
 
Hence we get a contactomorphism $\Sigma(2,\ldots,2,1)\cong \OB(T^*S^{n-1},\tau)$.
As said earlier, a branched cover along the binding iterates the monodromy, so we conclude that 
\begin{proposition}
\label{prop:openbook_simple_brieskorn}
The Brieskorn manifold $\Sigma(2,\ldots,2,N)$ is supported by the contact open book $\OB(T^*S^{n-1},\tau^N)$.
\end{proposition}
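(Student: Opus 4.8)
The plan is to assemble the statement from three ingredients already in place: the open book structure of Lemma~\ref{lemma:Brieskorn_open_book}, the identifications of Lemma~\ref{lemma:A1-singularity}, and the fact recorded in Section~\ref{sec:monodromy_Brieskorn} that a branched cover along the binding iterates the monodromy.

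\emph{Step 1: the open book and its page.} I would apply Lemma~\ref{lemma:Brieskorn_open_book} to $\Sigma(2,\ldots,2,N)$, where $a_0=\cdots=a_{n-1}=2$ and $a_n=N$: the projection $\tilde\Theta(z_0,\ldots,z_n)=z_n$ supports a concrete open book with binding $\Sigma(2,\ldots,2)$ (with $n$ twos) and page $V_\epsilon(2,\ldots,2)$ (with $n$ twos). By Lemma~\ref{lemma:A1-singularity} the page is symplectomorphic to $(T^*S^{n-1},d\lcan)$ and the binding is contactomorphic to $(ST^*S^{n-1},\lcan|_{ST^*S^{n-1}})$. I would also record that the Brieskorn contact form $\alpha=\frac{i}{2}\sum_j a_j(z_jd\bar z_j-\bar z_j dz_j)$ is adapted to this open book: its Reeb flow (Proposition~\ref{prop:BW_contact_form}) rotates the $z_n$-factor at nonzero speed, hence is positively transverse to the interior of each page $\{\arg z_n=\mathrm{const}\}$, while $d\alpha$ restricts to a positive symplectic form there. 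Thus $\Sigma(2,\ldots,2,N)$ is the contact open book $\OB(T^*S^{n-1},\psi_{2,\ldots,2,N})$, and it remains only to identify the symplectic monodromy.

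\emph{Step 2: the monodromy for $N=1$.} For $N=1$, $\Sigma(2,\ldots,2,1)\cong(S^{2n-1},\xi_0)$ is the boundary of the Lefschetz fibration $V_0(2,\ldots,2,1)\to\C$, $(z_0,\ldots,z_n)\mapsto z_n=-\sum_{i=0}^{n-1}z_i^2$. This is the model $A_1$ fibration on $\C^n$: the nondegenerate quadratic form has a single critical point, the regular fiber is $V_\epsilon(2,\ldots,2)\cong T^*S^{n-1}$, and the vanishing cycle is the zero section. Hence, by the Picard--Lefschetz formula (Arnold's observation, \cite{Arnold:monodromy_A1}, with details as in \cite[Exercise 6.20]{McDuff_Salamon:introduction}), the symplectic monodromy is the right-handed Dehn twist $\tau$ along the zero section, i.e. $\Sigma(2,\ldots,2,1)\cong\OB(T^*S^{n-1},\tau)$.

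\emph{Step 3: general $N$ via branched covers.} Since $\Sigma(2,\ldots,2,N)$ is an $N$-fold branched cover of $\Sigma(2,\ldots,2,1)$ along the binding, the corollary in Section~\ref{sec:monodromy_Brieskorn} gives that $\psi_{2,\ldots,2,N}$ is symplectically isotopic relative to the boundary to $\psi_{2,\ldots,2,1}^N=\tau^N$. Combined with Step~1 this yields $\Sigma(2,\ldots,2,N)\cong\OB(T^*S^{n-1},\tau^N)$. The main obstacle is carrying out Step~2 at the symplectic rather than merely the smooth level: one must match the abstract Weinstein page $T^*S^{n-1}$ and its zero-section vanishing cycle with the concrete Milnor page, and verify that the branched-covering step of Step~3 produces $\tau^N$ up to symplectic isotopy rel boundary, not just up to smooth isotopy. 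Both points are the content of the cited references (\cite{Arnold:monodromy_A1,McDuff_Salamon:introduction,Orlik:Brieskorn}) and are where the real work lies.
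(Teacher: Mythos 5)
Your proposal is correct and follows essentially the same route as the paper: identify the page $V_\epsilon(2,\ldots,2)\cong T^*S^{n-1}$, recognize the $N=1$ case as the model $A_1$ Lefschetz fibration whose monodromy is the right-handed Dehn twist $\tau$ (Arnold, McDuff--Salamon), and then iterate the monodromy via the $N$-fold branched cover along the binding. Your added remarks on adaptedness of the Brieskorn contact form and on where the genuine symplectic (rather than smooth) verifications lie are consistent with what the paper defers to the cited references.
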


\subsection{Lefschetz fibrations}
\label{sec:lefschetz}
Before we describe some Lefschetz fibrations on Brieskorn varieties, let us mention Seidel's monograph \cite{Seidel:Picard_Lefschetz}, which studies Lefschetz fibrations and their symplectic topology in detail.
We already saw the related notion of open books in Section~\ref{sec:decompositions}.
By virtue of being an affine variety, any Brieskorn variety admits the structure of a Lefschetz fibration over $\C$.
One way to understand this structure is to start with the variety $V_0(a_0,\ldots,a_{n-1},1)$, which is smooth since the last coordinate is a graph over the others.
In particular, $V_0(a_0,\ldots,a_{n-1},1)\cong \C^{n}$.
By projection, we get a singular fibration
\[
\begin{split}
pr: V_0(a_0,\ldots,a_{n-1},1) & \longrightarrow \C \\
(z_0,\ldots,z_{n-1},z_{n}) & \longmapsto z_{n}=-\sum_{j=0}^{n-1} z_j^{a_j}.
\end{split}
\]
The only singular value is $0$, and a regular fiber is diffeomorphic to $V_\epsilon(a_0,\ldots,a_{n-1})$.
This is not a Lefschetz fibration yet, but we can Morsify the singularity, and this will give us a Lefschetz fibration.

To do so, let $p_{i}$ be a generic polynomial of degree $a_i$ so that the $p'_{i}$ has $a_i-1$ distinct zeroes.
We can take $p_{i}(z)$ close to $z^{a_i}$.
An explicit Morsification of the Brieskorn polynomial $z_0^{a_0}+\ldots +z_{n-1}^{a_{n-1}}$ is then given by
$$
\tilde p=\sum_{i=0}^{n-1} p_{i}(z_i).
$$
This Morsification $\tilde p$ has only quadratic singularities and precisely $\prod_{i=0}^{n-1} (a_i-1)$ of them.
For the Morsified singularity, we obtain the Lefschetz fibration
\[
\begin{split}
\widetilde{pr}: \widetilde{V_0}(a_0,\ldots,a_{n-1},1) & \longrightarrow \C \\
(z_0,\ldots,z_{n-1},z_n) & \longmapsto z_n=-\sum_{j=0}^{n-1} p_{j}(z_j).
\end{split}
\]
Topologically, we can draw some conclusions from this fibration structure.
We find
$$
\widetilde{V_0}(a_0,\ldots,a_{n-1},1)\cong
\widetilde{V_0}(a_0,\ldots,a_{n-1}) \times D^2 \cup \coprod_{\prod_{i=0}^{n-1} (a_i-1)} n\text{-handles}.
$$
Indeed, from the Morsification argument, we see that the monodromy is the product of $\prod_{i=0}^{n-1} (a_i-1)$ Dehn twists since there are precisely that many quadratic singularities.
Each Dehn twist corresponds to critical handle attachment by \cite[Theorem 4.4]{vK:open_books_stab} (the reference formulates everything in terms of critical surgery on the boundary: this corresponds to critical handle attachment to the filling).
This gives the above claim.

We now also find
$$
\widetilde{V_0}(a_0,\ldots,a_{n-1},a_n)\cong
\widetilde{V_0}(a_0,\ldots,a_{n-1}) \times D^2\cup \coprod_{(a_0-1)\cdot\ldots \cdot(a_{n-1}-1)\cdot a_{n}} n\text{-handles}.
$$
We already know that $\widetilde{V_0}(a_0,\ldots,a_{n-1},1)\cong \C^{n}$, which is homotopy equivalent to a point. 
Attaching $(a_0-1)\cdot\ldots \cdot(a_{n-1}-1)\cdot(a_{n}-1)$ more $n$-handles gives us $\widetilde{V_0}(a_0,\ldots,a_{n-1},a_n)$, which is therefore homotopy equivalent to a bouquet of $(a_0-1)\cdot\ldots \cdot(a_{n-1}-1)\cdot(a_n-1)$ spheres.
We also see that these spheres can be made Lagrangian, so we can see part of the isotropic skeleton of Brieskorn varieties.

\section{Contact and symplectic invariants of Brieskorn manifolds}
In this section we compute some invariants of Brieskorn manifolds and their natural fillings.
We consider symplectic homology and equivariant symplectic homology, but we mainly focus on the equivariant theory since this will give us an invariant of contact manifolds via the mean Euler characteristic.

Whenever linearized contact homology for a contact manifold with a Liouville filling can be defined, the $+$-part of equivariant symplectic homology is isomorphic to linearized contact homology, see \cite{BO:SH_HC}.
The class of contact manifolds for which linearized contact homology works directly is very restricted, and hence we will mainly use symplectic homology.
For some Brieskorn manifolds contact homology can be defined with the results of Bourgeois and Oancea, \cite{BO:SH_HC}, but we do not go into this.

\subsection{Different versions of symplectic homology}
\label{sec:blackbox_SH}
We begin by describing symplectic homology by giving some of its properties: these are similar to some of the Eilenberg-Steenrod axioms, and this point of view has been worked out by Cieliebak-Oancea, \cite{Oancea:symplectic_algebraic_topology}. We won't need all of these axioms though, so we give a rather simple minded version.

($S^1$-equivariant) symplectic homology associates with a Liouville manifold $(W^{2n},\omega=d\lambda)$ graded $\Q$-vector spaces $SH_*^{(S^1)}(W,\omega)$ and $SH_*^{(S^1),+}(W,\omega)$ with the following properties.
\begin{enumerate}
\item There is a tautological exact sequence, the so-called Viterbo sequence,
$$
\longrightarrow
 SH^{(S^1),+}_{k+1}(W,\omega)
\longrightarrow
H_{n+k}^{(S^1)}(W,\partial W)
\longrightarrow
 SH^{(S^1)}_{k}(W,\omega)
\longrightarrow
 SH^{(S^1),+}_{k}(W,\omega)
 \longrightarrow
$$
\item the vector spaces $SH_*^{(S^1)}(W,\omega)$ and $SH_*^{(S^1),+}(W,\omega)$ are invariants of the symplectic deformation type under exact symplectomorphisms.
\end{enumerate}
In the above equivariant version of the Viterbo sequence, $S^1$ acts trivially on $(W,\partial W)$, so the homology groups $H_{*}^{S^1}(W,\partial W;\Z)$ can be computed using the K\"unneth formula and the homology of $BS^1\cong \C P^\infty$ as 
\begin{equation}
\label{eq:S1_equivariant_filling}
H_{*}^{S^1}(W,\partial W;\Z)
\cong
H_{*}(W,\partial W;\Z)\otimes \Z[u],
\end{equation}
where $u$ is a variable of degree $2$.

It is also known that symplectic homology does not change under subcritical surgery due to work of Cieliebak, \cite[Theorem 1.11]{Cieliebak}.
For the $+$-part of equivariant symplectic homology, the situation is more complicated.
We have the following Mayer-Vietoris style exact sequence from \cite[Theorem 4.4]{BO:SH_HC}.
\begin{theorem}[Surgery exact sequence for the connected sum]
\label{thm:les_connected_sum}
Let $W$ be a (disconnected) Liouville domain of dimension $2n$ satisfying $c_1(W)=0$, and suppose that  $\tilde W$ is obtained from $W$ by $1$-handle attachment (so $c_1(\tilde W)=0$).
Denote the ``symplectic homology'' of the $1$-handle by
\[
SH_*^{+,S^1}(tube):=SH^{+,S^1}_{*+1}(D^{2n},\omega_0)
=
\begin{cases}
\Q & *=n,n+2,n+4,\ldots \\
0 & otherwise.
\end{cases}
\]
Then the following surgery exact triangle holds,
\[
\entrymodifiers={+!!<0pt,\fontdimen22\textfont2>}
\xymatrix@R=20pt@C=15pt{
SH_*^{+,S^1}(tube) \ar[rr] & & SH_*^{+,S^1}(\tilde W) \ar[ld] \\
& SH_*^{+,S^1}(W) \ar[lu]^{[+1]} &
}
\]
\end{theorem}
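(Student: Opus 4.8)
The plan is to obtain the triangle by playing the Viterbo exact sequence (property~(1) above) for $W$ off against the one for $\tilde W$. Viewing $W$ as a Liouville subdomain of codimension zero in $\tilde W=W\cup(\text{1-handle})$, one has an $S^1$-equivariant Viterbo transfer morphism, and its functoriality turns the two Viterbo sequences into a commutative ladder whose vertical maps form three ``columns'':
\[
SH^{+,S^1}_*(\tilde W)\to SH^{+,S^1}_*(W),\quad H^{S^1}_{*+n}(\tilde W,\partial\tilde W)\to H^{S^1}_{*+n}(W,\partial W),\quad SH^{S^1}_*(\tilde W)\to SH^{S^1}_*(W).
\]
The hypothesis $c_1=0$, which is preserved under $1$-handle attachment, is what grades all of these groups over $\Z$.

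The first step is a purely homological one: the mapping cones of the three columns again fit into an exact triangle, so as soon as the third column is an isomorphism we get
\[
\mathrm{Cone}\bigl(SH^{+,S^1}_*(\tilde W)\to SH^{+,S^1}_*(W)\bigr)\ \cong\ \mathrm{Cone}\bigl(H^{S^1}_{*+n}(\tilde W,\partial\tilde W)\to H^{S^1}_{*+n}(W,\partial W)\bigr)[1].
\]
This reduces the theorem to two inputs: (i) the transfer $SH^{S^1}_*(\tilde W)\to SH^{S^1}_*(W)$ is an isomorphism; and (ii) $\mathrm{Cone}\bigl(H^{S^1}_{*+n}(\tilde W,\partial\tilde W)\to H^{S^1}_{*+n}(W,\partial W)\bigr)\cong SH^{+,S^1}_*(tube)$. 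Granting both, the displayed isomorphism becomes $\mathrm{Cone}\bigl(SH^{+,S^1}_*(\tilde W)\to SH^{+,S^1}_*(W)\bigr)\cong SH^{+,S^1}_*(tube)[1]$, which is precisely the exact triangle claimed.

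For (i): a $1$-handle on a $2n$-dimensional domain (with $n\ge2$) is subcritical, so this is the $S^1$-equivariant counterpart of Cieliebak's invariance of symplectic homology under subcritical surgery, the non-equivariant version of which is quoted above. I would prove it either by rerunning Cieliebak's neck-stretching argument inside the $S^1$-equivariant Floer complex -- after making the handle sufficiently thin, no Floer cylinder contributing to the transfer can enter the handle region, so the transfer is an isomorphism already on the chain level -- or by feeding the non-equivariant statement into the Gysin-type exact sequence that links $SH$ and $SH^{S^1}$, using that the transfer is a map of $\Q[u]$-modules. For (ii): by Lefschetz duality the transfer on relative homology of the filling is the restriction $\iota^*\colon H^*(\tilde W)\to H^*(W)$ placed in complementary degree, and a $1$-handle attachment modifies $H^*$ of the filling in the smallest possible way (it either merges two components or creates a single loop); either way the mapping cone of $\iota^*$ is one copy of $\Q$. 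Dualizing back, applying the degree shift $[+n]$ built into the Viterbo sequence, and tensoring with $\Q[u]$ to account for the $S^1$-action, this cone becomes a $\Q$ in each degree $n, n+2, n+4,\dots$ -- which, by comparison with the Viterbo sequence of the ball $(D^{2n},\omega_0)$ (whose symplectic homology vanishes), is exactly $SH^{+,S^1}_{*+1}(D^{2n},\omega_0)=SH^{+,S^1}_*(tube)$.

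I expect (i) to be the only genuine obstacle: the remaining steps are a diagram chase and a piece of (equivariant co)homology bookkeeping, whereas the subcritical invariance is where SFT-type compactness and the availability of $S^1$-equivariant transversality really enter. A more hands-on alternative, in the spirit of the Morse--Bott methods used elsewhere in this survey, would be to equip the connected sum with a contact form adapted to the handle, note that its Reeb orbits split into the old orbits of $\partial W$ away from the two surgery regions and a family of new short orbits concentrated near the handle (the contribution of $(D^{2n},\omega_0)$, up to index shift), and then extract the triangle from the action filtration together with the Morse--Bott spectral sequence; the analytic cost is comparable.
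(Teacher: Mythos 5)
The survey does not actually prove this statement: it is quoted directly from Bourgeois--Oancea \cite[Theorem 4.4]{BO:SH_HC}, so there is no in-paper argument to compare against. Your strategy --- compare the two Viterbo sequences via the transfer map, kill the middle column by subcritical invariance of $SH^{S^1}$, and identify the cone on the $H^{S^1}_{*+n}(\cdot,\partial\cdot)$ column with $SH^{+,S^1}_*(tube)$ via Lefschetz duality and the K\"unneth formula \eqref{eq:S1_equivariant_filling} --- is essentially the Bourgeois--Oancea argument, and your degree bookkeeping is correct in both the component-joining and the loop-creating case. Two caveats on the points you leave as sketches. First, the ``cone of cones'' step is not automatic in a triangulated category (cones are not functorial); it is justified here because the transfer is realized by a chain map respecting the action filtration that defines $SC^{\pm}$, so one genuinely has a map of short exact sequences of complexes. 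Second, for the crux (i), the Gysin/five-lemma route does not run directly on $SH^{S^1}$: the induction $SH^{S^1}_{k-2}\Rightarrow SH^{S^1}_{k}$ has no base case when the grading is unbounded below, so one must instead induct over the finite approximations $S^{2N+1}$ of $ES^1$ (as in Section~\ref{sec:review_equi_SH}), where each $HF^{S^1,N}$ is assembled from finitely many Gysin-type extensions of the non-equivariant theory to which Cieliebak's theorem applies. With those two repairs your outline is a complete and correct reduction.
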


\subsubsection{A little on defining symplectic homology}
Let $(W,\omega=d\lambda)$ be a compact Liouville manifold.
For grading purposes, suppose that $c_1(W)=0$ and that $W$ is simply-connected.
Then one can define Floer homology for the action functional depending on a time dependent, smooth Hamiltonian $H:W\times S^1 \to \R$. 
The action functional is defined on the loopspace of $W$, which we can describe with Sobolev spaces
$$
\Lambda W:=W^{1,p}(S^1,W)=\{ x:S^1=\R/\Z \to W~|~x~\text{ is of Sobolev class }W^{1,p}  \}
$$
In this note our convention for the Hamiltonian vector field $X_H$ is $i_{X_H}\omega=dH$, and the action functional is given by
\[
\begin{split}
\mathcal A_H: \Lambda W & \longrightarrow \R \\
\gamma & \longmapsto -\int_{S^1}\gamma^* \lambda-\int_{0}^1H(\gamma(t),t)dt
\end{split}
\]
The chain complex for Floer homology is a $\Z$-module generated by critical points of $\mathcal A_H$, which are given by $1$-periodic orbits of $H$.
We grade these $1$-periodic orbits by the Conley-Zehnder index using the following procedure.
By assumption, every $1$-periodic orbit $\gamma$ spans a disk $D$, along which we symplectically trivialize $(TW|_{D},\omega)$.
The linearized flow of $X_H$ with respect to this trivialization gives a path of symplectic matrices $\psi$.
The Conley-Zehnder index of a $1$-periodic orbit $\gamma$ of $X_H$ can then be defined with the crossing formula for the Maslov index, see \cite[Section 2, Section 5]{Robbin_Salamon:Maslov_path},
$$
\mu_{CZ}(\gamma):=-\mu_{CZ}(\psi),
$$
where the sign comes from the convention that the Hamiltonian vector field is a negative multiple of the Reeb vector field for an increasing Hamiltonian.
The differential for the Floer complex counts rigid solutions to the Floer equation, which can be thought of as the $L^2$-gradient of the action $\mathcal A_H$. In particular, the differential is action decreasing.
 
Symplectic homology can be defined as a Floer theory whose chain complex is generated by $1$-periodic orbits of a well-chosen Hamiltonian, or as the direct limit of the Floer homologies of a sequence of Hamiltonians.
For instance for cotangent bundles of compact smooth manifolds with their natural symplectic structure, the quadratic Hamiltonian $H=\frac{1}{2}\Vert p\Vert^2$ coming from a Riemannian metric can be used.
However, we use the latter approach, and we basically stick to the conventions of Bourgeois and Oancea, \cite{BO:SH_HC}.

Define the completion $\bar W:=W \cup_\partial \partial W\times [1,\infty[$.
Call a Hamiltonian $H:\bar W \times S^1 \to \R$ {\bf linear at infinity with slope $s$} if $H|_{\partial W\times [T,\infty[\times S^1}=s r+b$ for sufficiently large $T$. Here $r$ is the coordinate on the interval $[T,\infty[$.
The Hamiltonian vector field at infinity is given by $-sR$, where $R$ is the Reeb field of the contact form $\lambda|_{\partial W}$.
Assume that $s$ is not the period of any periodic Reeb orbit, so all $1$-periodic orbits of $X_H$ lie in a compact set.
Then Floer homology of $\bar W$ is defined.
It does, however, depend on the Hamiltonian.
To deal with this, we will consider a sequence of Hamiltonians.
If $H_1$ and $H_2$ are Hamiltonians with slope $s_1$ and $s_2$ such that $s_1\leq s_2$, then there is a {\bf continuation map}
\[
c_{12}: HF(\bar W,H_1,J_1) \longrightarrow  HF(\bar W,H_2,J_2)
\]
defined by counting suitable Floer trajectories.
One then defines symplectic homology of $\bar W$ as
$$
SH(\bar W):=\varinjlim_{N} HF(\bar W,H_N,J_N),
$$
where we take the direct limit over a sequence of Hamiltonians with increasing slope using the continuation maps.
\begin{remark}
We emphasize that, in general, the degrees in which symplectic homology is non-zero are not bounded from below or above.
\end{remark}
For a review of equivariant symplectic homology see Section~\ref{sec:review_equi_SH}.

\subsection{Morse-Bott spectral sequences}
In this section we will describe a Morse-Bott spectral sequence for (equivariant) symplectic homology.
A version of this spectral sequence with cohomology conventions appeared in \cite[Formula (3.2) and (8.9)]{Seidel:biased}.

Consider a simply-connected Liouville manifold $(W^{2n},d\lambda)$ with $c_1(W)=0$.
Suppose that $W$ has contact type boundary $\Sigma$ with a periodic Reeb flow.
We do not require all orbits to have the same period.
Define $\Sigma_T$ as the Morse-Bott submanifold in $\Sigma$ consisting of all periodic Reeb orbits of (not necessarily minimal) period $T$,
$$
\Sigma_T=\{ x\in \Sigma~|~Fl^R_T(x)=x \}
.
$$
\begin{remark}
\label{rem:MB_sub=Brieskorn_sub}
For every $T$ Morse-Bott submanifolds $\Sigma_T$ in a Brieskorn manifold $\Sigma(a)$ are Brieskorn manifolds themselves.
\end{remark} 
We can associate a Maslov index, often referred to as Robbin-Salamon index \cite{Robbin_Salamon:Maslov_path,Gutt:generalized_CZ}, with each connected component of a Morse-Bott submanifold $\Sigma_T$.
It turns out that for Brieskorn manifolds, even different connected components have the same index, so we will write $\mu(\Sigma_T)$ for this Maslov index.

\begin{theorem}
\label{thm:spectral_sequence_SH}
Let $(W,\omega=d\lambda)$ be a Liouville domain satisfying the assumptions.
\begin{enumerate}
\item The Reeb flow on $\partial W$ is periodic with minimal periods $T_1,\ldots, T_k$, where $T_k$ is the common period, i.e.~the period of a principal orbit.
We assume that all $T_i$ are integers.
\item The restriction of the tangent bundle to the symplectization to $\partial W$, $T(\R \times \partial W)|_{\partial W}$, is trivial as a symplectic vector bundle, $c_1(W)=0$ and $\pi_1(\partial W)=0$.
\item There is a compatible complex structure $J$ for $(\xi=\ker \lambda_{\partial W},d\lambda_{\partial W})$ such that for every periodic Reeb orbit $\gamma$ the linearized Reeb flow is complex linear with respect to some unitary trivialization of $(\xi,J,d\alpha)$ along $\gamma$.
\end{enumerate}
For each positive integer $p$ define $C(p)$ to be the set of Morse-Bott manifolds with (not necessarily minimal) return time $p$, and for each Morse-Bott manifold $\Sigma \in C(p)$ put
$$
shift(\Sigma)=\mu_{RS}(\Sigma)-\frac{1}{2}\dim \Sigma/S^1,
$$
where the Robbin-Salamon index is computed for a symplectic path defined on $[0,p]$.
Then there is a spectral sequence converging to $SH(W;R)$, whose $E^1$-page is given by
\begin{equation}
\label{eq:SS_SH}
E^1_{pq}(SH)=
\begin{cases}
\bigoplus_{\Sigma \in C(p) } H_{p+q-shift(\Sigma)}(\Sigma;R) & p>0
\\
H_{q+n}(W,\partial W;R) & p=0 \\
0 & p<0.
\end{cases}
\end{equation}
Furthermore, there is also a spectral sequence converging to $SH^{+,S^1}(W;R)$, whose $E^1$-page is given by
\begin{equation}
\label{eq:SS_SH_equivariant}
E^1_{pq}(SH^{+,S^1})=
\begin{cases}
\bigoplus_{\Sigma \in C(p) } H_{p+q-shift(\Sigma)}^{S^1}(\Sigma;R) & p>0 \\
0 & p\leq 0.
\end{cases}
\end{equation}
\end{theorem}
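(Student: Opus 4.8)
The plan is to compute $SH(W;R)$ from a cofinal family of Hamiltonians adapted to the periodic Reeb flow on $\partial W$ and to read off the spectral sequence from the filtration of the resulting Floer complex by the period of the underlying Reeb orbit. First I would use, as in Section~\ref{sec:blackbox_SH}, the presentation $SH(W;R)=\varinjlim_N HF(\bar W,H_N,J_N)$, choosing each $H_N$ to depend only on the radial coordinate $r$ on the symplectization end, to be convex and increasing there with slope $s_N\to\infty$ avoiding the action spectrum --- which by assumption~(1) is contained in $\Z_{>0}$ --- and to be a $C^2$-small Morse function in the interior of $W$. Then the $1$-periodic orbits of $X_{H_N}$ are the critical points of $H_N$ in the interior (the constant orbits) together with, for each integer $0<p<s_N$, families of nonconstant orbits lying over the part of the symplectization end where $H_N'=p$; after the standard adjustment these families are in bijection with the components of the Morse--Bott manifold $\Sigma_p$ of Reeb orbits of period $p$, which by Remark~\ref{rem:MB_sub=Brieskorn_sub} is again a Brieskorn submanifold.

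Next I would carry out the Morse--Bott perturbation. The orbit families over each $\Sigma_p$ are degenerate, so one perturbs $H_N$ by a small time-dependent term built from a Morse function $f$ on $\Sigma_p$ (respectively on $\Sigma_p/S^1$ in the equivariant case, with $S^1$ acting by the Reeb flow). Applying the standard Morse--Bott package for Floer theory, after this perturbation the generators lying over $\Sigma_p$ are in bijection with the critical points of $f$ and assemble into a copy of the Morse complex of $\Sigma_p$ placed in degrees shifted up by $shift(\Sigma_p)=\mu_{RS}(\Sigma_p)-\frac{1}{2}\dim(\Sigma_p/S^1)$, where the Robbin--Salamon index is computed over $[0,p]$. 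This is exactly where assumptions~(2) and~(3) enter: they provide the global symplectic trivialization along each Reeb orbit and the unitarity of the linearized return map, which make $\mu_{RS}$ well defined on each component --- and, for Brieskorn manifolds, independent of the component --- and ensure that the only contribution to the index beyond the Morse index is the stated shift; cf.~\cite{Robbin_Salamon:Maslov_path,Gutt:generalized_CZ}. I expect this step to be the main obstacle: one has to establish Morse--Bott transversality and gluing so that the Floer trajectories staying within one period level reduce to Morse gradient trajectories on $\Sigma_p$, and to pin down the additive constant in the index formula. The rest is bookkeeping with the filtration and a direct limit.

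With this in hand I would filter the Floer complex of $(\bar W,H_N,J_N)$ by letting $F_pC$ be the span of those generators whose underlying Reeb orbit has period at most $p$, the constant orbits being assigned filtration level $0$. Since the Floer differential is action-decreasing and the action of a perturbed orbit differs from its period only by a controlled small error, $F_\bullet C$ is an increasing, exhausting filtration by subcomplexes with $F_pC=0$ for $p<0$. The $E^0$-differential is the internal part of the Floer differential, which by the previous step is the Morse differential of the auxiliary functions, so for $p>0$ one obtains $E^1_{pq}=\bigoplus_{\Sigma\in C(p)}H_{p+q-shift(\Sigma)}(\Sigma;R)$, while the level $p=0$ is the Floer homology of the $C^2$-small Hamiltonian in the interior, namely $H_{q+n}(W,\partial W;R)$ by the standard computation compatible with the Viterbo sequence of Section~\ref{sec:blackbox_SH}; this is the $E^1$-page \eqref{eq:SS_SH}. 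The continuation maps are filtered and only introduce generators in filtration levels above the current slope, so restricted to any fixed filtration level they become isomorphisms once $s_N$ exceeds that level; taking the direct limit of spectral sequences (which is exact) therefore yields a spectral sequence with the stated $E^1$-page converging to $\varinjlim_N HF(\bar W,H_N,J_N)=SH(W;R)$. Convergence causes no trouble despite the unbounded grading: since $p\ge 0$, only finitely many differentials $d_r$ can act on a fixed bidegree, so $E^\infty_{pq}$ is attained on a finite page.

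Finally, for the equivariant statement I would repeat the whole construction with the $S^1$-equivariant Floer homology of Bourgeois--Oancea \cite{BO:SH_HC} in place of ordinary Floer homology --- equivalently, by applying the Borel construction throughout. The contribution of each family $\Sigma_p$ then becomes its $S^1$-equivariant homology $H^{S^1}_{p+q-shift(\Sigma_p)}(\Sigma_p;R)$, the $S^1$-action being the one generated by the Reeb flow, and passing to the $+$-part removes the constant orbits, so all filtration levels $p\le 0$ vanish; this gives \eqref{eq:SS_SH_equivariant}. As a consistency check, both spectral sequences are the homological incarnations of the cohomological ones in \cite[Formulas (3.2) and (8.9)]{Seidel:biased}.
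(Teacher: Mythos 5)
Your overall strategy coincides with the paper's proof in Appendix~B: a cofinal family of Hamiltonians that are $C^2$-small on $W$ and radial on the end, Morse--Bott perturbation by a Morse function on each orbit manifold, filtration by action (equivalently, after reindexing, by period), a direct limit of spectral sequences over the slopes arranged so that continuation maps are filtered isomorphisms in any fixed range, and the Bourgeois--Oancea finite-dimensional approximations $S^{2N+1}$ for the equivariant version. The one genuine gap is in the step you correctly identify as the main obstacle, but for a reason you do not mention: \emph{coherent orientations}. Pozniak's correspondence identifies the low-energy Floer trajectories near a component $\Sigma\in C(p)$ with Morse gradient trajectories on $\Sigma$, but the signs attached to these trajectories are governed by the determinant line bundle of the linearized Floer operators over $\Sigma$, which need not be orientable. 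A priori the local Floer homology is therefore $H_{*}(\Sigma;R\otimes_{\Z}\mathcal L_{\Sigma})$ for a local coefficient system $\mathcal L_{\Sigma}$ of rank one (this is already visible for a single bad orbit, where the twisted complex $\Z\xrightarrow{\cdot 2}\Z$ gives $\Z_2$ rather than $H_*(S^1)$), and the $E^1$-page you state would be wrong without an argument that $\mathcal L_{\Sigma}$ is trivial.

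This is precisely where hypothesis~(3) does its real work in the paper (Proposition~\ref{prop:local_Floer} and Lemma~\ref{lemma:local_coefficients_trivial}): complex linearity of the linearized Reeb flow makes the $\xi$-part of the asymptotic operator complex linear, so the determinant lines carry canonical orientations varying continuously over $\Sigma$, and the local system is trivial; the untwisted $E^1$-page then follows. In your write-up assumptions~(2) and~(3) are credited only with making $\mu_{RS}$ well defined and computing the shift, which understates their role; you should add the orientation analysis (following \cite{FH}, \cite{CFHW} and \cite{BO:SH_MB}) or at least invoke triviality of the local system explicitly, in both the ordinary and the equivariant settings. A second, minor imprecision: for a Hamiltonian that is linear at infinity the action of an orbit over the level where $H'=p$ is $r_pp-H(r_p)$, not approximately $p$; the filtration is still well defined because action is a monotone function of the period for convex radial Hamiltonians, but the phrase ``differs from its period only by a controlled small error'' should be replaced by this monotonicity argument.
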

We give a proof in Appendix B: the idea is to perturb the autonomous Hamiltonians and filter the resulting complex by action. In a different setup, this idea was first used by Fukaya in \cite{F}.
For Brieskorn varieties we need to compute the indices to make more explicit statements and this will be done in the next section.

\begin{remark}
In the non-degenerate case (i.e.~all periodic Reeb orbits are non-degenerate) a spectral sequence was worked out by Gutt in his thesis~\cite{Gutt:thesis}, Theorem~2.2.2. He also used an action filtration and the $E^1$-page of his spectral sequence is generated by good periodic Reeb orbits (defined right after this remark).
One can also filter by index and this leads to yet another spectral sequence used by Bourgeois and Oancea in part II of their proof of Proposition 3.7 in \cite{BO:SH_HC}.
We briefly discuss this in Section~\ref{sec:index_filtration}) of Appendix~C.
\end{remark}

\begin{definition}
Call a non-degenerate periodic Reeb orbit $\gamma$ {\bf good} if $\gamma$ is not an even cover of $\tilde \gamma$ with $\mu_{CZ}(\gamma)-\mu_{CZ}(\tilde \gamma)$ odd.
{\bf Bad orbits} are periodic Reeb orbits that are not good.
\end{definition}

\subsection{Trivializations and Maslov indices}
\label{sec:trivializations}
Consider the Brieskorn manifold $\Sigma^{2n-1}(a)\subset S^{2n+1}\subset \C^{n+1}$.
Take the symplectic form $\omega_a:=i\sum_j a_j dz_j \w d\bar z_j$, and let $\xi=\ker \alpha_a$ denote the contact structure on $\Sigma(a)$.
We compute the ``transverse'' Robbin-Salamon index or Maslov index (i.e.~the ``Conley-Zehnder index'' for degenerate orbits) by extending the flow to $\C^{n+1}$ and then subtracting the normal part.

For the construction of the extension, we use the symplectic form $\omega_a$ to split the trivial bundle $\epsilon_\C^{n+1}|_{\Sigma(a)}$ as
$$
\epsilon_\C^{n+1}|_{\Sigma(a)}=\xi \oplus \xi^{\omega}.
$$
The Reeb flow of $\alpha_a$ is given by
$$
Fl^{R_a}_t(z_0,\ldots,z_n)=(e^{\frac{it}{a_0}}z_0,\ldots,e^{\frac{it}{a_n}}z_n),
$$
so this flow can clearly be extended to $\C^{n+1}$.
With the obvious unitary trivialization of $\C^{n+1}$, we obtain the linearized flow as the path of symplectic matrices given in complex coordinates by
$$
\Psi(t)=\diag(e^{\frac{it}{a_0}},\ldots,e^{\frac{it}{a_n}}).
$$
One can check, see \cite[Section 2]{vK:HC_Brieskorn}, that the symplectic complement of $\xi$ with respect to $\omega_a$ is trivial, and spanned by
\[
\begin{split}
\xi^{\omega_a}&=span
\left( 
X_1=\sum_j \bar z_j^{a_j-1}\partial_{z_j}+
z_j^{a_j-1}\partial_{\bar z_j}
, 
\quad
Y_1=i \sum_j \bar z_j^{a_j-1}\partial_{z_j}-
z_j^{a_j-1}\partial_{\bar z_j}
,
\right.
\\
&\phantom{=span
( ~\quad} 
\left.
X_2=
-i\sum_j \frac{z_j}{a_j} \partial_{z_j}
-
\frac{\bar z_j}{a_j} \partial_{\bar z_j}
,
\quad
Y_2=
\sum_j z_j \partial_{z_j}+
\bar z_j \partial_{\bar z_j} 
\right)
\end{split}
\]
This can be made into a \emph{symplectic trivialization} by the Gram-Schmidt process.
The linearized flow on the normal part $\xi^{\omega_a}$ ($4$-dimensional) with respect to this symplectic trivialization is given by
$$
\Psi_{\nu}(t)=\diag(e^{it},1 )
$$
We claim that the above trivializations extend over $V_\epsilon(a)$.
This is obvious for the trivialization that we used for the extended flow. For the normal part, we just use the above, explicit trivialization, whose vectors are linearly independent away from $0$. Since $0$ does not lie on the smoothed variety, the claim follows.
\begin{remark}
Although we compute Maslov indices with respect to capping disks in the filling, we want to point out that we can also compute indices in a consistent way in the contact manifold, even in dimension $3$.
Higher dimensional Brieskorn manifolds are simply-connected, and we can homotope disks in the filling to disks lying in the boundary: it makes therefore no difference to compute the indices in the boundary or in the filling.

In dimension $3$, Brieskorn manifolds are usually not simply-connected, but the Chern class of the contact structure of Brieskorn manifolds is trivial, so in dimension $3$ the contact structure, a complex line bundle, is itself trivial, and there is a global trivialization of the contact structure.
We can therefore compute the Maslov indices with respect to this global trivialization, even for non-contractible Reeb orbits.
\end{remark}

We also observe.
\begin{corollary}
There is a compatible complex structure $J$ and a unitary trivialization of $(\xi,J,d\alpha)$ for which the Reeb flow is complex linear.
Furthermore, the tangent bundle to the Brieskorn variety is symplectically trivial.
\end{corollary}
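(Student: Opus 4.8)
The plan is to read off both assertions from the explicit data assembled above, working throughout with the standard complex structure $J_0$ of the ambient $\C^{n+1}$. For the first assertion I would take $J := J_0|_\xi$. One checks that $J_0$ is compatible with $d\alpha_a = \omega_a$: since $\omega_a = i\sum_j a_j\, dz_j\w d\bar z_j = 2\sum_j a_j\, dx_j\w dy_j$, the bilinear form $\omega_a(\cdot, J_0\cdot) = 2\sum_j a_j(dx_j^2 + dy_j^2)$ is a Riemannian metric, so $J_0$ is $\omega_a$-compatible on $\C^{n+1}$; and $\xi = TL\cap iTL$ is $J_0$-invariant by its very definition as the complex tangencies, so $J_0|_\xi$ is compatible with $d\alpha_a|_\xi$.

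Next I would invoke the explicit linearization: after rescaling the $j$-th vector of the standard frame by $1/\sqrt{2a_j}$ so that the frame is unitary for $\omega_a(\cdot, J_0\cdot)$, the extended linearized Reeb flow is the diagonal complex path $\Psi(t) = \diag(e^{it/a_0},\ldots,e^{it/a_n})$, which is manifestly $J_0$-complex linear and unitary. Along any periodic Reeb orbit $\gamma$, $\Psi(t)$ realizes the linearized Reeb flow; it preserves $\xi|_\gamma$, because the Reeb flow of a contact form preserves the contact structure, and hence it also preserves $\xi^{\omega_a}|_\gamma$, because it preserves $\omega_a$. Therefore the linearized Reeb flow on $\xi|_\gamma$ is $J_0$-complex linear as an endomorphism, and so it is complex linear in any unitary trivialization of $(\xi, J_0, d\alpha)$ over $\gamma$; such a trivialization exists since every complex vector bundle over $S^1$ is trivial. (Concretely one may use the trivialization of $\xi = (\xi^{\omega_a})^{\perp}$ induced by the standard unitary trivialization of $\epsilon_\C^{n+1}|_{\Sigma(a)}$ and the unitary trivialization of $\xi^{\omega_a}$ obtained by Gram--Schmidt from $X_1,Y_1,X_2,Y_2$.)

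For the second assertion I would show that $TV_\epsilon(a)$ is trivial as a complex, hence as a symplectic, vector bundle. As $V_\epsilon(a) = p^{-1}(\epsilon)$ is a regular level set of the holomorphic function $p$, its normal bundle $\nu$ in $\C^{n+1}$ is a trivial complex line bundle — for instance the complex gradient of $p$ trivializes it, being nowhere zero on $V_\epsilon(a)$ because $\epsilon$ is a regular value and $0\notin V_\epsilon(a)$. Hence $TV_\epsilon(a)\oplus\nu\cong\epsilon_\C^{n+1}|_{V_\epsilon(a)}$, so $TV_\epsilon(a)$ is stably trivial as a complex bundle. By Proposition~\ref{prop:homotopy_type_smoothed_var}, $V_\epsilon(a)$ has the homotopy type of an $n$-dimensional CW complex, and the fibre $U/U(n)$ of $BU(n)\to BU$ is $2n$-connected; consequently a stably trivial complex bundle of rank $n$ over a complex of dimension $\le 2n$ is already trivial, so $TV_\epsilon(a)$ is complex trivial. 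Choosing an $\omega_a$-compatible complex structure and applying Gram--Schmidt to a complex trivialization finally yields a symplectic trivialization.

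The routine parts are genuinely routine; the one step that deserves care is the passage from stable triviality to honest triviality of $TV_\epsilon(a)$, where Milnor's wedge-of-spheres description (Proposition~\ref{prop:homotopy_type_smoothed_var}) supplies exactly the dimension bound required. Note that mere parallelizability of $V_\epsilon(a)$ would not by itself deliver \emph{symplectic} triviality, which is why it is important to work with the complex (equivalently symplectic) structure from the outset.
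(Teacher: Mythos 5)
Your treatment of the second assertion is correct and is essentially the paper's own argument: the normal bundle of $V_\epsilon(a)$ in $\C^{n+1}$ is a trivial complex line bundle, so $TV_\epsilon(a)$ is stably trivial as a complex bundle, and the wedge-of-spheres homotopy type together with the $2n$-connectivity of $U/U(n)$ upgrades this to honest complex, hence symplectic, triviality. (The paper compresses this to ``homotopy equivalent to a wedge of spheres and the Chern classes vanish''; your version spells out the same obstruction-theoretic point.)

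The first assertion, however, has a genuine gap at its very first step. The contact structure in play is $\xi=\ker\alpha_a$ with $\alpha_a=\tfrac{i}{2}\sum_j a_j(z_jd\bar z_j-\bar z_jdz_j)$, and this is \emph{not} the field of complex tangencies of $\Sigma(a)=V_0(a)\cap S^{2n+1}$. Writing $g=\tfrac12\sum_j a_j|z_j|^2$ one has $\alpha_a=-dg\circ i$, whereas the complex tangencies $T\Sigma\cap iT\Sigma$ are cut out by $-df\circ i$ with $f=|z|^2$; these two hyperplane fields in $T\Sigma$ coincide only when all exponents $a_j$ are equal (Proposition~\ref{prop:BW_contact_form} gives a contactomorphism between the two structures, not an equality). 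In particular $\xi$ is not $J_0$-invariant, so $J:=J_0|_\xi$ is not defined, and the later claim that the linearized Reeb flow ``on $\xi|_\gamma$ is $J_0$-complex linear as an endomorphism'' does not typecheck, since $J_0$ maps $\xi$ off itself. You can see the failure concretely from the paper's frame for $\xi^{\omega_a}$: $J_0X_2=\sum_j\bigl(\tfrac{z_j}{a_j}\partial_{z_j}+\tfrac{\bar z_j}{a_j}\partial_{\bar z_j}\bigr)$ does not lie in the span of $X_1,Y_1,X_2,Y_2$ when the $a_j$ differ, so $\xi^{\omega_a}$, and hence its symplectic complement $\xi$, fails to be $J_0$-invariant. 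The intended argument instead exploits the flow-invariant splitting $\epsilon_\C^{n+1}|_{\Sigma(a)}=\xi\oplus\xi^{\omega_a}$: the linearized flow is the unitary path $\diag(e^{it/a_0},\ldots,e^{it/a_n})$ on the ambient bundle and $\diag(e^{it},1)$ on $\xi^{\omega_a}$ in the Gram--Schmidt frame, and from this one extracts a compatible $J$ on $\xi$ (not the restriction of $J_0$) together with a unitary trivialization in which the flow is complex linear --- for instance by observing that the restriction of this elliptic, periodic symplectic path to the invariant symplectic subbundle $\xi$ is symplectically conjugate to a path of unitary matrices. Your ambient computations (compatibility of $J_0$ with $\omega_a$, the rescaled unitary frame, the flow preserving $\xi$ and $\xi^{\omega_a}$) are fine; it is only the passage from $\C^{n+1}$ to $\xi$ that needs to be redone.
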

The first assertion is proved above.
We explain the second assertion in Section~\ref{sec:MB_SS}.
With this corollary, we can apply the Morse-Bott spectral sequence.

With the standard formula
\[
\mu(e^{it}|_{t\in[0,T]})=
\begin{cases}
\frac{T}{\pi} & \text{if }T\in 2\pi \Z \\
2 \lfloor \frac{T}{2\pi} \rfloor +1 & \text{otherwise,}
\end{cases}
\]
we find the following Maslov index for a periodic orbit $\gamma$ of period $T$ 
$$
\mu(\gamma|_{[0,T]})=
\sum_j \mu( e^{it/a_j}|_{t\in[0,T]} ) - \mu(\Psi_\nu(t)|_{t\in[0,T]} ).
$$

\subsubsection{Periods and orbit-types}
Although all Reeb orbits of a Brieskorn manifold $(\Sigma(a),\alpha_a)$ are periodic, the periods can vary, and this influences the Maslov indices.
To keep track of the periods, we introduce some notation in addition to that from Section~\ref{sec:homology_Brieskorn}.

First note that the period of a principal orbit is $2\pi \lcm_{j\in I} a_j$.
For all $T$ that divide $\lcm_{j\in I} a_j$, let $I_{T}$ denote the maximal subset of $I$ such that
$$
\lcm_{j\in I_{T}}a_j=T.
$$
If $\# I_{T}>1$, then $K(I_{T})\subset K(I)$ defines a Brieskorn submanifold $\Sigma(K(I_{T})\,)$, cf.~Remark~\ref{rem:MB_sub=Brieskorn_sub}.
This notation was defined in Formula~\eqref{eq:K_notation}.
The principal orbits of $\Sigma(K(I_{T})\,)$ have period $2\pi T$.

If the orbits with period $2\pi\cdot NT$ are not part of some larger orbit space $K(I_{T'})$, with $T|T'$, then the Maslov index of an $N$-fold cover of $K(I_{T})$ is given by
\begin{equation}
\label{eq_Maslov_index_Brieskorn}
\mu(N\cdot K(I_{T})\,)=2\sum_{j \in I_{T}} \frac{NT}{a_j}+
2\sum_{j \in I-I_{T}} \lfloor \frac{NT}{a_j} \rfloor+\#(I-I_{T})
-2NT.
\end{equation}
Note that the Brieskorn submanifolds $K(I_{T})$ consist of shorter orbits inside the bigger Brieskorn manifold $K(I)$.

\subsection{Index-positivity}
\label{sec:index_pos_neg}
Before we can state the next proposition, we need to introduce some notion measuring the growth rate of the Maslov-index.
Let $(\Sigma^{2n-1},\xi=\ker \alpha)$ be a cooriented contact manifold for which the Conley-Zehnder index or Maslov index of periodic Reeb orbits is well-defined.
This is for instance guaranteed if $c_1(\xi)=0$ and $\pi_1(\Sigma)=0$.

Suppose that $\gamma$ is a non-degenerate, periodic Reeb orbit. Then we have the following formula for iterates of $\gamma$ according to \cite[Lemma 13.4]{SZ},
$$
\mu_{CZ}(\gamma^N)=N\Delta(\gamma)+e(N).
$$
Here $\Delta(\gamma)$ is the {\bf mean index} and $e(N)$ an error term that is bounded in norm by $\dim_\C \xi=n-1$.
There is a similar formula for the Robbin-Salamon index of a degenerate Reeb orbit.

If $\Delta(\gamma)>0$ for all periodic Reeb orbits $\gamma$, then we call $(\Sigma^{2n-1},\alpha)$ {\bf index-positive}.
If $\Delta(\gamma)<0$ for all periodic Reeb orbits $\gamma$, then we call $(\Sigma^{2n-1},\alpha)$ {\bf index-negative}.
Contact manifolds that are neither index-positive nor index-negative will be referred to as {\bf index-indefinite}.
The same notions can also be defined for more general periodic orbits, such as orbits of Morse-Bott type.

Note that for a periodic flow, the mean index of a principal orbit equals the Maslov index of a principal orbit.
Now, look once more at Formula~\eqref{eq_Maslov_index_Brieskorn} to see that the following proposition holds.
\begin{proposition}
\label{prop:maslov_principal}
Let $\Sigma(a_0,\ldots,a_n)$ be a Brieskorn manifold with its natural contact form.
Then the Maslov index of a principal orbit is equal to
$$
\mu_P=2\lcm_{j\in I} a_j \, \left( \sum_{j=0}^n\frac{1}{a_j} \, -1 \right) .
$$
In particular, we see that $\Sigma(a_0,\ldots,a_n)$ is index-positive if $\sum_j \frac{1}{a_j}>1$, index-negative if $\sum_j \frac{1}{a_j}<1$ and index-indefinite if $\sum_j \frac{1}{a_j}=1$.
\end{proposition}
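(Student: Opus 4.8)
The plan is to read $\mu_P$ directly off the explicit linearized Reeb flow computed in Section~\ref{sec:trivializations}, and then to obtain the trichotomy from the large-iterate asymptotics of formula~\eqref{eq_Maslov_index_Brieskorn}. First I would record that the principal orbit $\gamma_P$ has period $T=2\pi\lcm_{j\in I}a_j$, which in the notation of Section~\ref{sec:trivializations} is the case $I_T=I$, $N=1$.

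For the computation of $\mu_P$ I would apply the index formula derived in Section~\ref{sec:trivializations},
\[
\mu(\gamma_P)=\sum_{j=0}^n\mu\bigl(e^{it/a_j}\big|_{[0,T]}\bigr)-\mu\bigl(\Psi_\nu(t)\big|_{[0,T]}\bigr),
\]
and exploit the arithmetic fact that $a_j$ divides $\lcm_{k\in I}a_k$ for every $j$, so that each quantity $T/a_j=2\pi\,\lcm_{k\in I}a_k/a_j$ is an integer multiple of $2\pi$. After the reparametrization $s=t/a_j$ the path $t\mapsto e^{it/a_j}$ on $[0,T]$ becomes the rotation path $s\mapsto e^{is}$ on $[0,T/a_j]$, and since $T/a_j\in 2\pi\Z$ the standard index formula gives $\mu(e^{it/a_j}|_{[0,T]})=\tfrac{1}{\pi}\cdot\tfrac{T}{a_j}=2\lcm_{k\in I}a_k/a_j$. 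For the normal block $\Psi_\nu(t)=\diag(e^{it},1)$ the constant factor contributes $0$ and the rotation factor, evaluated at $T=2\pi\lcm_{k\in I}a_k\in 2\pi\Z$, contributes $T/\pi=2\lcm_{k\in I}a_k$. Summing, $\mu_P=\sum_{j}2\lcm_{k\in I}a_k/a_j-2\lcm_{k\in I}a_k=2\lcm_{j\in I}a_j\bigl(\sum_j 1/a_j-1\bigr)$, the claimed value. (Alternatively one may simply specialize~\eqref{eq_Maslov_index_Brieskorn} to $I_T=I$, $N=1$: the floor terms and the cardinality term all drop out and the same value appears at once.)

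For the last sentence I would argue via mean indices. By the observation immediately preceding the proposition, for a periodic Reeb flow the mean index of the principal orbit coincides with its Maslov index, so $\Delta(\gamma_P)=\mu_P$ already carries the sign of $\sum_j 1/a_j-1$. To promote this to a statement about \emph{all} periodic orbits, as index-positivity and index-negativity require, I would use that every periodic Reeb orbit of $\Sigma(a)$ is an iterate of a principal orbit of some Brieskorn submanifold $K(I_T)$ with $T=\lcm_{j\in I_T}a_j$ (Remark~\ref{rem:MB_sub=Brieskorn_sub}), whose $N$-fold cover has Maslov index~\eqref{eq_Maslov_index_Brieskorn}. Dividing that expression by $N$ and letting $N\to\infty$, the terms $\lfloor NT/a_j\rfloor/N$ tend to $T/a_j$ and $\#(I-I_T)/N\to 0$, so the mean index of the orbit equals $2T\bigl(\sum_{j=0}^n 1/a_j-1\bigr)$ with $T\ge 1$. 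Hence the mean indices of all periodic orbits share the sign of $\sum_j 1/a_j-1$, which yields index-positivity when the sum exceeds $1$, index-negativity when it is less than $1$, and index-indefiniteness when it equals $1$.

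I expect the one real obstacle to be careful bookkeeping rather than any conceptual point: one must confirm that the transverse Robbin--Salamon index is indeed the index of the extended flow minus that of the normal block with respect to the symplectic trivialization fixed in Section~\ref{sec:trivializations}, and that in the degenerate situations --- where an orbit sits inside a larger Morse--Bott family $K(I_{T'})$ with $T\mid T'$ --- one reads the index off that larger family. In all cases the mean index retains the shape $2T'\bigl(\sum_j 1/a_j-1\bigr)$ with $T'>0$, so the trichotomy is unaffected.
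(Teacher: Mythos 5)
Your proposal is correct and follows essentially the same route as the paper, which simply specializes Formula~\eqref{eq_Maslov_index_Brieskorn} to $I_T=I$, $N=1$ and invokes the fact that for a periodic flow the mean index of a principal orbit equals its Maslov index; your direct computation from the linearized flow and the normal block $\Psi_\nu$ is just the derivation of that formula carried out explicitly. Your extra step of checking that \emph{every} periodic orbit, as an iterate of a principal orbit of some $K(I_T)$, has mean index $2T\bigl(\sum_j 1/a_j-1\bigr)$ with $T>0$ correctly fills in the detail the paper leaves implicit in passing from the sign of $\mu_P$ to index-positivity/negativity for all orbits.
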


This proposition reflects an important geometric property of Brieskorn manifolds: if $\sum_j \frac{1}{a_j}>1$, then we can think of $\Sigma$ as a prequantization bundle over a Fano orbifold.
If $\sum_j \frac{1}{a_j}=1$, then $\Sigma$ is a prequantization bundle over a Calabi-Yau orbifold, and if $\sum_j \frac{1}{a_j}<1$, then $\Sigma$ is a prequantization bundle over an orbifold of general type.

\begin{remark}
As an entertaining corollary, we can immediately conclude that a Brieskorn manifold $\Sigma(a_0,\ldots,a_n)$ cannot be subcritically fillable if $\sum_j \frac{1}{a_j}\leq 1$.
Indeed, all generators for $SH^{+,S^1}_*$ have index bounded from above, and this is not true for subcritical Stein manifolds. 
\end{remark}

\subsection{Some explicit examples}
\label{sec:examples}
As an example, we work out the spectral sequence~\eqref{eq:SS_SH_equivariant} for the $+$-part of equivariant symplectic homology of the smoothed $A_{k-1}$-singularities, i.e.~the Brieskorn varieties $V_\epsilon(2,2,2,k)$.
We will work out the case $k>2$. The case $k=2$ is worked out in Section~\ref{sec:cotangent_sphere}, and the case $k=1$ is the standard contact sphere.

\begin{figure}
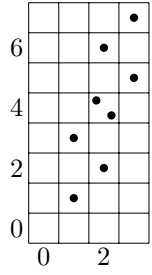

\begin{center}
\begin{sseq}{0...3}
{0...7}
\ssmoveto 1 1
\ssdropbull
\ssmove 0 2
\ssdropbull

\ssmoveto 2 2
\ssdropbull
\ssmove 0 2
\ssdropbull
\ssdropbull
\ssmove 0 2
\ssdropbull
\ssmoveto 3 5
\ssdropbull
\ssmove 0 2
\ssdropbull

\end{sseq}

\end{center}
\caption{$E^1$-page of Morse-Bott spectral sequence for $SH^{+,S^1}_*(V_\epsilon(4,2,2,2)\, )$: the short columns are given by $H^{S^1}_*(\Sigma(2,2,2);\Q)\cong H_*(S^2;\Q)$, and the long column is given by $H^{S^1}_*(\Sigma(4,2,2,2);\Q)\cong H_*(S^2\times S^2;\Q)$.}
\label{fig:SS_V4}
\end{figure}
There are two types of Morse-Bott submanifolds consisting of periodic Reeb orbits:
\begin{itemize}
\item Exceptional orbits. These form the Brieskorn submanifold $K( \{ 0,1,2 \})=\Sigma(2,2,2)\subset \Sigma(2,2,2,k)=K(\{ 0,1,2,3 \})$.
A simple cover of an exceptional orbit has period $2\pi \cdot 2$.
\item Principal orbits. These form the full Brieskorn manifold $K(\{ 0,1,2,3 \})=\Sigma(2,2,2,k)$.
A simple cover of a principal orbit has period $2\pi \cdot \lcm(2,k)$.
\end{itemize}

Now fill the $E^1$-page of the Morse-Bott spectral sequence \eqref{eq:SS_SH_equivariant} with the equivariant homology groups of these Brieskorn manifolds.

Using Formula~\eqref{eq_Maslov_index_Brieskorn}, we compute the degree shifts.
For an $N$-th cover of an exceptional orbit, denoted by $\gamma^{exc}_N$, such that $\lcm(2,k)$ does not divide $2N$, the degree shift is equal to
$$
shift(\gamma^{exc}_N)=\left( 2N+2 \lfloor \frac{N\cdot 2}{k} \rfloor +1 \right)-\frac{1}{2}(3-1)=2N+2 \lfloor \frac{N\cdot 2}{k} \rfloor.
$$
For an $N$-cover of principal orbit, denoted by $\gamma^{prin}_N$, the degree shift equals
$$
shift(\gamma^{prin}_N)=
\left( 
\lcm (2,k)+\frac{2\lcm(2,k)}{k}
\right)
N-2.
$$
Examples illustrating the case when $k$ is even and odd are given in Figures~\ref{fig:SS_V4} and \ref{fig:SS_V3}, respectively.
Since the total degree difference is always even, we conclude that the spectral sequence degenerates, and we can directly obtain the homology groups.
For $k=2m$ even we have 
$$
SH_\ell^{+,S^1}(V_\epsilon(2,2,2,2m)\,)\cong
\begin{cases}
\Q & \ell=2, \\ 
\Q^2 & \ell>2, \ell \text{ is even} \\
0 & \text{ otherwise.}
\end{cases}
$$
Note that these homology groups are independent of $m$.

For odd $k$, the $+$-part of equivariant symplectic homology is given by
\begin{equation}
\label{eq:SH_Ustilovsky}
SH_*^{+,S^1}(V_\epsilon(2,2,2,k)\,)\cong
\begin{cases}
0 & * \text{ is odd or } *<2, \\ 
\Q^2 & *=2 \lfloor \frac{2N}{k} \rfloor +2(N+1) \text{ with } N\in \Z_{N\geq 1} \text{ and }2N+1 \notin k\Z \\
\Q & \text{ otherwise.}
\end{cases}
\end{equation}
A typical page of the Morse-Bott spectral sequence looks like the one in Figure~\ref{fig:SS_V3}.
\begin{figure}[htp]
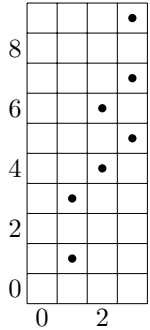

\begingroup\endlinechar=-1
{%
\begin{sseq}{0...3}
{0...9}
\ssmoveto 1 1
\ssdropbull
\ssmove 0 2
\ssdropbull

\ssmoveto 2 4
\ssdropbull
\ssmove 0 2
\ssdropbull

\ssmoveto 3 5
\ssdropbull
\ssmove 0 2
\ssdropbull
\ssmove 0 2
\ssdropbull

\end{sseq}

}\endgroup

\caption{$E^1$-page of Morse-Bott spectral sequence for $SH^{+,S^1}_*(V_\epsilon(3,2,2,2)\, )$: the short columns are given by $H^{S^1}_*(\Sigma(2,2,2);\Q)\cong H_*(S^2;\Q)$, and the long column is given by $H^{S^1}_*(\Sigma(3,2,2,2);\Q)\cong H_*(\C P^2;\Q)$.}
\label{fig:SS_V3}
\end{figure}

\begin{remark}
In the case of $A_{k-1}$-singularities, there are no orbits of contact homology degree $-1,0,1$, so cylindrical contact homology is (conjecturally) defined and gives an invariant. Alternatively, observe that all degree differences are even, so other exact fillings (with vanishing first Chern class) cannot give another $SH_*^{+,S^1}$; the differentials necessarily vanish.
This point of view has also been exploited in work of Gutt \cite{Gutt:thesis}, Theorem~2.2.2, where full details are given in the non-degenerate setup.
We also want to mention that Gutt obtained the same results on the Ustilovsky spheres independently with his spectral sequence, see \cite{Gutt:thesis,Gutt:invariant}, giving a rigorous proof of Ustilovsky's result.
Prior to Gutt's result, Fauck was able to distinguish Ustilovsky's spheres using Rabinowitz-Floer homology, \cite{Fauck:RFH}.
\end{remark}

\subsection{The $+$-part of equivariant symplectic homology of $ST^*S^n$}
\label{sec:cotangent_sphere}
As mentioned in Lemma~\ref{lemma:A1-singularity}, $(ST^*S^n,\lambda_{can})$ can be identified with the Brieskorn manifold $\Sigma(2,\ldots,2)$, so we can use the Morse-Bott spectral sequence and the formula in Theorem~\ref{thm:equivariant_homology_Brieskorn} for the homology of the quotient space $ST^*S^n/S^1$ to compute the $+$-part of equivariant symplectic homology.
We have the following explicit result for the Betti numbers of the $+$-part of equivariant symplectic homology of $(ST^*S^n,\lambda_{can})$.
\begin{proposition}
If $n$ is odd, then
\[
SH^{+,S^1}_k(T^*S^n,\lambda_{can})\cong
\begin{cases}
\Q^2 & \text{if }k=n-1+d(n-1) \text{ with }d\in \Z_{\geq 1}\\
0 & \text{if $k$ is odd or $k<n-1$}\\
\Q & \text{otherwise.}
\end{cases}
\]
If $n$ is even, then
\[
SH^{+,S^1}_k(T^*S^n,\lambda_{can})\cong
\begin{cases}
\Q^2 & \text{if }k=n-1+2d(n-1) \text{ with }d\in \Z_{\geq 1}\\
0 & \text{if $k$ is even or $k<n-1$} \\
\Q & \text{otherwise.}
\end{cases}
\]
\end{proposition}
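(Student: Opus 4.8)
The plan is to run the Morse--Bott spectral sequence~\eqref{eq:SS_SH_equivariant} of Theorem~\ref{thm:spectral_sequence_SH} on the Liouville filling $W=V_\epsilon(2,\ldots,2)\cong T^*S^n$, using the identification $(ST^*S^n,\lambda_{can})\cong(\Sigma(2,\ldots,2),\alpha)$ of Lemma~\ref{lemma:A1-singularity}. The hypotheses hold: $T^*S^n$ is simply connected with $c_1=0$, and the corollary of Section~\ref{sec:trivializations} supplies the required unitary trivialization of $(\xi,J,d\alpha)$ along closed Reeb orbits.

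The first step is to assemble the Morse--Bott data. For $a=(2,\ldots,2)$ every nonempty $I_s\subseteq I$ has $\lcm_{j\in I_s}a_j=2$, so there is a single orbit type: the minimal closed Reeb orbits are all principal, the Morse--Bott submanifold they sweep out is all of $\Sigma(2,\ldots,2)$, and for larger return times the submanifolds $\Sigma_T$ are the towers of $N$-fold iterates, again equal to $\Sigma(2,\ldots,2)$. Since the defining polynomial is homogeneous, the $S^1$-action generated by the Reeb flow is free with quotient the smooth quadric $\{[z]\in\C P^n : \sum_j z_j^2=0\}$, so Theorem~\ref{thm:equivariant_homology_Brieskorn} applies; evaluating~\eqref{eq:rk_middle_dim_hom} gives $\kappa(I)=1$ for $n$ odd and $\kappa(I)=0$ for $n$ even, hence $H^{S^1}_*(\Sigma(2,\ldots,2);\Q)$ is $\Q$ in each even degree $0,2,\ldots,2n-2$, with one extra $\Q$ in the middle degree $n-1$ precisely when $n$ is odd. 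Finally, Proposition~\ref{prop:maslov_principal} gives Maslov index $\mu_P=2(n-1)$ for a principal orbit; by~\eqref{eq_Maslov_index_Brieskorn} (or by periodicity of the flow) its $N$-th iterate has Maslov index $2N(n-1)$, and since $\tfrac12\dim(\Sigma(2,\ldots,2)/S^1)=n-1$ the degree shift attached to the $N$-th iterate is $shift_N=(2N-1)(n-1)$.

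Feeding this into~\eqref{eq:SS_SH_equivariant}, the $E^1$-page is $\bigoplus_{N\ge1}H^{S^1}_{*-shift_N}(\Sigma(2,\ldots,2);\Q)$, one summand per iterate. Because $H^{S^1}_*(\Sigma(2,\ldots,2);\Q)$ is supported in even degrees while $shift_N\equiv n-1\pmod2$, every generator of the $E^1$-page sits in total degree $\equiv n-1\pmod2$; the differentials lower total degree by one, so they all vanish and $E^1=E^\infty$. Therefore
\[
SH^{+,S^1}_k(T^*S^n,\lambda_{can})\cong\bigoplus_{N\ge1}H^{S^1}_{\,k-(2N-1)(n-1)}\bigl(\Sigma(2,\ldots,2);\Q\bigr),
\]
and the two displayed formulas come out of a counting argument: for fixed $k$ the iterates $N$ with $k-(2N-1)(n-1)\in\{0,2,\ldots,2n-2\}$ form an interval containing exactly one value of $N$, except along multiples of $n-1$, where either two consecutive summands overlap or, for $n$ odd, the distinguished middle class gets hit --- in both cases the rank doubles. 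Keeping track of this, of the lower cutoff $k\ge n-1$, and of the vanishing in the parity opposite to $n-1$ reproduces the stated answer, with doubling set $\{n-1+d(n-1):d\ge1\}$ for $n$ odd and $\{n-1+2d(n-1):d\ge1\}$ for $n$ even (the latter because for $n$ even only odd multiples of $n-1$ have the right parity).

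I expect the last bookkeeping step to be the only genuinely fiddly part: correctly merging the two sources of rank-two groups and handling the boundary cases near $k=n-1$. Everything else is a direct substitution of results established earlier in the paper.
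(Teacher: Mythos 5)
Your argument is the paper's own proof: identify $ST^*S^n$ with $\Sigma(2,\ldots,2)$, feed $\mu(\gamma_N)=2N(n-1)$ and the equivariant homology of the quadric from Theorem~\ref{thm:equivariant_homology_Brieskorn} into the spectral sequence \eqref{eq:SS_SH_equivariant}, observe degeneration by parity, and count overlaps of consecutive columns (plus the extra middle class when $n$ is odd); your bookkeeping of where the rank doubles checks out. One small caveat, shared with the paper (which explicitly restricts to $n>2$): for $n=2$ the boundary $ST^*S^2\cong\R P^3$ is not simply connected, so the hypothesis $\pi_1(\partial W)=0$ of Theorem~\ref{thm:spectral_sequence_SH} fails and your blanket claim that ``the hypotheses hold'' needs a separate justification in that case.
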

\begin{proof}
We first do the case $n>2$.
From Formula~\eqref{eq_Maslov_index_Brieskorn} we find that the Maslov index of an $N$-fold cover of a principal orbit, which we denote by $\gamma_N$, is given by
$$
\mu(\gamma_N)=2(n-1)N.
$$
From Theorem~\ref{thm:equivariant_homology_Brieskorn} we find that
\[
H_k^{S^1}(ST^*S^n;\Q)\cong
\begin{cases}
\Q & \text{if }k\text{ is even, with } 0\leq k \leq 2n-2 \\
0 & \text{otherwise} 
\end{cases}
\oplus
\begin{cases}
\Q & \text{if }k=n-1 \text{ with }n-1 \text{ even} \\
0 & \text{otherwise}.
\end{cases}
\]
We put these homology groups in the Morse-Bott spectral sequence~\eqref{eq:SS_SH_equivariant}, which we display in Figure~\ref{fig:SS_STSn}.
Since the difference of the total degree of any two terms in this spectral sequence is even, nothing can kill or be killed, so we directly obtain the equivariant symplectic homology groups.
The result is
\[
\begin{split}
SH_k^{+,S^1}(T^*S^n,d\lambda_{can};\Q)&=
\bigoplus_{N=1}^\infty H^{S^1}_{k-\mu(\gamma_N)+\frac{1}{2}\dim(ST^*S^n/S^1) }
(ST^*S^n;\Q)\\
&=
\bigoplus_{N=1}^\infty H^{S^1}_{k-\mu(\gamma_N)+n-1 }
(ST^*S^n;\Q).
\end{split}
\]
\end{proof}

\begin{remark}
For an alternative argument, note that $ST^*S^n$ can be identified with a prequantization bundle over the complex quadric in projective space, or equivalently with a prequantization bundle over the Grassmannian of oriented $2$-planes, $Gr^+(2,n+1)$.

The homology and Chern class of the quadric can be computed with the method from \cite[Example 4.27]{McDuff_Salamon:introduction}.
The Chern class can then be used to compute the Maslov index and this gives the required information to work out the above spectral sequence.
\end{remark}

\begin{figure}
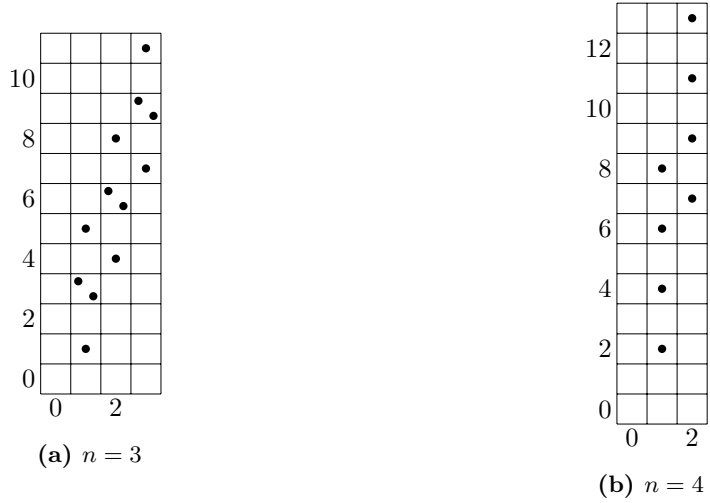

\centering

\begin{subfigure}{.5\textwidth}
  \centering
  \begin{sseq}{0...3}
{0...11}
\ssmoveto 1 1
\ssdropbull
\ssmove 0 2
\ssdropbull
\ssdropbull
\ssmove 0 2
\ssdropbull

\ssmoveto 2 4
\ssdropbull
\ssmove 0 2
\ssdropbull
\ssdropbull
\ssmove 0 2
\ssdropbull
\ssmoveto 3 7
\ssdropbull
\ssmove 0 2
\ssdropbull
\ssdropbull
\ssmove 0 2
\ssdropbull

\end{sseq}
  \caption{$n=3$}
  \label{fig:n=3}
\end{subfigure}%
\begin{subfigure}{.5\textwidth}
  \centering
  \begin{sseq}{0...2}
{0...13}
\ssmoveto 1 2
\ssdropbull
\ssmove 0 2
\ssdropbull
\ssmove 0 2
\ssdropbull
\ssmove 0 2
\ssdropbull

\ssmoveto 2 7
\ssdropbull
\ssmove 0 2
\ssdropbull
\ssmove 0 2
\ssdropbull
\ssmove 0 2
\ssdropbull

\end{sseq}
  \caption{$n=4$}
  \label{fig:n=4}
\end{subfigure}
\caption{$E^1$-pages of Morse-Bott spectral sequence for $ST^*S^n$}
\label{fig:SS_STSn}
\end{figure}

\subsection{Invariants of contact manifolds, and detecting exotic contact structures}
\label{sec:MEC}
So far we have discussed symplectic homology and equivariant symplectic homology.
These homology theories are invariants of symplectic manifolds with contact type boundary.
To obtain an invariant of the contact manifold we have following methods at our disposal.
\begin{itemize}
\item If the indices of all orbits are sufficiently high, one can use the methods from \cite{BO:SH_HC} to show that the $+$-part of (equivariant) symplectic homology is independent of the filling.

\item The mean Euler characteristic of equivariant symplectic homology will provide an invariant in many cases. 
This number does not depend on the choice of contact form because symplectic homology is a symplectic deformation invariant, see Gutt's thesis \cite{Gutt:thesis}.
On the other hand, this number does not depend on the choice of Liouville filling either.
The basic idea is that the Euler characteristic does not depend on the differential, and the only augmentation dependent part in the above story is the differential.
See Lemma~\ref{lemma:mec_invariant} and Proposition~\ref{prop:mean_euler_S^1-orbibundle} for precise statements.

\item If there is a chain complex (or spectral sequence) computing (equivariant) symplectic homology such that the differentials vanish for degree reasons, then $SH^+$ or $SH^{+,S^1}$ are invariants of the contact structure.
See Corollary~\ref{cor:invariance_SH+} for a precise sample statement of this kind.
\end{itemize}
To make the second point precise, we need some definitions.
Denote the Betti numbers of the $+$-part of equivariant symplectic homology of a symplectic manifold $(W,\omega)$ by $sb_i:=\rk SH^{+,S^1}_i(W)$.
We define the {\bf mean Euler characteristic} of $(W,\omega)$ as
$$
\chi_m(W) \,=\, 
\frac 12 
\left( 
\liminf_{N \to \infty}  \frac{1}{N} \sum_{i=-N}^N (-1)^i sb_i(W) \,+\,
\limsup_{N \to \infty}  \frac{1}{N} \sum_{i=-N}^N (-1)^i sb_i(W)
\right) 
$$
if this number exists.
If we have a uniform bound on the Betti numbers $sb_i$, then the limit inferior and the limit superior exist.
Furthermore, in all cases we will consider, the limit inferior and the limit superior exist and coincide, so the mean Euler characteristic reduces to
$$
\chi_m(W) \,=\, 
\lim_{N \to \infty}  \frac{1}{N} \sum_{i=-N}^N (-1)^i sb_i(W).
$$
We will use the following definition to get some nicely stated condition for the invariance of the mean Euler characteristic.
\begin{definition}
We say a contact structure $\xi$ on a simply-connected manifold $P^{2n-1}$ has {\bf convenient dynamics} if there exists a smooth family of contact forms $\{ \alpha_T=f_T \alpha\}_{T\in[T_0,\infty[}$ with $\xi=\ker \alpha_T$ and $T_0>0$, a positive constant $\Delta_m$ and a positive integer $k$ with the following properties:
\begin{enumerate}
\item all periodic orbits of $\alpha_T$ with period less than $T$ are non-degenerate, and satisfy the inequality for the mean index,
$$
{|\Delta(\gamma)|}>\Delta_m.
$$
\item there are simple periodic Reeb orbits of $\alpha_T$, denoted $\gamma_1^T,\ldots,\gamma_k^T$, with $\mathcal A_{\alpha_T}(\gamma_i^T)<T_0$ such that every periodic Reeb orbit of $\alpha_T$ with period at most $T$ is a cover of one of these $\gamma_i^T$.
\item the following inequality holds for all $p\in P$
$$
Tf_T(p)> (T-1) f_{T-1}(p).
$$
\end{enumerate}
\end{definition}
We will call the orbits $\gamma_1^T,\ldots,\gamma_k^T$ \emph{essential orbits}, and remark that these are a special case of the construction in \cite{BO:connected}.
In Remark~\ref{rem:why_convenient_dynamics} we motivate this definition.
Note that many contact manifolds do \emph{not} admit convenient dynamics.
However if the Reeb flow is periodic, the contact manifold often admits convenient dynamics, see Lemma~\ref{lemma:periodic_flow_convenient}.

\begin{lemma}[Mean Euler characteristic as an invariant]
\label{lemma:mec_invariant}
Suppose $(\Sigma,\alpha=\lambda|_{\partial W})$ is a compact, simply-connected contact manifold admitting a simply-connected Liouville filling $(W,d\lambda)$ with $c_1(W)=0$, so that grading in symplectic homology is well-defined.
Assume that the boundary $(\partial W,\ker \alpha )$ has convenient dynamics.
Then $\chi_m(W,d\lambda)$ is an invariant of the contact manifold $(\Sigma,\xi=\ker \alpha )$.
\end{lemma}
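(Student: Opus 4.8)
\emph{Plan of proof.} The number $\chi_m(W,d\lambda)$ is defined through the filling $(W,d\lambda)$, so there are two kinds of independence to establish: independence of the choice of contact form $\alpha$ with $\ker\alpha=\xi$, and independence of the Liouville filling. The first would follow immediately from the black-box properties of Section~\ref{sec:blackbox_SH}: two contact forms with the same kernel differ by a positive conformal factor, the associated Liouville domains are joined by an exact symplectic deformation, and under such a deformation every group $SH^{+,S^1}_i(W)$ — hence every Betti number $sb_i$, hence $\chi_m$ — is unchanged. So the real content is to show that two simply-connected fillings $W_1,W_2$ of $(\Sigma,\xi)$, both with vanishing first Chern class, give the same mean Euler characteristic.

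The plan for this is to build, for each $W_j$, a chain-level model of $SH^{+,S^1}_*(W_j)$ whose \emph{generators} are intrinsic to $(\Sigma,\xi)$ and whose \emph{differential} is the only filling-dependent ingredient. Fix a contact form $\alpha$ on $\Sigma$ realising the convenient dynamics, with essential orbits $\gamma_1^T,\dots,\gamma_k^T$ and mean-index lower bound $\Delta_m$. Using the family $\{\alpha_T=f_T\alpha\}$ and property~(3) of the definition, one gets for each $\overline{W_j}$ a cofinal sequence of admissible Hamiltonians that are linear at infinity with slopes tending to infinity; property~(2) ensures that at slope $\approx T$ the only $1$-periodic orbits entering the $+$-part are the iterates of the essential orbits of action below $T$, and property~(1) makes all of these non-degenerate with controlled Conley--Zehnder indices. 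After the standard $S^1$-equivariant and small time-dependent perturbations of \cite{BO:SH_HC,BO:connected}, one obtains in the direct limit a $\Z$-graded complex $(C^{+,S^1}_*,\partial_j)$ computing $SH^{+,S^1}_*(W_j)$, with boundedly many generators in each degree, all attached to the good covers of the essential orbits. These generators and their gradings sit in an arbitrarily thin collar of $\partial W_j\cong\Sigma$: since $\Sigma$ is simply connected each orbit bounds a disk \emph{inside} $\Sigma$, $\xi$ is trivial over that disk because $c_1(\xi)=0$, and the resulting index is independent of the filling. Only $\partial_j$, which counts Floer cylinders in $\overline{W_j}$, sees $W_j$.

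Next I would invoke the elementary fact that a truncated Euler characteristic does not feel the differential. For any $\Z$-graded complex $(C_*,\partial)$ with $\partial$ of degree $-1$ and all $C_i$ finite-dimensional, rank--nullity gives
\[
\Bigl|\sum_{i=-N}^{N}(-1)^i\dim C_i-\sum_{i=-N}^{N}(-1)^i\dim H_i(C_*)\Bigr|\le\dim C_{-N}+\dim C_{N+1}.
\]
Apply this to $(C^{+,S^1}_*,\partial_j)$. By property~(1), the index of the $N$-th iterate of $\gamma_i^T$ grows linearly in $N$ with slope the mean index $\Delta(\gamma_i^T)$, with $|\Delta(\gamma_i^T)|>\Delta_m$ and a bounded error term; hence each degree is hit by at most $k(1+\lceil 2(n-1)/\Delta_m\rceil)$ of the relevant generators, so $\dim C^{+,S^1}_i\le M$ for a constant $M=M(k,n,\Delta_m)$ uniform in $i$. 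Dividing the displayed inequality by $N$ and letting $N\to\infty$ kills the right-hand side, so
\[
\chi_m(W_j)=\lim_{N\to\infty}\frac1N\sum_{i=-N}^{N}(-1)^i\dim C^{+,S^1}_i,
\]
and the right-hand side no longer mentions $\partial_j$: it depends only on the essential orbits of $\alpha$ and their indices, i.e.\ only on $(\Sigma,\xi)$. Therefore $\chi_m(W_1)=\chi_m(W_2)$, which, together with the first paragraph, proves the lemma.

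The main obstacle is not the algebra but the chain-level input: one must know that in the convenient-dynamics regime the full $+$-part of equivariant symplectic homology is genuinely computed by a complex generated \emph{only} by good covers of the essential orbits, with the predicted gradings and with a differential of degree $-1$ — this rests on the transversality and gluing behind \cite{BO:SH_HC} plus the precise bookkeeping built into the definition of convenient dynamics, and it is where properties~(1)--(3) are really used. A secondary, but genuinely necessary, point is that the Conley--Zehnder grading, a priori read off from capping disks in the filling, can be computed inside $\Sigma$; the hypotheses $\pi_1(\Sigma)=0$ and $c_1(\xi)=0$ are exactly what guarantees this. Once those are granted, the passage from the ``chain-level Euler density'' to the homological mean Euler characteristic is just the short rank--nullity estimate above.
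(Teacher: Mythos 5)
Your proposal is correct and follows essentially the same strategy as the paper's proof in Appendix C: convenient dynamics plus the capping-disks-in-the-boundary argument make the generators and their gradings intrinsic to $(\Sigma,\xi)$, the Euler characteristic is blind to the (filling-dependent) differential, and the truncation error vanishes after dividing by $N$. The only cosmetic difference is that the paper controls the truncation via stabilized degree windows $[-M_0,M_0]$ on the $E^1$-page of the Morse--Bott spectral sequence (with the auxiliary parameter $N_0(T)$ for the equivariant approximation), whereas you make the equivalent rank--nullity estimate explicit at chain level.
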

The idea for this statement was given before and we give a proof in Appendix C, Section~\ref{sec:AppendixC}.
\begin{remark}
For this reason, we will often write $\chi_m(\Sigma)$ instead of $\chi_m(W)$ to indicate that we actually have an invariant of the contact manifold rather than just of the symplectic filling.
\end{remark}

\begin{lemma}
\label{lemma:periodic_flow_convenient}
Suppose that $(P,\alpha)$ is a compact, simply-connected, cooriented contact manifold with the following properties.
\begin{itemize}
\item $c_1(\xi=\ker \alpha)=0$.
\item the Reeb flow of $\alpha$ is periodic and the mean index of a principal orbit is not equal to $0$.
\end{itemize}
Then $(P,\xi=\ker \alpha)$ has convenient dynamics. 
\end{lemma}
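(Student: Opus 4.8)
The plan is to produce the family $\{\alpha_T=f_T\alpha\}$ by the Morse--Bott perturbation of a periodic contact form used by Bourgeois and Oancea, \cite{BO:SH_HC,BO:connected}, and to observe that such a perturbation leaves mean indices essentially unchanged, so that the hypothesis on the principal orbit propagates. First I would record the structure of a periodic Reeb flow. The flow of $R$ generates a locally free $S^1$-action on $P$; write $T_\ast$ for the common period (so $Fl^R_{T_\ast}=\id_P$), $\mu_P$ for the mean index of a principal orbit (well-defined, and nonzero by hypothesis), and $T_{\min}>0$ for the smallest minimal period that occurs. Local freeness on a compact manifold forces finitely many isotropy orders, hence finitely many orbit types, each a closed $S^1$-invariant submanifold $P_m\subset P$ with constant stabiliser $\Z/m$ and $P_m/S^1$ a closed manifold. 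The key bookkeeping is the identity
$$
\Delta(\gamma)=\frac{T_\gamma}{T_\ast}\,\mu_P \qquad\text{for every periodic orbit }\gamma\text{ of minimal period }T_\gamma
$$
(note $T_\gamma\mid T_\ast$). Indeed $\gamma^{T_\ast/T_\gamma}$ has period exactly $T_\ast$ and its linearised return map over one period is the identity, so its mean index equals the Maslov index over $[0,T_\ast]$ of the loop $t\mapsto D(Fl^R_t)|_\xi$ along that orbit; using $\pi_1(P)=0$ and $c_1(\xi)=0$ this loop-index is a homotopy invariant as the base point varies over the connected manifold $P$, hence a single constant, equal to $\mu_P$ by evaluating on a principal orbit, and homogeneity of the mean index gives the displayed formula. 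Since $\mu_P\neq 0$ we get $|\Delta(\gamma)|\geq \tfrac{T_{\min}}{T_\ast}|\mu_P|=:2\Delta_m>0$ for every periodic orbit of the unperturbed flow.

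Next I would set up the perturbation. Choose a function $H\colon P\to\R$ invariant under the Reeb flow, whose restriction to each $P_m/S^1$ is Morse, and which near each stratum equals $H|_{P_m/S^1}$ plus a non-degenerate quadratic form in the normal directions, so that the critical set of $H$ is exactly the union of the critical sets of the $H|_{P_m/S^1}$ and the transverse linearised return maps at the corresponding orbits are non-degenerate; this is precisely the Morse--Bott perturbation data of \cite{BO:SH_HC,BO:connected}. Fix $c\in(0,1]$ small with $c\|H\|_\infty < T_\ast$, put $T_0:=2T_\ast$, and for $T\in[T_0,\infty[$ set $\alpha_T:=f_T\alpha$ with $f_T:=1+\tfrac{c}{T}H$. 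Then $f_T>0$, so $\alpha_T$ is a contact form with $\ker\alpha_T=\xi$, the family is smooth in $T$, and property (3) is immediate since $Tf_T(p)-(T-1)f_{T-1}(p)=(T+cH(p))-(T-1+cH(p))=1>0$.

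It then remains to check (1) and (2). Write $\epsilon(T)=c/T$; the perturbed Reeb field is $\tfrac1{f_T}R$ up to a term of size $O(\epsilon)$ in the $\xi$-directions, and its flow descends on $P/S^1$ to an $\epsilon$-slow Hamiltonian flow of $H$. By the analysis of \cite{BO:SH_HC,BO:connected} — applicable uniformly in $T$ because $\epsilon(T)\cdot T=c$ is a fixed small constant — every periodic orbit of $\alpha_T$ of period at most $T$ is an iterate of one of the finitely many simple orbits $\gamma^T_1,\dots,\gamma^T_k$ lying over the critical points of $H$, all of them non-degenerate by construction, with $k$ independent of $T$; their actions are $T_m+O(\epsilon)\leq T_\ast+O(c/T_0)<T_0$, which is (2). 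Finally each $\gamma^T_i$ differs from an unperturbed orbit only by a Morse-index term bounded by $\dim P$, so its mean index is unchanged, $|\Delta(\gamma^T_i)|=\tfrac{T_m}{T_\ast}|\mu_P|\geq 2\Delta_m>\Delta_m$, and hence $|\Delta((\gamma^T_i)^N)|=N|\Delta(\gamma^T_i)|>\Delta_m$ for every iterate; this gives (1), and $(P,\xi)$ has convenient dynamics.

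The main obstacle is the perturbation analysis underlying the step I have attributed to \cite{BO:SH_HC,BO:connected}: constructing $H$ compatibly with the stratification of $P/S^1$ by orbit types (with its nesting of strata) and proving rigorously that, with $\epsilon\sim 1/T$, the periodic Reeb orbits of $\alpha_T$ of period $\leq T$ are exactly the non-degenerate iterates of the orbits over the critical points of $H$, with a uniformly bounded index shift. In the write-up I would quote this machinery rather than reprove it, and spend the effort on the index bookkeeping above and on verifying that the constants $\Delta_m$, $T_0$, $k$ and $c$ can all be chosen once and for all, independently of $T$.
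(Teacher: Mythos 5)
Your proposal is correct and follows essentially the same route as the paper: a Bourgeois-style perturbation $f_T=1+\tfrac{c}{T}H$ with $H$ induced by Morse functions on the orbit-type strata of $P/S^1$, followed by verification of the three conditions (your check of condition (3) is the same computation, and conditions (1)--(2) are deduced from the same finitely-many-essential-orbits picture). The one point where you go beyond the paper is the explicit identity $\Delta(\gamma)=\tfrac{T_\gamma}{T_*}\mu_P$ via the loop Maslov index, which makes precise the step the paper disposes of by citing Bourgeois' thesis together with ``finitely many orbit spaces''; this is a welcome clarification rather than a divergence.
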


With following lemma we can construct more contact manifolds with convenient dynamics.
\begin{lemma}
\label{lemma:subcritical_preserves_convenient_dynamics}
Let $(W,d\lambda)$ be a Liouville domain with convenient dynamics on the boundary.
Suppose that the Liouville domain $(\tilde W,d\tilde \lambda )$ is obtained from $W$ by subcritical surgery along the isotropic sphere $S$ with framing $\epsilon$.
Then $(\tilde W,d\tilde \lambda )$ also admits convenient dynamics on the boundary.
\end{lemma}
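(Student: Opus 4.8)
The plan is to keep the contact dynamics of $\partial W$ essentially untouched, modifying it only inside an arbitrarily small model neighbourhood of the surgery, and then to read off the three defining conditions of convenient dynamics for the new boundary. Recall that subcritical surgery is realised by attaching a Weinstein handle $H$ of index $k\le n-1$ along an isotropic sphere $S=S^{k-1}\subset\partial W$, and that in the normal form of \cite{CE:Stein_book} the Liouville form $\tilde\lambda$ coincides with $\lambda$ outside an arbitrarily small neighbourhood $\nu(S)$ of $S$; the new boundary is $\partial\tilde W=(\partial W\setminus\nu(S))\cup H_\partial$, where $H_\partial$ is a neighbourhood of the belt sphere $S^{2n-k-1}$ (of dimension $\ge n$), and $\tilde\alpha:=\tilde\lambda|_{\partial\tilde W}$ agrees with $\alpha$ away from $\nu(S)$. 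I assume throughout that the surgery keeps $\partial\tilde W$ simply connected and $c_1(\tilde W)=0$, as in all the situations where we apply the lemma, so that convenient dynamics even makes sense for $\partial\tilde W$.

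Next I would produce the witnessing family for $\partial\tilde W$ out of the one for $\partial W$. Let $\{\alpha_T=f_T\alpha\}_{T\ge T_0}$, $\Delta_m$, $k_0$ and essential orbits $\gamma_1^T,\dots,\gamma_{k_0}^T$ witness convenient dynamics on $\partial W$. On $H_\partial$ install the standard subcritical contact handle model, perturbed by a small Morse function on the belt sphere; its Reeb flow then has only finitely many simple closed orbits $\delta_1,\dots,\delta_m$ (one, in the $1$-handle case), all non-degenerate, of action $<T_0$ once the handle is taken thin enough, and with mean indices bounded away from $0$ --- here subcriticality is used, since the first return map transverse to the belt direction is a rotation through a strictly positive total angle, whence $\Delta(\delta_j^N)=N\Delta(\delta_j)$ stays bounded away from $0$ for all $N\ge 1$. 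Define $\tilde\alpha_T:=\tilde f_T\tilde\alpha$ with $\tilde f_T$ equal to $f_T$ on $\partial W\setminus\nu(S)$ and matched across a thin collar to a $T$-independent function on the core of $H_\partial$, chosen so that the monotonicity $T\tilde f_T>(T-1)\tilde f_{T-1}$ survives the interpolation (it is inherited from $f_T$ outside the handle and is automatic where $\tilde f_T$ does not depend on $T$). Finally, take as essential orbits of $\partial\tilde W$ the $\delta_j$ together with those $\gamma_i^T$ disjoint from $\nu(S)$, and replace $\Delta_m$ by $\min\{\Delta_m,|\Delta(\delta_1)|,\dots,|\Delta(\delta_m)|\}$. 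A periodic Reeb orbit of $\tilde\alpha_T$ of period $\le T$ that avoids the handle region is literally a periodic orbit of $\alpha_T$ disjoint from $\nu(S)$, hence a cover of a surviving $\gamma_i^T$; an orbit contained near the belt sphere is a cover of some $\delta_j$; non-degeneracy and the mean-index bound for all these follow from the corresponding facts on $\partial W$ and from the handle model. This yields conditions (1)--(3).

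The main obstacle is the case left open in condition (2): a periodic Reeb orbit of $\tilde\alpha_T$ of period $\le T$ that repeatedly crosses the interface between $H_\partial$ and $\partial W\setminus\nu(S)$ and is not a cover of any $\delta_j$. Excluding such orbits is precisely the local dynamical analysis of the subcritical handle that also underlies Cieliebak's invariance of symplectic homology under subcritical surgery and the Bourgeois--Oancea surgery exact sequence (Theorem~\ref{thm:les_connected_sum}): one must arrange the normal form so that, outside a neighbourhood of the belt family, the Reeb flow enters $H_\partial$ through one part of the interface and exits through the complementary part without recurrence. I would carry this out by writing the handle in the explicit model of \cite{CE:Stein_book} and checking the Reeb vector field directly; keeping $\tilde f_T$ independent of $T$ near $H_\partial$ is convenient here because it makes the relevant flow independent of $T$, so this estimate need only be done once. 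Everything else is bookkeeping against the definition.
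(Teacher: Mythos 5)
Your setup and your identification of the three conditions match the paper's overall strategy, and you correctly isolate the dangerous case: periodic orbits of period at most $T$ that pass through the handle region without being covers of the belt-sphere (Lyapunov) orbits. But your proposed resolution of that case does not close the gap. Arranging the handle model so that the Reeb flow traverses $H_\partial$ ``without recurrence'' only rules out recurrence \emph{inside} the handle; it does not rule out an orbit that exits the handle, travels through $\partial W\setminus\nu(S)$ under the unmodified flow, and re-enters $\nu(S)$ before time $T$. With a surgery region of fixed size, such hybrid orbits will in general exist once $T$ is large enough, and they are neither covers of your $\delta_j$ nor of the surviving $\gamma_i^T$, so condition (2) of convenient dynamics fails. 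The feature you present as a convenience --- keeping $\tilde f_T$ and the handle $T$-independent so the estimate ``need only be done once'' --- is precisely the step that breaks.

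The paper's fix is to let the surgery region depend on $T$ (``thinning the handle'', following Cieliebak's argument for invariance of $SH$ under subcritical surgery). First one perturbs so that there are no Reeb chords from $S$ to itself, which is possible by transversality since $S$ has high codimension. Then, for each $T$, one performs the surgery on a neighbourhood $\nu^T(S,\epsilon)$ chosen so small that the return time of the $\alpha_T$-Reeb flow to $\nu^T(S,\epsilon)$ exceeds $T$. Consequently every orbit of period at most $T$ either avoids the surgery region entirely (hence is a cover of an essential orbit of $\alpha_T$) or stays in the handle (hence is a cover of a Lyapunov orbit, whose existence, non-degeneracy and mean-index bound come from the Lyapunov centre theorem as in Ustilovsky's thesis). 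Your treatment of conditions (1) and (3) is otherwise reasonable in outline, though for (3) the paper verifies monotonicity via an explicit increasing family of Hamiltonians in the Weinstein handle model rather than by interpolating $f_T$ on the boundary alone.
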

See the appendix C, Section~\ref{sec:AppendixC}, for a proof.
The following sum formula for the mean Euler characteristic, originally found by Espina \cite[Corollary 5.7]{E}, will be very useful.
The current version is most easily found in \cite{BO:SH_HC}.
\begin{theorem}[Espina, Bourgeois-Oancea]
Let $(W_1^{2n},\omega_1)$ and $(W_2^{2n},\omega_2)$ be Liouville manifolds for which the mean Euler characteristic is defined, i.e.~the above limits exist.
Then the boundary connected sum of $W_1$ and $W_2$ satisfies
\begin{equation}
\label{eq:sum_formula}
\chi_m(\,(W_1,\omega_1) \natural (W_2,\omega_2) \,)
=
\chi_m(W_1,\omega_1)+\chi_m(W_2,\omega_2)
+(-1)^n\frac{1}{2}.
\end{equation}
\end{theorem}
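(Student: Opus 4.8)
The plan is to deduce the formula directly from the surgery exact sequence of Theorem~\ref{thm:les_connected_sum}. First I would observe that the boundary connected sum $W_1\natural W_2$ is obtained from the disjoint union $W_1\sqcup W_2$ by attaching a single $1$-handle, that both are Liouville domains with $c_1=0$ (so that the $\Z$-grading on $SH^{+,S^1}$ makes sense), and that since Floer trajectories do not couple distinct components one has $SH^{+,S^1}_*(W_1\sqcup W_2)\cong SH^{+,S^1}_*(W_1)\oplus SH^{+,S^1}_*(W_2)$. Applying Theorem~\ref{thm:les_connected_sum} with $W=W_1\sqcup W_2$ and $\tilde W=W_1\natural W_2$ then yields a long exact sequence
$$
\cdots \to SH^{+,S^1}_k(tube)\xrightarrow{\alpha_k} SH^{+,S^1}_k(W_1\natural W_2)\xrightarrow{\beta_k} SH^{+,S^1}_k(W_1\sqcup W_2)\xrightarrow{\partial_k} SH^{+,S^1}_{k-1}(tube)\to\cdots,
$$
where $\rk SH^{+,S^1}_k(tube)=1$ for $k\in\{n,n+2,n+4,\ldots\}$ and $0$ otherwise.

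Next I would pass to ranks. Write $a_k,b_k,c_k$ for the ranks of $SH^{+,S^1}_k$ of $tube$, $W_1\natural W_2$, $W_1\sqcup W_2$ respectively, and set $d_k:=\rk(\im\partial_k)$. Counting dimensions around the term $SH^{+,S^1}_k(W_1\natural W_2)$ in the exact sequence gives the identity $b_k=a_k+c_k-d_k-d_{k+1}$. Forming the truncated alternating sum $\sum_{k=-N}^{N}(-1)^k b_k$ and substituting, the $d$-contributions telescope and leave only a boundary term of the shape $-(-1)^N\bigl(d_{-N}+d_{N+1}\bigr)$. The key estimate is that this stays bounded as $N\to\infty$: indeed $d_k\le \rk SH^{+,S^1}_{k-1}(tube)\le 1$ for every $k$, and $d_{-N}=0$ once $-N-1<n$. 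Dividing by $N$ and letting $N\to\infty$ therefore kills it, and since $c_k=sb_k(W_1)+sb_k(W_2)$ with both limits existing by hypothesis, we obtain
$$
\chi_m(W_1\natural W_2)=\chi_m(W_1)+\chi_m(W_2)+\lim_{N\to\infty}\frac1N\sum_{k=-N}^{N}(-1)^k a_k ,
$$
and in particular the limit on the left-hand side exists.

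Finally I would evaluate the tube contribution. All degrees $k$ with $a_k\neq 0$ are congruent to $n$ modulo $2$, so each such term equals $(-1)^n$, and the number of them in $[-N,N]$ is $\sim N/2$; hence $\lim_N \frac1N\sum_{k=-N}^N(-1)^k a_k=(-1)^n\tfrac12$, which is exactly the asserted correction term. The only genuinely delicate point of the argument is this telescoping/estimate step — one must keep careful track of the degree shift in the connecting homomorphism of Theorem~\ref{thm:les_connected_sum} and verify that the leftover boundary term is $o(N)$; but because $SH^{+,S^1}(tube)$ has rank at most one in each degree and vanishes below degree $n$, the required bound holds automatically, independently of the shift convention. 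Everything else is routine bookkeeping with alternating sums of ranks.
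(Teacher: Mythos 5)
Your derivation is correct. The paper itself states this theorem without proof, citing Espina and Bourgeois--Oancea, so there is no in-paper argument to compare against; but your route --- applying the surgery exact triangle of Theorem~\ref{thm:les_connected_sum} to $W=W_1\sqcup W_2$, $\tilde W=W_1\natural W_2$, extracting the rank identity $b_k=a_k+c_k-d_k-d_{k+1}$, telescoping the connecting-map contributions to a boundary term bounded by $2$, and averaging the tube's ranks (one generator in every other degree, all of parity $n$) to get the correction $(-1)^n\tfrac12$ --- is exactly the standard argument behind the cited formula. Your observation that the conclusion is insensitive to the sign convention of the degree shift in the connecting homomorphism, because $\rk SH^{+,S^1}_k(tube)\le 1$ in every degree, is the right way to dispose of the only delicate point. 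The one hypothesis you use beyond the theorem's literal statement is $c_1=0$ so that Theorem~\ref{thm:les_connected_sum} and the integer grading apply, but this is a standing assumption throughout the paper's discussion of graded symplectic homology, so nothing is lost.
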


\subsection{Mean Euler characteristic of Brieskorn manifolds}
\label{sec:MEC_Brieskorn}
We start with a general formula for the mean Euler characteristic of a Liouville fillable contact manifold $(\Sigma,\alpha)$ with a periodic Reeb flow.
Though explicit, the general formula is not so nice. 
Assume that the minimal periods of the Reeb flow of $\alpha$ are given by $T_1<\ldots<T_k$, where $T_k$ is the period of a principal orbit.
We follow \cite{FSvk:displaceability} and introduce the function
$$
\phi_{T_i;T_{i+1},\ldots,T_k} \,=\,
\# \{ a \in \N \mid aT_i < T_k \text{ and } a T_i \notin T_j \N \text{ for } j=i+1,\ldots, k \}
.
$$
We use the convention that $\phi_{T_k;\emptyset}=1$.
The Euler characteristic of the $S^1$-equivariant homology $H^{S^1}(\Sigma;\Q)$ will be denoted by $\chi^{S^1}(\Sigma)$.
This equivariant Euler characteristic equals the Euler characteristic of the quotient if the circle action has no fixed points.

\begin{proposition}[Not so nice formula for the mean Euler characteristic]
\label{prop:mean_euler_S^1-orbibundle}
Let $(\Sigma,\alpha)$ be a simply-connected contact manifold as above and assume that it admits a simply-connected Liouville filling $(W,d\lambda)$.
Suppose furthermore that the following conditions hold.
\begin{itemize}
\item The restriction of the tangent bundle to the symplectization to $\Sigma$, $T(\R \times \Sigma)|_{\Sigma}$, is trivial as a symplectic vector bundle, and $c_1(W)=0$.
\item There is a compatible complex structure $J$ for $(\xi=\ker \alpha,d\alpha)$ such that for every periodic Reeb orbit $\gamma$ the linearized Reeb flow is complex linear with respect to some unitary trivialization of $(\xi,J,d\alpha)$ along $\gamma$.
\end{itemize}
Let $\mu_P:=\mu(\Sigma)$ denote the Maslov index of a principal orbit of the Reeb action.
If $\mu_P\neq 0$ then the following hold.
\begin{itemize}
\item The contact manifold $(\Sigma,\alpha)$ is index-positive if $\mu_P>0$ and index-negative if $\mu_P<0$.
\item The mean Euler characteristic is an invariant of the contact structure and satisfies the following formula,
\begin{equation}
\label{eq:MEC_general}
\chi_m(W)=\frac{\sum_{i=1}^k (-1)^{\mu(\Sigma_{T_i})-\frac{1}{2}\dim (\Sigma_{T_i}/S^1) } \phi_{T_i;T_{i+1},\ldots T_k} \chi^{S^1}(\Sigma_{T_i})}{|\mu_P|}.
\end{equation}
\end{itemize}
\end{proposition}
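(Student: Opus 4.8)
The plan is to derive this proposition by feeding the Morse--Bott spectral sequence of Theorem~\ref{thm:spectral_sequence_SH} into the definition of $\chi_m(W)$ and evaluating the resulting alternating sum over the Morse--Bott submanifolds of periodic Reeb orbits. The invariance statement is immediate from the general machinery: the two bulleted hypotheses, together with $\mu_P\neq 0$, are precisely those of Lemma~\ref{lemma:periodic_flow_convenient}, so $(\Sigma,\xi)$ has convenient dynamics, and then Lemma~\ref{lemma:mec_invariant} says $\chi_m(W,d\lambda)$ depends only on $(\Sigma,\xi)$. So the work is in the two displayed assertions about the sign of the mean index and the closed formula.

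First I would record the behaviour of $shift$. Since the Reeb flow is periodic with principal period $T_k$, the Morse--Bott manifold $\Sigma_p$ of return time $p$ equals all of $\Sigma$ whenever $T_k\mid p$, and the Robbin--Salamon index of the $a$-fold principal family is $a\mu_P$ up to a bounded error; by monotonicity of the Robbin--Salamon index under lengthening the time interval, $\mu_{RS}(\Sigma_p)=\tfrac{p}{T_k}\mu_P+O(1)$ for every $p$, while $\dim\Sigma_p/S^1\le 2n-2$ is bounded. Hence $shift(\Sigma)=\tfrac{p}{T_k}\mu_P+O(1)$. In particular the mean index of every periodic Reeb orbit has the sign of $\mu_P$, which is exactly index-positivity when $\mu_P>0$ and index-negativity when $\mu_P<0$; this proves the first bullet. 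Moreover, when $\mu_P>0$ the assignment $p\mapsto shift$ is proper and bounded below, so in each total degree $k=p+q$ only finitely many pairs $(p,\Sigma)$ can contribute a nonzero $H^{S^1}_{k-shift(\Sigma)}(\Sigma;\Q)$, and $SH^{+,S^1}_*(W)$ vanishes in all sufficiently negative degrees; symmetrically for $\mu_P<0$. In either case the Betti numbers $sb_k$ are finite and $\chi_m(W)$ is defined.

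Next, since taking homology preserves the Euler characteristic in each total degree and the spectral sequence is of finite type in each degree, the averaged alternating sum $\tfrac1N\sum_{k=-N}^N(-1)^k(\cdot)$ is unchanged, up to correction terms that telescope to boundary contributions at $k=\pm N$, when one replaces the abutment by the $E^1$-page; by the previous paragraph those boundary contributions stay bounded (they vanish near $-N$ in the index-positive case and grow at most like the uniformly bounded rank of $E^1$ near $+N$), hence are $o(N)$, so $\chi_m(W)$ may be computed directly from $E^1$. Interchanging the now legitimately organised sums and using
\[
\sum_m (-1)^{m+shift(\Sigma)}\dim H^{S^1}_m(\Sigma;\Q)=(-1)^{shift(\Sigma)}\chi^{S^1}(\Sigma)
\]
(valid once the window $[-N,N]$ contains the bounded support of $H^{S^1}_*(\Sigma)$), one gets $\sum_{k=-N}^N(-1)^k\dim E^1_k=\sum_{p>0}\sum_{\Sigma\in C(p)}(-1)^{shift(\Sigma)}\chi^{S^1}(\Sigma)+(\text{boundary terms})$.

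Finally I would reorganise the sum over $\{(p,\Sigma)\}$ by \emph{orbit type}: for each minimal period $T_i$ the family at $p=aT_i$ carrying orbits genuinely of type $T_i$ occurs exactly when $aT_i\notin T_j\mathbb N$ for all $j>i$, and within one principal period there are precisely $\phi_{T_i;T_{i+1},\ldots,T_k}$ such values of $a$ — this is the definition of $\phi$ — with the principal orbits themselves being the case $i=k$ under the convention $\phi_{T_k;\emptyset}=1$. One checks, using the complex-linearity hypothesis and the iteration behaviour of the Robbin--Salamon index, that $(-1)^{shift(\Sigma)}$ is constant along the covers of a fixed orbit type, equal to $(-1)^{\mu(\Sigma_{T_i})-\frac12\dim(\Sigma_{T_i}/S^1)}$, and that as $p$ runs over one principal period the total degree $k$ sweeps out a window of length $|\mu_P|$. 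Therefore the averaged alternating sum converges to $\tfrac{1}{|\mu_P|}\sum_{i=1}^k(-1)^{\mu(\Sigma_{T_i})-\frac12\dim(\Sigma_{T_i}/S^1)}\phi_{T_i;T_{i+1},\ldots,T_k}\chi^{S^1}(\Sigma_{T_i})$, which is Formula~\eqref{eq:MEC_general}. I expect the main obstacle to be exactly this last step: the simultaneous bookkeeping of which Morse--Bott manifold appears at which $p$, the verification that $(-1)^{shift}$ really is locally constant along each family of covers (so contributions accumulate rather than cancel telescopically), and the precise matching of "one principal period of orbits" with "a degree window of length $|\mu_P|$". The analytic input is comparatively routine given Theorem~\ref{thm:spectral_sequence_SH}; the combinatorics of periods and the parity control, essentially following \cite{FSvk:displaceability}, is where the real work lies.
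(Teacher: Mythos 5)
Your proposal follows exactly the route the paper itself takes (and in fact only sketches): read off the signed count of generators from the $E^1$-page of the Morse--Bott spectral sequence \eqref{eq:SS_SH_equivariant}, use the horizontal periodicity of the page with degree shift $\mu_P$ per principal period to divide by $|\mu_P|$, and obtain invariance from Lemmas~\ref{lemma:periodic_flow_convenient} and \ref{lemma:mec_invariant}. Your parity check on $(-1)^{shift}$ along iterates and the $o(N)$ control of the boundary terms are correct and supply details the paper leaves implicit, so there is no gap.
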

The idea behind this proposition is to perform a signed count of the ranks of the entries, or ``the dots'', in the spectral sequence \eqref{eq:SS_SH_equivariant}.
The Euler characteristic of the equivariant homology of a Morse-Bott submanifold is equal to this signed number of dots in a single column.
Since this spectral sequence is periodic in the horizontal direction with shifts, we only need to count in one period, and divide by the absolute value of the mean index $\mu_P$.

We can apply this formula to Brieskorn manifolds.
The Maslov index $\mu_P$ of a principal orbit is computed in Proposition~\ref{prop:maslov_principal}, and Theorem~\ref{thm:equivariant_homology_Brieskorn} tells us how to compute $\chi^{S^1}(\Sigma_{T_i})$.
We illustrate the use of this proposition with an example that we will use later.

Let $p,q$ be odd integers that are relatively prime, and consider $\Sigma(2,2,p,q)$ with its natural contact form.
All assumptions of the proposition hold for this Brieskorn manifold, so we can start our computations.
Principal orbits in this Brieskorn manifold have then period $2pq$, and exceptional orbits can have period $2p$, $2q$, $2$ and $pq$ (and integer multiples).
We list all possible Morse-Bott submanifolds in the table below.
By frequency we mean the numbers $\phi_{T_i;T_{i+1},\ldots T_k}$.
\[
\begin{tabular}{llll}
Orbit space &  period & $\chi^{S^1}$ & frequency (in one period of $E^1$) \\
\hline
$\Sigma(2,2,p,q)$ & $2pq$ & 3 & 1 \\
$\Sigma(2,2,p)$ & $2p$ & 2 & $q-1$ \\
$\Sigma(2,2,q)$ & $2q$ & 2 & $p-1$ \\
$\Sigma(2,2)$ & $2$ & 2 & $pq-q-p+1$ \\
$\Sigma(p,q)$ & $pq$ & 1 & 1 \\
\end{tabular}
\]

We conclude that 
$$
\chi_m(V_\epsilon(2,2,p,q) \, )=\frac{1+2(pq-p-q+1)+2(q-1)+2(p-1)+3}{4(p+q)}
=\frac{1+pq}{2(p+q)}.
$$
We also want to mention another case where the numerics work out nicely.
If all exponents of the Brieskorn polynomial are pairwise relatively prime, then Formula~\eqref{eq:MEC_general} reduces to something more explicit, see \cite[Proposition 4.6]{FSvk:displaceability}.
\begin{proposition}
\label{prop:mean_euler_Brieskorn}
The Brieskorn manifold $\Sigma(a_0,\ldots,a_n)$ with its natural contact form~$\alpha$
is index-positive if $\sum_j \frac{1}{a_j} >1$, and index-negative if $\sum_j \frac{1}{a_j} <1$.
Furthermore, if the exponents $a_0,\ldots,a_n$ are pairwise relatively prime,
then the mean Euler characteristic of $\Sigma(a_0,\ldots,a_n)$ is given by
\begin{equation}
\label{eq:mean_euler}
\chi_m(\Sigma(a_0,\ldots,a_n),\alpha) \,=\,
(-1)^{n+1}\,
\frac{n+(n-1) \sum_{i_0}(a_{i_0}-1)+\ldots + 1 \cdot \sum_{i_0<\ldots< i_{n-2} }(a_{i_0}-1)\cdots (a_{i_{n-2}}-1)
}{
2 | (\sum_j a_0 \cdots \widehat{a_j}\cdots a_n)  -a_0\cdots a_n|
}
\end{equation}
\end{proposition}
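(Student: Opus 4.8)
The two index assertions require no new argument: they are exactly the trichotomy already recorded in Proposition~\ref{prop:maslov_principal} (equivalently Proposition~\ref{prop:mean_euler_S^1-orbibundle}), since for pairwise relatively prime exponents $\lcm_{j\in I}a_j=\prod_j a_j$, so that
\[
\mu_P=2\Big(\textstyle\sum_{j=0}^n a_0\cdots\widehat{a_j}\cdots a_n-a_0\cdots a_n\Big),
\]
which has the sign of $\sum_j\frac1{a_j}-1$. In particular $\mu_P\neq0$ precisely in the index-definite case, and $|\mu_P|$ is already the denominator in~\eqref{eq:mean_euler}; so throughout I assume $\sum_j\frac1{a_j}\neq1$ (this is also what makes $\chi_m$ well defined). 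For the formula itself the plan is to specialize Formula~\eqref{eq:MEC_general} of Proposition~\ref{prop:mean_euler_S^1-orbibundle}, whose hypotheses hold here: $V_\epsilon(a)$ is a simply-connected Liouville filling with $c_1=0$, and by the computations of Section~\ref{sec:trivializations} there is a unitary trivialization of $(\xi,J,d\alpha)$ along which the linearized Reeb flow is complex linear.

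The combinatorial heart is the Morse--Bott data under pairwise coprimality. A periodic Reeb orbit supported on a subset $S\subseteq I$ has minimal period (up to the customary factor $2\pi$) equal to $\prod_{j\in S}a_j$; distinct subsets give distinct periods, $S$ necessarily has at least two elements, and the Morse--Bott submanifold $\Sigma_{T_{I_s}}$ of orbits with return time $T_{I_s}:=\prod_{j\in I_s}a_j$ is precisely the Brieskorn submanifold $K(I_s)=\Sigma\big((a_j)_{j\in I_s}\big)$, as $I_s$ runs over the subsets of $I$ with $|I_s|\ge2$. For pairwise coprime exponents each summand $\prod_{i\in J}a_i/\lcm_{j\in J}a_j$ in~\eqref{eq:rk_middle_dim_hom} equals $1$, and $\sum_{J\subseteq I_s}(-1)^{|I_s|-|J|}=0$, so $\kappa(I_s)=0$; hence Theorem~\ref{thm:equivariant_homology_Brieskorn} gives $\chi^{S^1}(K(I_s))=|I_s|-1$. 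Next I would compute the sign $(-1)^{\mu(K(I_s))-\frac12\dim K(I_s)/S^1}$: one has $\frac12\dim\big(K(I_s)/S^1\big)=|I_s|-2$, while Formula~\eqref{eq_Maslov_index_Brieskorn} (applied with $N=1$, $T=T_{I_s}$) shows that $\mu(K(I_s))$ is congruent mod $2$ to $\#(I\setminus I_s)=n+1-|I_s|$. Thus the exponent is congruent to $n+1$ for \emph{every} $I_s$, the sign equals the constant $(-1)^{n+1}$, and it factors out of the sum in~\eqref{eq:MEC_general}.

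The remaining step, and the one I expect to be most delicate, is the frequency $\phi_{T_{I_s};T_{I_{s+1}},\dots,T_k}$, which records how many covers of a principal orbit of $K(I_s)$ are \emph{not} absorbed into a strictly larger Morse--Bott family within one principal period $T_k=\prod_j a_j$. The $a$-th cover has period $aT_{I_s}$, and one checks that it lies in a strictly larger family exactly when some $a_j$ with $j\notin I_s$ divides $a$; since $aT_{I_s}<T_k$ forces $a<\prod_{j\notin I_s}a_j$, the Chinese Remainder Theorem identifies the admissible $a$ with the tuples in $\prod_{j\notin I_s}\Z/a_j\Z$ all of whose coordinates are nonzero. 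Hence $\phi_{T_{I_s};\dots}=\prod_{j\in I\setminus I_s}(a_j-1)$, consistently with the convention $\phi_{T_k;\emptyset}=1$.

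Feeding $\chi^{S^1}(K(I_s))=|I_s|-1$, the sign $(-1)^{n+1}$, and this frequency into~\eqref{eq:MEC_general} yields
\[
\chi_m(W)=(-1)^{n+1}\,\frac{\sum_{I_s\subseteq I,\ |I_s|\ge2}(|I_s|-1)\prod_{j\in I\setminus I_s}(a_j-1)}{|\mu_P|}.
\]
Reindexing by the complementary set $J=I\setminus I_s$ (so $0\le|J|\le n-1$ and $|I_s|-1=n-|J|$) and grouping the terms by $t=|J|$ rewrites the numerator as $\sum_{t=0}^{n-1}(n-t)\,e_t(a_0-1,\dots,a_n-1)$, where $e_t$ is the $t$-th elementary symmetric polynomial; this is exactly the numerator of~\eqref{eq:mean_euler}, and $|\mu_P|$ is its denominator by the first paragraph, completing the proof. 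The only genuine obstacle in the whole argument is pinning down the frequency count together with the precise meaning of ``absorbed into a larger family''; everything else is assembling formulas already established.
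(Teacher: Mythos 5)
Your proposal is correct and follows essentially the same route as the paper: the index statements are read off from Proposition~\ref{prop:maslov_principal}, and the formula is obtained by specializing Formula~\eqref{eq:MEC_general} using $\chi^{S^1}(K(I_s))=|I_s|-1$ (since $\kappa(I_s)=0$ for pairwise coprime exponents), the constant sign $(-1)^{n+1}$, and the frequency count $\phi=\prod_{j\notin I_s}(a_j-1)$ via the Chinese Remainder Theorem — which is precisely the computation the paper outsources to \cite[Proposition 4.6]{FSvk:displaceability}. Your treatment of the frequency (identifying "absorbed into a larger family" with divisibility of $a$ by some $a_j$, $j\notin I_s$) is the one genuinely non-trivial verification, and it is carried out correctly.
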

Finally, for the $A_{k-1}$ singularities in dimension $5/6$, we can read off the mean Euler characteristic from the computations in Section~\ref{sec:examples},
\[
\chi_m( \Sigma(2,2,2,k)\,)=
\begin{cases}
1 & \text{if }k\text{ is even}\\
\frac{1}{2}\frac{2k+1}{k+2} & \text{if }k\text{ is odd}.
\end{cases}
\]
Note that Lemma~\ref{lemma:mec_invariant} applies here, so this gives a simple way to see that the Ustilovsky spheres, given by $\Sigma(2,2,2,k)$ with $k$ odd, are pairwise non-isomorphic.

\begin{theorem}
\label{thm:every_rational_number_is_MEC}
On the sphere $S^5$ every rational number can be realized as the mean Euler characteristic of some contact structure.
\end{theorem}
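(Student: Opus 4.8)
The plan is to realize a prescribed rational $x$ as $\chi_m$ of a contact structure on $S^5$ assembled, by boundary connected sum, from Brieskorn $5$-spheres, exploiting that $S^5\#S^5\cong S^5$ and that the reduced quantity $\tilde\chi_m:=\chi_m-\tfrac12$ (the invariant appearing in the first theorem of the paper) is additive under $\natural$. The first building block is $\Sigma(2,2,p,q)$ for odd coprime $p,q>1$: its Brieskorn graph has the two isolated vertices $p,q$, so by the Milnor--Brieskorn sphere criterion it is homeomorphic to $S^5$, and since $n=3$ the Hirzebruch--Mayer theorem (Brieskorn homotopy $5$-spheres are standard) upgrades this to a diffeomorphism $\Sigma(2,2,p,q)\cong S^5$. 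The natural contact form has periodic Reeb flow with principal mean index $\mu_P=2(p+q)\neq0$ by Proposition~\ref{prop:maslov_principal}, hence convenient dynamics by Lemma~\ref{lemma:periodic_flow_convenient}, hence $\chi_m$ is a contact invariant by Lemma~\ref{lemma:mec_invariant}; and by the computation in Section~\ref{sec:MEC_Brieskorn},
\[
\chi_m(\Sigma(2,2,p,q))=\frac{pq+1}{2(p+q)},\qquad
\tilde\chi_m(\Sigma(2,2,p,q))=\frac{(p-1)(q-1)}{2(p+q)}\ \ge\ 0.
\]

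The second building block supplies negativity. For pairwise coprime $a_0,\dots,a_3\ge2$ with $\sum_j 1/a_j\neq1$ the graph has four isolated vertices, so $\Sigma(a_0,\dots,a_3)$ is again diffeomorphic to $S^5$, has periodic Reeb flow with $\mu_P\neq0$ and convenient dynamics, and $\chi_m$ is the contact invariant given by Proposition~\ref{prop:mean_euler_Brieskorn}. Taking the $a_j$ moderately large makes the denominator of that formula large; concretely for $\Sigma(3,5,7,11)$ one gets $\chi_m=\tfrac{211}{538}$, so $\tilde\chi_m(\Sigma(3,5,7,11))=-\tfrac{29}{269}<0$ (and $269$ is prime). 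Now assemble: $V_\epsilon(a)\,\natural\,V_\epsilon(a')$ is a simply-connected Liouville domain with $c_1=0$ filling $\Sigma(a)\#\Sigma(a')\cong S^5$; since passing from $W\sqcup W'$ to $W\natural W'$ is a subcritical ($1$-handle) surgery, Lemma~\ref{lemma:subcritical_preserves_convenient_dynamics} keeps convenient dynamics on the boundary, and the sum formula~\eqref{eq:sum_formula} with $n=3$ gives $\tilde\chi_m(\xi\#\xi')=\tilde\chi_m(\xi)+\tilde\chi_m(\xi')$. Hence the set $S\subseteq(\Q,+)$ of values of $\tilde\chi_m$ realized by contact structures on $S^5$ is a submonoid, and the theorem is the statement $S=\Q$.

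The endgame is arithmetic. From $\tilde\chi_m(\Sigma(2,2,3,5))=\tfrac12$ we get $\tfrac12,1\in S$, and combining $269$ copies of $\Sigma(3,5,7,11)$ with $56$ copies of $\Sigma(2,2,3,5)$ gives $-1=269\cdot(-\tfrac{29}{269})+56\cdot\tfrac12\in S$. Next, given a prime power $\ell^k$, Dirichlet's theorem produces a prime $q$ in a suitable residue class (roughly $q\equiv-3\pmod{2^{k+2}}$ for $\ell=2$, $q\equiv3\pmod4$ and $q\equiv-3\pmod{\ell^k}$ for odd $\ell\neq3$, and a variant of $\Sigma(2,2,5,q)$ for $\ell=3$) such that $\tilde\chi_m(\Sigma(2,2,3,q))=\tfrac{q-1}{q+3}$ in lowest terms equals some $c/d$ with $\ell^k\mid d$ and $\gcd(c,d)=1$; since $-1\in S$ one checks $\langle c/d,-1\rangle_{\mathrm{monoid}}=\tfrac1d\Z\subseteq S$, whence $\tfrac1{\ell^k}=(d/\ell^k)\cdot\tfrac1d\in S$. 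Finally, for arbitrary $q=\prod_i\ell_i^{k_i}$ the element $\sum_i\tfrac1{\ell_i^{k_i}}\in S$ has the form $M/q$ with $\gcd(M,q)=1$, so again $\langle M/q,-1\rangle_{\mathrm{monoid}}=\tfrac1q\Z\subseteq S$; therefore $S\supseteq\bigcup_q\tfrac1q\Z=\Q$.

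The conceptual crux is the second building block: one must exhibit contact structures on $S^5$ with $\chi_m<\tfrac12$, without which $S$ would lie in $[\tfrac12,\infty)\cap\Q$ and surjectivity would fail; these come precisely from the index-negative pairwise coprime Brieskorn $5$-spheres via the closed formula of Proposition~\ref{prop:mean_euler_Brieskorn}, and verifying that such a family actually dips below $\tfrac12$ (rather than staying large, as the examples with $\sum 1/a_j$ close to $1$ do) is where the real content lies. Everything after that — persistence of convenient dynamics under boundary connected sum, and the Dirichlet-plus-monoid bookkeeping, with the usual care about coprimality of numerators and denominators — is routine.
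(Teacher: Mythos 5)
Your proof is correct and shares the paper's architecture — Brieskorn $5$-spheres as generators, the orbit-space formula for $\chi_m$, and additivity of $\tilde\chi_m=\chi_m-\tfrac12$ under boundary connected sum — but it differs in both choices of generators and in the arithmetic endgame, and the comparison is instructive. The paper gets negativity from $\Sigma(p,q,r,s)\,\#\,\Sigma(p,q,r,s)$ with all exponents large (so that $\chi_m<\tfrac14$ and the $-\tfrac12$ correction pushes the sum below zero), whereas you exhibit a single explicit index-negative sphere $\Sigma(3,5,7,11)$ with $\tilde\chi_m=-29/269$; both work, and your computation of $211/538$ checks out against Proposition~\ref{prop:mean_euler_Brieskorn}. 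For the denominators, the paper is more constructive: it hits $2-\tfrac{1}{p^\ell}$ directly via $\Sigma(3p^\ell-2,2,2,2)$ and $\Sigma(2,2,2^\ell-3,2^\ell+3)$, with no appeal to Dirichlet, while you invoke primes in arithmetic progressions to force $\ell^k$ into the reduced denominator of $\tfrac{q-1}{q+3}$. On the other hand, your monoid bookkeeping is the more complete of the two: the paper's reduction ``it suffices to realize $1/p^\ell$'' is left implicit, and merely having $2-\tfrac{1}{p^\ell}$ together with some negative values does not by itself yield all of $\Q$ without an argument like your observation that $c/d$ with $\gcd(c,d)=1$ together with $\Z\subseteq S$ generates $\tfrac1d\Z$, and that $\sum_i \ell_i^{-k_i}$ has denominator exactly $q$. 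Two trivia: your $\mu_P$ for $\Sigma(2,2,p,q)$ should be $4(p+q)$, not $2(p+q)$ (immaterial, since only $\mu_P\neq0$ is used), and the $\ell=3$ variant with $\Sigma(2,2,5,q)$ does work but deserves the extra line checking that $\gcd(2(q-1),q+5)$ divides $12$, so choosing $3^{k+1}\mid q+5$ leaves $3^k$ in the reduced denominator.
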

\begin{proof}
This is just a matter of finding the right generators, and using the sum formula \eqref{eq:sum_formula}.
We take the Brieskorn spheres $\Sigma(k,2,2,2)$ with $k$ odd (also known as Ustilovsky spheres), $\Sigma(2,2,p,q)$ and $\Sigma(p,q,r,s)$, where $p,q,r,s$ are pairwise relatively prime.
By Formula~\eqref{eq:MEC_general}, we find the following results (if $p,q,r,s$ are sufficiently large).
\[
\chi_m(\Sigma(k,2,2,2) \,)=\frac{1}{2}\frac{2k+1}{k+2},
\quad
\chi_m(\Sigma(p,q,r,s) \,)<\frac{1}{4},
\quad
\chi_m(\Sigma(2,2,p,q) \,)=\frac{1+pq}{2(p+q)}.
\]
We see that $\chi_m(\Sigma(2,2,3,5)\,)=1$, so by the sum formula, we find 
$$
\chi_m(\Sigma_1\# \Sigma_2 \#\Sigma(2,2,3,5)\, )=\chi_m(\Sigma_1)+\chi_m(\Sigma_2).
$$
Negative $\chi_m$ can be obtained by taking $\Sigma(p,q,r,s)\# \Sigma(p,q,r,s)$ if $p,q,r,s$ are sufficiently large to have $\chi_m(\Sigma(p,q,r,s) \,)<\frac{1}{4}$.
Hence it suffices to realize $1/p^\ell$ for any prime $p$. 
To get the prime powers $1/p^\ell$ for $p$ an odd prime, take $k=3\cdot p^\ell-2$.
Then
$$
\chi_m(\Sigma(k,2,2,2) 
\# \Sigma(k,2,2,2) 
\# \Sigma(2,2,3,5)
\,)=
\frac{2k+1}{k+2}=\frac{2\cdot 3\cdot p^\ell-4+1}{3\cdot p^\ell}=2-\frac{1}{p^\ell}.
$$
For the prime powers $1/2^\ell$, we observe that
$$
\chi_m(\Sigma(2,2,2^\ell-3,2^\ell+3)=\frac{2^{2\ell}-8}{2(2^{\ell+1}) }
=2^{\ell-2}-\frac{1}{2^{\ell-1}}
.
$$
\end{proof}

\begin{remark}
Since Brieskorn manifolds are prequantization orbibundles, the mean Euler characteristic is always a rational number.
As we have seen in Formula~\eqref{eq:sum_formula}, the connected sum preserves this property.
We do not know whether it is possible to get irrational numbers with more general constructions.
\end{remark}

\subsection{Exotic contact structures on a sphere form a monoid}
It is well-known that contact structures on $S^{2n-1}$ form a monoid under the connected sum operation.
The neutral element is the standard contact sphere $(S^{2n-1},\xi_0)$.
Let us call this monoid $\Xi(S^{2n-1})$.
Define the submonoid $\Xi_{nice}(S^{2n-1})$ consisting of contact structures on $S^{2n-1}$ that are convex fillable by simply-connected Liouville domains with vanishing first Chern class that admit convenient dynamics.

If we offset the mean Euler characteristic by a half, then we can reformulate Theorem~\ref{thm:every_rational_number_is_MEC} in more fancy language:
we have a surjective monoid homomorphism
\[
\begin{split}
\tilde \chi_m: (\Xi_{nice}(S^5),\#) & \longrightarrow (\Q,+) \\
\xi & \longmapsto \chi_m(S^5,\xi)-\frac{1}{2}.
\end{split}
\]
With the sum formula \eqref{eq:sum_formula} we see that this is indeed a homomorphism.
It follows that this monoid is infinitely generated.
The observation that the monoid $\Xi(S^{2n-1})$ is infinitely generated is not new, and was, in fact, already made by Ustilovsky in his thesis \cite{Ustilovsky:thesis} using cylindrical contact homology.
He looked at the chain complex of the connected sum, and although he didn't know the exact sequence from Theorem~\ref{thm:les_connected_sum}, he could deduce enough information to show that $\Xi_{nice}(S^5)$ is an infinitely generated monoid. See Section~\ref{sec:examples} for the necessary computations in the five-dimensional case: these can be easily extended to dimensions 9, 13, etc.

\subsection{The mean Euler characteristic and the $+$-part of equivariant symplectic homology over $\Q$ are not full invariants}
\label{sec:not_full_invariants}
Consider the Brieskorn manifolds $\Sigma(2k,2,2,2)$ from Section~\ref{sec:examples}.
We saw that the $+$-part of equivariant symplectic homology with $\Q$-coefficients does not depend on $k$.
Now consider the subcritically fillable manifold $(S^2\times S^3,\xi_{sub})=\partial( T^*S^2 \times \C,d\lambda_{can}+\omega_0)$.
To motivate this choice, observe that $\Sigma(2k,2,2,2)$ is the contact open book $\OB(T^*S^2,\tau^{2k})$, whose monodromy is a $2k$-fold Dehn twist.
On the other hand, $(S^2\times S^3,\xi_{sub})$ is given by the contact open book $\OB(T^*S^2,\id)$.

By examining the equivariant Viterbo long exact sequence we find that the $+$-part of equivariant symplectic homology of $(S^2\times S^3,\xi_{sub})$ is isomorphic to that of the $\Sigma(2k,2,2,2)$'s, so $SH^{+,S^1}$ cannot distinguish any of these manifolds.
We remind the reader that $(ST^*S^3,\xi_{can})\cong\Sigma(2,2,2,2)$.

Though we will not do this, one can show that $(S^2\times S^3,\xi_{sub})$ and $(ST^*S^3,\xi_{can})$ are not contactomorphic.
It is easier to show instead that $(S^2\times S^3,\xi_{sub})$ and $\Sigma(2k,2,2,2)$ are not contactomorphic for $k>1$.
A nice argument to see this can be found in \cite[Proposition 6.2(a)]{OV}. This proposition states that the link of an isolated singularity for which the intersection form of the smoothing is nonzero does not embed into a subcritical Stein manifold. 
Since this condition on the intersection form holds true for $\Sigma(2k,2,2,2)$ if $k>1$, the claim follows.

We reach the following conclusion.
\begin{proposition}
There are non-isomorphic, simply-connected Stein-fillable contact manifolds that are not distinguished by the $+$-part of equivariant symplectic homology with $\Q$-coefficients.
In other words, the $+$-part of equivariant symplectic homology (with $\Q$-coefficients) and their mean Euler characteristics are not full invariants.
\end{proposition}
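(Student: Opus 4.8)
The plan is to produce the pair explicitly: on one side the subcritically fillable contact structure $\xi_{sub}$ on $S^2\times S^3$, realized as $\partial(T^*S^2\times\C)$, and on the other the natural contact structure on the Brieskorn manifold $\Sigma(2k,2,2,2)$ for a fixed integer $k\ge 2$. Recall that $\Sigma(2k,2,2,2)\cong\OB(T^*S^2,\tau^{2k})$ is again diffeomorphic to $S^2\times S^3$, is simply connected (being $(n-2)$-connected with $n=3$), and carries its standard Stein filling $V_\epsilon(2k,2,2,2)$, while $T^*S^2\times\C$ is a product of Stein manifolds, hence Stein, and subcritical. Thus both contact manifolds are simply connected and Stein-fillable, and the only thing to establish is that their $+$-parts of equivariant symplectic homology over $\Q$ agree while they are not contactomorphic.

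First I would check that $SH^{+,S^1}$ with $\Q$-coefficients coincides on the two. For $\Sigma(2k,2,2,2)$ this is the computation of Section~\ref{sec:examples}: since $2k$ is even one gets $SH^{+,S^1}_\ell\cong\Q$ for $\ell=2$, $\Q^2$ for even $\ell>2$, and $0$ otherwise, independently of $k$. For $(S^2\times S^3,\xi_{sub})$ I would run the equivariant Viterbo exact sequence (property (1) of Section~\ref{sec:blackbox_SH}) for $W=T^*S^2\times\C$ (so $2n=6$). Because $W$ is subcritical, $SH_*(W)=0$ by Cieliebak's invariance under subcritical surgery, and the equivariant theory vanishes as well, so the Viterbo sequence degenerates into isomorphisms $SH^{+,S^1}_\ell(W)\cong H^{S^1}_{\ell+2}(W,\partial W;\Q)$. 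Using Formula~\eqref{eq:S1_equivariant_filling}, the K\"unneth formula and Lefschetz duality, and $W\simeq S^2$, one finds $H_*(W,\partial W;\Q)=\Q$ concentrated in degrees $4$ and $6$, hence $H^{S^1}_*(W,\partial W;\Q)$ is $\Q$ in degree $4$ and $\Q^2$ in each even degree $\ge 6$. Substituting gives $SH^{+,S^1}_\ell(S^2\times S^3,\xi_{sub})\cong\Q$ for $\ell=2$, $\Q^2$ for even $\ell\ge 4$, and $0$ otherwise, which is exactly the Brieskorn answer. Moreover, on both sides all degree differences are even, so the relevant differentials vanish for parity reasons (as in the remark after Figure~\ref{fig:SS_V3}), whence $SH^{+,S^1}$ over $\Q$ is a genuine invariant of the contact structure; it therefore truly cannot separate these two contact manifolds. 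A fortiori the mean Euler characteristic cannot either (both are equal to $1$).

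Next I would show the two contact structures are not contactomorphic, which is where the actual content sits: every invariant developed in this survey is, by the previous paragraph, blind to the difference, so one must import an embedding obstruction. Concretely, $(S^2\times S^3,\xi_{sub})=\partial(T^*S^2\times\C)$ sits as a contact-type hypersurface inside the subcritical Stein manifold $\widehat{T^*S^2\times\C}$. By contrast $\Sigma(2k,2,2,2)$ is the link of the isolated singularity $z_0^{2k}+z_1^2+z_2^2+z_3^2=0$, whose smoothing $V_\epsilon(2k,2,2,2)$ is the $A_{2k-1}$ Milnor fiber; for $k\ge 2$ consecutive Pham generators in the basis~\eqref{eq:basis_homology} intersect in $\pm1$ by Proposition~\ref{prop:intersection_form}, so its intersection form on $H_3$ is nonzero. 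By \cite[Proposition 6.2(a)]{OV} the link of an isolated singularity with nonvanishing intersection form of the smoothing does not embed into any subcritical Stein manifold, so $\Sigma(2k,2,2,2)$ cannot be contactomorphic to $(S^2\times S^3,\xi_{sub})$. Taking $\xi_1=\xi_{sub}$ and $\xi_2$ the contact structure of $\Sigma(4,2,2,2)$ proves the proposition.

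The main obstacle is precisely this last step. Since $SH^{+,S^1}$ over $\Q$, the mean Euler characteristic, and indeed all the formal invariants agree for the two structures, no computation internal to the framework of this survey distinguishes them, and one is forced to use the symplectic-topological embedding obstruction of Oancea--Viterbo. The only genuine verification required there is the nonvanishing of the intersection form of the $A_{2k-1}$ smoothing for $k\ge 2$ --- immediate from Pham's description, but the point where care is needed, since it fails for $k=1$ where $\Sigma(2,2,2,2)=ST^*S^3$ and the intersection form of $T^*S^3$ is zero.
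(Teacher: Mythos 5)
Your proposal is correct and follows essentially the same route as the paper: the same pair $(S^2\times S^3,\xi_{sub})$ versus $\Sigma(2k,2,2,2)$, the equivariant Viterbo sequence to match $SH^{+,S^1}$ over $\Q$, and the Oancea--Viterbo non-embedding criterion to distinguish them. You merely fill in two details the paper leaves implicit, namely the explicit Viterbo/K\"unneth computation for the subcritical filling and the verification via Pham's basis that the intersection form of the $A_{2k-1}$ smoothing is nonzero for $k\geq 2$; both are carried out correctly.
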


\subsection{Classical invariants}
With modern technology around, classical invariants have become rather unpopular.
However, these invariants can still be very effective, especially in dimension 7, 11, and so on.
See \cite[Section~5.1]{Geiges:applications_contact_surgery} for a more detailed description of these invariants for Brieskorn manifolds and for some applications.
To explain the classical invariants, we start with some definitions.
\begin{definition}
An {\bf almost contact structure} on a $2n-1$-dimensional manifold is a reduction of the structure group from $\SO(2n-1)$ to $\U(n-1)\times \id$. (or equivalently to $\Sp(2(n-1)\,)\times \id$ ).
\end{definition}
Note that a cooriented contact structure $\xi=\ker \alpha$ on a manifold $\Sigma$ gives an almost contact structure.
Indeed, $(\xi,d\alpha)$ is a symplectic vector bundle and its complement is spanned by the Reeb field $R_\alpha$.
Since $T\Sigma\cong \xi \oplus \R R_\alpha$, we get an almost contact structure.
The homotopy class of almost contact structures provides an invariant and this is actually an invariant of the underlying contact structure.

Put differently, an almost contact structure is a lift of $f:\Sigma \to B\SO(2n-1)$, where $f$ is the classifying map for $T\Sigma$, to $\bar f:\Sigma \to B(\U(n-1)\times \id )$.
We note here that $B(\U(n-1) \times \id )$ is a fibration over $B\SO(2n-1)$ with fiber $\SO(2n-1)/\U(n-1)$.
The latter homogeneous space can be identified with $\SO(2n)/\U(n)$.
If $\Sigma$ is a sphere, then we see that homotopy classes of almost contact structures correspond to elements in $\pi_{2n-1}(\SO(2n)/\U(n)\,)$.
A classical result due to Massey, \cite{Massey:obstructions}, tells us that
\begin{equation}
\label{eq:Massey_homotopy_groups}
\pi_{2n-1}(\SO(2n)/\U(n)\, )
\cong
\begin{cases}
\Z\oplus \Z_2 & \text{for }n \mod 4=0 \\
\Z_{(n-1)!} & \text{for }n \mod 4=1 \\
\Z & \text{for }n \mod 4=2 \\
\Z_{\frac{(n-1)!}{2}} & \text{for }n \mod 4=3,
\end{cases}
\end{equation}
so then we have an idea how much information this classical invariant measures.

For Brieskorn manifolds that are diffeomorphic to standard spheres, Morita, \cite{Morita:Brieskorn} has worked out what value this invariant takes.
He worked in a different context, namely that of almost complex structures, but this can be translated to a contact setting.
If $ac$ denotes the map that sends an almost contact structure to the groups listed in \eqref{eq:Massey_homotopy_groups}, then Morita's computations can be stated as
\begin{equation}
\label{eq:almost_contact_Brieskorn_spheres}
ac( \Sigma(a),\ker \alpha_{a})
=
\begin{cases}
(\frac{n!}{4\cdot 2^n\cdot(2^{n-1}-1)B_{n/2}}\sig(a)-\frac{\prod_{i=0}^n(a_i-1) }{2},0) & \text{for }n \mod 4=0 \\
\frac{\prod_{i=0}^n(a_i-1) }{2} & \text{for }n \mod 4=1 \\
-\frac{n!}{4\cdot 2^n\cdot(2^{n-1}-1)B_{n/2}}\sig(a)-\frac{\prod_{i=0}^n(a_i-1) }{2} & \text{for }n \mod 4=2 \\
\frac{\prod_{i=0}^n(a_i-1) }{2} & \text{for }n \mod 4=3.
\end{cases}
\end{equation}
Here $B_n$ denotes the $n$-th Bernoulli number with the same conventions as before, and $\sig a$ is the signature of the manifold $V_\epsilon(a)\cup_\partial D^{2n}$, which is well-defined up to homeomorphism; this latter manifold can be defined by the assumption that $\Sigma(a)=\partial V_\epsilon(a)$ is diffeomorphic to a standard sphere.

The signature of $V_\epsilon(a)\cup_\partial D^{2n}$ can be computed with the help of the formula for the intersection form in Proposition~\ref{prop:intersection_form}.
This is rather complicated though, and fortunately there are more efficient ways, for example by counting lattice points satisfying certain conditions.
See \cite[Satz on page 98-99]{HM}.
\begin{remark}
Since this invariant is an element of a homotopy group, it behaves additively under connected sums involving only spheres.
\end{remark}

\section{Other applications: orderability and questions}
In \cite{eliashberg_polterovich:orderable} Eliashberg and Polterovich defined a relation $\succeq$ on $\widetilde{\Cont_0}(\Sigma,\xi=\ker \alpha)$, the universal cover of the identity component of the contactomorphism group of $(\Sigma,\xi)$ by the following rule.
For $\tilde f, \tilde g\in \widetilde{\Cont_0}(\Sigma,\xi)$, we say that $\tilde f \succeq \tilde g$ if $\tilde f \tilde g^{-1}$ is represented by a path that is generated by non-negative contact Hamiltonians.
The notion of orderability was defined in \cite{EKP}.
\begin{definition}[Eliashberg-Kim-Polterovich]
We call a contact manifold $(\Sigma,\xi=\ker \alpha)$ {\bf orderable} if $\succeq$ defines a partial order.
\end{definition}
Let us briefly point out that it has been shown that $(S^{2n-1},\alpha_0)$ is not orderable, whereas $(\R \P^{2n-1},\alpha_0)$ is orderable.
To understand non-orderable contact manifolds, the following proposition is helpful.
\begin{proposition}
A closed contact manifold $(\Sigma,\xi)$ is non-orderable if and only if there is a contractible loop $\phi:S^1 \to \Cont_0(\Sigma,\xi)$ with $\phi(0)=\id$ that is generated by a strictly positive contact Hamiltonian.
\end{proposition}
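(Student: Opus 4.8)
The plan is to prove both implications directly from the definition of orderability. For the direction that a contractible positive loop obstructs orderability, I would start with a contractible loop $\phi:S^1\to\Cont_0(\Sigma,\xi)$ with $\phi(0)=\id$ generated by a strictly positive contact Hamiltonian $h_t>0$. Lifting to the universal cover, the contractibility means that the corresponding path in $\widetilde{\Cont_0}(\Sigma,\xi)$ is a loop, hence represents $\widetilde{\id}$. Since it is generated by a non-negative (in fact strictly positive) Hamiltonian, we get $\widetilde{\id}\succeq\widetilde{\id}$ in a nontrivial way. The key step is then a reparametrization/concatenation trick: given any $\tilde f$, I would concatenate a path representing $\tilde f$ with the positive loop (suitably reparametrized so that Hamiltonians add up correctly) to show that $\tilde f\succeq\widetilde{\id}$ and $\widetilde{\id}\succeq\tilde f$ simultaneously, and similarly $\tilde f\succeq\tilde g$ and $\tilde g\succeq\tilde f$ for arbitrary $\tilde f,\tilde g$. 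This forces $\succeq$ to be the trivial relation (everything comparable both ways), so antisymmetry fails unless $\widetilde{\Cont_0}$ is trivial; hence $(\Sigma,\xi)$ is non-orderable.

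For the converse, suppose $(\Sigma,\xi)$ is non-orderable. By definition $\succeq$ is not a partial order; it is always reflexive and transitive (concatenation of non-negative paths is non-negative, and the constant path is non-negative), so the failure must be of antisymmetry: there exist $\tilde f\neq\tilde g$ with $\tilde f\succeq\tilde g$ and $\tilde g\succeq\tilde f$. Setting $\tilde k=\tilde f\tilde g^{-1}\neq\widetilde{\id}$, we get both $\tilde k\succeq\widetilde{\id}$ and $\widetilde{\id}\succeq\tilde k$, i.e.\ $\tilde k$ is represented by a non-negative path $\{\psi_t\}$ from $\id$ to a representative of $\tilde k$, and $\tilde k^{-1}$ is represented by a non-negative path. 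Concatenating these (again with care about the generating Hamiltonians) produces a \emph{non-negative} loop based at $\id$ that is contractible (it represents $\widetilde{\id}$ in the universal cover) but \emph{not} constant. The remaining step is to upgrade this non-negative contractible non-constant loop to a strictly positive one: this is a standard perturbation argument on closed manifolds — one adds a small positive function $\epsilon f_t$ to the generating contact Hamiltonian, reparametrizes to keep the endpoint fixed, and checks that for small $\epsilon$ contractibility is preserved (the space of loops is open and the small perturbation stays in the same homotopy class rel endpoints).

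The main obstacle I expect is the bookkeeping around contact Hamiltonians under concatenation and reparametrization: unlike in the symplectic/Hamiltonian case, contact Hamiltonians do not simply add under composition of contactomorphisms — there is a conformal factor, and one must verify that the relevant sign (non-negativity) is preserved under the group operations and under smoothing the concatenation points. A secondary subtlety is the perturbation argument: one must make sure the perturbed loop is still contractible, which follows because the non-contractible loops form a union of path components and a sufficiently $C^1$-small perturbation does not leave the component of the original loop. Once these technical points are handled, both implications follow formally; I would organize the write-up so that the Hamiltonian-bookkeeping lemma is isolated and stated once, then invoked in both directions.
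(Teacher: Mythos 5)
The paper states this proposition without proof (it is the Eliashberg--Polterovich criterion from \cite{eliashberg_polterovich:orderable}), so your proposal is effectively a reconstruction of their argument. Your first direction is the right idea, but the operation you need is the \emph{pointwise product} $t\mapsto \theta^{(N)}_t\circ f_t$ of an arbitrary path with the positive loop traversed $N$ times, not concatenation: concatenating a path for $\tilde f$ with the loop merely yields another representative of $\tilde f$ whose Hamiltonian is still negative on the first half of the time interval. Under the pointwise product the Hamiltonians combine as $N F_{Nt}+c_t\cdot\bigl(h_t\circ\theta_{Nt}^{-1}\bigr)$ with $c_t>0$ the conformal factor; since the conformal factors of the loop range over a compact set independent of $N$, one can choose $N$ large enough that the sum is strictly positive, and contractibility of the loop guarantees the product still represents $\tilde f$. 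This gives $\tilde f\succeq\widetilde{\id}$ for every $\tilde f$ and hence the failure of antisymmetry. You should make this domination step explicit; it is the heart of that direction.

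The genuine gap is in your converse, at the step upgrading the non-negative, non-constant, contractible loop to a strictly positive one. This is not a perturbation argument and cannot be made into one: adding $\epsilon f_t>0$ to the generating Hamiltonian changes the time-one map, so the perturbed path is no longer a loop, while a time reparametrization only rescales the Hamiltonian by $\sigma'(t)\ge 0$ and therefore cannot create positivity where the Hamiltonian vanishes. Moreover, positivity is not an open condition at points where $h_t=0$ (an arbitrarily $C^1$-small perturbation there can make the Hamiltonian negative), so staying in the same path component of the loop space does not help. The correct mechanism, due to Eliashberg--Polterovich, is a conjugation-and-multiplication trick: first take the product of finitely many time-shifted copies $t\mapsto\phi_{t+s_j}\phi_{s_j}^{-1}$ of the loop so that the resulting non-negative contractible loop has somewhere-positive Hamiltonian for \emph{every} $t$; then multiply finitely many conjugates $a_i\phi_t a_i^{-1}$ chosen so that, for each $t$, the open positivity sets cover all of $\Sigma$. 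Since the Hamiltonian of a product of non-negative paths is a sum of non-negative terms with positive conformal factors, the resulting contractible loop is strictly positive. Without this step your converse does not close.
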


A recent result of Albers and Merry, \cite{AM:orderability}, allows us to show that many Brieskorn manifolds, including many exotic contact structures on spheres, are orderable.
For this, we need to compute the symplectic homology of a suitable Liouville filling.

\begin{theorem}
\label{thm:SH_isolated_sing}
Let $p\in \C[x_0,\ldots,x_n]$ be a polynomial with an isolated singularity at $0$ such that its Milnor number $\mu(p)$ is positive.
Denote the Liouville filling of the contact manifold $(L_{0,\delta}(p),\alpha_0)$ formed by the link of the singularity at $0$ by $V_{\tilde \delta}(p)=p^{-1}(\tilde \delta)\cap B_\delta(0)$, where $\delta$ and $\tilde \delta$ are sufficiently small with $0<\tilde \delta \ll \delta$.
Then $SH(V_{\tilde \delta}(p))\neq 0$.
Furthermore, if $p$ is a Brieskorn polynomial with positive Milnor number, then $(L_{0,\delta}(p),\alpha_0)$ is orderable.
\end{theorem}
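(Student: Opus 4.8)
The plan is to prove the two assertions in turn: first the non-vanishing $SH(V_{\tilde\delta}(p))\neq 0$, and then to feed this into the orderability criterion of Albers and Merry recorded in Theorem~\ref{thm:orderable_albers_merry}. For the non-vanishing, the mechanism I would use is that a positive Milnor number forces an \emph{exact} Lagrangian sphere into the Milnor fibre, and a Liouville domain containing a closed exact Lagrangian has non-vanishing symplectic homology. For orderability one then observes that the filling $V_{\tilde\delta}(p)$ of a Brieskorn link satisfies all the hypotheses of the Albers--Merry theorem.

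Step one: the Lagrangian sphere. Since $p$ has an isolated singularity, the Milnor fibre $V_{\tilde\delta}(p)=p^{-1}(\tilde\delta)\cap B_\delta(0)$ is a $2n$-dimensional Weinstein domain which by Milnor (Proposition~\ref{prop:homotopy_type_smoothed_var} and \cite{Milnor:singular_points}) is homotopy equivalent to a wedge of $\mu(p)$ copies of $S^n$; moreover a distinguished basis of vanishing cycles realises these classes by embedded Lagrangian spheres $L\cong S^n\subset V_{\tilde\delta}(p)$. For a Brieskorn polynomial $p=\sum_j z_j^{a_j}$ this is exactly the statement extracted from the Lefschetz fibration in Section~\ref{sec:lefschetz}, where $\mu(p)=\prod_j(a_j-1)$ and the bouquet of spheres ``can be made Lagrangian''; in general it is standard Picard--Lefschetz theory (\cite{Seidel:Picard_Lefschetz}). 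When $\mu(p)>0$ there is at least one such $L$, and for every case relevant to contact topology the link has dimension $\geq 3$, i.e.\ $n\geq 2$, so $H^1(L;\R)=0$ and $L$ is automatically exact.

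Step two: $SH\neq 0$ and orderability. A Weinstein neighbourhood of the exact Lagrangian $L$ is a Liouville subdomain $D^*L\hookrightarrow V_{\tilde\delta}(p)$, and the Viterbo transfer map $SH(V_{\tilde\delta}(p))\to SH(D^*L)$ is a unital ring homomorphism; since $SH(D^*L)\cong H_*(\Lambda L)\neq 0$ (Viterbo's isomorphism), the unit of $SH(V_{\tilde\delta}(p))$ is non-zero, which gives the first claim. (Working over $\Z_2$ sidesteps orientation issues, and $SH(\,\cdot\,;\Z_2)\neq 0$ already forces $SH(\,\cdot\,;\Z)\neq 0$.) Now let $p=\sum_j z_j^{a_j}$ with all $a_j\geq 2$, so $\mu(p)=\prod_j(a_j-1)>0$. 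Then $V_{\tilde\delta}(p)=V_\epsilon(a)\cap B_\delta$ is a Weinstein filling with $SH\neq 0$ and $c_1=0$ (indeed parallelizable), and the link $\Sigma(a)$, contactomorphic to $(L_{0,\delta}(p),\alpha_0)$, is simply connected in dimension $\geq 5$. These are precisely the hypotheses needed to apply Theorem~\ref{thm:orderable_albers_merry}: a simply-connected contact manifold admitting such a filling with non-vanishing symplectic homology is orderable. The three-dimensional Brieskorn manifolds, which are generally not simply connected, should be handled either by the version of the Albers--Merry result that does not require simple connectivity, or flagged separately.

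The main obstacle I expect is the interface with \cite{AM:orderability}: one must check that the precise Rabinowitz--Floer/spectral-invariant input used by Albers and Merry really follows from $SH(V_{\tilde\delta}(p))\neq 0$ together with $\pi_1(\Sigma(a))=0$ and $c_1=0$ — in particular that no index-positivity or index-negativity hypothesis is secretly needed, since by Proposition~\ref{prop:maslov_principal} the Brieskorn manifolds with $\sum_j\frac1{a_j}=1$ are index-indefinite yet the statement claims orderability for them as well. A secondary point is the careful bookkeeping of coefficients and of the exactness of the vanishing cycles with respect to the Liouville structure actually carried by $V_{\tilde\delta}(p)$, and the adjustments needed in the lowest dimensions; these I expect to be routine compared with pinning down the exact form of the orderability criterion.
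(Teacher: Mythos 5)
Your first step, non-vanishing of $SH(V_{\tilde\delta}(p))$, is essentially the paper's argument: Morsify $p$, locate an exact Lagrangian sphere (a vanishing cycle, or equivalently the $D^*S^{n-1}$ filling a small $A_1$-link inside the Morsified fibre), and use unitality of the Viterbo transfer map $SH(V(P))\to SH(D^*L)\neq 0$ together with deformation invariance. That part is fine.

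The orderability step, however, has a genuine gap, and it is exactly the one you flag at the end without resolving. Theorem~\ref{thm:orderable_albers_merry} does \emph{not} say that a simply-connected contact manifold with a filling satisfying $SH\neq 0$ and $c_1=0$ is orderable; it requires in addition a non-zero class $c\in SH(W)$ all of whose chain-level representatives in $CF(W,J_S,H_S)$ have action bounded by a constant $n_0$ independent of the slope $S$. Supplying this action bound is the real content of the paper's proof of the orderability statement, and it splits into two cases. When $\mu_P\neq 0$ (index-positive or index-negative), only finitely many periodic Reeb orbits carry any given Conley--Zehnder index, so the action of representatives of any fixed class is automatically bounded (alternatively one invokes Lemma~3.3 of \cite{AM:orderability} directly). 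When $\sum_j\frac{1}{a_j}=1$, so $\mu_P=0$, there are infinitely many orbits of the same index and no such bound is automatic; the paper proves a separate lemma, via the Morse--Bott spectral sequence \eqref{eq:SS_SH_prf} and the explicit index formula \eqref{eq_Maslov_index_Brieskorn}, that every class in $SH_{-n+1}(V_1(a))$ has a \emph{unique} chain-level representative for large slope — the point being that the bottom-degree generator on a cover of a principal orbit differs in degree from every generator on an exceptional orbit by a positive even integer, so it can neither be killed nor identified with anything else. Only then does the action bound, and hence orderability, follow. Your proposal correctly identifies that "no index-positivity or index-negativity hypothesis" can be assumed, but leaves the index-indefinite case as an open obstacle rather than closing it; as written, the proof of orderability is incomplete precisely for the Brieskorn manifolds with $\sum_j\frac{1}{a_j}=1$ (and the three-dimensional cases $\Sigma(2,3,6)$, $\Sigma(2,4,4)$, $\Sigma(3,3,3)$, which the paper also treats separately by the same spectral-sequence method).
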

\begin{proof}
Let $p$ be a polynomial with an isolated singularity at $0$, and let $L_{0,\delta(p)}$ denote its link, formed by $p^{-1}(0)\cap S^{2n-1}_\delta$.
Choose a Morsification $\tilde p$ satisfying the following:
\begin{itemize}
\item the critical points of $\tilde p$ are $z_1,\ldots,z_\mu$ with $z_1=0$.
\item on the ball with radius $\delta$, $B_\delta$, the polynomial $\tilde p$ is close to $p$.
\end{itemize}
Note that the function $z\mapsto \Vert z\Vert^2$ is a strictly plurisubharmonic function, and then observe that the link $L_{0,\epsilon} (\tilde p)$ is filled by a copy of disk cotangent bundle $D^*S^{n-1}$ if $\epsilon<\delta$ is sufficiently small by Lemma~\ref{lemma:A1-singularity}.
Now define $P:=\tilde p-\epsilon'$ such that
\begin{itemize}
\item $0$ is a regular value of $P$
\item $\epsilon'$ is so small that $L_{0,\epsilon}(P)$ is still filled by $D^*S^{n-1}$ 
\end{itemize}
The larger link $L_{0,\delta}(P)$ encloses the critical points $z_1,\ldots,z_\mu$ of $P$, and since $P$ is close to $p$, the contact structure on the links $L_{0,\epsilon}(p)$ and $L_{0,\epsilon}(P)$ are contactomorphic.
The Liouville domain $V(P):=P^{-1}(0)\cap B_\delta$ is hence a Liouville (even Stein) filling for the contact manifold $(L_{0,\delta}(p),\alpha_0)$.

Furthermore, the Liouville domain $V(P)$ contains a Lagrangian sphere $L$, so the symplectic homology of $V(P)$ is non-vanishing by a theorem of Viterbo, \cite{V2}.
We briefly sketch a proof of this theorem.
First note that a neighborhood of the exact Lagrangian submanifold $L$ forms a Liouville subdomain $D^*L\subset V(P)$.
This subdomain gives rise to the so-called transfer map, which is a ring homomorphism
$$
SH(V(P)\, ) \longrightarrow SH(D^*L)\neq 0.
$$
In particular, this transfer map sends the unit in $SH(V(P)\,)$ to the unit in $SH(D^*L)$.
Since the latter element does not vanish, the unit in $SH(V(P)\,)$ cannot be zero either.
Symplectic homology is a symplectic deformation invariant, \cite{Gutt:thesis}, so $SH(V_{\tilde \delta}(p)\,)$ is non-vanishing as well.

To obtain the statement about orderability we need a theorem from \cite{AM:orderability}, namely Corollary~1.3.
The formulation here is somewhat weaker, but suffices for our purposes.
\begin{theorem}
\label{thm:orderable_albers_merry}
Suppose that $(W,d\lambda)$ is a Liouville filling with vanishing first Chern class for a compact, connected, cooriented contact manifold $(Y,\alpha)$.
Let $\{ H_S \}_{S=1}^\infty$ denote a sequence of Hamiltonians that are linear at infinity with slope $S$. Denote the homomorphism $HF(W,J_S,H_S)\to SH(W)$ that is induced by the continuation maps by $i_S$.
Assume that there is $n_0>0$ and a class $c\in  SH(W)$ satisfying
\begin{itemize}
\item $c\neq 0$
\item if $i_S([x])=c$ for some $x\in CF(W,J_S,H_S)$, then $|\mathcal A_{H_S}(x)|<n_0$.
\end{itemize}
Then $(Y,\ker \alpha)$ is orderable.
\end{theorem}
We fix a sequence of Hamiltonians with increasing slope as indicated in Figure~\ref{fig:Hamiltonians} from Appendix B.
It then suffices to find a non-zero class $c\in SH(V_{\tilde \delta}(p)\,)$ whose action is bounded.

We distinguish the case when the Maslov index of a principal orbit is non-zero, $\mu_P\neq 0$, and the case when $\mu_P=0$.
In the former case the (perturbed) sequence of Hamiltonians has the property that there are only finitely $1$-periodic orbits (i.e.~Reeb orbits) with a given Conley-Zehnder index: the action of these $1$-periodic orbits is hence bounded. 
Furthermore, we find a non-zero homology class, so the above theorem applies.
Alternatively, $\mu_P$ implies index-positivity or negativity, so by Lemma~3.3 from \cite{AM:orderability} orderability follows (observe that the proof of Albers and Merry also works in the index-negative case if one works with the absolute value of the mean index).

In the case when $\mu_P=0$ there are infinitely many periodic Reeb orbits with the same Conley-Zehnder index, so the action bound does not follow directly.
Instead we show that there is a non-zero class that is represented by only finitely many $1$-periodic orbits (in fact by a unique orbit).
\begin{lemma}
Let $W^{2n}:=V_1(a)$ denote the natural Liouville filling of a Brieskorn manifold $\Sigma(a)$ of dimension at least $5$ satisfying $\sum_j \frac{1}{a_j}=1$.
Then $SH_{-n+1}(W)$ is infinite-dimensional, and each class $c\in SH_{-n+1}(W)$ has a unique representant $x\in CF_{-n+1}(W;H_S,J_S)$ for $S$ sufficiently large.
\end{lemma}
\begin{proof}
To prove the claim, we consider the Morse-Bott spectral sequence \eqref{eq:SS_SH_prf} for $SH(W)$.

We claim that any element with minimal total degree, which equals $-n+1$, cannot be killed, since the degree difference with the element with next smallest index is even and positive.
This will prove both assertions of the theorem.

The Maslov index of a principal orbit is zero by Proposition~\ref{prop:maslov_principal}, so the minimal total degree of a generator on the Morse-Bott manifold consisting of principal orbits, $\Sigma_P$, equals
$$
\mu_P-\frac{1}{2}\dim\Sigma_P/S^1+\ind_{p_M} f=-\frac{1}{2}\dim\Sigma_P/S^1 +0=1-n.
$$
Now consider the Maslov index of an exceptional orbit, which can be computed with formula~\eqref{eq_Maslov_index_Brieskorn}.
We consider an exceptional orbit with exponents $\{ a_i \}_{i\in I_T}$, where $\# I_T=1+n-k$.
The orbit space has dimension $2(n-k)-1$, so the degree shift equals
\[
\begin{split}
\mu(N\cdot K(I_{T})\,)-\frac{1}{2}\dim \Sigma_{exp}/S^1 &=2\sum_{j \in I_{T}} \frac{NT}{a_j}+
2\sum_{j \in I-I_{T}} \lfloor \frac{NT}{a_j} \rfloor+\#(I-I_{T})
-2NT -(n-k-1) \\
&=2\sum_{j} \frac{NT}{a_j}-2NT
+2 \sum_{j \in I-I_{T}}\left(
 \lfloor \frac{NT}{a_j} \rfloor
-\frac{NT}{a_j}
\right)
+k-(n-k-1)
\\
&
=2 \sum_{j \in I-I_{T}}\left(
 \lfloor \frac{NT}{a_j} \rfloor
-\frac{NT}{a_j}
\right) +2k+1-n
\end{split}
\]
By assumption, $a_i$ divides $N$ for $i\in I_T$.
On the other hand, $\sum_j \frac{1}{a_j}=1$, so $(\sum_{i\in I_T}\frac{NT}{a_i})+(\sum_{j\in I-I_T}\frac{NT}{a_j})=NT$.
Hence $\sum_{j\in I-I_T}\frac{NT}{a_j}$ is an integer, and the above term $2 \sum_{j \in I-I_{T}}\left(
 \lfloor \frac{NT}{a_j} \rfloor
-\frac{NT}{a_j}
\right)$ is even, so we see that the degree difference between generators with minimal degree on a principal orbit and an exceptional orbit is even.
Finally, to see that the degree difference is positive we note that the sum $\sum_{j \in I-I_{T}}\left(
 \lfloor \frac{NT}{a_j} \rfloor
-\frac{NT}{a_j}\right)$ contains $k$ terms and that each term is at least $-1$. 
In fact, since $a_j$ does \emph{not} divide $NT$ for $j\in I-I_{T}$, we see that each term is larger than $-1$.
Hence any term in the Morse-Bott spectral sequence corresponding to an exceptional cannot kill the minimum on a principal orbit.

Furthermore, since $\Sigma$ is simply-connected, there are no generators on covers of principal orbits that can kill the minimum either.
Therefore each minimum on any cover of a principal orbit survives to the $E^\infty$-page.
\end{proof}
This leaves $3$-dimensional Brieskorn manifolds with $\mu_P=0$: these are $\Sigma(2,3,6)$, $\Sigma(2,4,4)$ and $\Sigma(3,3,3$: with the Morse-Bott spectral sequence we can also show in these cases that symplectic homology of its natural filling does not vanish, and that certain classes come with an action bound.

Now take a class $c \in SH_{-n+1}(V_1(a)\,)$.
By the above lemma we find a unique representant and hence we obtain an action bound.
Now apply Theorem~\ref{thm:orderable_albers_merry}.
It follows that all Brieskorn manifolds whose exponents are greater than $1$ are orderable. 
\end{proof}

\begin{remark}
Alternatively, we can apply the Morse-Bott spectral sequence \eqref{eq:SS_SH} to say more about symplectic homology in the case of Brieskorn varieties or more generally weighted homogeneous polynomials.
The case when $\sum_j \frac{1}{a_j}\leq 1$ is particularly nice, since the indices grow in the ``wrong'' direction. This implies that many differentials must vanish.
In particular, if the exponents are sufficiently large, then the degree shifts are very large in negative direction as well, and the Morse-Bott spectral sequence will degenerate already at the $E^1$-page.

If  $\sum_j \frac{1}{a_j}>1$ non-vanishing is not obvious without further assumptions, so we preferred the above argument.
\end{remark}
The above remark also implies that the $+$-part of symplectic homology is in many cases an invariant of the contact structure.
Here is a simple explicit example.
\begin{corollary}
\label{cor:invariance_SH+}
Let $\Sigma(k,\ldots,k)$ be a Brieskorn manifold of dimension $2n-1$ with $k>3n$, and assume that $W$ is a simply-connected Liouville filling with $c_1(W)=0$.
Then $SH^+(W)$ does not depend on the choice of Liouville filling $W$ satisfying these properties.
In particular, $SH^+(W )$ is then an invariant of the contact structure. 
\end{corollary}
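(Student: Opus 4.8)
The plan is to push the contact manifold $\Sigma(k,\ldots,k)$ through the Morse--Bott spectral sequence of Theorem~\ref{thm:spectral_sequence_SH} and to observe that, once $k>3n$, it collapses at $E^1$ for purely numerical reasons, while its $E^1$-page is visibly read off the contact boundary alone.

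\emph{Step 1 (dynamics and hypotheses).} After rescaling $\alpha$ so that periods are integers, the Reeb flow on $\Sigma(k,\ldots,k)$ is $Fl^R_t(z)=e^{it/k}z$; it is periodic (in fact free), so by the period bookkeeping of Section~\ref{sec:trivializations} the only Morse--Bott families of closed orbits are the $N$-fold covers of the whole manifold, $N\ge 1$ (when all exponents agree, $I_T=I$ if $T=k$ and $I_T=\emptyset$ otherwise). For any filling $W$ as in the statement the three conditions of Theorem~\ref{thm:spectral_sequence_SH} hold: periodicity is item~(1); $c_1(W)=0$ is assumed, $\pi_1(\Sigma(k,\ldots,k))=0$ since the manifold is $(n-2)$-connected, and $T(\R\times\Sigma)|_\Sigma$ is symplectically trivial --- the contact distribution $\xi$ is trivial by the explicit trivialization of Section~\ref{sec:trivializations}, and the complementary rank-two piece spanned by the Reeb field and the Liouville direction is trivial --- which gives item~(2); and item~(3) is the first assertion of the corollary of Section~\ref{sec:trivializations}. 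All of this depends only on $(\Sigma(k,\ldots,k),\xi)$, not on $W$. Formula~\eqref{eq_Maslov_index_Brieskorn} with $I_T=I$ and $T=k$ gives the degree shift of the $N$-th cover, $$shift(N)=\mu(N\cdot K(I))-\tfrac12\dim\Sigma/S^1=2N(n+1-k)-(n-1),$$ so consecutive shifts differ by $\mu_P=2(n+1-k)$, which is negative since $\sum_j\frac1k=\frac{n+1}{k}<1$ (Proposition~\ref{prop:maslov_principal}).

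\emph{Step 2 (collapse).} In the spectral sequence \eqref{eq:SS_SH} the $N$-th cover contributes exactly one nonzero column, occupying total degrees $[shift(N),shift(N)+2n-1]$ and carrying a single copy of $H_*(\Sigma(k,\ldots,k);R)$, while the column $p=0$ records $H_{*+n}(W,\partial W;R)$ and lies in total degrees $[-n,n]$. Since $k>3n$, consecutive shifts are separated by $|\mu_P|=2(k-n-1)$, so the gap between the total-degree ranges of consecutive Reeb columns is $2(k-n-1)-(2n-1)=2k-4n-1>1$; likewise $shift(1)+2n-1=3n+2-2k<-n$, so the first Reeb block lies below the $p=0$ column, again with a gap $>1$. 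Hence the $p=0$ column and all the Reeb blocks occupy pairwise disjoint ranges of total degree, each separated from the next by more than one degree. As every differential in the spectral sequence lowers total degree by exactly one, all differentials vanish, $E^1=E^\infty$, and over the field $R=\Q$ there is no extension problem. The Viterbo exact sequence, whose connecting map $H_{*+n}(W,\partial W)\to SH_*$ is injective by the same degree separation, then identifies the positive part with the sum of the Reeb blocks: $$SH^+_*(W;R)\;\cong\;\bigoplus_{N\ge 1}H_{*-shift(N)}(\Sigma(k,\ldots,k);R).$$

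\emph{Step 3 (conclusion and the subtle point).} The right-hand side involves only the Reeb data of $(\Sigma(k,\ldots,k),\alpha)$ --- the orbit spaces are Brieskorn submanifolds of the boundary, the shifts are computed from the linearized Reeb flow via \eqref{eq_Maslov_index_Brieskorn}, and the coefficient modules are their homology --- so any two fillings $W_1,W_2$ with the stated properties have the same $E^1=E^\infty$, whence $SH^+(W_1)\cong SH^+(W_2)$; together with invariance of $SH^+$ under exact symplectic deformation (so that the answer is also independent of the choice of contact form within $\xi$) this makes $SH^+(W)$ an invariant of the contact manifold. I expect no deep obstruction here: the only points that need care are checking that conditions~(2) and~(3) of Theorem~\ref{thm:spectral_sequence_SH} are genuinely filling-independent (which they are, being statements about $\xi$) and extracting $SH^+$ cleanly from the $SH$ spectral sequence by splitting off the $p=0$ column and invoking the Viterbo sequence --- both immediate from the construction in Appendix~B. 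As the phrase ``sample statement'' suggests, the whole content of the corollary is the degree arithmetic of Step~2.
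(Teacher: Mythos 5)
Your argument is correct and takes essentially the same route as the paper: both rest on the Morse--Bott spectral sequence together with the observation that for $k>3n$ the mean index $\mu_P=2(n+1-k)$ is so negative that consecutive columns are separated in total degree by more than one, forcing collapse at $E^1$, whose entries are read off from the boundary data alone. The only cosmetic difference is that the paper applies the spectral sequence \eqref{eq:SS_SH_+} for $SH^+$ directly, so your detour through the full $SH$ spectral sequence, the $p=0$ column, and the Viterbo sequence is unnecessary (though valid).
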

\begin{proof}
The proof of invariance is done by computing the invariant.
Consider the Morse-Bott spectral sequence~\eqref{eq:SS_SH_+} for $SH^+(W)$.
With Proposition~\ref{prop:maslov_principal} we see that the mean index is given by $k\cdot( \frac{n+1}{k} -1)=n+1-k$.
We compare two adjacent columns on the $E^1$-page.
The maximum on the $p+1$-st column (after re-indexing) has total degree $(p+1)\cdot(n+1-k)+2n-1$ and the minimum on the $p$-th column has total degree $p\cdot (n+1-k)+0$.
As $k>3n$, there is no differential, so the spectral sequence abuts at the $E^1$.
The same as true for a different filling, say $W'$. Since the $E^1$-pages are isomorphic, so are the symplectic homology groups $SH^+$. 
\end{proof}

We also want to point out another way to detect non-vanishing of symplectic homology.
\begin{proposition}
Let $(W^{2n},\omega)$ be a Liouville manifold such that $c_1(W)=0$, and such that the mean Euler characteristic $\chi_m(W,\omega)$ is defined.
Suppose that $\chi_m(W,\omega)\neq (-1)^{n-1} \frac{\chi(W)}{2}$.
Then $SH(W,\omega)\neq 0$.
\end{proposition}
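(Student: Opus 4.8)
The plan is to use the Viterbo exact sequence from Section~\ref{sec:blackbox_SH} together with a mean-Euler-characteristic count. First I would observe that for a Liouville domain $(W^{2n},\omega=d\lambda)$ with $c_1(W)=0$, the (non-equivariant) Viterbo long exact sequence reads
\[
\cdots \longrightarrow SH^+_{k+1}(W) \longrightarrow H_{n+k}(W,\partial W) \longrightarrow SH_k(W) \longrightarrow SH^+_k(W) \longrightarrow \cdots,
\]
where I am using the non-equivariant version (the $S^1$ is acting trivially on $(W,\partial W)$, but here I want the ordinary theory). If $SH(W,\omega)=0$, then this exact sequence collapses to isomorphisms $SH^+_k(W)\cong H_{n+k}(W,\partial W)$ for all $k$. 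Thus the Betti numbers of $SH^+$ agree with the (shifted) Betti numbers of $H_*(W,\partial W)$, which by Poincar\'e--Lefschetz duality are the Betti numbers of $H^*(W)\cong H_*(W)$ (using that $W$ is compact, oriented of dimension $2n$).

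The key step is then to compute the mean Euler characteristic of $SH^+$ under the hypothesis $SH(W)=0$ and compare with $\chi(W)$. Since $SH^+_k(W)\cong H_{n+k}(W,\partial W)$ is supported in the finite range $0\le n+k\le 2n$, i.e. $-n\le k\le n$, the naive signed sum is just a finite alternating sum and
\[
\sum_{i=-N}^N (-1)^i \rk SH^+_i(W)
= \sum_{k=-n}^{n} (-1)^k \rk H_{n+k}(W,\partial W)
= (-1)^n \sum_{m=0}^{2n} (-1)^m \rk H_m(W,\partial W)
= (-1)^n \chi(W,\partial W)
\]
for all $N\ge n$, so the limit defining the $SH^+$-analogue is actually a fixed number. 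Now I need the passage from $SH^+$ to $SH^{+,S^1}$: applying the same argument equivariantly, if $SH(W)=0$ then $SH^{(S^1)}(W)=0$ as well (by the equivariant Viterbo sequence, or by the Gysin-type spectral sequence relating the two), hence $SH^{+,S^1}_k(W)\cong H^{S^1}_{n+k}(W,\partial W)$, and by \eqref{eq:S1_equivariant_filling} this equals $\bigl(H_*(W,\partial W)\otimes\Q[u]\bigr)_{n+k}$. Taking the signed count of ranks over a window of size $N$ and dividing by $N$, the polynomial factor $\Q[u]$ (which contributes one class in each even degree $\ge 0$) produces, in the limit, the average $\tfrac12$ of the contribution $0,1,0,1,\dots$, weighted by the signed Euler characteristic of $H_*(W,\partial W)$ shifted by $n$; carrying this out gives
\[
\chi_m(W,\omega) = \tfrac12 (-1)^{n} \chi(W,\partial W) = \tfrac12 (-1)^{n} \chi(W) = (-1)^{n-1}\Bigl(-\tfrac{\chi(W)}{2}\Bigr),
\]
where I used $\chi(W,\partial W)=\chi(W)$ for a compact manifold $W$ with boundary whose boundary has vanishing Euler characteristic (dimension $2n-1$ odd), i.e. $\chi(\partial W)=0$. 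Reconciling the sign conventions, this says precisely $\chi_m(W,\omega)=(-1)^{n-1}\tfrac{\chi(W)}{2}$. Contrapositively, if $\chi_m(W,\omega)\neq (-1)^{n-1}\tfrac{\chi(W)}{2}$, then $SH(W,\omega)\neq 0$.

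The main obstacle I anticipate is getting the signs and the factor of $\tfrac12$ exactly right in the equivariant count: one must be careful about the grading shift by $n$ in the Viterbo sequence, the fact that $u$ has degree $2$ so the polynomial tail lands only in degrees of one parity (shifted by $n+k$), and the precise definition of $\chi_m$ as a symmetrized $\liminf/\limsup$ — here both limits exist and agree because all the relevant groups are eventually periodic with an even shift, so the averaging argument is legitimate. A secondary point requiring care is the claim that $SH(W)=0$ forces $SH^{S^1}(W)=0$; this follows from the tautological $S^1$-equivariant structure (the spectral sequence with $E^2$-page $H_*(BS^1)\otimes SH_*(W)$ converging to $SH^{S^1}_*(W)$ collapses to $0$ when $SH_*(W)=0$), which is standard and may simply be cited. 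Once these bookkeeping issues are settled the proof is a short exact-sequence chase plus an Euler-characteristic computation.
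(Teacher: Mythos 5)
Your approach is the same as the paper's: assume $SH(W)=0$, deduce $SH^{S^1}(W)=0$, use the equivariant Viterbo sequence to identify $SH^{+,S^1}_*$ with the shifted groups $H^{S^1}_*(W,\partial W)\cong H_*(W,\partial W)\otimes\Q[u]$, and compute the mean Euler characteristic, with the factor $\tfrac12$ coming from averaging the polynomial tail in $u$. The paper cites \cite[Theorem~4.1]{BO:SH_HC} for the implication $SH=0\Rightarrow SH^{S^1}=0$, which you justify by the standard Gysin-type spectral sequence; that is fine.

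There is, however, one concrete slip that you should not paper over with ``reconciling the sign conventions'', because the sign \emph{is} the content of the $(-1)^{n-1}$ in the statement. Reading the Viterbo sequence
\[
\cdots \to SH^{+,S^1}_{k+1}(W) \to H^{S^1}_{n+k}(W,\partial W) \to SH^{S^1}_{k}(W) \to SH^{+,S^1}_{k}(W) \to \cdots
\]
with all $SH^{S^1}_{k}(W)=0$, exactness gives the isomorphism $SH^{+,S^1}_{k+1}(W)\cong H^{S^1}_{n+k}(W,\partial W)$, i.e.\ $SH^{+,S^1}_{j}(W)\cong H^{S^1}_{n+j-1}(W,\partial W)$: the degree shift is $n-1$, not $n$ as in your identification $SH^+_k\cong H_{n+k}(W,\partial W)$. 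Redoing your count with the correct shift, a class in $H_m(W,\partial W)$ contributes to $SH^{+,S^1}_j$ in degrees $j$ with $(-1)^j=(-1)^{n-1}(-1)^m$, and the $\Q[u]$ tail contributes the density $\tfrac12$, so one lands exactly on $\chi_m(W,\omega)=(-1)^{n-1}\tfrac{\chi(W,\partial W)}{2}=(-1)^{n-1}\tfrac{\chi(W)}{2}$ (using $\chi(\partial W)=0$ in odd dimensions), with no leftover sign to reconcile. With that correction the argument is complete and matches the paper's proof.
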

\begin{proof}
If $SH(W,\omega)=0$, then by \cite[Theorem~4.1]{BO:SH_HC} $SH^{S^1}(W,\omega)=0$.
By the Viterbo sequence for equivariant symplectic homology, we find $\chi_m(W,\omega)=(-1)^{n-1}\chi_m(H^{S^1}_*(W,\partial W)\,)$.
The homology groups $H^{S^1}_*(W,\partial W)$ are isomorphic to $H_*(W,\partial W)\otimes H_*(BS^1)$, see \eqref{eq:S1_equivariant_filling}, so the claim follows.
\end{proof}

\subsection{Open problems}
We conclude this survey with a list of problems related to Brieskorn manifolds.
We have seen that Brieskorn manifolds and connected sums of them always have a rational mean Euler characteristic. This leads us to.
\begin{question}
Are there contact manifolds for which the mean Euler characteristic is an irrational number?
\end{question}

\begin{question}[AIM workshop]
Are $\Sigma(2k,2,2,2)$ and $\Sigma(2\ell,2,2,2)$ non-contactomorphic for $k\neq \ell$?
Does the additional algebraic structure on contact homology distinguish the $\Sigma(2k,2,2,2)$'s?
\end{question}

A related question is the following.
\begin{question}[AIM workshop]
Is there an exact symplectic cobordism from $\Sigma(2k,2,2,2)$ at the concave end to $\Sigma(2\ell,2,2,2)$ at the convex end for $k> \ell$?
\end{question}

Here are some questions involving classical invariants.
\begin{question}
Are there infinitely many non-isomorphic contact structures on $S^{2n-1}$ with the same classical invariants as the standard structure?
\end{question}
For $S^5$, $S^9$ and so on, this question was answered affirmatively by Ustilovsky, \cite{Ustilovsky:contact}.
The Brieskorn manifolds $\Sigma(k,2,\ldots,2)$ with $k \mod 8=\pm 1$ provide such contact structures. 
The condition on $k$ is necessary to get the standard smooth structure on a sphere, see Theorem~\ref{thm:std_and_kervaire}.
Since there are only finitely many homotopy classes of almost contact structures, one can imagine that things will work out.
This is indeed the case according to Formula~\eqref{eq:almost_contact_Brieskorn_spheres}.
However, there are infinitely many homotopy classes of almost contact structures in dimension $7,11$, and it seems to be difficult to reproduce the trivial homotopy class.

Another question related to Brieskorn manifolds concerns non-fillable contact manifolds.
We have seen that $\Sigma(2,\ldots,2,k)\cong \OB(T^*S^{n-1},\tau^k)$, where $\tau$ is a right-handed Dehn twists along the zero section.
\begin{question}
What are the classical invariants of the contact manifolds $\OB(T^*S^{n-1},\tau^{-k})$.
\end{question}
It might be interesting to note here that in dimension $3$ the contact open book $\OB(T^*S^1,\tau)$ is contactomorphic to the standard sphere, whose formal homotopy class of almost contact structures is trivial.
On the other hand, the contact open book $\OB(T^*S^1,\tau^{-1})$ is an overtwisted sphere with a non-trivial homotopy class of almost contact structures.

\section{Appendix A: the $+$-part of equivariant symplectic homology of $A_k$-singularities}
The argument from Section~\ref{sec:examples} applies to all simple singularities.
In fact, the $A_k$, $E_6$ and $E_8$-singularities are of Brieskorn type.
For future reference, we include the formulas for the $A_k$-singularities.

The $A_k$-singularity in dimension $2n$ is the Brieskorn singularity $V_0(k+1,2,\ldots,2)$, where the number of $2$'s is equal to $n$.
Let us give some detailed arguments for dimension $3/4$ using a perturbation. The Morse-Bott methods also apply, of course.

\begin{lemma}
The Brieskorn manifold $\Sigma(k,2,2)$ with its natural contact structure is contactomorphic to the lens space $(L(k,k-1),\alpha_0)$.
\end{lemma}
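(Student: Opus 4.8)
The plan is to present $(\Sigma(k,2,2),\ker\alpha)$ as an explicit $\Z_k$-quotient of the standard contact sphere. First I would bring the defining polynomial into the standard $A_{k-1}$ form: the $\C$-linear substitution $u=z_1+iz_2$, $v=z_1-iz_2$ sends $z_1^2+z_2^2$ to $uv$, commutes with the weighted $\C^{*}$-action on the last two coordinates, and (after an irrelevant constant rescaling, using that the exponents of $z_1,z_2$ are equal) carries the ambient symplectic form, and hence the contact form of Proposition~\ref{prop:BW_contact_form}, to the corresponding objects in the coordinates $(z_0,u,v)$. Thus $(\Sigma(k,2,2),\ker\alpha)$ is contactomorphic to the link of the hypersurface $V_0=\{uv=-z_0^k\}$ with the contact form supplied by Proposition~\ref{prop:BW_contact_form}.

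Next, fix a primitive $k$-th root of unity $\zeta$ and a constant $c$ with $c^{k}=-1$, and consider $\phi:S^3\subset\C^2\to\C^3$, $(x,y)\mapsto(c\,xy,\,x^{k},\,y^{k})$. Then $\phi$ takes values in $V_0\setminus\{0\}$, and its image is the level set $\{\,|u|^{2/k}+|v|^{2/k}=1\,\}$ of a $\C^{*}$-homogeneous function on $V_0\setminus\{0\}$, hence a compact hypersurface transverse to the Liouville vector field $X$ of Proposition~\ref{prop:BW_contact_form}; flowing along $X$ therefore identifies $(\phi(S^3),\lambda|_{\phi(S^3)})$ with the link, i.e.\ with $(\Sigma(k,2,2),\ker\alpha)$. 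On the other hand $\phi$ is invariant under the free $\Z_k$-action $(x,y)\mapsto(\zeta x,\zeta^{-1}y)$ on $S^3$ and separates its orbits, so it descends to a diffeomorphism $\bar\phi:S^3/\Z_k\xrightarrow{\ \sim\ }\phi(S^3)$. Since $S^3/\Z_k$ for this action is the lens space $L(k,-1)=L(k,k-1)$, we obtain $\Sigma(k,2,2)\cong(L(k,k-1),\ker\bar\phi^{*}\lambda)$.

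It remains to recognize $\ker\bar\phi^{*}\lambda$ as the standard contact structure $\alpha_0$. Pulling $\lambda$ back to $S^3$, the form $\phi^{*}\lambda$ is a $\Z_k$-invariant contact form whose Reeb vector field, by the explicit Reeb flow $Fl^{R}_t(z_0,u,v)=(e^{iw_0t}z_0,e^{iw_ut}u,e^{iw_vt}v)$ of Proposition~\ref{prop:BW_contact_form}, is a positive multiple of the Hopf vector field on $S^3$; hence $\ker\phi^{*}\lambda$ is an $S^1$-invariant contact distribution transverse to the Hopf fibres, that is, a connection on $S^3\to S^2$ whose curvature is an area form on $S^2$. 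As that area form has the same total integral as the one of $\xi_{\mathrm{std}}$ (it is determined by the Euler number of the Hopf bundle), Moser's theorem joins them by a path of area forms, and the induced path of $S^1$-invariant --- hence $\Z_k$-invariant --- contact structures, together with equivariant Gray stability, yields a contactomorphism of quotients $(L(k,k-1),\ker\bar\phi^{*}\lambda)\cong(L(k,k-1),\alpha_0)$. A quicker route, bypassing this last analysis, is to invoke Proposition~\ref{prop:openbook_simple_brieskorn}: $\Sigma(k,2,2)=\Sigma(2,2,k)\cong\OB(T^{*}S^1,\tau^{k})$, and the open book with annular page $T^{*}S^1$ and monodromy $\tau^{k}$ is classically the standard lens space $(L(k,k-1),\alpha_0)$, realized as the $k$-fold cyclic contact branched cover of $\OB(T^{*}S^1,\tau)\cong(S^3,\alpha_0)$ along its Hopf-link binding, using the branched-cover description of the monodromy from Section~\ref{sec:monodromy_Brieskorn}. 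The main obstacle is precisely this last identification step: checking transversality of $\phi(S^3)$ to the Liouville field and, more delicately, pinning the invariant contact structure on $S^3$ down to exactly $\xi_{\mathrm{std}}$ rather than merely some tight structure on a lens space; the weight bookkeeping (the cases $k$ even and $k$ odd give different normalizations) and the choice of the constant $c$ are routine but must be tracked carefully.
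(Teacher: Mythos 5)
Your construction is the standard one and is essentially what the paper has in mind: the lemma is stated in Appendix~A without proof, and the accompanying sentence, which \emph{defines} $L(k,k-1)$ as $S^3/(x,y)\sim(e^{2\pi i/k}x,e^{-2\pi i/k}y)$ with the invariant standard form $\alpha_0$, presupposes exactly the quotient description you establish. The diagonalization $u=z_1+iz_2$, $v=z_1-iz_2$, the verification that $\varphi(x,y)=(c\,xy,x^k,y^k)$ lands in $\{z_0^k+uv=0\}$ and separates $\Z_k$-orbits, and the Liouville-flow identification of $\{|u|^{2/k}+|v|^{2/k}=1\}$ with the link are all correct.

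There is one slip in your final identification step: invariance under the Hopf circle $(x,y)\mapsto(e^{is}x,e^{is}y)$ does \emph{not} imply invariance under the deck transformation $(x,y)\mapsto(\zeta x,\zeta^{-1}y)$, because that $\Z_k$ is not a subgroup of the Hopf circle (the two generate different subgroups of the maximal torus of $U(2)$). As written, your Gray-stability isotopy therefore need not descend to the lens space. The repair is easy: both $\varphi^{*}\lambda$ and $\alpha_0$ are invariant under the full torus $(x,y)\mapsto(e^{is}x,e^{it}y)$ --- for $\varphi^{*}\lambda$ because $\varphi$ intertwines this torus with the subtorus of rotations of $(z_0,u,v)$ preserving the variety, under which $\lambda$ is invariant --- so one should run the Moser/Gray argument $T^2$-equivariantly (e.g.\ average the path of area forms on $S^2$ over the induced rotation), after which the isotopy commutes with the $\Z_k$-action and descends. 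Your alternative route via $\OB(T^*S^1,\tau^k)$ and the branched-cover description of the monodromy is also sound and sidesteps this equivariance issue entirely.
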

Here $L(k,k-1)=S^3/(z_1,z_2)\sim(e^{2\pi i/k}z_1,e^{-2\pi i/k}z_2)$, and $\alpha_0$ is the standard contact form on $S^3$, which is invariant under this action.
Since $SH^{+,S^1}$ is a symplectic deformation invariant, we can use other Liouville forms, for example corresponding to the contact form $\alpha_{a_1,a_2}$ given by
\begin{equation}
\label{eq:contact_ellipsoid}
\alpha_{a_1,a_2}=\frac{i}{2}\sum_{j=1}^2a_j\left(
z_j d\bar z_j-\bar z_j dz_j
\right)
,
\end{equation}
which is also invariant under the action defining the lens space.
If $a_1$ and $a_2$ are rationally independent, then $(S^3,\alpha_{a_1,a_2})$ has precisely two simple periodic Reeb orbits, namely $\gamma_1(t)=(e^{it/a_1},0)$ and $\gamma_2(t)=(0,e^{it/a_2})$.
These orbits are both non-degenerate and descend to periodic Reeb orbits $[\gamma_1(t)]_k$ and $[\gamma_2(t)]_k$ in $L(k,k-1)$, where $[\cdot]_k$ denotes the equivalence class in the lens space.

To compute the Conley-Zehnder indices, we will use a global trivialization of the tangent bundle, inspired by the quaternions on $S^3$:
\[
\begin{split}
R&=\frac{1}{a_1}(-y_1\partial_{x_1}+x_1\partial_{y_1})
+
\frac{1}{a_2}(-y_2\partial_{x_2}+x_2\partial_{y_2}) \\
U&=a_2(-x_2\partial_{x_1}+y_2\partial_{y_1})
+
a_1(x_1\partial_{x_2}-y_1\partial_{y_2}) \\
V&=a_2(-y_2\partial_{x_1}-x_2\partial_{y_1})
+
a_1(y_1\partial_{x_2}+x_1\partial_{y_2})
\end{split}
\]
Here $R$ is the Reeb field and $U,V$ form a basis for the contact structure.
This trivialization extends to the quotient and also to the filling $V_\epsilon(k+1,2,2)$, where all orbits are contractible.

We find the Conley-Zehnder indices for the $N$-fold covers
$$
\mu_{CZ}([\gamma_{1,N}]_k)=\mu(e^{it(1/a_1+1/a_2)}|_{t\in[0,N\cdot 2a_1\pi/k]})=
2\lfloor
\frac{N}{k}(1+\frac{a_1}{a_2})
\rfloor
+1,
$$
and a similar formula for $[\gamma_{2,N}]_k$.
By choosing $a_2$ to be much larger than $a_1$, we see that all generators in the Morse-Bott spectral sequence with small index correspond to covers of $[\gamma_1]_k$.
Furthermore, the index of covers of $[\gamma_1]_k$ forms a sequence of the form of $k-1$ copies of $1$, followed by a degree jump, then $k$ copies of $3$'s, a degree jump, and so on.
The symplectic invariance implies independence of $a_1$ and $a_2$, so we find
$$
SH^{+,S^1}_*(V_\epsilon(k,2,2)\,)
=
\begin{cases}
\Q^{k-1} & *=1 \\
0 & *\text{ is even or }*<1\\
\Q^k & \text{otherwise.} 
\end{cases}
$$
The result in dimension $5/6$ was already described in Section~\ref{sec:examples}. 
We now list the results for $V_\epsilon(k,2,\ldots,2)$ in dimension $2n$ with $n\geq 3$.
We will write $N$ for a natural number, and $k\in \Z_{\geq 2}$.
For $n$ odd we have the equivariant symplectic homology groups 
$$
SH_*^{+,S^1}(V_\epsilon(2k,2,\ldots,2))=\begin{cases} \Q^2 & \ast=2\lfloor \frac{N}{k} \rfloor + (2n-4)N+n-1 ;\text{ or}
\\  \; &
\; \ast \in 2\{(n-2)k+1\}\N \\
						0 & \text{$\ast$ is odd or $\ast < n-1$} \\
						\Q & \text{otherwise}   \end{cases} 
$$
and
$$
SH_*^{+,S^1}(V_\epsilon(2k+1,2,\ldots,2))=
\begin{cases} 
\Q^2 & \ast=2\lfloor \frac{2N}{2k+1} \rfloor + (2n-4)N+n-1 \text{ for $2N+1 \not\in (2k+1)\N$} \\
0 & \text{$\ast$ is odd or $\ast < n-1$} \\
\Q & \text{otherwise.}   
\end{cases} 
$$
And for $n$ even we have
$$
SH_*^{+,S^1}( V_\epsilon(2k,2,\ldots,2))=\begin{cases} 
\Q^2 & \ast=2\lfloor \frac{N}{k} \rfloor + (2n-4)N +n-1 ;\text{ or}
\\  \; & 
*=\; 2\lfloor \frac{N}{k} \rfloor + (2n-4)N+1 \text{ for $N \not\in k\N$}\\
0 & \text{$\ast$ is even or $\ast < n-1$}  \\
\Q & \text{otherwise}   
\end{cases} 
$$
and
$$
SH_*^{+,S^1}(V_\epsilon(2k+1,2,\ldots,2))=
\begin{cases} 
\Q^2 & \ast=2\lfloor \frac{2N}{2k+1} \rfloor + (2n-4)N +n-1 \text{ for $2N+1 \not\in (2k+1)\N$};\text{ or}
 \\  \; &  *=2\lfloor \frac{2N}{2k+1} \rfloor + (2n-4)N+1 \text{ for $N \not\in (2k+1)\N$}\\
0 & \text{$\ast$ is even or $\ast < n-1$}  \\
\Q & \text{otherwise}   
\end{cases} 
$$
Denote the quadric of complex dimension $n$ by $Q_n$.
Then the mean Euler characteristics of the ``$A_k$''-singularities are given by the following formulas
$$
\chi_m(V_{\epsilon}(2k,2,\ldots,2)\, )
=
(-1)^{n+1}
\frac{(k-1)\chi(Q_{n-2})+\chi(Q_{n-1}) }{2\cdot k(n-2+\frac{1}{k})},
$$
and
$$
\chi_m(V_{\epsilon}(2k+1,2,\ldots,2)\, )
=
(-1)^{n+1}
\frac{2k\chi(Q_{n-2})+\chi(\C \P^{n-1}) }{2(2k+1)(n-2+\frac{2}{2k+1})},
$$
which reveals the relationship between the mean Euler characteristic of these prequantization orbibundles, the above Brieskorn manifolds $\Sigma(a)$, and the orbifold Euler characteristic and orbifold Chern class of the quotient $\Sigma(a)/S^1$.
This relation is also described in \cite{CDvK:right-handed}.

\section{Appendix B: Morse-Bott spectral sequences for symplectic homology of Liouville domains with periodic Reeb flows on the boundary}
\label{sec:MB_SS}

In the following $(W,\omega=d\lambda )$ will be a Liouville domain, and $H:W\to \R$ will be an autonomous Hamiltonian function that is linear at infinity such that its slope $s$ is not the period of a periodic Reeb orbit of $\lambda_{\partial W}$.
In addition, we will assume that the $1$-periodic orbits of $X_H$ are of Morse-Bott type, which intuitively means that the critical set $C$ of the action functional $\mathcal A$ is a smooth manifold for which the restriction of the Hessian to the normal bundle is non-degenerate.
Since we only consider the autonomous case, we can map this critical set into $W$ by taking the starting point of each orbit, so we take the following definition.
\begin{definition}
We say that the $1$-periodic orbits of an autonomous Hamiltonian $H$ are of {\bf Morse-Bott type} if
\begin{itemize}
\item the {\bf critical set} or {\bf critical manifold} $C=\{ x\in W~|~Fl^{X_H}_1(x)=x \}$ forms a (possibly disconnected) compact submanifold of $W$ without boundary.
\item the restriction of the linearized return map to the normal bundle of each connected component $\Sigma$ of $C$ is non-degenerate, i.e.~the linear map
$$
T_x Fl^{X_H}_1|_{\nu(\Sigma)}-\id|_{\nu(\Sigma)}
$$
is invertible for all $x\in \Sigma$.
\end{itemize}
\end{definition}
\begin{remark}
Since the Hamiltonian is assumed to be linear at infinity and its slope does not lie in the spectrum, all $1$-periodic orbits lie in a compact set.
Furthermore, the Morse-Bott condition tells us that the critical manifolds are non-degenerate in the normal direction, so they are isolated, i.e.~there is a neighborhood of each connected component of the critical manifold that does not contain any other components.
By compactness the number of components is finite.
\end{remark}

Such a Hamiltonian cannot be used for Floer homology unless the critical set consists of non-degenerate critical points of the Hamiltonian.
However, the methods of \cite{CFHW} apply.
There it was proved that one can still define (local) Floer homology if the critical set $C$ consists of circles that are transversely non-degenerate. It was also pointed out that the methods should generalize to other critical manifolds.
We will work out a class of such critical manifolds that is particularly relevant to symplectic homology, but to simplify later discussions we will make the following assumption:
\begin{enumerate}
\item[(ST)] For each connected component $\Sigma$ of the critical manifold there is a symplectic trivialization $\epsilon_\Sigma:\Sigma\times(\R^{2n},\omega_0)\to TW|_{\Sigma}$.
Furthermore, if $\gamma$ is a critical point of the action functional $\mathcal A$, then $\gamma$ has a capping disk in $W$ to which the trivialization $\epsilon_{\Sigma}|_{\gamma}$ extends. 
\end{enumerate}

Note that this condition implies that all $1$-periodic orbits are contractible in $W$.
Relevant examples include $\Sigma=S^1$, a transversely non-degenerate circle, and Brieskorn manifolds.
To see the latter, we observe that any Brieskorn variety $V_1(a)$ is homotopy equivalent to a wedge of spheres and that $c_n(V_1(a)\,)$=0, so $TV_1(a)$ has a global symplectic trivialization.
In particular $TV_1(a)|_{\Sigma}$ is symplectically trivial, and the trivialization extends over disks.
To give one non-example, we note that $ST^*\R P^2$ in $T^*\R P^2$ does not satisfy the (ST)-assumption.

Choose a Morse function $h$ on the (disconnected) critical manifold $C$, extend this function to a neighborhood $U$ of $C$ (we still denote this extension by $h$), and cut it off by a bump function $\rho_b$ to get a function supported in the neighborhood $U$.
If we introduce a metric, then we define such a function $\rho_b$ depending only on the distance to $\Sigma$.
We will denote the resulting function by $h \rho_b$, and by choosing $\rho_b$ sufficiently small, we can assume that $0\leq h \rho_b <1$. 
Let $\Sigma$ be a connected component of the critical manifold $C$ and denote the slope of $H$ at $\Sigma$ by $s$.
We use $h|_{\nu_W(\Sigma)}$ to define a time dependent function by putting
\[
\begin{split}
\bar h: \nu(\Sigma) \times S^1 & \longrightarrow \R \\
(p,n;t) & \longmapsto h(Fl^{sR}_t(p)) \rho_b(n),
\end{split}
\] 
where $p$ is a point in $\Sigma$ and $n\in \nu(\Sigma)_p$ a normal vector to $\Sigma$.
For $\delta>0$, define
$$
H_\delta=H+\delta \bar h.
$$

\begin{lemma}
\label{lemma:orbits_curves}
Suppose that $\Sigma$ is a connected component of the critical manifold of an autonomous Hamiltonian $H$ of Morse-Bott type.
Let $U$ denote a neighborhood of $\Sigma$ that does not contain $1$-periodic orbits of $X_H$ other than those in $\Sigma$.
Then for any open neighborhood $V\subset U$ of $\Sigma$, there exists $\delta_0>0$ with the property that for any $\delta \in]0,\delta_0[$ the following hold.
\begin{enumerate}
\item all $1$-periodic orbits of the perturbed Hamiltonian $H_\delta$ that are contained in $U$ are already contained in $V$.
\item all Floer trajectories completely contained in $U$ are already contained in $V$.
\end{enumerate} 
\end{lemma}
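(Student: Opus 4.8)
\emph{Proof idea.}
The plan is to prove both statements by contradiction, combining the usual $\delta\to 0$ compactness (in the spirit of \cite{CFHW}) with the fact that the action functional is constant on $\Sigma$. First one reduces to the case where $U$ is an \emph{isolating neighborhood} of $\Sigma$, meaning that $\bar U$ is compact and the only $1$-periodic orbits of $X_H$ contained in $\bar U$ are those in $\Sigma$; such $U$ exist because, by the Morse--Bott hypothesis and the preceding remark, $\Sigma$ is an isolated component of the compact critical manifold of $H$. Since $\bar h$ is fixed and $\delta\to 0$, the Hamiltonians $H_\delta$ and their vector fields $X_{H_\delta}$ converge in $C^\infty$ on $\bar U$ to $H$ and $X_H$. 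Finally, because $\omega=d\lambda$ is exact on $W$ there are no non-constant $J$-holomorphic spheres with image in $\bar U$, which rules out bubbling in the arguments below.

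For (1), if the claim fails for some $V$ there are $\delta_k\downarrow 0$ and $1$-periodic orbits $\gamma_k$ of $X_{H_{\delta_k}}$ with image in $U$ but not in $V$. Each $\gamma_k$ solves $\dot\gamma_k=X_{H_{\delta_k}}(\gamma_k)$ and has image in the compact set $\bar U$; differentiating the equation and using the uniform $C^\infty$-bounds on the $X_{H_{\delta_k}}$ gives uniform $C^\infty$-bounds on the $\gamma_k$, so by Arzel\`a--Ascoli a subsequence converges in $C^\infty$ to a $1$-periodic orbit $\gamma_\infty$ of $X_H$ with image in $\bar U$. By the isolating property, $\gamma_\infty\subset\Sigma\subset V$, and since $V$ is open this forces $\gamma_k\subset V$ for large $k$, a contradiction.

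For (2), suppose there are $\delta_k\downarrow 0$ (and $J_k\to J$) and Floer cylinders $u_k$ of $(H_{\delta_k},J_k)$ completely contained in $U$ but not in $V$. The energy of $u_k$ equals the difference of the $\mathcal{A}_{H_{\delta_k}}$-values of its two asymptotic orbits $\gamma_k^\pm$, both of which have image in $\bar U$; the defining integrals $\int_{S^1}H_{\delta_k}(\gamma_k^\pm)\,dt$ and $\int_{S^1}\lambda(X_{H_{\delta_k}}(\gamma_k^\pm))\,dt$ are bounded uniformly by $\sup_{\bar U}$-norms, so $E(u_k)$ is bounded uniformly. Choose $(s_k,t_k)$ with $u_k(s_k,t_k)\in U\setminus V$; as $\Sigma\subset V$ is open and $\bar U\setminus V$ is compact and disjoint from the compact set $\Sigma$, we get $\mathrm{dist}(u_k(s_k,t_k),\Sigma)\ge\epsilon_0$ for some $\epsilon_0>0$. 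Translating, put $\tilde u_k(s,t):=u_k(s+s_k,t)$. Absence of bubbling gives a uniform bound on $|d\tilde u_k|$, so (after a subsequence, with $t_k\to t_\infty$) the $\tilde u_k$ converge in $C^\infty_{\mathrm{loc}}(\R\times S^1)$ to a finite-energy solution $\tilde u_\infty$ of the \emph{unperturbed} Floer equation for $(H,J)$, with image in $\bar U$ and $\mathrm{dist}(\tilde u_\infty(0,t_\infty),\Sigma)\ge\epsilon_0$.

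The asymptotic limits of $\tilde u_\infty$ as $s\to\pm\infty$ are $1$-periodic orbits of $X_H$ with image in $\bar U$, hence in $\Sigma$, and they carry the same action since $\mathcal{A}_H$ is constant on the connected Morse--Bott manifold $\Sigma$ (the critical manifold lies in a single level set of $\mathcal{A}_H$). As $s\mapsto\mathcal{A}_H(\tilde u_\infty(s,\cdot))$ is non-increasing (the Floer differential is action-decreasing), with $\tfrac{d}{ds}\mathcal{A}_H(\tilde u_\infty(s,\cdot))=-\int_{S^1}|\partial_s\tilde u_\infty|^2\,dt$ in the relevant sign convention, equality of the two asymptotic values forces $\partial_s\tilde u_\infty\equiv 0$. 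Then $\tilde u_\infty(s,\cdot)$ is a $1$-periodic orbit of $X_H$ in $\bar U$, so $\tilde u_\infty$ has image in $\Sigma$, contradicting $\mathrm{dist}(\tilde u_\infty(0,t_\infty),\Sigma)\ge\epsilon_0>0$. The main obstacle is carrying out the compactness step in (2) rigorously for the (degenerate) Hamiltonians $H_{\delta_k}$ --- in particular the uniform gradient bound and the identification of the limit as a genuine unperturbed Floer cylinder whose asymptotics lie on $\Sigma$ --- which is precisely the kind of analysis developed in \cite{CFHW}; granting these standard inputs, the action argument closes both cases.
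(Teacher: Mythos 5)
Your argument is correct and is essentially the paper's proof: the paper simply states that the lemma "follows that of Lemma~2.1 in \cite{CFHW} almost verbatim", and what you have written out — the $\delta\to 0$ contradiction argument via Arzel\`a--Ascoli for the orbits, Gromov compactness without bubbling (exactness of $\omega$) for the trajectories, and the observation that the limit cylinder has zero energy because $\mathcal A_H$ is constant on the connected critical component $\Sigma$ — is precisely that argument of Cieliebak--Floer--Hofer--Wysocki. The only caveat, which you already flag, is the standard one of arranging that $\bar U$ itself contains no $1$-periodic orbits outside $\Sigma$, handled by shrinking to an isolating neighborhood.
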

\begin{proof}
The proof follows that of Lemma~2.1 in \cite{CFHW} almost verbatim.
\end{proof}

We can be more specific about $1$-periodic orbits of the perturbation $H_\delta$ given above: if $\delta$ is sufficiently small then all $1$-periodic orbits of the perturbation $H_\delta$ correspond to critical points of the Morse function $h$.
We denote the $1$-periodic orbits of $H_\delta$ that are contained in $U$ by ${\mathcal P}_{H_\delta}^U$.
We follow \cite{CFHW} to define the local Floer homology of $\Sigma$. Fix a ring $R$, and define the local Floer homology as the homology of the following complex.
\begin{itemize}
\item define $CF^{loc}_*(\Sigma,H_\delta,J)$ as the $R$-module freely generated by $1$-periodic orbits of $H_\delta$.
We grade each $1$-periodic orbit with minus the Conley-Zehnder index measured with respect to a trivialization on a capping disk.

\item define the local ``moduli space'' as the set
\[
\begin{split}
\mathcal M^{U}(x_+,x_-,H_\delta,J):=
\{
u: \R \times S^1 \to U \subset W ~|&~u_s+J_t(u) 
\left(u_t
-X_{H_\delta}
\right)
=0 \\
&~
\lim_{s \to \mp \infty } u(s,\cdot)=x_{\pm}(\cdot) \}
\end{split}
\]
where we mean uniform convergence. We will assume that $J$ is chosen generically, such that the moduli spaces are smooth manifolds of dimension determined by the Fredholm index.

\item the differential is induced from the full Floer complex in the following way.
We define coherent orientations on the full perturbed complex $CF_*(H_\delta,J)$ with the method of \cite{FH}.
In general, there can be several choices of coherent orientations as pointed out in Remark~8.1.15 of \cite{FOOO:Anomaly2}.
We will fix one particular choice by requiring that Floer flow lines for a $C^2$-small Hamiltonian (these correspond to Morse flow lines as we shall see below) are counted in the same way as the version of Morse homology that is isomorphic to singular homology.

Now define the differential on generators of $CF^{loc}_*(\Sigma,H_\delta,J)$  by
\[
\begin{split}
\partial_k^{loc}: CF^{loc}_k & \longrightarrow CF^{loc}_{k-1} \\
\bar x & \longmapsto \sum_{\underset{-\mu_{CZ}(\ubar x)=k-1}{\ubar x  \in {\mathcal P}_{H_\delta}^U } } \sum_{[u]\in \mathcal M^{U}(\bar x,\ubar x,H_\delta,J)/\R} \epsilon([u])  \ubar x,
\end{split}
\]
where $\epsilon([u])$ is the sign assigned to a Floer trajectory by comparing the flow orientation (induced by the $\R$-action) of $u$ to that of the coherent orientation on $\mathcal M^{U}(\bar x,\ubar x,H_\delta,J)/\R$.

\item the local Floer homology is $HF^{loc}_*(\Sigma,H_\delta,J)=H_*(CF^{loc}_*(\Sigma,H_\delta,J),\partial^{loc})$.
\end{itemize}
It follows from Lemma~\ref{lemma:orbits_curves} that the local Floer homology of $\Sigma$ is well-defined and independent of the choice of complex structure $J$, and perturbation $h$.
The following is an analogue of Proposition~2.2 in \cite{CFHW}.
\begin{proposition}
\label{prop:local_Floer}
Suppose that $\Sigma$ is a component of the critical manifold of an autonomous Hamiltonian of Morse-Bott type $H:W\to \R$ satisfying 
\begin{enumerate}
\item the conditions from Lemma~\ref{lemma:orbits_curves}.
\item the symplectic triviality assumption (ST) holds and $c_1(W)=0$.
\item assume that $c$, where $H(\Sigma)=c$, is a regular value of $H$ such that the Liouville vector field $X$ is transverse to $\Sigma_c:=H^{-1}(c)$.
\item the restriction of the Hessian to the Liouville direction is positive definite.
\item one of the following conditions hold: $H^1(\Sigma;\Z_2)=0$, $\Sigma=S^1$ is a good Reeb orbit, or the linearized Reeb flow is complex linear with respect to some unitary trivialization of the contact structure along every periodic Reeb orbit in $\Sigma$.
\end{enumerate}
Then the local Floer homology is isomorphic to
$HF^{loc}_{*+shift}(\Sigma,H,J;R)\cong H^{Morse}_*(\Sigma;R)$, where $shift=\mu_{RS}(\Sigma)-\frac{1}{2}\dim \Sigma/S^1$.
\end{proposition}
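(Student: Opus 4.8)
The plan is to reduce the computation of the local Floer homology to a clean Morse-theoretic model, following the strategy of \cite{CFHW} but handling the higher-dimensional critical manifold $\Sigma$ rather than just a circle. First I would use the splitting of $TW|_{\Sigma_c}$ into the symplectization direction and the contact hyperplane field: by assumption (3) the Liouville vector field $X$ is transverse to $\Sigma_c=H^{-1}(c)$, so near $\Sigma$ we can introduce coordinates in which $W$ looks like a piece of the symplectization $(\R\times\Sigma_c, d(e^r\alpha))$, and the critical manifold $\Sigma$ sits inside a level set $\{r=\mathrm{const}\}$ as the locus of periodic Reeb orbits of the relevant period. Condition (4), positive-definiteness of the Hessian in the Liouville direction, guarantees that the ``radial'' degrees of freedom contribute no index and no extra critical points after the perturbation $H_\delta$: all $1$-periodic orbits of $H_\delta$ in $U$ correspond, via Lemma~\ref{lemma:orbits_curves}, exactly to critical points of the Morse function $h$ on $\Sigma$.

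Next I would compute the grading. Each generator is a critical point $p$ of $h$ on $\Sigma$, sitting on a periodic Reeb orbit $\gamma_p$; its degree is $-\mu_{CZ}$ computed with respect to a capping disk, using the trivialization from (ST) (which exists and extends over disks, and for Brieskorn manifolds is the explicit one from Section~\ref{sec:trivializations}). The Conley--Zehnder index of a degenerate orbit should be interpreted as the Robbin--Salamon index $\mu_{RS}(\Sigma)$ of the whole Morse--Bott family, plus the Morse index $\ind_p h$ of $p$ as a critical point of $h$, plus a correction $-\tfrac12\dim\Sigma/S^1$ coming from the fact that the orbit is only transversely non-degenerate along the $S^1$-direction (the orbit direction itself contributes a ``$-1/2$'' per transverse-rotation pair, assembled into the stated $shift$). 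This is the standard index identity for clean intersections; I would cite \cite{Robbin_Salamon:Maslov_path,Gutt:generalized_CZ}. The role of hypothesis (5) is precisely to ensure that the sign/orientation bookkeeping works out so that the local Floer differential coincides with the Morse differential of $h$ with the singular-homology orientation conventions: when $H^1(\Sigma;\Z_2)=0$ there is no obstruction, when $\Sigma=S^1$ is a good orbit the $\Z_2$-ambiguity is killed by goodness, and the complex-linearity condition on the linearized Reeb flow pins down a preferred unitary trivialization along each orbit so that all the comparison isomorphisms of orientation lines are canonical and compatible.

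The remaining point is to identify the \emph{differential}. Here I would argue that for the specific perturbation $H_\delta=H+\delta\bar h$ with $\delta$ small, the rigid Floer trajectories contained in $U$ are in bijection, via a stretching/adiabatic-limit argument, with negative gradient flow lines of $h$ on $\Sigma$ — this is the heart of the analysis in \cite{CFHW} (their Proposition~2.2) and the claim is that their proof goes through verbatim once (ST) is in force, because (ST) is exactly what makes the linearized operators split as (a fixed invertible transverse piece) $\oplus$ (the Morse operator on $\Sigma$). The coherent orientation scheme is fixed so that this bijection is sign-preserving. Hence $(CF^{loc}_*(\Sigma,H_\delta,J),\partial^{loc})$ is isomorphic, after the degree shift $shift=\mu_{RS}(\Sigma)-\tfrac12\dim\Sigma/S^1$, to the Morse complex $(C^{Morse}_*(\Sigma,h),\partial^{Morse})$, whose homology is $H_*^{Morse}(\Sigma;R)\cong H_*(\Sigma;R)$.

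The main obstacle I expect is the transversality-and-gluing step identifying Floer cylinders in $U$ with gradient lines of $h$, together with the compatibility of coherent orientations — i.e.\ making rigorous that ``the proof of \cite[Prop.~2.2]{CFHW} applies almost verbatim'' when $\Sigma$ is an arbitrary Morse--Bott component and not a circle. In particular one must check that no new broken configurations appear that pass through the normal (Liouville) directions, which is where hypotheses (3) and (4) are used, and that the three alternatives in hypothesis (5) genuinely suffice to rule out the orientation-line sign ambiguities. I would treat the circle case and the $H^1(\Sigma;\Z_2)=0$ case as the model arguments and then remark that the complex-linear-Reeb-flow hypothesis reduces the general case to a computation with $\Z$-graded, canonically oriented bundles, which is the situation relevant to Brieskorn manifolds by the Corollary in Section~\ref{sec:trivializations}.
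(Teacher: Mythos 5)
Your outline is essentially the paper's proof: reduce to the strategy of \cite{CFHW}, identify the local Floer complex with the Morse complex of $h$ on $\Sigma$ up to the stated shift, and use hypothesis (5) to control orientations. Two remarks on where the execution differs, both in your favor to know. First, the step you flag as the main obstacle --- identifying rigid Floer cylinders in $U$ with gradient flow lines of $h$ --- is not handled in the paper by a stretching or adiabatic-limit argument. Instead one first ``unwraps'' (spins) the flow: using the coordinates near $\Sigma_c$ provided by hypothesis (3), one conjugates $H_\delta$ by the time-$t$ flow of $K=-s\cdot r$ to obtain a $C^2$-small Hamiltonian $\tilde K$ whose $1$-periodic orbits are constant; for such a Hamiltonian with time-independent $J$ the Floer equation reads $\partial_s u+J\partial_t u=-\mathrm{grad}_g\tilde K$, and Po\'zniak's correspondence gives an exact bijection (with matching linearized operators) between rigid Floer trajectories and Morse flow lines --- no limiting analysis is needed. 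The price is that the conjugation moves capping disks, which is exactly where the index shift enters. Second, the orientation bookkeeping behind hypothesis (5) is organized more structurally than ``the signs work out'': the determinant lines of the asymptotic operators assemble into a real line bundle over $\Sigma$, hence a local coefficient system $\mathcal L_\Sigma$, and the local Floer homology is canonically $H^{Morse}_*(\Sigma;R\otimes_\Z\mathcal L_\Sigma)$; each of the three alternatives in (5) is then shown to trivialize $\mathcal L_\Sigma$ (in the complex-linear case because the operator splits off a complex-linear part whose determinant line is canonically oriented). Without one of these hypotheses the statement genuinely fails --- e.g.\ for a bad orbit the twisted complex computes $\Z_2$ rather than $H_*(S^1)$ --- so (5) is not merely a convenience for making isomorphisms canonical.
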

In this statement we compute the Robbin-Salamon index of an orbit $\gamma$ in $\Sigma$ as a periodic Reeb orbit. Hence there is no minus sign.

\begin{proof}
The argument follows the ideas of the proof of Proposition~2.2 from \cite{CFHW} closely.
See also the survey of Oancea, \cite{Oancea:survey}, section~3.3    who calls the following ``spinning''.

\noindent
{\bf Simplifying the flow by unwrapping or spinning}
This is essentially step 1 in the proof of Proposition~2.2 in \cite{CFHW}.
By assumption (iii), a neighborhood of $\Sigma$ is diffeomorphic to $]1-\epsilon,1+\epsilon[ \times \nu_{\Sigma_c}(\Sigma)$ with symplectic form $d(r \lambda|_{\Sigma_c})$, where $r$ is the coordinate on $]1-\epsilon,1+\epsilon[$.
Assume that the slope of $H$ along $\Sigma$ is equal to $s$, i.e.~$\partial_r H|_{\Sigma}=s$.
Define the Hamiltonian $K=-s\cdot r$, and let $\Delta_t=Fl^{X_K}_t$ denote its time-$t$ flow.
Put $\tilde K(t,x):=H_\delta(t,Fl^{X_K}_{-t}(x))+ K(x)$.
We construct a map between the Floer complexes $CF^{loc}(H_\delta,J)$ and $CF^{loc}(\tilde K,J_{\tilde K})$ as follows
\begin{itemize}
\item send a $1$-periodic orbit $x$ of $H_\delta$ to $\hat x=\Delta_t\circ x$. By the formula for the Hamiltonian of a product, we see that $\hat x$ is a $1$-periodic orbit of $\tilde K$.
\item define the complex structure by conjugation, $J_{\tilde K}:=T \Delta_t \circ J \circ T\Delta_t^{-1}$.
With this choice of complex structure, every Floer trajectory for $CF^{loc}(H_\delta,J)$ maps to a Floer trajectory for $CF^{loc}(\tilde K,J_{\tilde K})$.
Furthermore, regularity of Floer trajectories holds for $CF^{loc}(\tilde K,J_{\tilde K})$ if and only if it holds for $CF^{loc}(H_\delta,J)$.
\item We pull back the coherent orientations on $CF^{loc}(H_\delta,J)$ with $\Delta_t^{-1}$ to get a local system on $CF^{loc}(\tilde K,J_{\tilde K})$.
Below we explain these orientations and the local coefficient system in more detail.

\end{itemize}
Note that the resulting map between chain complexes need not be grading preserving, since the map $\Delta_t$ affects the capping disks for the $1$-periodic orbits.

\noindent
{\bf Local Floer homology via Morse homology}
This follows from an analogue of steps 2 and 4 in the proof of Proposition~2.2 in \cite{CFHW}.
We compute the local Floer homology up to a degree shift.
The restriction of $\tilde K$ to $\Sigma$ is $C^2$-small, and all $1$-periodic orbits of $\tilde K$ in the neighborhood $\nu(\Sigma)$ are critical points of $\tilde K$ by Lemma~\ref{lemma:orbits_curves}.
Consider the local Floer homology $HF^{loc}(\Sigma,\tilde K,J;O)$, where we have chosen $J$ to be time-independent, and such that the gradient flow of the metric $g=\omega(\cdot,J\cdot)$ is Morse-Smale.
The notation $O$ indicates that we take the coherent orientations from the complex $CF^{loc}(H_\delta,J)$.
The Floer equation reads $\partial_s u+J\partial_t u=-grad_g \tilde K$, so we see that rigid gradient flow lines of $\tilde K$ with respect to $g$ are also Floer trajectories.
This is actually a $1-1$-correspondence, which was already noticed by Floer, \cite{Floer:Lagrangian_Floer}, as well in step 4 of the proof of Proposition~2.2 in \cite{CFHW}.
A detailed version of this correspondence was worked out by Po\'zniak in \cite[Proposition 3.4.6]{Pozniak}, who identifies the kernels of the linearized Floer equation and the kernel of linearized gradient flow, to obtain an isomorphism between Floer homology and Morse homology in a Lagrangian setup.
For convenience we will refer to this relation in the Hamiltonian setting also as Po\'zniak's correspondence.

On the Floer side we use coherent orientations following the scheme of \cite{FH}, and with Po\'zniak's correspondence we get a corresponding system of orientations on the Morse homology side, yielding a local coefficient system on $\Sigma$, which we denote by $\mathcal L_{\Sigma}$.
We will now describe this local coefficient system in more detail.

\noindent
{\bf From orientations to local coefficient systems}
We follow some of the ideas in \cite{BO:SH_MB} Section~4.4 in order to define coherent orientations for the local Floer homology of a Morse-Bott critical manifold $\Sigma$.
Since we are in a non-degenerate setup after perturbation, we will only need coherent orientations for Floer trajectories corresponding to Morse flow lines, so we will not go into the general setup which involves cascades.

Suppose that $\Sigma$ is a critical manifold of the Hamiltonian vector field $X_H$ belonging to the autonomous Hamiltonian $H$, so each point $\gamma$ of $\Sigma$ is a $1$-periodic orbit of $X_H$.
With respect to a unitary trivialization $\epsilon$ of $\gamma^*TW$ the linearized flow $TFl^{X_H}_t$ is a path of symplectic matrices $\psi(t)$ satisfying the ODE
\begin{equation}
\label{eq:path_sympl_loop_Ham}
\begin{split}
\dot \psi &= J_0 S(t) \psi \\
\psi(0) & = \id.
\end{split}
\end{equation}
Here $S(t)$ is a \emph{loop} of symmetric matrices that is non-degenerate in the direction normal to $\Sigma$ by the Morse-Bott assumption.
We will consider loops of symmetric matrices satisfying this non-degeneracy assumption,
$$
\mathcal S
:=
\{
S: S^1 \to Sym_{2n}(\R)~|~\det( \left( \id-\psi_S(1) \right)|_{\nu(\Sigma)} )\neq 0 
\}
.
$$
Because we allow for Morse-Bott degeneracies we will need asymptotic weights for the Sobolev spaces in order to get the Fredholm property for the operators considered below, just as in \cite{BO:SH_MB} Section~4.4.
For a cylindrical end $([R,\infty[ \times S^1,J_0,ds\w dt)$ one uses the measure $e^{\delta s}ds \w dt$, where $\delta>0$ is chosen to be smaller than the spectral gap of the asymptotic operator.
If we use polar coordinates for $\C$, then we have $(\C,J_0,rdr\w d\theta)$, with $J_0 \partial_r =\frac{1}{r}\partial_\theta$, and the appropriate measure with asymptotic weights becomes $\rho_\delta(r) dr\w d\theta$, where $\rho_\delta(r)$ equals $r$ near $r=0$ (polar measure) and $r^{\delta-1}$ for large $r$ (pulled back measure).
Given a vector space $E$ (to be replaced by a trivialization of the symplectic vector bundle over $\C$), we define the Banach spaces
\[
\begin{split}
W^{1,p}_\delta(\C,E)&=\{ 
 f:\C \to E ~|~f(r,\theta) \in W^{1,p}(\C,E;\rho_\delta (r) dr \w d\theta)
\} 
,\text{ and} \\
L^{p}_\delta(\C,E)&=\{ 
 f:\C \to E ~|~f(r,\theta) \in L^{p}(\C,E;\rho_\delta (r) dr \w d\theta)
\}
.
\end{split}
\]
For the Morse-Bott setup we need to include elements in the kernel of the asymptotic operator.
Find a basis of solutions of the ODE given by the asymptotic operator
\begin{equation}
\label{eq:asymptotic_ODE}
J_0 \partial_t e_j+S(t) e_j=0
\end{equation}
Choose a cutoff function that is supported at infinity, say $\rho:\C \to \R$ depending only on $|z|$ such that $\rho(z)=1$ if $|z|>2R$ and $\rho(z)=0$ for $|z|<R$, where $R$ is a constant.
Then the vector-valued functions $\tilde e_j(z):=\rho(z)e_j(\frac{z}{|z|})$ extend to vector valued functions on $\C$.
We define a vector space of dimension $\dim \Sigma$, denoted by ${\mathcal T}_\gamma \Sigma=span_\R (\tilde e_1,\ldots,\tilde e_{\dim \Sigma})$.
The notation indicates that ${\mathcal T}_\gamma \Sigma$ is morally the tangent space to the Morse-Bott manifold.
For $\tilde S \in \mathcal S$ define a space ${\mathcal O}(\C,E;\tilde S)$ consisting of operators 
\begin{equation}
\label{eq:operator}
D: W^{1,p}_\delta(\C,E)\oplus {\mathcal T}_\gamma \Sigma \longrightarrow L^{p}_\delta(\C,\Omega^{0,1}(E)\, )
\end{equation}
satisfying the following conditions.
\begin{enumerate}
\item in a local unitary trivialization of $E$, the operator has the form
\begin{equation}
\label{eq:operator_unitary_triv}
D=\left( \partial_r+J_0\partial_\theta+S(z) \right)d\bar z,
\end{equation}
where we interpret an element in $W^{1,p}_\delta(\C,E)\oplus {\mathcal T}_\gamma \Sigma$ as a linear combination of the form
$$
X=\tilde X+\sum_{j=1}^{\dim \Sigma} a_j \tilde e_j,
$$  
with $\tilde X\in W^{1,p}_\delta(\C,E)$.
Together with the next condition, we see that this makes sense.
\item $\lim_{s\to \infty}S(s,t)=\tilde S(t)$.
So by construction, the $\tilde e_j$'s map under $D$ into $L^{p}_\delta(\C,\Omega^{0,1}(E)\, )$.
\end{enumerate}
Because of the asymptotic weights, the space ${\mathcal O}(\C,E;\tilde S)$ consists of Fredholm operators.
Also, each space ${\mathcal O}(\C,E;\tilde S)$ has fixed asymptotics, so the determinant bundle over ${\mathcal O}(\C,E;\tilde S)$ is trivial by \cite{FH}[Proposition 7].
Below we will construct a bundle version of the map~\eqref{eq:operator}.

\subsubsection{Banach bundles over $\Sigma$}
In the following it will be useful to think of $\Sigma$ as a submanifold in $\Lambda \bar W$.
Throughout the following construction we fix $R>0$ and a cutoff function $\rho:\C \to \R$ such that $\rho(z)=1$ for $|z|>2R$ and $\rho(z)=0$ for $|z|<R$.
We will also need a symmetric connection $\nabla$ for $TW$.

Cover $\Sigma$ with finitely many good charts $\{ U_a\subset \R^{\dim \Sigma} \}_a$, meaning that each intersection $U_{a_1}\cap \ldots \cap U_{a_k}$ is either diffeomorphic to an open ball or empty.
For each $U_a$ choose a collection of pairs of maps $(u_\gamma^a,\epsilon_\gamma^a)$ such that
$$
u_\gamma^a: \C \to W
$$
is a capping plane for $\gamma$, i.e. $u_\gamma^a(z)=\gamma( \frac{z}{|z|})$ for $|z|>R$ and 
$$
\epsilon_{\gamma}^a: \C \times \R^{2n} \to {u_\gamma^a}^* TW
$$
is a trivialization of ${u_\gamma^a}^* TW$ that only depends on $\frac{z}{|z|}$ if $|z|>R$.
We will now use the (ST) assumption to make the following construction work.
Denote the symplectic trivialization of $TW|_{\Sigma}$ by $\epsilon_\Sigma:\Sigma \times \R^{2n} \to TW|_{\Sigma}$.
Then we choose the above trivializations $\epsilon_\gamma^a$ by putting
\[
\begin{split}
\epsilon_\gamma^a:\C-B_R(0) \times \R^{2n} \longrightarrow {u_\gamma^a}^* \\
(z,v) & \longmapsto \epsilon_\Sigma(u_\gamma^a(z),v).
\end{split}
\]
We extend this trivialization to the rest of $\C$. For fixed $a$, this can be done in a way that depends smoothly on $\gamma\in U_a$.
We now construct a bundle version of the operator~\ref{eq:operator}.
We first define the analogue of $\mathcal T_\gamma \Sigma$.
For each $\gamma\in U_a$ find a basis of solutions to the following ODE for the vector field $X$ along $\gamma$, seen as a loop in $W$,
$$
J(\nabla_t X-\nabla_X X_H)=0.
$$
This is a coordinate-free version of the Equation~\eqref{eq:asymptotic_ODE}.
We can assume that this basis depends smoothly on the point $\gamma$ in $U_a$, and we will denote this basis of solutions by $\{ {e_\gamma^a}_{,i} \}_i$.
Using the cutoff function $\rho$ we extend these functions to a map
\[
\begin{split}
{\tilde e_\gamma^a}_{,i}: \C & \longrightarrow {u_\gamma^a}^*TW \\
z & \longmapsto \rho(z) {e_\gamma^a}_{,i}(\frac{z}{|z|}).
\end{split}
\]
Define the finite-dimensional vector space
$$
{\mathcal T}_\gamma^a\Sigma:=span_\R ({\tilde e_\gamma^a}_{,1},\ldots,{\tilde e_\gamma^a}_{,\dim \Sigma} ).
$$
We construct a Banach bundle $W_\Sigma$ over $\Sigma$ whose fiber over $\gamma\in U_a$ is
$$
W^{1,p}_\delta(\C, \R^{2n})
\oplus
\R^{\dim \Sigma}
\cong
W^{1,p}_\delta(\C, {u_\gamma^a}^* TW)
\oplus
{\mathcal T}_\gamma^a\Sigma
.
$$
Define the transition function $g_{ab}:\C \times \R^{2n}$ by putting $g_{ab}(z,\cdot):=\id$.
This makes sense since ${\epsilon_\gamma^b}^{-1}\circ {\epsilon_\gamma^a}(z,\cdot)=\id$ for $|z|>R$ by the above choice of trivializations.
The above functions $\tilde {e^a_\gamma}_i$ are related to one another by the formula
$$
\sum_i {\tilde e^a_\gamma}_i {\tilde g_{ab}}^i{}_j={\tilde e^b_\gamma}_j.
$$
These functions satisfy the cocycle condition, so we say that elements are equivalent,
$$
(\gamma;X,v)\in U_a \times \left( W^{1,p}_\delta(\C, \R^{2n})
\oplus
\R^{\dim \Sigma} \right)
\sim
(\delta;Y,w)\in 
U_b \times \left( W^{1,p}_\delta(\C, \R^{2n})
\oplus
\R^{\dim \Sigma} \right)
$$
if $\gamma=\delta$, $Y(z)=g_{ab}(z,X(z)\,)$ and $v^i=\sum_j {\tilde g_{ab}}^i{}_j w^j$.
This gives rise to the Banach bundle
$$
W_\Sigma := \coprod U_a \times \left( W^{1,p}_\delta(\C, \R^{2n})
\oplus
\R^{\dim \Sigma} \right) / \sim.
$$
Similarly, we define a Banach bundle $L_\Sigma$ whose fiber over $\gamma\in U_a$ is given by
$$
L^{1,p}_\delta(\C, \R^{2n})
\cong
L^{1,p}_\delta(\C,\Omega^{0,1}( {u_\gamma^a}^* TW)\, )
.
$$
We define a vector bundle homomorphism $D_\Sigma:W_\Sigma \to L_\Sigma$ covering the identity. On the level of charts, put
\begin{equation}
\label{eq:D_chart}
\begin{split}
{D_{\Sigma}}^{U_b}_{U_a}: W_\Sigma|_{U_a} & \longrightarrow L_\Sigma|_{U_b} \\
(\gamma;X,v) & \longmapsto 
\left( \gamma;(\epsilon^b_\gamma)^{-1}\circ
\rho(z)\cdot \left( \nabla_s \cdot +J (\nabla_t \cdot -\nabla_\cdot X_H) 
\circ (
\epsilon^a_\gamma(X)+\sum_i v^i {\tilde e^a_\gamma}_i
)
\right)
\right),
\end{split}
\end{equation}
so we simply substitute the vector $\epsilon^a_\gamma(X)+\sum_i v^i {\tilde e^a_\gamma}_i$ into the differential operator $X \mapsto \nabla_s X +J (\nabla_t X -\nabla_X X_H)$.

\begin{lemma}
The map $D_{\Sigma}$ defines a bundle homomorphism that restricts to a Fredholm operator on each fiber.
Furthermore ${D_{\Sigma}}^{U_a}_{U_a}|_\gamma$ is a compact perturbation of an operator in $\mathcal O(\C,{u_\gamma^a}^*TW,S)$ for some $S\in \mathcal S$.
\end{lemma}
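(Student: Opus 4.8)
The claim has two parts: first, that $D_\Sigma$ is a well-defined bundle homomorphism $W_\Sigma \to L_\Sigma$ with Fredholm restriction to each fiber; second, that the diagonal chart expression ${D_\Sigma}^{U_a}_{U_a}|_\gamma$ differs from some operator in $\mathcal O(\C, {u_\gamma^a}^* TW, S)$ by a compact operator. I would treat these in turn, since the first is essentially a bookkeeping statement about the transition functions and the second is the genuine analytic input.

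\textbf{Step 1: well-definedness as a bundle homomorphism.} I would check that the local expressions ${D_\Sigma}^{U_b}_{U_a}$ from Formula~\eqref{eq:D_chart} are compatible with the equivalence relation defining $W_\Sigma$ and $L_\Sigma$. Concretely: if $(\gamma; X, v) \sim (\gamma; Y, w)$ with $Y = g_{ab}(z, X(z))$ and $v^i = \sum_j \tilde g_{ab}{}^i{}_j w^j$, then by construction the substituted vector $\epsilon^a_\gamma(X) + \sum_i v^i \tilde e^a_{\gamma,i}$ equals $\epsilon^b_\gamma(Y) + \sum_i w^i \tilde e^b_{\gamma,i}$ in ${u_\gamma}^* TW$ — this is exactly what the cocycle conditions $\sum_i \tilde e^a_{\gamma,i}\tilde g_{ab}{}^i{}_j = \tilde e^b_{\gamma,j}$ and $\epsilon^b_\gamma \circ (\epsilon^a_\gamma)^{-1} = \mathrm{id}$ for $|z| > R$ were arranged to guarantee. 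Since the differential operator $X \mapsto \nabla_s X + J(\nabla_t X - \nabla_X X_H)$ is intrinsically defined on sections of ${u_\gamma}^* TW$ (it uses only the connection $\nabla$, the almost complex structure, and the Hamiltonian vector field, all of which are geometric data on $W$), the output is independent of the chart used to express the input. Multiplying by the cutoff $\rho$ and re-expressing in the $\epsilon^b$-trivialization then gives a consistent global map. Smoothness in $\gamma$ follows from the smoothness of the trivializations and the basis $\{e^a_{\gamma,i}\}$ in each chart.

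\textbf{Step 2: Fredholmness and the comparison with $\mathcal O(\C, \cdot, S)$.} Fix $\gamma \in U_a$ and work in the chart. In the unitary trivialization $\epsilon^a_\gamma$, the operator ${D_\Sigma}^{U_a}_{U_a}|_\gamma$ takes the form $\partial_s + J_0 \partial_t + S(s,t)$ acting on $W^{1,p}_\delta(\C, \R^{2n}) \oplus \mathcal T^a_\gamma\Sigma$, where $S(s,t)$ is a matrix-valued function built from the linearization of the Floer equation along the capping plane $u^a_\gamma$; as $s \to \infty$ it converges to the asymptotic loop $\tilde S(t)$ coming from \eqref{eq:asymptotic_ODE}, which lies in $\mathcal S$ precisely by the Morse-Bott nondegeneracy assumption. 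Passing from polar to cylindrical coordinates on $\C \setminus \{0\}$ and using the asymptotic weight $\rho_\delta$ — chosen with $\delta$ below the spectral gap of the asymptotic operator, exactly as in \cite{BO:SH_MB} Section~4.4 — the operator becomes Fredholm: this is the standard weighted-Sobolev theory for Cauchy-Riemann type operators on a cylinder with nondegenerate (after including the kernel directions $\mathcal T_\gamma\Sigma$) asymptotics. The comparison statement then amounts to observing that any two elements of $\mathcal O(\C, E; \tilde S)$, or an element of $\mathcal O$ and our ${D_\Sigma}^{U_a}_{U_a}|_\gamma$, differ by a zeroth-order term $S(s,t) - S'(s,t)$ which vanishes (or rather is controlled) at infinity — more precisely, the difference is a multiplication operator whose coefficient decays in the weighted norm, hence defines a compact operator $W^{1,p}_\delta \to L^p_\delta$ by Rellich-type compactness on the cylindrical end together with interior compactness. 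This pins down ${D_\Sigma}^{U_a}_{U_a}|_\gamma$ as a compact perturbation of a fixed model operator in $\mathcal O(\C, {u^a_\gamma}^* TW, S)$.

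\textbf{The main obstacle.} The delicate point is not the algebra of Step 1 but the analytic setup of Step 2: one must be careful that the inclusion of the finite-dimensional space $\mathcal T_\gamma\Sigma$ of asymptotic kernel elements is done so that the extended operator $W^{1,p}_\delta \oplus \mathcal T_\gamma\Sigma \to L^p_\delta$ is genuinely Fredholm (the plain $W^{1,p}_\delta \to L^p_\delta$ operator is \emph{not} surjective/Fredholm of the right index when the asymptotic operator has kernel in the Morse-Bott direction), and that this construction is uniform in $\gamma$ so that $D_\Sigma$ is a bundle map of Banach bundles rather than merely a fiberwise family. I would handle this by citing the corresponding constructions in \cite{BO:SH_MB} Section~4.4 and \cite{FH}, and checking that the (ST) assumption — which forces $\epsilon^b_\gamma \circ (\epsilon^a_\gamma)^{-1} = \mathrm{id}$ near infinity and lets the trivializations $\epsilon^a_\gamma$ be taken to agree with the ambient $\epsilon_\Sigma$ there — is exactly what makes the transition functions $g_{ab}$ trivial at infinity and hence the bundle structure well-defined. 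The index of the fiberwise Fredholm operator can be recorded for later use, but is not needed for this lemma.
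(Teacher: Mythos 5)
Your proposal is correct and follows essentially the same route as the paper: first check compatibility with the transition functions (which are trivial at infinity precisely because of (ST)), then identify the fiber operator in the chart $\epsilon_\gamma^a$ with a Cauchy--Riemann type operator $\partial_s+J_0\partial_t+S$ whose asymptotics lie in $\mathcal S$ by the Morse--Bott assumption, and invoke the weighted Fredholm theory of \cite{BO:SH_MB} and \cite{FH}. The only slight divergence is where you locate the compact perturbation: the paper attributes it to the overall cutoff prefactor $\rho(z)$ in \eqref{eq:D_chart} multiplying the operator $D_{\gamma,a}\in\mathcal O(\C,{u_\gamma^a}^*TW;\tilde S)$, whereas you describe it as a decaying zeroth-order difference of the $S$-terms; this is minor bookkeeping and the substance is identical.
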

\begin{proof}
We first check that the map $D_{\Sigma}$ is compatible with the transition functions.
For $|z|\leq R$ the cutoff function $\rho(z)$ vanishes, so the right-hand side is then $0$.
For $|z|>R$, the trivializations $\epsilon^a_\gamma$ do not depend on $a$, so the $X$-terms in the expression~\eqref{eq:D_chart} are independent of $a$ and $b$.
For the terms involving $v$, we go to the $U_b$ chart, so $v\mapsto \tilde g_{ab}^{-1} v$, resulting in the term
$$
\sum_{i,j} v^i {({\tilde g_{ab}}^{-1})}^j{}_i {\tilde e^b_\gamma}_j=
\sum_{i,j,k} v^i {({\tilde g_{ab}}^{-1})}^j{}_i 
{\tilde g_{ab}}^k{}_j {\tilde e^a_\gamma}_k
=\sum_k v^k {\tilde e^a_\gamma}_k
$$
in the above expression, so the map $D_\Sigma$ is well-defined.
To check the Fredholm property, we define
\[
\begin{split}
D_{\gamma,a}: W_\Sigma|_{\gamma} & \longrightarrow  L_\Sigma|_{\gamma} \\
X & \longmapsto 
\nabla_s X+J(\nabla_t X-\nabla_X X_H )
.
\end{split}
\]
We claim that $D_{\gamma,a}\in \mathcal O(\C,{u_\gamma^a}^*TW;\tilde S)$ for some $\tilde S\in \mathcal S$.
To see this, we use the trivialization $\epsilon_\gamma^a$.
This gives a unitary frame $\{f_j\}$ of ${u_\gamma^a}^* TW$ along $\C$.
By assumption, this frame only depends on $\frac{z}{|z|}$ if $|z|>R$.
With respect to such a trivialization the expression $\nabla_s X+J(\nabla_t X-\nabla_X X_H )$ has terms of the form
$$
\nabla_s \sum_j(f_jX^j)=\sum_j(\nabla_s f_j)X^j+ \sum_j f_j\partial_s X^j.
$$
The latter corresponds to $\partial_s X$ and the former is a constituent of the term $S(z)$ in Equation~\eqref{eq:operator_unitary_triv}.
The other terms can be identified similarly.

Now observe that $\nabla_s f_j$ vanishes for $|z|>R$ due to our choice of trivialization.
Furthermore, for large $|z|$, the second term in the above expression for $D_{\gamma,a}$ equals $J(\nabla_t X-\nabla_X X_H )$.
This is a coordinate-free expression for the asymptotic operator with a minus sign, so $D_{\gamma,a}\in \mathcal O(\C,{u_\gamma^a}^*TW;\tilde S)$.
In particular, we conclude that $D_{\gamma,a}$ is a Fredholm operator.
To see that $D_{\Sigma}|_\gamma$ is a Fredholm operator we observe that the map $L^p \to L^p$, which sends $X$ to $\rho(z) X$ is a compact operator, which preserves the Fredholm condition.
\end{proof}

\subsubsection{Determinant bundles}
We take the determinant bundle of the vector bundle homomorphism $D_{\Sigma}$ to obtain a real line bundle $Det(D_{\Sigma})\to \Sigma$.
We can choose the transition functions of this line bundle to lie in $O(1)$, so we obtain a local coefficient system $\mathcal L_{\Sigma}$ on $\Sigma$.
The signed count of Morse flow lines from Po\'zniak's correspondence is hence twisted with the local coefficient system $\mathcal L_{\Sigma}$.
To make this precise, it is convenient to use Abouzaid's description of orientations in Floer homology, see \cite{Abouzaid:Viterbo_thm}, Sections~1.4 and 1.5.
We briefly review his description.
Given a $1$-periodic Hamiltonian orbit $\hat \gamma$ of $H_\delta$, choose a capping plane $u_{\hat \gamma}$ and a trivialization $\epsilon_{\hat \gamma}$ of $TW$ along $u_{\hat \gamma}$.
As before we obtain an expression for the asymptotic operator of the form $-(J\partial_t+\tilde S_{\hat \gamma})$.
We extend $\tilde S_{\hat \gamma}$ to a map $S_{\hat \gamma}$ defined on $\C$, for example by using the cutoff function $\rho$.
We obtain a Fredholm operator 
$$
D_{\hat \gamma}=\partial_s+J\partial_t+S_{\hat \gamma},
$$
and define the determinant line $Det(D_{\hat \gamma})$.
This line comes with a $\Z$-grading given by minus the Conley-Zehnder index of $\psi$, where $\psi$ solves the ODE~\eqref{eq:path_sympl_loop_Ham} with $S=\tilde S_{\hat \gamma}$.

\begin{remark}
\label{rem:det_bundle=line_bundle}
For later purposes, recall that $\hat \gamma$ corresponds to a critical point $\gamma$ on $\Sigma$, so if we choose the above capping plane $u_{\hat \gamma}$ and trivialization $\epsilon_{\hat \gamma}$ with the above construction for $D_\Sigma$ we get a canonical isomorphism $Det(D_{\hat \gamma}) \cong Det(D_{\Sigma})|_{\gamma}$. 
\end{remark}

We return to Abouzaid's construction of orientations.
Choose orientations $\delta_{\hat \gamma}^+$ and $\delta_{\hat \gamma}^-$ for the determinant line $Det(D_{\hat \gamma})$, and define the orientation line $o_{\hat \gamma}$ of $\hat \gamma$ as the abelian group generated by  $\delta_{\hat \gamma}^+$ and $\delta_{\hat \gamma}^-$ subject to the relation that their sum vanishes, i.e.
$$
o_{\hat \gamma}:=\langle \delta_{\hat \gamma}^+,\delta_{\hat \gamma}^-~|~\delta_{\hat \gamma}^+ +\delta_{\hat \gamma}^-=0 \rangle
.
$$
Now define the Floer chain complex as the graded abelian groups
$$
CF_k^{loc}(\nu(\Sigma),H_\delta,J):=\bigoplus_{ \underset{-\mu_{CZ}(\hat x)=k}{\hat x\in {\mathcal P}_{H_\delta}^{\nu(\Sigma)} }} o_{\hat x}.
$$
The differential is defined on orientation lines by
$$
\partial|_{o_x}: a \longmapsto
\sum_y \sum_{[u]\in \mathcal M(x,y, H_\delta,J)}
\partial_u a,
$$
where $\partial_u: o_x \to o_y$ is a map between orientation lines constructed in Lemma~1.5.4 of \cite{Abouzaid:Viterbo_thm}. This is similar to the earlier description where we fixed orientations.
As always, this map $\partial$ is extended linearly.
Now define $L_x$ to be the orientation line of the line $Det(D_\Sigma)|_{x}$.
The corresponding bundle $L$ with fiber $L_x$ is an $O(1)$-bundle over $\Sigma$.
By Remark~\ref{rem:det_bundle=line_bundle}, we have a canonical isomorphism between the previous Floer complex and the twisted Morse complex
$$
C_k^{Morse}(\tilde K;\mathcal L_\Sigma):=\bigoplus_{ \underset{\ind_x\tilde K=k}{x\in Crit(\tilde K) }} L_x.
$$
This is an isomorphism of chain complexes, since we can identify Floer flow lines and their determinant lines in the complex $CF_*^{loc}(\nu(\Sigma),H_\delta,J)$ with Morse flow lines with local coefficients by Po\'zniak's correspondence.
We can implement Morse homology with local coefficients using flat vector bundles as in \cite{Ono:flux}, Section 6: we are in such a setup as we have chosen the transition functions of the determinant bundles to lie in $O(1)$.
Hence we obtain the following isomorphism, up to a degree shift, which we will work out below.
\begin{equation}
\label{eq:local_Floer}
HF^{loc}_{*+shift(\Sigma)}(\nu(\Sigma),H_\delta,J;R)
\cong 
H^{Morse}_{*}(\Sigma,\tilde K;R\otimes_\Z \mathcal L_{\Sigma}).
\end{equation}

The next lemma provides a simple criterion to see when $\mathcal L_{\Sigma}$ is trivial.
\begin{lemma}
\label{lemma:local_coefficients_trivial}
Suppose that one of the following conditions hold: $H^1(\Sigma;\Z_2)=0$, $\Sigma=S^1$ is a good Reeb orbit, or the linearized Reeb flow is complex linear with respect to some unitary trivialization of the contact structure along each $\gamma \in \Sigma$.
Then $\mathcal L_{\Sigma}$ is trivial.
\end{lemma}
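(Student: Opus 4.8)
The plan is to prove that the local coefficient system $\mathcal L_\Sigma$ is trivial by showing that the real line bundle $\mathrm{Det}(D_\Sigma)\to\Sigma$ from which it is built is orientable, i.e.\ that its first Stiefel--Whitney class $w_1(\mathrm{Det}(D_\Sigma))$ vanishes in $H^1(\Sigma;\Z_2)$. Since this obstruction class is independent of the auxiliary choices (capping planes, trivializations, complex structure $J$, connection $\nabla$) entering the construction of $D_\Sigma$, in each of the three cases I am free to compute with whatever choices are most convenient. The case $H^1(\Sigma;\Z_2)=0$ is then immediate, since the obstruction lives in a trivial group.

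Next suppose the linearized Reeb flow is complex linear along every $\gamma\in\Sigma$. The first step is to split off the ``trivial'' normal directions: over $\Sigma$ there is a splitting $TW|_\Sigma=\xi_\Sigma\oplus\varepsilon$, where $\varepsilon$ is the rank-two symplectic subbundle spanned by the Reeb vector field and the Liouville vector field; by assumption~(iii) of Proposition~\ref{prop:local_Floer} this is a genuine subbundle near $\Sigma$, and it is symplectically trivial (trivialized by $R$ and $X$). Choosing $J$ and $\nabla$ compatible with this splitting, the operator $D_\Sigma$ block-diagonalizes into a family of Cauchy--Riemann operators on the $\xi$-factor and a family on the $\varepsilon$-factor, so $\mathrm{Det}(D_\Sigma)\cong\mathrm{Det}(D_\Sigma^\xi)\otimes\mathrm{Det}(D_\Sigma^\varepsilon)$. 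The $\varepsilon$-factor is built from a $\gamma$-independent operator in the fixed trivialization of $\varepsilon$, so $\mathrm{Det}(D_\Sigma^\varepsilon)$ is a trivial line bundle. For the $\xi$-factor I would invoke the hypothesis: in a unitary trivialization of $(\xi_\gamma,J,d\alpha)$ for which the linearized Reeb flow is complex linear, the loop of symmetric matrices $S(t)$ in~\eqref{eq:path_sympl_loop_Ham} commutes with $J_0$, so the asymptotic operator $J_0\partial_t+S(t)$, its kernel (the relevant summand of the stabilizing space $\mathcal T_\gamma\Sigma$), and the operator $D_\Sigma^\xi|_\gamma$ itself are all complex linear. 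A complex-linear Fredholm operator carries a canonical complex orientation on its determinant line; this orientation is independent of the complex-linearizing trivialization used (any two differ by a loop in $U(n-1)$, which preserves complex orientations) and it varies continuously with $\gamma$. Hence $\mathrm{Det}(D_\Sigma^\xi)$, and therefore $\mathrm{Det}(D_\Sigma)$, is orientable, so $\mathcal L_\Sigma$ is trivial.

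Finally, suppose $\Sigma=S^1$ is a good periodic Reeb orbit --- the situation of \cite{CFHW}. Then $H^1(\Sigma;\Z_2)=\Z_2$, so $\mathrm{Det}(D_\Sigma)$ is classified by a single monodromy sign $\epsilon(\Sigma)\in\{\pm1\}$ around the generator of $\pi_1(S^1)$. The plan is to compute $\epsilon(\Sigma)$ by the standard orientation computation for the determinant bundle along the $S^1$-family of basepoint shifts of a periodic orbit, using the coherent-orientation conventions of \cite{FH} (equivalently Abouzaid's orientation lines, \cite{Abouzaid:Viterbo_thm}) in the Morse--Bott framework of \cite{BO:SH_MB}: writing $\Sigma=\tilde\gamma^m$ for a simple orbit $\tilde\gamma$, one obtains $\epsilon(\Sigma)=+1$ when $m$ is odd and $\epsilon(\Sigma)=(-1)^{\mu_{CZ}(\tilde\gamma^m)-\mu_{CZ}(\tilde\gamma)}$ when $m$ is even. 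Using that the Conley--Zehnder index of an odd iterate has the same parity as that of the underlying orbit, this shows that $\epsilon(\Sigma)=+1$ precisely when $\Sigma$ is \emph{good} in the sense recalled earlier --- this is exactly the orientation phenomenon that the notion of a bad orbit is designed to record. Under the goodness hypothesis we conclude $\epsilon(\Sigma)=+1$, so $\mathrm{Det}(D_\Sigma)$ is trivial and hence $\mathcal L_\Sigma$ is trivial.

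The step I expect to be the main obstacle is the monodromy computation in this last case: it requires carefully tracking the Floer--Hofer gluing signs under the basepoint shift generating $\pi_1(S^1)$ --- in particular how the capping plane and its trivialization rotate along the loop --- and matching the resulting sign against the parity characterization of bad orbits. By contrast, the complex-linear case is essentially formal once the Reeb--Liouville plane has been split off, and the $H^1(\Sigma;\Z_2)=0$ case needs nothing.
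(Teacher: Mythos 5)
Your proposal is correct and follows essentially the same route as the paper. In the complex-linear case you use exactly the paper's argument: split $TW|_\Sigma$ into the symplectically trivial Reeb--Liouville plane and the contact hyperplane $\xi$, observe that the operator block-diagonalizes, that the $\xi$-block is complex linear (since the loop $S_\xi$ commutes with $J_0$), and that kernel and cokernel therefore carry canonical complex orientations, giving a nowhere-vanishing section of $\mathrm{Det}(D_\Sigma)$; your added remark that the resulting orientation is independent of the choice of complex-linearizing unitary trivialization is a useful point the paper leaves implicit. The only real divergence is in the good-orbit case: the paper does not carry out the monodromy-sign computation you flag as the main obstacle, but simply cites \cite{JY_Zhao}, Proposition 6.2, and \cite{BO:SH_MB}, Lemma 4.29, which establish precisely that $\mathcal L_\Sigma$ over a transversely non-degenerate circle is trivial if and only if the orbit is good. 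So your parity criterion $\epsilon(\Sigma)=(-1)^{\mu_{CZ}(\tilde\gamma^m)-\mu_{CZ}(\tilde\gamma)}$ for even $m$ is the right statement, and you could close that step by citation rather than by re-deriving the Floer--Hofer gluing signs; if you do want a self-contained derivation, that is the one genuinely laborious part of the argument, but it is standard and not a gap in the logic.
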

\begin{proof}
If $H^1(\Sigma;\Z_2)$ is trivial, then all local systems on $\Sigma$ are trivial.

In case $\Sigma=S^1$ is a transversely non-degenerate orbit, then $\mathcal L_{\Sigma}$ is a trivial system precisely when $\Sigma$ is good.
This was verified in \cite{JY_Zhao}, Proposition 6.2. See also \cite{BO:SH_MB}, Lemma 4.29.

For the third case, we already know that $Det(D_\Sigma)|_{U_a}$ is trivial since $U_a$ is contractible.
We will construct a nowhere-vanishing section of $Det(D_\Sigma)$.
To keep notation simple we assume that $H$ equals $f(r)$ near $\Sigma$, where $r$ is the coordinate on the interval component of $]1-\epsilon,1+\epsilon[\times \nu_{\Sigma_c}(\Sigma)$.
Define the metric
$$
g=dr\otimes dr+\alpha\otimes \alpha+d\alpha(\cdot,J|_{\xi}\cdot),
$$
where $\alpha$ is the contact form on $\Sigma_c$, and let $\nabla$ denote the Levi-Civita connection for $g$.
We need to compute the asymptotic operator, given by
$-J(\partial_t-\nabla X_H )$.
Put $S_o:=J\nabla X_H$. We have $X_H=-f'(r)R$, so if we use that $\nabla_R R=0$ for this metric, we see that $S_o$ decomposes as
\[
S_o=
\left(
\begin{array}{ccc}
rf''(r) & 0 & 0 \\
0 & 0 & 0 \\
0 & 0 & S_\xi
\end{array}
\right)
\]
if we choose a frame $r\partial_r, R$ and any frame for $\xi$.
This clearly splits.
Now fix $\gamma\in U_a$.
We extend $S_o$ to the capping plane $u_\gamma^a$ using the cutoff function $\rho$. 
Choose the trivialization $r\partial_r, R$ for the symplectic complement of $\xi$ and any frame for $\xi$.
Then the operator 
$$
D_{\gamma}:=\rho(z)\cdot(\partial_s +J_0\partial_t+S_o)
$$ 
splits. 
Furthermore, this operator is conjugated to the operator $D_\Sigma|_{\gamma}$ by a change of trivialization.
We write $D_{\gamma}=D_\epsilon \oplus D_\xi$.
By assumption, there is a unitary trivialization of the contact structure $\xi$ along $\gamma$ such that the path of symplectic matrices $\psi$ corresponding to the linearized Reeb flow with respect to this trivialization is complex linear.
So we have
$$
\dot \psi=J_0 S_\xi \psi,
$$
where the loop $S_\xi$ commutes with $J_0$ by complex linearity.
This implies that the operator $D_\xi$ is complex linear, i.e.~$D_\xi(J_0\Psi)=J_0 D_\xi\Psi$.
Hence both the kernel and cokernel of $D_\xi$ carry a complex structure induced by $J_0$ and therefore a canonical orientation. Thus the determinant line of $D_\xi$ also carries a canonical orientation.
The operator $D_\epsilon$ is not complex linear, but it still carries a natural orientation (independent of $\gamma$).

It follows that the determinant line of $D_\Sigma|_{\gamma}$ carries a canonical orientation. In particular, it is independent of the chart $U_a$ and hence extends to a nowhere-vanishing section of the orientation line bundle $L$.
We conclude that the local coefficient system $\mathcal L_{\Sigma}$ is trivial.
\end{proof}

\noindent
{\bf Grading shift}
We have seen that the local Floer homology $HF^{loc}(H_\delta;R)$ is isomorphic to $H_*(\Sigma;R)$ up to a degree shift, which we will compute now.
This is also carried out in step 3 of the proof of Proposition~2.2 in \cite{CFHW}.
Instead of their method we find the shift by comparing the degrees of the elements corresponding to the minima of $h$.
Such a computation was done for the transverse Conley-Zehnder index in \cite[Chapter 2]{Bourgeois:thesis}.
See also \cite[Lemma 2.4]{vK:BW} for details.
To get the full index, we use the assumption that the Hessian of $H$ restricted to $\nu_W(\Sigma)$ is positive-definite.
See also \cite{Oancea:survey}, section 3.3, for this computation.
\end{proof}

\begin{remark}
If the boundary of $W$ is a prequantization bundle $(P,\alpha)$ over a symplectic manifold $(Q,\omega)$ with $c_1(Q)=c[\omega]$ for some $c\in \R$, then the assumption of complex linearity of the linearized Reeb flow is satisfied.
Namely, every periodic Reeb orbit of $(P,\alpha)$ is a fiber over a point $q\in Q$. Choose a unitary trivialization of $TQ$ near $q$.
Since the contact structure on $P$ is a horizontal lift of the tangent bundle to $Q$, we obtain a unitary trivialization of $\xi$ along the fiber over $q$. The linearized Reeb flow is the identity for this trivialization, so it is complex linear and $S\equiv 0$.

On the other hand, a hyperbolic periodic Reeb orbit $\Sigma$ in a contact $3$-manifold never satisfies this property, because the return map does not commute with any choice of complex structure $J$.
Still the determinant bundle $Det(\Sigma)$ will be orientable if $\Sigma$ is good by \cite{BO:SH_MB}, Lemma 4.29.
\end{remark}

For general Morse-Bott manifolds, many different local coefficient systems are possible. Let us describe one simple case, namely that $\Sigma$ is a bad Reeb orbit for the flow Hamiltonian of $H$.
Then by \cite{BO:SH_MB}, Lemma 4.29, the local coefficient system is non-trivial. There is only one such system on $S^1$. 
So if we use the isomorphism~\eqref{eq:local_Floer} we end up with a twisted Morse complex computing the local Floer homology,
$$
\Z \stackrel{\cdot 2}{\longrightarrow} \Z,
$$
and we obtain the homology
\[
HF^{loc}_{*+shift}(\Sigma,H_\delta,J;\Z)\cong
\begin{cases}
\Z_2 &  *=0 \\
0 & \text{otherwise.}
\end{cases}
\]

\subsection{Filtering by action}
Since the Floer differential is action decreasing, one can filter the Floer complex by action.
As in Lemma~\ref{lemma:orbits_curves}, choose a Morse function $h$ for the critical manifold $C$ of the Morse-Bott Hamiltonian $H$.
We will assume that all critical values of $H$ are negative and close to $0$, and that other $1$-periodic orbits have action larger than $0$.

First observe that a Floer trajectory of the autonomous Hamiltonian $H$ escaping a neighborhood $\nu(\Sigma)$ of a component $\Sigma$ of the Morse-Bott manifold of $1$-periodic orbits has energy bounded from below.
Since there are only finitely many Morse-Bott submanifolds, we find some minimal $\tilde \delta>0$ such that a Floer trajectory escaping a neighborhood of the critical manifold $C$ has energy larger than $\tilde \delta$.
Now perturb the Hamiltonian $H$ into a time-dependent Hamiltonian $H_\delta$ that is admissible for symplectic homology.
We will take $\delta$ smaller than $\tilde \delta/2$,
so any Floer trajectory escaping $\nu(\Sigma)$ has energy larger than $\delta$, the action difference between $1$-periodic orbits corresponding to the minimum and the maximum of $h$.

Choose a strictly increasing function $a_H:\Z \to \R$ such that the following holds.
\begin{itemize}
\item We set $a_H(0)=0$, and impose for a critical point $x$ of $H$ that $\mathcal A(x)\in ]a_H(-1),a_H(0)]$.

\item if $\Sigma$ is a connected component of the critical set $C$ with action $\mathcal A(\Sigma)$ in the interval $]a_H(p-1),a_H(p)]$, then the $1$-periodic orbits corresponding to the perturbed Hamiltonian $H_\delta$ also have action in this interval. 

We define 
$$
C(p):=\{ \text{connected component }\Sigma \text{ of }C~|~\mathcal A(\Sigma)\in ]a_H(p-1),a_H(p)]\}
.
$$

\item if $p>0$, and $\Sigma_1$ and $\Sigma_2$ are distinct components in $C(p)$, then there are no Floer trajectories between them.
\item $\lim_{p\to \infty}a_H(p)=\infty$.
\end{itemize}
To see that such a function exists, choose both the perturbation parameter $\delta$ and the differences $a_H(p)-a_H(p-1)$ sufficiently small.
As a Floer trajectory connecting different components must have energy larger than $\tilde \delta$, this ensures the second and third condition.

Now introduce a filtration on the whole Floer complex $CF_*(W,H_\delta)$,
$$
F_p CF_{q}(W,H_\delta)=\{ x\in CF_{q}(W,H_\delta)~|~\mathcal A(x)\leq a_H(p) \}
.
$$
This filtration exhausts the complex in finitely many steps by our assumptions on the Hamiltonian $H$ and the condition that $a_H$ goes to $\infty$. 
Let $p_{H}$ denote the minimal value of $p$ such that $F_p CF_q(W,H_\delta)=CF_q(W,H_\delta)$ for all $p>p_H$.
We consider the spectral sequence associated with this filtration where we use the conventions from~\cite[Section 5.4]{Weibel}.
The $E^0$-page of this spectral sequence is given by 
$$
E^0_{pq}=F_p CF_{p+q}(W,
H_\delta)/F_{p-1} CF_{p+q}(W,H_\delta)
$$
The differential on the $E^0$-page only counts Floer trajectories which decrease the action level less than $a_H(p)-a_H(p-1)$, which is small by our assumptions on the function $a_H$.

In particular, for $p>0$, all these Floer trajectories are counted by the local Floer homologies of the critical manifolds.
For $p=0$, the arguments from the proof of Proposition~\ref{prop:local_Floer} still apply and we have
\begin{equation}
\label{eq:hom_filling}
E^1_{0q}\cong H_{q+n}(W,\partial W;R).
\end{equation}
Hence the $E^1$-page is given by the homologies of the critical manifolds with appropriate degree shifts.
The higher differentials follow the familiar recipe from the spectral sequence of the filtration, and are hard to determine explicitly.

Finally, we observe that the filtration is bounded, so the spectral sequence converges by the classical convergence theorem \cite[Theorem 5.5.1, 1.]{Weibel}.
In order to keep track of the grading, we define the shift of a Morse-Bott submanifold $\Sigma$ by
$$
shift(\Sigma)=\mu_{RS}(\Sigma)-\frac{1}{2} \dim \Sigma /S^1,
$$
which is always an integer. We summarize.
\begin{lemma}
\label{lemma:SS_Floer_hom}
Suppose that $(W,\omega=d\lambda)$ is a Liouville domain with a Hamiltonian function $H$ satisfying the following.
\begin{itemize}
\item the symplectic triviality assumption (ST) holds and $c_1(W)=0$.
\item the Hamiltonian $H$ is autonomous, linear at infinity, and its slope is not the period of any periodic Reeb orbit of $\lambda_{\partial W}$.
\item the $1$-periodic orbits of $H$ are of Morse-Bott type.
\item the restriction of the Hessian of $H$ along each critical manifold $\Sigma$ to the Liouville direction is positive definite.
\end{itemize}
Choose a perturbation $H_\delta$ for which all $1$-periodic orbits are non-degenerate and such that its slope is the same as that of $H$.
Then there is a spectral sequence converging to $HF(W,H_\delta;R)$, whose $E^1$-page is given by
\[
E^1_{pq}(HF(W,H_\delta;R) \,)=
\begin{cases}
\bigoplus_{\Sigma \in C(p) } H_{p+q-shift(\Sigma)}(\Sigma;R\otimes_{\Z}\mathcal L_{\Sigma}) & 0<p<p_H
\\
H_{q+n}(W,\partial W;R) & p=0 \\
0 & p<0 \text{ or }p\geq p_H.
\end{cases}
\]
\end{lemma}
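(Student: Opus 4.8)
The plan is to assemble the statement from pieces already in place: the computation of the local Floer homology of a Morse-Bott critical manifold in Proposition~\ref{prop:local_Floer} (in its twisted form~\eqref{eq:local_Floer}) together with the action filtration constructed just above the statement. First I would fix the perturbation. Using Lemma~\ref{lemma:orbits_curves}, choose $\delta>0$ small enough that every $1$-periodic orbit of $H_\delta$ lying in a fixed neighborhood $U$ of the critical manifold $C$ is contained in an arbitrarily small neighborhood $\nu(\Sigma)$ of a component $\Sigma$, and that every Floer trajectory of $H_\delta$ staying in $U$ stays in $\nu(\Sigma)$; keep the slope of $H_\delta$ equal to that of $H$, so $HF(W,H_\delta;R)$ is defined and no new orbits appear near infinity. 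By the Morse-Bott assumption and the choice of $h$, all $1$-periodic orbits of $H_\delta$ are then non-degenerate and concentrated near $C$, corresponding to critical points of $h$.

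Next I would set up the action filtration exactly as in the subsection ``Filtering by action''. Pick a strictly increasing $a_H:\Z\to\R$ with $a_H(0)=0$, with the critical values of $H$ (hence the actions of the corresponding perturbed orbits) lying in $]a_H(-1),a_H(0)]$, and with each component $\Sigma$ of $C$ assigned to the level $C(p)$ determined by $\mathcal A(\Sigma)\in\,]a_H(p-1),a_H(p)]$. Choosing both $\delta$ and the gaps $a_H(p)-a_H(p-1)$ small relative to the energy threshold $\tilde\delta$ of a Floer trajectory escaping a neighborhood of $C$ guarantees three things: the perturbed orbits sit in the same action window as their critical set; for $p>0$ there are no Floer trajectories between distinct components of $C(p)$; and for $p>0$ every trajectory contributing to the $E^0$-differential is confined to a single $\nu(\Sigma)$. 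Set $F_pCF_q(W,H_\delta)=\{x\mid \mathcal A(x)\le a_H(p)\}$; since $H$ is linear at infinity with admissible slope and $a_H(p)\to\infty$, this filtration is bounded and exhausts the complex, with stabilization index $p_H$.

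Then I would read off the spectral sequence. Since the Floer differential is action-decreasing, the filtration yields a spectral sequence with $E^0_{pq}=F_pCF_{p+q}/F_{p-1}CF_{p+q}$, whose differential counts only trajectories dropping the action by less than $a_H(p)-a_H(p-1)$. By the confinement property, for $0<p<p_H$ this $E^0$-complex is the direct sum over $\Sigma\in C(p)$ of the local Floer complexes $CF^{loc}_*(\nu(\Sigma),H_\delta,J)$, so $E^1_{pq}=\bigoplus_{\Sigma\in C(p)}HF^{loc}_{p+q}(\nu(\Sigma),H_\delta,J;R)$; applying Proposition~\ref{prop:local_Floer} via~\eqref{eq:local_Floer} rewrites this as $\bigoplus_{\Sigma\in C(p)}H_{p+q-shift(\Sigma)}(\Sigma;R\otimes_\Z\mathcal L_\Sigma)$ with $shift(\Sigma)=\mu_{RS}(\Sigma)-\frac{1}{2}\dim\Sigma/S^1$. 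For $p=0$ the arguments from the proof of Proposition~\ref{prop:local_Floer} give $E^1_{0q}\cong H_{q+n}(W,\partial W;R)$, as recorded in~\eqref{eq:hom_filling}, and for $p<0$ or $p\ge p_H$ the group vanishes. Convergence to $HF(W,H_\delta;R)$ follows from boundedness of the filtration and the classical convergence theorem \cite[Theorem 5.5.1]{Weibel}.

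I expect the main obstacle to be bookkeeping rather than a new idea: checking that the choices of $\delta$ and of the step sizes $a_H(p)-a_H(p-1)$ can be arranged simultaneously below the finitely many relevant energy and action thresholds, and that the identification of the $E^0$-differential with the sum of local differentials is made with the correct coherent orientations, so that one genuinely lands in the twisted Morse complex rather than in an unidentified chain complex. The grading shift --- comparing the degree of the generator at a minimum of $h$ with its Conley-Zehnder index and incorporating the positive-definite Hessian in the Liouville direction --- is the other place requiring care, but this was already carried out in the proof of Proposition~\ref{prop:local_Floer}.
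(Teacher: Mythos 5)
Your proposal is correct and follows essentially the same route as the paper: the action filtration with the carefully chosen step function $a_H$, the confinement of $E^0$-differential trajectories to neighborhoods of single components so that the $E^1$-page is assembled from the local Floer homologies of Proposition~\ref{prop:local_Floer}, the identification~\eqref{eq:hom_filling} at $p=0$, and convergence from the boundedness of the filtration. The points you flag as requiring care (simultaneous smallness of $\delta$ and the action gaps, coherent orientations, the grading shift) are exactly the ones the paper delegates to Lemma~\ref{lemma:orbits_curves} and Proposition~\ref{prop:local_Floer}.
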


\begin{remark}
Note that the value of the function $a_H(p)$ is irrelevant for the above if $a_H(p)$ is larger than the maximal action of a $1$-periodic orbit of $H$.
We will use this observation when defining a spectral sequence for symplectic homology.
\end{remark}

For symplectic homology, we need to understand the continuation maps, which we shall do by choosing a suitable sequence of Hamiltonians.
\begin{theorem}
[Morse-Bott spectral sequence for symplectic homology of periodic flows]
\label{thm:SS_SH}
Let $(W,\omega=d\lambda)$ be a Liouville domain satisfying the following.
\begin{itemize}
\item the symplectic triviality assumption (ST) holds and $c_1(W)=0$.
\item The Reeb flow of $\partial W$ is periodic with periods $T_1,\ldots, T_k$, where $T_k$ is the common period, i.e.~the period of a principal orbit.
\item the linearized Reeb flow is complex linear.
\end{itemize}
Then there is a spectral sequence converging to $SH(W;R)$, whose $E^1$-page is given by
\begin{equation}
\label{eq:SS_SH_prf}
E^1_{pq}(SH)=
\begin{cases}
\bigoplus_{\Sigma \in C(p) } H_{p+q-shift(\Sigma)}(\Sigma;R) & p>0
\\
H_{q+n}(W,\partial W;R) & p=0 \\
0 & p<0.
\end{cases}
\end{equation}
Furthermore, there is a spectral sequence converging to $SH^+(W;R)$ with $E^1$-page
\begin{equation}
\label{eq:SS_SH_+}
E^1_{pq}(SH^+)=
\begin{cases}
\bigoplus_{\Sigma \in C(p) } H_{p+q-shift(\Sigma)}(\Sigma;R) & p>0
\\
0 & p\leq 0.
\end{cases}
\end{equation}
\end{theorem}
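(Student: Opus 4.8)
\emph{Overall strategy.} The plan is to deduce both spectral sequences from the single‑Hamiltonian statement of Lemma~\ref{lemma:SS_Floer_hom} by passing to a direct limit over a cofinal sequence of autonomous Hamiltonians, the only real work being to keep the continuation maps compatible with a single action filtration. Fix periods $s_1<s_2<\cdots\to\infty$ avoiding the action spectrum of $\lambda|_{\partial W}$, and for each $m$ take an autonomous $H^{(m)}$ on the completion $\bar W$ that is $C^2$‑small with nondegenerate critical points on $W$ minus a collar, radial and convex with slope $s_m$ on the collar, and linear with slope $s_m$ at infinity. Because the Reeb flow on $\partial W$ is periodic, the non‑constant $1$‑periodic orbits of $X_{H^{(m)}}$ form Morse--Bott families, one copy of the Morse--Bott submanifold $\Sigma_T\subset\partial W$ for each Reeb period $T\le s_m$ (including multiply covered orbits), placed at the radius where the radial profile has derivative $T$; these are exactly the $\Sigma\in C(p)$ with $p>0$. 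Convexity of the profile makes the Hessian of $H^{(m)}$ positive‑definite along each critical manifold in the Liouville direction, and (ST) together with $c_1(W)=0$ is assumed, so Lemma~\ref{lemma:SS_Floer_hom} applies. Shrinking the perturbation parameters if necessary, one chooses a \emph{single} strictly increasing $a\colon\Z\to\R$ with $a(0)=0$ and $a(p)\to\infty$ serving as ``$a_H$'' for every $H^{(m)}$ simultaneously (the remark after Lemma~\ref{lemma:SS_Floer_hom} says the exact value of $a(p)$ is irrelevant once it exceeds the top action at that level), so the filtration index $p$ acquires a uniform meaning across the whole sequence. Finally, by Lemma~\ref{lemma:local_coefficients_trivial} the hypothesis that the linearized Reeb flow is complex linear makes every local system $\mathcal L_{\Sigma}$ trivial, so the $E^1$‑page of the spectral sequence for $HF(W,H^{(m)}_{\delta_m};R)$ reads $\bigoplus_{\Sigma\in C(p)}H_{p+q-shift(\Sigma)}(\Sigma;R)$ for $0<p<p_{H^{(m)}}$, equals $H_{q+n}(W,\partial W;R)$ for $p=0$, and vanishes otherwise.

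\emph{Continuation maps.} For $s_m\le s_{m+1}$ pick a monotone homotopy from $H^{(m)}$ to $H^{(m+1)}$ whose $s$‑derivative is nonpositive on the compact region containing all $1$‑periodic orbits (it may be positive only near infinity, where there are no orbits and where the maximum principle confines continuation cylinders). The usual energy estimate then shows the induced continuation map does not increase the action of generators, hence is a filtered chain map for the common filtration $F_\bullet$ attached to $a$. It therefore induces a morphism of spectral sequences $E^{\bullet}(m)\to E^{\bullet}(m+1)$, and on the $E^1$‑page this morphism is the obvious stabilization map: the identity on every column $C(p)$ with $p<p_{H^{(m)}}$, since the Morse--Bott families and their local Floer homologies there agree verbatim for the two Hamiltonians, and zero into the newly appearing columns.

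\emph{Passing to the limit and the $+$‑part.} Taking $\varinjlim_m$ and using that $\varinjlim$ is exact, hence commutes with homology and with the formation of each $E^r$‑page, yields a spectral sequence $\varinjlim_m E^{\bullet}(m)$ abutting to $\varinjlim_m HF(W,H^{(m)}_{\delta_m};R)=SH(W;R)$. For fixed $(p,q)$ the groups $E^1_{pq}(m)$ are eventually constant because $p_{H^{(m)}}\to\infty$, so $\varinjlim_m E^1_{pq}(m)$ is precisely the right‑hand side of \eqref{eq:SS_SH_prf}; each $E^{\bullet}(m)$ has a bounded filtration and thus converges by \cite[Theorem~5.5.1]{Weibel}, a property inherited by the direct limit. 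This is the first spectral sequence. For $SH^+$, the constant $1$‑periodic orbits sit in filtration level $p=0$ and, the Floer differential being action‑decreasing, span a subcomplex; the quotient complex computes $SH^+(W;R)$ via the Viterbo sequence, and its induced filtration is $F_\bullet$ with the $p\le 0$ part discarded. Repeating the argument for this quotient filtration gives the spectral sequence with $E^1$‑page \eqref{eq:SS_SH_+}.

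\emph{Main obstacle.} The delicate point is not any single analytic estimate but the coherence of all choices at once: producing one action filtration adapted to the entire sequence of Hamiltonians, arranging the monotone homotopies so the continuation maps are genuinely filtered and reduce on $E^1$ to inclusions of the stably occurring columns (so that the direct limit of the $E^1$‑pages is literally \eqref{eq:SS_SH_prf}), and keeping the degree shifts $shift(\Sigma)=\mu_{RS}(\Sigma)-\tfrac12\dim\Sigma/S^1$ consistent for iterates and covers. Once this bookkeeping is set up, the remainder is formal homological algebra.
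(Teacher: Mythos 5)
Your proposal follows essentially the same route as the paper's proof: a cofinal sequence of autonomous Hamiltonians of increasing slope, Lemma~\ref{lemma:SS_Floer_hom} applied to each with the local systems trivialized via Lemma~\ref{lemma:local_coefficients_trivial}, action-filtered continuation maps inducing a directed system of spectral sequences, a direct limit identified with $SH(W;R)$ by exactness of $\varinjlim$ and the classical convergence theorem, and the $+$-part obtained by discarding the $p\le 0$ columns. The one piece of bookkeeping you leave implicit, and which the paper makes explicit, is that each $H_{T'}$ must literally coincide with $H_T$ on the region containing all $1$-periodic orbits of $H_T$ (so that the old Morse--Bott families keep their positions and actions and the continuation map is the identity on the old columns), and that the filtration function $a_{H_\infty}$ is built inductively so that newly appearing columns sit at filtration degree beyond $2p_{H_T}$ — a gap exceeding the page on which $E^r(H_T)$ has already degenerated — which is what guarantees compatibility with all higher differentials and that each fixed entry $E^1_{pq}$ stabilizes.
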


\begin{proof}
We construct a complete Liouville manifold $\bar W$ by attaching the positive end of a symplectization, so we have the decomposition $\bar W=W \cup_\partial P\times \R_{\geq 1}$, where $P\times \{1 \}$ denotes the boundary of $W$.
We take a sequence of Hamiltonians $\{ H_T: \bar W \to \R \}$ satisfying the following.
\begin{itemize}
\item $H_T|_{W_{}}$ is $C^2$-small
\item $H_T|_{P\times \R_{\geq 1} }=f_T(t)$, where $f_T$ is an increasing function that is linear at infinity with slope $(T\cdot T_k+\epsilon)$, and that has small slope near $P\times \{ 1\}$.
\item $H_T$ coincides with $H_{T-1}$ on $W_{}$ and on $P\times [1,t_{T-1}]$, where $t_{T-1}$ is such that $H_{T-1}|_{P\times [t_{T-1},\infty[ }$ is a linear function. 
\end{itemize}
Some functions in the sequence of Hamiltonians are sketched in Figure~\ref{fig:Hamiltonians}.
\begin{figure}[htp]
\def\svgwidth{0.85\textwidth}%
\begingroup\endlinechar=-1
\resizebox{0.85\textwidth}{!}{%
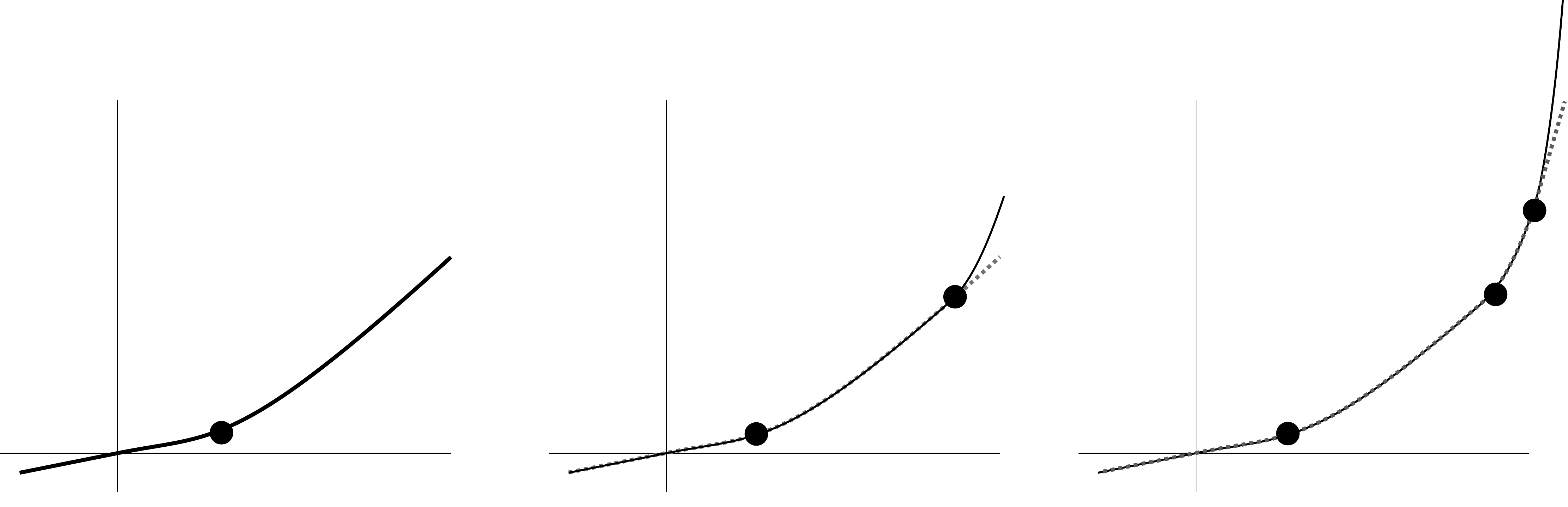%
}\endgroup
\caption{Sequence of Hamiltonians and the location of the Morse-Bott submanifolds}
\label{fig:Hamiltonians}
\end{figure}
For this sequence of Hamiltonians we consider the sequence of spectral sequences from Lemma~\ref{lemma:SS_Floer_hom}, which we denote by $E^r_{pq}(H_T,J_T)$.
Each of these spectral sequences converges to the Floer homology $HF(H_T,J_T)$.

Due to the special choice of Hamiltonians, we can choose a function $a_{H_\infty}$ by the following procedure.
Define $a_{H_1}$ following the procedure before Lemma~\ref{lemma:SS_Floer_hom}, 
We define $a_{H_{T}}$ inductively by the requirements.
\begin{enumerate}
\item Put $a_{H_{T}}(p):=a_{H_{T-1}}(p)$ for $p\leq p_{H_{T-1}}$, where $p_{H_{T-1}}$ is defined as before as the minimal value of $p$ such $F_pCF_*(H_{T-1})=CF_*(H_{T-1})$ for all $p\geq p_{H_{T-1}}$.
\item For $p> p_{H_{T-1}}$ we let $a_{H_T}$ increase so slowly that $F_pCF_*(H_{T})=CF_*(H_{T-1})$ for $p=p_{H_{T-1}},\ldots,2 p_{H_{T-1}}$.
In other words, the first new non-zero column, if any, of $E^0_{p*} (H_T,J_T)$ appears for filtration degree $p$ at least $2 p_{H_{T-1}}+1$.
\end{enumerate}
The function can be extended to $\Z$ using the procedure described before Lemma~\ref{lemma:SS_Floer_hom}.
As a result, all terms in the spectral sequence for $HF(H_T)$ also appear in the same place in the spectral sequences for $HF(H_{T'})$ with $T'>T$. 
Via the above construction we obtain a function $a_{H_\infty}$ by taking the stable value from first requirement.

\begin{figure}
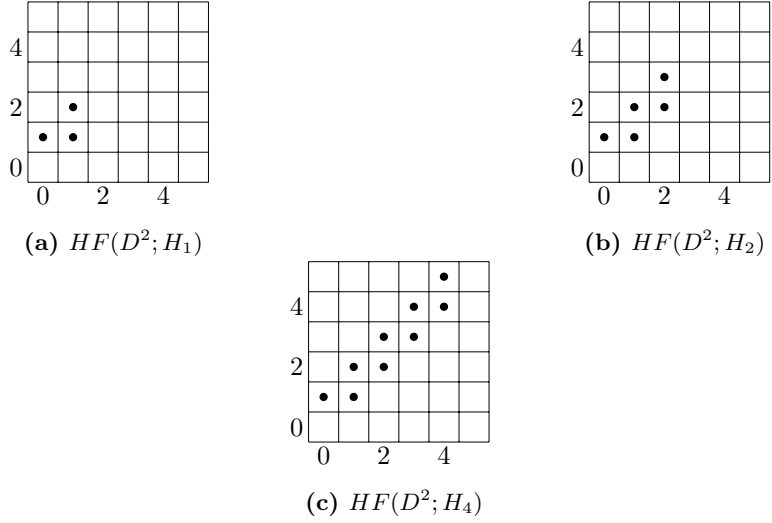

\centering

\begin{subfigure}{.5\textwidth}
  \centering
  \begin{sseq}{0...5}
{0...5}
\ssmoveto 0 1
\ssdropbull

\ssmoveto 1 1
\ssdropbull
\ssmove 0 1
\ssdropbull

\end{sseq}
  \caption{$HF(D^2;H_1)$}
  \label{fig:H1}
\end{subfigure}%
\begin{subfigure}{.5\textwidth}
  \centering
 \begin{sseq}{0...5}
{0...5}
\ssmoveto 0 1
\ssdropbull

\ssmoveto 1 1
\ssdropbull
\ssmove 0 1
\ssdropbull

\ssmoveto 2 2
\ssdropbull
\ssmove 0 1
\ssdropbull

\end{sseq}
  \caption{$HF(D^2;H_2)$}
  \label{fig:H2}
\end{subfigure}
\begin{subfigure}{.5\textwidth}
  \centering
 \begin{sseq}{0...5}
{0...5}
\ssmoveto 0 1
\ssdropbull

\ssmoveto 1 1
\ssdropbull
\ssmove 0 1
\ssdropbull

\ssmoveto 2 2
\ssdropbull
\ssmove 0 1
\ssdropbull

\ssmoveto 3 3
\ssdropbull
\ssmove 0 1
\ssdropbull

\ssmoveto 4 4
\ssdropbull
\ssmove 0 1
\ssdropbull
\end{sseq}
  \caption{$HF(D^2;H_4)$}
  \label{fig:H4}
\end{subfigure}
\caption{$E^1$-pages of Morse-Bott spectral sequences Hamiltonians $H_1$, $H_2$ and $H_4$ leading to a spectral sequence for $SH(D^2)$ (after reindexing to make smaller gaps between the columns)}
\label{fig:SS_SH}
\end{figure}

To construct the Morse-Bott spectral sequence for symplectic homology, we consider for each Hamiltonian $H_T$ the spectral sequence from Lemma~\ref{lemma:SS_Floer_hom}.
The local coefficients are trivial by Lemma~\ref{lemma:local_coefficients_trivial}.
We claim that for $T'>T$ the continuation map $c_{TT'}$ induces a morphism between spectral sequences
$$
c^r_{TT'}:E^r_{pq}(H_T)\longrightarrow E^r_{pq}(H_{T'}).
$$
To see this, first observe that the spectral sequence $E^r_{pq}(H_T)$ is bounded with maximal filtration degree $p_{H_T}$, so it converges not any later than on the $p_{H_T}$-th page.
By our choice of the function $a_{H_{\infty}}$, namely requirement (ii), the gap between the last non-zero column of $E^0_{pq}(H_T)$ and the first new column of $E^0_{pq}(H_{T'})$ is larger than $p_{H_T}$.
Then $c_{TT'}^r$ commutes with the differential for all $r$:
\begin{itemize}
\item for $p\leq p_{H_T}$ this is by construction: the Hamiltonians and complex structures coincide on the relevant sets, and the linear slope between the different regions imply that there are no Floer trajectories escaping the region.
\item for $p>p_{H_T}$, the continuation map $c_{TT'}^r$ vanishes, and the differential $d^{r,T}$ vanishes by requirement (ii) of the above choice for $a_{H_\infty}$.
\end{itemize}
Hence we obtain a directed system of spectral sequences, and we take the direct limit over the slopes $T$.
This gives the spectral sequence~\eqref{eq:SS_SH_prf}.
To see that this spectral sequence converges, we observe that the action filtration is bounded from below and exhausting, so by the classical convergence theorem \cite[Theorem 5.5.1, 2.]{Weibel}, this spectral sequence converges.
\end{proof}

\subsection{Spectral sequence for equivariant symplectic homology}
\label{sec:review_equi_SH}
In principle, the ideas here are very similar to the non-equivariant case, but we need a Morse-Bott setup that is ``Botter'' than before.

We first give a quick review of equivariant symplectic homology defined by Bourgeois-Oancea, \cite{BO:SH_HC}. We will use their notation, and refer to \cite{BO:SH_HC} for details.
Fix a Liouville domain $(W,\omega=d\lambda)$ which we complete by attaching the positive part of a symplectization to $\bar W$.
The idea is to mimic the Borel construction to obtain equivariant symplectic homology. We approximate $ES^1=\varinjlim_{N}S^{2N+1}$ by the non-contractible space $S^{2N+1}$.
The circle $S^1$ acts freely on $S^{2N+1}$, and for larger $N$ more and more homotopy groups vanish.

Consider the approximation of $\Lambda \bar W \times ES^1$ by $\Lambda \bar W \times S^{2N+1}$, which carries a diagonal circle action, namely $g\cdot(\gamma(\cdot),z)=(\gamma(\cdot+g),g\cdot z)$, where $g\cdot z$ is the usual circle action on the Hopf fibration.

In order to get an $S^1$-invariant functional we consider Hamiltonians defined on $H:S^1\times W \times S^{2N+1}\to \R$ that are invariant under the diagonal action on $S^1\times S^{2N+1}$, so $H(t+g,x,g\cdot z)=H(t,x,z)$.
Define the parametrized action functional
\[
\begin{split}
\mathcal A^N: \Lambda \bar W \times S^{2N+1} & \longrightarrow \R \\
(\gamma,z) & \longmapsto -\int_{S^1} \gamma^* \lambda
-\int_{0}^1 H(t,\gamma(t),z)dt.
\end{split}
\]
This is designed to be invariant under the above circle action on $\Lambda \bar W \times S^{2N+1}$.

The critical points of $\mathcal A^N$ are pairs $(\gamma,z_0)$, where
\begin{itemize}
\item $\gamma$ is a $1$-periodic orbit of $H(\cdot,\cdot,z_0)$
\item $\int_{S^1}\frac{\partial H}{\partial z}(t,\gamma(t),z_0)dt=0$.
\end{itemize}
Denote critical points of $\mathcal A^N$ by $\mathcal P(H)$.
Given such a critical point $p=(\gamma,z)$ we get its orbit under the circle action $S^1\cdot p$, which we denote by $S_p$.

To get a version of symplectic homology, we assume that the $S^1$-invariant Hamiltonian satisfies in addition that it is linear at infinity with slope independent of $z\in S^{2N+1}$.
Choose an almost complex structure $J$ with domain $S^1\times \bar W \times S^{2N+1}$ that is $S^1$-invariant, so $J(t+g,x,g\cdot z)=J(t,x,z)$.
This gives a family of $L^2$-metrics on $\Lambda \bar W$ parametrized by $z\in S^{2N+1}$, namely
$$
\langle X,Y \rangle_z:=\int_{S^1} \omega\left( X(t),J(t,\gamma(t),z)Y(t) \right) dt
$$
where $X,Y\in T_\gamma \Lambda \bar W=\Gamma(S^1,\gamma^* T\bar W)$.
Together with an $S^1$-invariant metric on $S^{2N+1}$ we get an $S^1$-invariant metric on $\Lambda \bar W\times S^{2N+1}$.
Take the $L^2$-gradient flow of $\mathcal A^N$ to obtain the \emph{parametrized Floer equations}
\begin{equation}
\label{eq:param_Floer}
\begin{split}
\partial_s u+J(t,u(s,t),z(s)\,)\left( \partial_t u-X^t_{H_{z(s)}}(u(s,t) \right)&=0, \\
\dot z(s)-\int_{S^1}{\vec \nabla}_z H(t,u(s,t),z(s)\, )dt&=0.
\end{split}
\end{equation}
where one requires $(u,z)$ to converge asymptotically to a circle of critical points of $\mathcal A^N$,
\[
\lim_{s\to -\infty}(u(s,t),z(s)\, )\in S^1 \cdot (\bar \gamma(t),\bar \lambda)
\text{ and }
\lim_{s\to \infty}(u(s,t),z(s)\, )\in S^1 \cdot (\ubar \gamma(t),\ubar \lambda).
\]
We write ${\vec \nabla}_z$ to indicate that we are taking the gradient with respect to the $z$-coordinates, and $X^t_{H_{z}}(x)$ denotes the Hamiltonian vector field of the Hamiltonian $H(\cdot,\cdot,z)$ at time $t$ and position $x$.
A $\bar{\phantom{S}}$ indicates an asymptote at the negative puncture, a $\ubar{\phantom{S}}$ indicates an asymptote at the positive puncture.
It is shown in \cite{BO:param}, Theorem A that this equation leads to a Fredholm problem for a generic choice of $S^1$-invariant Hamiltonian.
Furthermore for a suitable choice of Floer data $(H,J,g)$ the linearized operator is surjective leading to moduli spaces $\mathcal M(S_{\bar p},S_{\ubar p};H,J,g)$ that are smooth manifolds.
These moduli spaces carry a free circle action, and the quotient is a smooth manifold of dimension
$$
\dim \mathcal M_{S^1}(S_{\bar p},S_{\ubar p};H,J,g)=-\mu(\bar p)+\mu(\ubar p)-1.
$$

\subsubsection{Coherent orientations}
\label{sec:coherent_S1-equi}
As in the unparametrized Floer theory, these moduli spaces can be given orientations.
Since the asymptotes are a priori not fixed, the methods from \cite{FH} do not directly apply.
However, given an $S^1$-family of critical points $S_p$, where $p=(\gamma,z)$ we can choose trivializations of $\gamma^* T\bar W\oplus T_z S^{2N+1}$ that are invariant under the $S^1$-action on $S_p$.
In this way, the expression for the linearized operator, \cite{BO:param} Formula~2.11, only depends on the orbit $S_p$.
In particular, in this description the asymptotic operator is fixed, so \cite{FH} tells us then that the determinant bundle is trivial.
One then defines a system of coherent orientations on the moduli spaces $\mathcal M_{S^1}(S_{\bar p},S_{\ubar p};H,J,g)$ using the usual scheme.

Now the define the equivariant Floer complex
$$
SC_*^{S^1,N}(H,J,g)=\bigoplus_{S_p} \Z \langle S_p \rangle
$$
with differential
\begin{equation}
\label{eq:equi_Floer_diff}
\partial^{S^1} {\bar S}_p=
\sum_{  \underset{-\mu({\bar S}_p)+\mu({\ubar S}_p)=1}{ {\ubar S}\in Crit {\mathcal A}^N}    }  \sum_{u\in {\mathcal M}_{S^1}(S_{\bar p},S_{\ubar p};H,J,g) } \epsilon([u]) {\ubar S}_p.
\end{equation}
The sign $\epsilon([u])$ is obtained comparing the coherent orientation on $\mathcal M_{S^1}(S_{\bar p},S_{\ubar p};H,J,g)$ with orientation induced by the circle action.
It is proved in the work of Bourgeois-Oancea that $\partial^{S^1}$ defines a differential for generic Floer data $(H,J,g)$ satisfying the earlier assumptions.
Furthermore they also show that there are continuation maps for Hamiltonians with increasing slope, and a direct system of maps induced by the embeddings $S^{2N+1}\to S^{2N+3}$. 

The associated equivariant symplectic homology is defined as the direct limit
\begin{equation}
\label{eq:def_SHS1}
SH^{S^1}(W):=\varinjlim_N \varinjlim_T HF^{S^1,N}(H_{T,N},J_{T,N},g_{T,N} ).
\end{equation}
Here we first take the direct limit over all Hamiltonians that are linear at infinity with increasing slope $T$, and after that we take the direct limit over the the embeddings $S^{2N+1}\to S^{2N+3}$.
We will use the notation $(H_{T,N},J_{T,N},g_{T,N} )$ to indicate the Hamiltonian $H_{T,N}$ and the almost complex structure $J_{T,N}$ on $W$ as well as the metric $g_{T,N}$ on $S^{2N+1}$. We use $T$ to denote the slope of the Hamiltonian at infinity.

As in symplectic homology, one can consider Hamiltonians that are $C^2$-small on $W$ and this leads to a subcomplex $SC^{-,S^1,N}$ and a quotient complex $SC^{+,S^1,N}=SC^{S^1,N}/SC^{-,S^1,N}$.
The corresponding direct limits on homology level are $SH^{-,S^1}(W)$ which was shown to be isomorphic to $H^{S^1}(W,\partial W;R)$ and $SH^{+,S^1}(W)$.

\subsection{Local equivariant Floer homology}
\label{sec:local_equiv_Floer}
The above version of the equivariant Floer differential is action decreasing because it counts $L^2$-gradient flow trajectories.
We can hence filter the equivariant Floer complex by action.

In the following we will take an autonomous Hamiltonian whose $1$-periodic orbits form Morse-Bott manifolds in the old sense, which we denote by $\Sigma$.
We also get corresponding Morse-Bott manifolds of critical points in the equivariant setup, namely $\Sigma\times S^{2N+1}$.

We can again take suitable perturbations of the Hamiltonian to go to a non-degenerate setup.
Hence we need to define and study the local equivariant Floer homology of $\Sigma$. Fix a ring $R$.
\begin{itemize}
\item choose $N\in \N$: we obtain the above free circle action on $\Lambda \bar W\times S^{2N+1}$.
\item find perturbations $H_\delta$, $J_\delta$ and $g_\delta$.
\item define $CF^{loc,S^1,N}_*(\nu(\Sigma),H_\delta$, $J_\delta,g_\delta)$ as the $R$-module freely generated by $S^1$-orbits of critical points $S^1\cdot (\gamma_\delta,z_\delta)$ of the parametrized action functional.
By choosing the perturbations sufficiently small and applying an Arzela-Ascoli argument as in Lemma~\ref{lemma:orbits_curves} one can check that $(\gamma_\delta,z_\delta)$ converges to a point $(\gamma,z)$ in the Morse-Bott manifold $\Sigma\times S^{2N+1}$ as $\delta$ converges to $0$.

\item define the $N$-th approximation of the equivariant local ``moduli space'' as the set
\[
\begin{split}
\mathcal M_{S^1}^{\nu(\Sigma)}(S_{\bar p},S_{\ubar p};H_\delta,J_\delta,g_\delta):=
\{ (u,z) \in \mathcal M_{S^1}(S_{\bar p},S_{\ubar p};H_\delta,J_\delta,g_\delta)~|~\im(u) \subset \nu(\Sigma)
\}
.
\end{split}
\]
\item for the differential, count those parametrized Floer trajectories that are completely contained in $\nu(\Sigma)\times S^{2N+1}$ with sign $\epsilon([u])$ defined in Section~\ref{sec:coherent_S1-equi},
$$
\partial^{loc,S^1}{\bar S}_p=
\sum_{  \underset{-\mu({\bar S}_p)+\mu({\ubar S}_p)=1}{ {\ubar S}_p\in Crit {\mathcal A}^N}    }  \sum_{u\in {\mathcal M}_{S^1}^{\nu(\Sigma)}(S_{\bar p},S_{\ubar p};H,J,g) } \epsilon([u]) {\ubar S}_p.
$$

\item define the $N$-th approximation of the local equivariant Floer homology of $\nu(\Sigma)$ as
$$
HF^{loc,S^1,N}_*(\nu(\Sigma);H_\delta,J_\delta,g_\delta):=
H_*(CF^{loc,S^1,N}_*(\nu(\Sigma),H_\delta,J_\delta,g_\delta)\, ).
$$
As in Proposition~\ref{prop:local_Floer} there is an isomorphism.
\begin{equation}
\label{eq:equivariant_identification}
\begin{split}
HF^{loc,S^1,N}_{*+shift(\Sigma) }(\nu_{\Sigma_c}(\Sigma),H_\delta,J_\delta,g_\delta;R) 
\cong H^{Morse}_*(\Sigma\times_{S^1}S^{2N+1},\tilde K;\tilde {\mathcal L}_{\Sigma,N}\otimes_{\Z}R).
\end{split}
\end{equation}
This isomorphism comes from the unwrapping procedure in the first step of the proof of Proposition~\ref{prop:local_Floer} and Po\'zniak's correspondence applied to \eqref{eq:param_Floer}.

Here the local coefficients are obtained as follows.
As in the proof of Proposition~\ref{prop:local_Floer} we define a real line bundle $L=Det(\Sigma\times S^{2N+1})$ over the critical manifold $\Sigma\times S^{2N+1}$.

Fix a critical point $(\gamma,z)$, so $\gamma$ is a $1$-periodic orbit in $\Sigma$ and $z\in S^{2N+1}$.
As in Section~\ref{sec:coherent_S1-equi}, choose trivializations of $\gamma^* T\bar W\oplus T_z S^{2N+1}$ that are invariant under the circle action.
The expression for the linearized operator for the parametrized Floer equation, \cite{BO:param} Formula~2.11, only depends on the orbit of the $(\gamma,z)$ with such an $S^1$-family of trivializations, so the restriction of $L$ to $S_{(\gamma,z)}$ is trivial.

We hence get a well-defined line bundle $\tilde L$ on the quotient space $\Sigma \times_{S^1} S^{2N+1}$.
We denote the corresponding local coefficient system on $\Sigma \times_{S^1} S^{2N+1}$ by $\tilde{\mathcal L}_{\Sigma,N} $.

\end{itemize}

\begin{lemma}
\label{lemma:local_equi_Floer}
Suppose that one of the following conditions hold: $H^1(\Sigma;\Z_2)=0$, $\Sigma=S^1$ is a good Reeb orbit, or the linearized Reeb flow is complex linear.
Then
\[
HF^{loc,S^1,N}_{*+shift(\Sigma) }(\nu_{\Sigma_c}(\Sigma),H_\delta,J_\delta,g_\delta;R) 
\cong
H_*(\Sigma \times_{S^1} S^{2N+1};R).
\]
\end{lemma}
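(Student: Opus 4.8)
The plan is to reduce Lemma~\ref{lemma:local_equi_Floer} to the already-established identification~\eqref{eq:equivariant_identification} by showing that under each of the three hypotheses the local coefficient system $\tilde{\mathcal L}_{\Sigma,N}$ on $\Sigma\times_{S^1}S^{2N+1}$ is trivial, so that Morse homology with these local coefficients reduces to ordinary Morse (hence singular) homology of $\Sigma\times_{S^1}S^{2N+1}$. First I would recall from Section~\ref{sec:local_equiv_Floer} that $\tilde{\mathcal L}_{\Sigma,N}$ is the $O(1)$-local system associated with the determinant line bundle $\tilde L=Det(\Sigma\times S^{2N+1})/S^1$ descended to the Borel-type quotient, and that the line bundle $L=Det(\Sigma\times S^{2N+1})$ over $\Sigma\times S^{2N+1}$ is pulled back (fiberwise, up to the spinning/unwrapping change of trivialization) from the corresponding determinant bundle $Det(D_\Sigma)$ over $\Sigma$ used in Proposition~\ref{prop:local_Floer}. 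Concretely, the linearized operator for the parametrized Floer equation (\cite{BO:param}, Formula~2.11) splits, up to compact perturbation, as the unparametrized operator $D_\Sigma$ in the $\bar W$-directions and a (complex-linear, hence canonically oriented) operator in the $S^{2N+1}$-directions; so $Det$ of the parametrized operator is canonically isomorphic to the pullback of $Det(D_\Sigma)$ along the projection $\Sigma\times S^{2N+1}\to\Sigma$, and this isomorphism is $S^1$-equivariant for the diagonal action.

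Next I would run the trichotomy exactly as in Lemma~\ref{lemma:local_coefficients_trivial}. If $H^1(\Sigma;\Z_2)=0$: since $S^{2N+1}$ is $2N$-connected, the projection $\Sigma\times_{S^1}S^{2N+1}\to BS^1$ composed with... more simply, for $N$ large the fibration $\Sigma\to\Sigma\times_{S^1}S^{2N+1}\to \C P^N$ and $H^1(\C P^N;\Z_2)=0=H^1(\Sigma;\Z_2)$ give $H^1(\Sigma\times_{S^1}S^{2N+1};\Z_2)=0$ by the relevant piece of the Serre spectral sequence, so every $O(1)$-local system there is trivial. If $\Sigma=S^1$ is a good Reeb orbit: the bundle $L$ is pulled back from $Det(D_\Sigma)$ over $S^1$, which is trivial precisely because $\Sigma$ is good (\cite{JY_Zhao}, Proposition~6.2; see also \cite{BO:SH_MB}, Lemma~4.29), and a pullback of a trivial bundle is trivial, hence so is $\tilde L$ and $\tilde{\mathcal L}_{\Sigma,N}$. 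If the linearized Reeb flow is complex linear along every $\gamma\in\Sigma$: then by the third case of the proof of Lemma~\ref{lemma:local_coefficients_trivial}, $Det(D_\Sigma)$ carries a canonical, chart-independent orientation, so it admits a nowhere-vanishing section; pulling this section back to $\Sigma\times S^{2N+1}$ and noting it is $S^1$-invariant (the diagonal action only rotates the loop and the $z$-coordinate, and the canonical orientation was built from the complex structure $J_0$, which is preserved) gives a nowhere-vanishing section of $\tilde L$ over $\Sigma\times_{S^1}S^{2N+1}$. In all three cases $\tilde{\mathcal L}_{\Sigma,N}$ is the trivial $\Z$-local system, so $H^{Morse}_*(\Sigma\times_{S^1}S^{2N+1},\tilde K;\tilde{\mathcal L}_{\Sigma,N}\otimes_\Z R)\cong H_*(\Sigma\times_{S^1}S^{2N+1};R)$, and combining this with~\eqref{eq:equivariant_identification} yields the claim.

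The main obstacle I anticipate is the bookkeeping in the third case: verifying that the canonical orientation of $Det(D_\Sigma)$ really does pull back to an $S^1$-\emph{invariant} nowhere-vanishing section of the determinant bundle of the \emph{parametrized} operator, rather than just a section that is defined fiberwise over $S_{(\gamma,z)}$. This requires checking that the splitting of the parametrized linearized operator into a $\bar W$-part and an $S^{2N+1}$-part is compatible with the $S^1$-invariant trivializations of $\gamma^*T\bar W\oplus T_zS^{2N+1}$ chosen in Section~\ref{sec:coherent_S1-equi}, and that the spinning map used in Proposition~\ref{prop:local_Floer} intertwines these trivializations equivariantly; once this compatibility is in place the orientation descends to the quotient automatically. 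A secondary point is ensuring the canonical isomorphism $Det(D_{\hat\gamma})\cong Det(D_\Sigma)|_\gamma$ of Remark~\ref{rem:det_bundle=line_bundle} has an equivariant analogue in the parametrized setting, which follows from the same argument since the $S^{2N+1}$-directions contribute a complex (hence canonically trivialized in $O(1)$) factor to every determinant line involved.
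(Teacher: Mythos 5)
Your proposal is correct and follows essentially the same route as the paper: reduce the lemma to triviality of the local system $\tilde{\mathcal L}_{\Sigma,N}$ and then handle the three cases via triviality of the determinant line bundle, quoting \cite{BO:SH_MB}, Lemma 4.29 for good orbits and the canonical complex orientation in the complex-linear case. The equivariance issue you flag as the main obstacle in the third case is exactly what the paper disposes of with Lemma~\ref{lemma:Det_bundle_relation}: since the parametrized linearized operator depends only on the orbit $S_{(\gamma,z)}$ in the chosen $S^1$-invariant trivializations, any nowhere-vanishing section of $L$ may be averaged to an $S^1$-invariant one and hence descends, so triviality upstairs and downstairs are equivalent.
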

We only need to show that the local coefficient system $\tilde{\mathcal L}_{\Sigma,N} $ in Equation~\eqref{eq:equivariant_identification} is trivial.
For this, we have a simple lemma.
\begin{lemma}
\label{lemma:Det_bundle_relation}
The real line bundle $\tilde L \to \Sigma \times_{S^1} S^{2N+1}$ is trivial if and only if $L\to \Sigma\times S^{2N+1}$ is trivial.
\end{lemma}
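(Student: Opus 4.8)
The plan is to reduce the whole statement to a fact about the map induced on first $\Z_2$-cohomology by the quotient projection. Write $q\colon \Sigma\times S^{2N+1}\to \Sigma\times_{S^1}S^{2N+1}$ for the quotient by the free diagonal circle action (free because the Hopf action on $S^{2N+1}$ is free), so that $q$ is a principal $S^1$-bundle and $\Sigma\times_{S^1}S^{2N+1}$ is a closed smooth manifold. By the construction carried out just before the statement — using $S^1$-invariant trivializations, so that the determinant line depends only on the $S^1$-orbit — the bundle $L$ is $S^1$-equivariant and descends to $\tilde L$ with $L\cong q^*\tilde L$. One implication is then immediate: if $\tilde L$ is trivial, so is its pullback $L$.

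For the converse I would invoke the classification of real line bundles over a paracompact space $X$ by the first Stiefel--Whitney class, i.e.\ by $H^1(X;\Z_2)\cong[X,\R\P^\infty]$; a real line bundle is trivial precisely when its $w_1$ vanishes, and $w_1$ is natural, so $w_1(L)=q^*w_1(\tilde L)$ in $H^1(\Sigma\times S^{2N+1};\Z_2)$. Hence it suffices to show that $q^*\colon H^1(\Sigma\times_{S^1}S^{2N+1};\Z_2)\to H^1(\Sigma\times S^{2N+1};\Z_2)$ is injective, for then $w_1(L)=0$ forces $w_1(\tilde L)=0$. For this I would use the Gysin exact sequence of the circle bundle $q$ with $\Z_2$-coefficients — available without any orientability hypothesis, since the mod-$2$ Thom class always exists — whose relevant segment is
\[
H^{-1}(\Sigma\times_{S^1}S^{2N+1};\Z_2)\xrightarrow{\;\cup e\;}H^{1}(\Sigma\times_{S^1}S^{2N+1};\Z_2)\xrightarrow{\;q^*\;}H^{1}(\Sigma\times S^{2N+1};\Z_2).
\]
Since the leftmost group vanishes, exactness gives that $q^*$ is injective in degree one, which is exactly what is needed. (The five-term exact sequence of the Serre spectral sequence of the fibration $S^1\hookrightarrow \Sigma\times S^{2N+1}\xrightarrow{q}\Sigma\times_{S^1}S^{2N+1}$ yields the same injectivity and could be used instead.)

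There is no genuinely hard step here; the argument is formal once one grants that $L$ descends. The two points that deserve care are: first, that $\tilde L$ is well defined with $q^*\tilde L\cong L$, which is precisely the content of the $S^1$-equivariance discussion preceding the lemma and should simply be cited; and second, that one must stay with $\Z_2$-coefficients throughout, since $L$ is only a priori a \emph{real} line bundle, classified by $w_1$, and it is exactly over $\Z_2$ that the Gysin sequence for the circle bundle $q$ is available unconditionally.
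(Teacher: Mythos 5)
Your proposal is correct, and for the nontrivial direction it takes a genuinely different route from the paper. The paper argues directly with sections: it uses the fact that the linearized operator for the parametrized Floer equation depends only on the orbit $S_{(\gamma,z)}$, so that $L$ is literally constant along $S^1$-orbits; a nowhere-vanishing section of $L$ can then be replaced by an $S^1$-invariant one (a nowhere-vanishing section cannot change sign along a connected orbit inside a fixed line) and pushed down to a nowhere-vanishing section of $\tilde L$, with the converse obtained by pulling back. You instead treat the identification $L\cong q^*\tilde L$ as the only input and finish with characteristic classes: real line bundles are classified by $w_1$, and the $\Z_2$-Gysin sequence of the principal circle bundle $q$ shows $q^*$ is injective on $H^1(\cdot;\Z_2)$, so $w_1(L)=q^*w_1(\tilde L)=0$ forces $w_1(\tilde L)=0$. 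Both arguments are sound. Yours is more formal and more general — it proves that for \emph{any} principal $S^1$-bundle a real line bundle on the base is trivial iff its pullback is, without using anything about determinant lines — whereas the paper's is more elementary (no classifying spaces or Gysin sequences) but leans on the specific geometric fact that the determinant line is constant along orbits, which is also exactly what justifies the existence of the descended bundle $\tilde L$ in the first place. The one point you should make explicit if writing this up is that the descent statement $q^*\tilde L\cong L$ is precisely the content of the $S^1$-invariant-trivialization discussion preceding the lemma; you do flag this, so the argument is complete.
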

\begin{proof}
To see why this is true, suppose that $\sigma$ is a nowhere vanishing section of $L$.
Because the expression for the linearized operator for the parametrized Floer equation only depends on the orbit $S_{(\gamma,z)}$ by the above, we can assume that $\sigma$ is $S^1$-invariant, and thus we obtain a nowhere vanishing section $\tilde \sigma$ of $\tilde L$ by putting $\tilde \sigma([v,z])=\sigma(v,z)$ which is then well-defined.
The converse is obtained by the same observation.
\end{proof}

To prove Lemma~\ref{lemma:local_equi_Floer} there are three cases to consider.
The first case is obvious.
In the last case the linearized Reeb flow is complex linear with respect to some unitary trivialization, and we can argue as in the proof of Lemma~\ref{lemma:local_coefficients_trivial}, to show that $L=Det(\Sigma \times S^{2N+1})$ is trivial. By Lemma~\ref{lemma:Det_bundle_relation}, $\tilde{\mathcal L}_{\Sigma,N}$ is then trivial.
We explain the second case below with a computation.

First we make a couple of observations to clarify the situation when the determinant bundles are non-trivial.
We will need this for the Appendix C.
First of all, a circle fiber of the circle bundle $\Sigma \times S^{2N+1}\to \Sigma \times_{S^1} S^{2N+1}$ is not necessarily homotopic to a \emph{simple} circle fiber of $\Sigma$ (seen as a periodic orbit).
We explain this in the case when $\Sigma$ is a non-degenerate Reeb orbit that is a $k$-fold cover of a simple Reeb orbit $\Sigma'$.
In that case the $S^1$-action on $\Sigma\cong S^1$ is given by $(g,\phi)\mapsto g^k \phi$, so the space $\Sigma\times_{S^1} S^{2N+1}$ is by defined the quotient space
$$
\Sigma \times S^{2N+1}/(\phi,z)\sim (g^k\phi,z\cdot g).
$$
We identify this manifold with the lens space $L^{2N+1}(k)$ by the associated bundle construction for the principal circle bundle $S^{2N+1}$.

If $\Sigma$ is good, we claim that the local coefficient system is trivial.
Indeed, along a loop of the form $S^1\times \{ z \}$ the bundle $L$ is trivial by \cite{BO:SH_MB}, Lemma 4.29.
Hence $L$ is trivial over $\Sigma\times S^{2N+1}$, so by Lemma~\ref{lemma:Det_bundle_relation} the local coefficient system $\tilde{\mathcal L}_{\Sigma,N}$ is trivial, which completes the proof of Lemma~\ref{lemma:local_equi_Floer}.
We can compute the Morse homology with the cell complex
$$
\underset{*=2N+1}{\Z}  \stackrel{\cdot 0}{\longrightarrow} \underset{*=2N}{\Z} \stackrel{\cdot k}{\longrightarrow}
\ldots
\underset{*=2}{\Z}  \stackrel{\cdot k}{\longrightarrow} \underset{*=1}{\Z} \stackrel{\cdot 0}{\longrightarrow} \underset{*=0}{\Z}.
$$
whose homology is given by
\begin{equation}
\label{eq:homology_lens_space}
H_*(S^1\times_{S^1} S^{2N+1};\Z)\cong H_*(L^{2N+1}(k);\Z)
\cong
\begin{cases}
\Z & *=0,2N+1\\
\Z_k & * \text{ odd}\\
0 & \text{otherwise.}
\end{cases}
\end{equation}

If $\Sigma$ is bad, then $k$ is even and we claim that the local coefficient system is non-trivial.
Again, consider a loop of the form $S^1\times \{ z \}$.
The bundle $L$ along this loop is non-trivial by \cite{BO:SH_MB}, Lemma 4.29.
So the above lemma tells us that $\tilde{\mathcal L}_{\Sigma,N}$ is non-trivial. 
There is only one such non-trivial local coefficient system as $H^1(L^{2N+1}(k);\Z_2)\cong \Z_2$.
We use cellular homology with local coefficients to compute the homology.
This yields the complex.
$$
\underset{*=2N+1}{\Z}  \stackrel{\cdot k}{\longrightarrow} \underset{*=2N}{\Z} \stackrel{\cdot 0}{\longrightarrow}
\ldots
\underset{*=2}{\Z}  \stackrel{\cdot 0}{\longrightarrow} \underset{*=1}{\Z} \stackrel{\cdot k}{\longrightarrow} \underset{*=0}{\Z}
$$
and find the homology
\[
H_*(S^1\times_{S^1} S^{2N+1};\tilde{\mathcal L}_{\Sigma,N} )\cong H_*(L^{2N+1}(k);\tilde{\mathcal L}_{\Sigma,N} )
\cong
\begin{cases}
\Z_k & *=0,2,\ldots,2N\\
0 & \text{otherwise}.
\end{cases}
\]
We note that bad orbits do not give a contribution to local equivariant Floer homology if we use $\Q$-coefficients. See \cite{Gutt:thesis}, Corollary~2.2.5 and the discussion before that for another argument.
To make the connection with equivariant homology, observe that the Gysin sequence for circle bundles implies that for $k<N$
\begin{equation}
\label{eq:equivariant_hom_approx}
H_k(\Sigma\times_{S^1} S^{2N+1};R) \cong H_k^{S^1}(\Sigma;R).
\end{equation}

With the identifications from~\eqref{eq:equivariant_identification}, we obtain the following analogue of Lemma~\ref{lemma:SS_Floer_hom}.
\begin{lemma}
\label{lemma:SS_equi_Floer_hom}
Assume that the conditions of Lemma~\ref{lemma:SS_Floer_hom} hold and replace the pair $(H,J)$ by $(H_{T,N},J_{T,N},g_{T,N})$ defined on $S^1\times \bar W \times S^{2N+1}$.
Then there is a spectral sequence converging to $HF^{S^1,N}(W,H_{T,N}^\delta,J_{T,N}^\delta,g_{T,N}^\delta;R)$ whose $E^1$-page is given by
\[
E^1_{pq}(HF^{S^1,N} \,)=
\begin{cases}
\bigoplus_{\Sigma \in C(p) } H_{p+q-shift(\Sigma)}(\Sigma\times_{S^1} S^{2N+1};R\otimes_{\Z} \tilde{\mathcal L}_{\Sigma,N} ) & 0<p<p_H
\\
H_{q+n}( (W,\partial W)\times_{S^1} S^{2N+1};R) & p=0 \\
0 & p<0 \text{ or }p\geq p_H.
\end{cases}
\]
Similarly, there is a spectral sequence converging to the $+$-part of equivariant Floer homology, whose $E^1$-page is given by
\begin{equation}
\label{eq:SS_+_equi_Floer}
E^1_{pq}(HF^{+,S^1,N} \,)=
\begin{cases}
\bigoplus_{\Sigma \in C(p) } H_{p+q-shift(\Sigma)}^{S^1}(\Sigma\times S^{2N+1};R\otimes_{\Z} \tilde{\mathcal L}_{\Sigma,N} ) & 0<p<p_H
\\
0 & p\geq 0.
\end{cases}
\end{equation}
\end{lemma}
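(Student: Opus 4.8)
The statement is the equivariant, finite-dimensional-approximation analogue of Lemma~\ref{lemma:SS_Floer_hom}, and the plan is to run the same action-filtration argument on the parametrized Floer complex $SC^{S^1,N}_*(W,H_{T,N}^\delta,J_{T,N}^\delta,g_{T,N}^\delta)$ under the hypotheses of Lemma~\ref{lemma:SS_Floer_hom}. Since the parametrized Floer differential \eqref{eq:equi_Floer_diff} counts $L^2$-gradient flow lines of the parametrized action functional $\mathcal A^N$, it is action decreasing, so any strictly increasing $a_H:\Z\to\R$ gives a filtration $F_pSC^{S^1,N}_q=\{S_p:\mathcal A^N(S_p)\le a_H(p)\}$. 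First I would fix the perturbation parameter $\delta$ and the function $a_H$ exactly as in the discussion preceding Lemma~\ref{lemma:SS_Floer_hom}: small enough that each connected component $\Sigma$ of the (old-sense) Morse–Bott critical manifold of $H$ — hence each $\Sigma\times S^{2N+1}$, which is the parametrized critical manifold — occupies its own filtration step $C(p)$, that distinct components at the same level have no Floer trajectories between them, and that any parametrized Floer trajectory leaving $\nu(\Sigma)\times S^{2N+1}$ has energy exceeding the action gap $a_H(p)-a_H(p-1)$. This makes the filtration bounded and exhausting, with stabilization index $p_H$.

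Next I would identify the $E^1$-page. For $p>0$ the $E^0$-differential only counts trajectories staying inside a single $\nu(\Sigma)\times S^{2N+1}$, so $E^1_{pq}\cong HF^{loc,S^1,N}_{p+q}(\nu(\Sigma),H_\delta,J_\delta,g_\delta;R)$; by the identification \eqref{eq:equivariant_identification} this equals $H^{Morse}_{p+q-shift(\Sigma)}(\Sigma\times_{S^1}S^{2N+1},\tilde K;\tilde{\mathcal L}_{\Sigma,N}\otimes_\Z R)$, i.e. the homology of $\Sigma\times_{S^1}S^{2N+1}$ with coefficients in $R\otimes_\Z\tilde{\mathcal L}_{\Sigma,N}$, which is the claimed entry. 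For $p=0$ the relevant piece is the $C^2$-small part of $H$ on $W\times S^{2N+1}$; a parametrized version of the computation in Proposition~\ref{prop:local_Floer}, using that $S^1$ acts freely on $S^{2N+1}$ so that the Borel quotient is a genuine manifold and Po\'zniak's correspondence can be applied over it, yields $E^1_{0q}\cong H_{q+n}((W,\partial W)\times_{S^1}S^{2N+1};R)$, the parametrized analogue of \eqref{eq:hom_filling}. For $p<0$ and $p\ge p_H$ the associated graded vanishes by the choice of $a_H$. Finally, boundedness of the filtration gives convergence to $HF^{S^1,N}(W,H_{T,N}^\delta,J_{T,N}^\delta,g_{T,N}^\delta;R)$ by the classical convergence theorem \cite[Theorem~5.5.1]{Weibel}.

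For the $+$-part I would pass to the quotient complex $SC^{+,S^1,N}=SC^{S^1,N}/SC^{-,S^1,N}$, where $SC^{-,S^1,N}$ is generated by the critical orbits lying over $W$ (those coming from the $C^2$-small part). The action filtration descends to the quotient, killing the $p\le 0$ part and leaving exactly the columns $p>0$ with the same $E^1$-entries as above; these may then be rewritten in the form \eqref{eq:SS_+_equi_Floer} using \eqref{eq:equivariant_hom_approx}, i.e.\ $H_k(\Sigma\times_{S^1}S^{2N+1};R)\cong H^{S^1}_k(\Sigma;R)$ for $k<N$. Convergence is again immediate from boundedness of the induced filtration.

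The main obstacle is not the homological bookkeeping but the careful transcription of Proposition~\ref{prop:local_Floer} — the unwrapping/spinning step, the Fredholm and regularity theory of the parametrized moduli spaces from \cite{BO:param}, and the compatibility of the coherent orientations of Section~\ref{sec:coherent_S1-equi} with the determinant bundles defining $\tilde{\mathcal L}_{\Sigma,N}$ — to the ``Botter'' equivariant setting, so as to get the isomorphism \eqref{eq:equivariant_identification} with the correct degree shift $shift(\Sigma)$. All the ingredients (the action-gap estimate, the local equivariant Floer homology, the determinant-bundle analysis) are already in place in the preceding sections, so the work is in assembling them rather than in proving anything genuinely new; I would present this step by pointing to the non-equivariant proof and indicating the required modifications.
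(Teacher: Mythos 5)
Your proposal is correct and follows essentially the same route as the paper: the paper's proof of this lemma consists of the single remark that it follows the proof of Lemma~\ref{lemma:SS_Floer_hom} verbatim once one substitutes the identification of local equivariant Floer homology from Equation~\eqref{eq:equivariant_identification}, which is exactly the action-filtration argument you spell out. Your additional details (the choice of $a_H$, the treatment of $p=0$, the quotient complex for the $+$-part, and convergence from boundedness) are the intended, if unwritten, content of that remark.
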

\begin{proof}
The proof follows the one of Lemma~\ref{lemma:SS_Floer_hom} verbatim if one use the identifications from Equation~\eqref{eq:equivariant_identification}.
\end{proof}

\begin{theorem}[Morse-Bott spectral sequence for periodic flows]
\label{thm:SS_equivariant_homology}
Let $(W,\omega=d\lambda)$ be a Liouville domain satisfying the following.
\begin{itemize}
\item the symplectic triviality assumption (ST) holds and $c_1(W)=0$.
\item The Reeb flow of $W$ is periodic with minimal periods $T_1,\ldots, T_k$, where $T_k$ is the common period, i.e.~the period of a principal orbit.
\item The linearized Reeb flow is complex linear with respect to some unitary trivialization of the contact structure.
\end{itemize}
Then there is a spectral sequence converging to $SH^{S^1}(W;R)$, whose $E^1$-page is given by
\[
E^1_{pq}(SH^{S^1})=
\begin{cases}
\bigoplus_{\Sigma \in C(p) } H_{p+q-shift(\Sigma)}^{S^1}(\Sigma;R) & p>0 \\
H_{q+n}^{S^1}(W,\partial W;R) & p=0 \\
0 & p<0.
\end{cases}
\]
Furthermore, there is also a spectral sequence converging to $SH^{+,S^1}(W;R)$. Its $E^1$-page is given by
\begin{equation}
\label{eq:MB_SS_SH+S1}
E^1_{pq}(SH^{+,S^1})=
\begin{cases}
\bigoplus_{\Sigma \in C(p) } H_{p+q-shift(\Sigma)}^{S^1}(\Sigma;R) & p>0 \\
0 & p\leq 0.
\end{cases}
\end{equation}
If in addition, $\pi_1(W)=0$ and $\pi_1(\partial W)=0$, then the mean Euler characteristic of $SH^{+,S^1}$ is an invariant of the contact manifold $(\partial W,\lambda|_{\partial W})$ and it can be computed with formula~\eqref{eq:MEC_general}.
\end{theorem}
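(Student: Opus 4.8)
The plan is to upgrade the non-equivariant construction of Theorem~\ref{thm:SS_SH} to the $S^1$-equivariant setting by working with the finite-dimensional approximations $S^{2N+1}$ of $ES^1$ and then passing to a double direct limit. Fix $N\in\N$ and take a sequence of $S^1$-invariant Floer data $(H_{T,N},J_{T,N},g_{T,N})$ exactly as in the proof of Theorem~\ref{thm:SS_SH}: each $H_{T,N}$ is $C^2$-small on $W$, linear at infinity with slope $T\cdot T_k+\epsilon$, and coincides with the previous member of the sequence on a large region. For every $T$, Lemma~\ref{lemma:SS_equi_Floer_hom} produces a spectral sequence converging to $HF^{S^1,N}(W,H_{T,N}^\delta,J_{T,N}^\delta,g_{T,N}^\delta;R)$ whose positive-filtration $E^1$-terms are $\bigoplus_{\Sigma\in C(p)}H_{p+q-shift(\Sigma)}(\Sigma\times_{S^1}S^{2N+1};R\otimes\tilde{\mathcal L}_{\Sigma,N})$; by Lemma~\ref{lemma:local_equi_Floer} the complex-linearity hypothesis forces all the local systems $\tilde{\mathcal L}_{\Sigma,N}$ to be trivial. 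Choosing the action filtration function $a_{H_\infty}$ by the inductive recipe from the proof of Theorem~\ref{thm:SS_SH} makes the continuation maps into morphisms of spectral sequences, so $\varinjlim_T$ yields a spectral sequence converging to the $N$-th approximation $\varinjlim_T HF^{S^1,N}$ of equivariant symplectic homology.

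Next I would take $\varinjlim_N$ along the inclusions $S^{2N+1}\hookrightarrow S^{2N+3}$, which are compatible with all the Floer data, continuation maps and action filtrations, and hence induce morphisms of spectral sequences. Since direct limits are exact they commute with homology and with the formation of the spectral sequences, so the directed system assembles to a single spectral sequence converging to $\varinjlim_N\varinjlim_T HF^{S^1,N}=SH^{S^1}(W;R)$ by~\eqref{eq:def_SHS1}. On the $E^1$-page the Gysin identification~\eqref{eq:equivariant_hom_approx} shows that in each fixed bidegree the term $H_{p+q-shift(\Sigma)}(\Sigma\times_{S^1}S^{2N+1};R)$ stabilizes to $H^{S^1}_{p+q-shift(\Sigma)}(\Sigma;R)$, and $H_{q+n}((W,\partial W)\times_{S^1}S^{2N+1};R)$ to $H^{S^1}_{q+n}(W,\partial W;R)$, which gives the asserted pages. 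Convergence follows from the classical theorem \cite[Theorem 5.5.1]{Weibel}, the action filtration being bounded below at $p=0$ and exhausting, exactly as in Theorem~\ref{thm:SS_SH}. Replacing the full complexes $SC^{S^1,N}$ by the quotient complexes $SC^{+,S^1,N}$, which simply discards the columns with $p\le0$, gives the spectral sequence~\eqref{eq:MB_SS_SH+S1} converging to $SH^{+,S^1}(W;R)$. The delicate point here is that $\varinjlim_N$ of a directed system of convergent spectral sequences again converges, to the direct limit of the abutments; the boundedness of the filtration together with the degreewise stabilization~\eqref{eq:equivariant_hom_approx} are exactly what makes this work, and I expect this to be the main technical hurdle.

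For the final assertion, add the hypotheses $\pi_1(W)=0$, $\pi_1(\partial W)=0$, and (implicitly, so that the Betti numbers $sb_i$ are bounded and~\eqref{eq:MEC_general} makes sense) $\mu_P\ne0$. Since the Reeb flow is periodic with mean index $\mu_P\ne0$ on a principal orbit, Lemma~\ref{lemma:periodic_flow_convenient} gives convenient dynamics on $(\partial W,\ker\lambda|_{\partial W})$, whereupon Lemma~\ref{lemma:mec_invariant} shows that $\chi_m(W)$ depends only on the contact structure on $\partial W$. To evaluate it I would use that the mean Euler characteristic is unchanged under passage from the Floer complex to the $E^1$-page~\eqref{eq:MB_SS_SH+S1}: the column attached to a Morse-Bott orbit space $\Sigma$ of return time $T$ contributes the signed count $(-1)^{shift(\Sigma)}\chi^{S^1}(\Sigma)$, with $shift(\Sigma)=\mu_{RS}(\Sigma)-\frac{1}{2}\dim(\Sigma/S^1)$, and index-positivity or index-negativity (a consequence of $\mu_P\ne0$, cf. Proposition~\ref{prop:maslov_principal}) makes the pattern of columns periodic in the filtration direction up to a total-degree shift by $\mu_P$, each orbit space $\Sigma_{T_i}$ occurring with frequency $\phi_{T_i;T_{i+1},\ldots,T_k}$ per period. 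Summing these signed contributions over one period and dividing by $|\mu_P|$ produces~\eqref{eq:MEC_general}; this is precisely the content of Proposition~\ref{prop:mean_euler_S^1-orbibundle}, and the remaining verification that the period and the frequencies are as claimed is a bookkeeping exercise with the divisibility relations among $T_1,\ldots,T_k$.
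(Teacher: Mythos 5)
Your proposal follows essentially the same route as the paper's own proof: both reduce to Lemma~\ref{lemma:SS_equi_Floer_hom} for the finite-dimensional approximations, invoke Lemma~\ref{lemma:local_equi_Floer} to trivialize the local systems via complex linearity, form the directed system of spectral sequences with the special choice of Hamiltonians from Theorem~\ref{thm:SS_SH}, and pass to the double direct limit over $T$ and $N$ using the Gysin stabilization $H^{S^1}_k(\Sigma;R)\cong H_k(\Sigma\times_{S^1}S^{2N+1};R)$, with the final assertion handled by Lemmas~\ref{lemma:periodic_flow_convenient} and~\ref{lemma:mec_invariant} together with the counting argument of Proposition~\ref{prop:mean_euler_S^1-orbibundle}. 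Your explicit remark that $\mu_P\neq 0$ is implicitly needed for formula~\eqref{eq:MEC_general} is a correct and worthwhile clarification, not a deviation.
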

We point out that a similar spectral sequence for a Liouville domain with non-degenerate periodic Reeb orbits on the boundary was found by Gutt, \cite{Gutt:thesis,Gutt:invariant}.

\begin{proof}
The arguments are similar to those of the proof of Theorem~\ref{thm:SS_SH}.
Equivariant symplectic homology is defined by taking two direct limits, first over the slopes $T$ and then over the dimension of the spheres $S^{2N+1}$, i.e.~
$$
SH^{S^1}(W)=\varinjlim_N \varinjlim_T HF^{S^1,N}(H_{T,N},J_{T,N},g_{T,N} ).
$$
For the equivariant Floer homology groups $HF^{S^1,N}(H_{T,N},J_{T,N},g_{T,N} )$ we have the spectral sequence from Lemma~\ref{lemma:SS_equi_Floer_hom}.
The local coefficients are trivial by Lemma~\ref{lemma:local_equi_Floer}.
We can argue as in the proof of Theorem~\ref{thm:SS_SH} with a special choice of Hamiltonians to obtain a directed system of spectral sequences.
Taking the direct limits over $N$ and over the slopes $T$ gives the above spectral sequence by noting that equivariant homology of $\Sigma$ can be defined as $H^{S^1}(\Sigma;R):=\varinjlim_N H(\Sigma\times_{S^1}S^{2N+1};R)$.
Since the filtration is bounded from below and exhausting, convergence from the classical convergence theorem.
\end{proof}

\subsection{Morse-Bott spectral sequences obtained by index filtrations}
\label{sec:index_filtration}
In some cases we can also filter by index to produce a spectral sequence.
Such an argument needs more analysis for the Morse-Bott setup; in particular one needs transversality results.
This has not yet been published for general Morse-Bott manifolds, but a complete argument has been given by Bourgeois and Oancea, \cite{BO:SH_MB}, for critical manifolds that are circles.
On a perturbed Floer complex one can hence consider the filtration
$$
F_p CF_*(W,\tilde H)=\{ \hat x\in CF_*(W,\tilde H)~|~\mu(x)-\frac{1}{2}\dim \Sigma_x /S^1\leq p \}
$$
where $\hat x$ comes from a $1$-periodic orbit $x$ of the unperturbed problem. The Floer differential respects this filtration if one has sufficient transversality.
Such an argument was used in an equivariant setup to obtain a spectral sequence for $SH^{+,S^1}$ in the proof of Proposition~3.7, part (II) of \cite{BO:SH_HC}.
This index filtration is useful, since it excludes other differentials than the action filtration.

\section{Appendix C: Computing the mean Euler characteristic and its invariance}
\label{sec:AppendixC}
\begin{proof}[ of Lemma~\ref{lemma:mec_invariant}: the mean Euler characteristic as an invariant]
Denote the boundary of $W$ by $P$.
We have convenient dynamics, so there is a sequence of contact forms $\alpha_T=f_T \alpha$.

Define the completion $\bar W=W \cup_{\partial} P\times [1,\infty [$, and construct a sequence of Hamiltonians $H_T:\bar W \to \R$ with the following properties.
\begin{enumerate}
\item $H_T$ is $C^2$-small on $W$.
\item the restriction of $H_T$ to the set $(P\times [1,\infty[,d( t \alpha_T )\, )$ only depends on the interval coordinate $t$.
\item $H_T$ is linear at infinity for the contact form $\alpha_T$ with slope $T$.
With respect to the standard coordinates on the symplectization $(P\times [1,\infty[,d( t \alpha )\,)$ this means that $H_T=T\cdot f_T \cdot t+c_T$ for some constant $c_T$. 
\item $H_T\geq H_{T-1}$.
\end{enumerate}
The third condition of convenient dynamics guarantees that the last condition can be realized.

Under the assumptions of the lemma, the Conley-Zehnder index is independent of the choice of capping disk, and we will choose the capping disks to lie in the region $P\times [1,\infty[$, which can be done by the assumption that $\pi_1(P)=0$.

We fix a degree window $[-M_0,M_0]$ in which we want to compute equivariant symplectic homology with $\Q$-coefficients.
Fix a slope $T$, and consider the spectral sequence~\eqref{eq:SS_+_equi_Floer} for the Floer data $(H_{T,N},J_{T,N},g_{T,N})$.
We will write $E^r_{pq}(H_{T,N})$ for this spectral sequence.
All elements in $E^r_{pq}(H_{T,N})$ with total degree in the degree window $[-M_0,M_0]$ lie in a skew-diagonal band having width $2M_0+1$, depicted in Figure~\ref{fig:deg_window}.
Note that the $E^1$-page, $E^1_{pq}(H_{T,N})$, can be computed with Equation~\eqref{eq:homology_lens_space}: we only need to take those periodic orbits with action less than $T$ that are good.

\begin{figure}
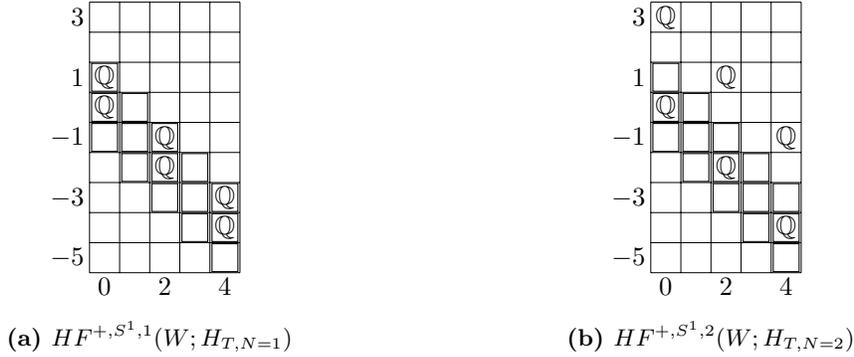

\centering

\begin{subfigure}{.5\textwidth}
  \centering
  \begin{sseq}{0...4}
{-5...3}
\ssmoveto 0 {-1}
\ssdropboxed{\phantom{\Q}}
\ssmove 0 1
\ssdropboxed{\Q}
\ssmove 0 1
\ssdropboxed{\Q}

\ssmoveto 1 {-2}
\ssdropboxed{\phantom{\Q}}
\ssmove 0 1
\ssdropboxed{\phantom{\Q}}
\ssmove 0 1
\ssdropboxed{\phantom{\Q}}

\ssmoveto 2 {-3}
\ssdropboxed{\phantom{\Q}}
\ssmove 0 1
\ssdropboxed{\Q}
\ssmove 0 1
\ssdropboxed{\Q}

\ssmoveto 3 {-4}
\ssdropboxed{\phantom{\Q}}
\ssmove 0 1
\ssdropboxed{\phantom{\Q}}
\ssmove 0 1
\ssdropboxed{\phantom{\Q}}

\ssmoveto 4 {-5}
\ssdropboxed{\phantom{\Q}}
\ssmove 0 1
\ssdropboxed{\Q}
\ssmove 0 1
\ssdropboxed{\Q}

\end{sseq}
  \caption{$HF^{+,S^1,1}(W;H_{T,N=1})$}
  \label{fig:unwanted1}
\end{subfigure}%
\begin{subfigure}{.5\textwidth}
  \centering
  \begin{sseq}{0...4}
{-5...3}
\ssmoveto 0 {-1}
\ssdropboxed{\phantom{\Q}}
\ssmove 0 1
\ssdropboxed{\Q}
\ssmove 0 1
\ssdropboxed{\phantom{\Q}}
\ssmove 0 2
\ssdrop{\Q}

\ssmoveto 1 {-2}
\ssdropboxed{\phantom{\Q}}
\ssmove 0 1
\ssdropboxed{\phantom{\Q}}
\ssmove 0 1
\ssdropboxed{\phantom{\Q}}

\ssmoveto 2 {-3}
\ssdropboxed{\phantom{\Q}}
\ssmove 0 1
\ssdropboxed{\Q}
\ssmove 0 1
\ssdropboxed{\phantom{\Q}}
\ssmove 0 2
\ssdrop{\Q}

\ssmoveto 3 {-4}
\ssdropboxed{\phantom{\Q}}
\ssmove 0 1
\ssdropboxed{\phantom{\Q}}
\ssmove 0 1
\ssdropboxed{\phantom{\Q}}

\ssmoveto 4 {-5}
\ssdropboxed{\phantom{\Q}}
\ssmove 0 1
\ssdropboxed{\Q}
\ssmove 0 1
\ssdropboxed{\phantom{\Q}}
\ssmove 0 2
\ssdrop{\Q}

\end{sseq}
  \caption{$HF^{+,S^1,2}(W;H_{T,N=2})$}
  \label{fig:unwanted2}
\end{subfigure}%
\caption{Choosing a degree window to avoid unwanted generators}
\label{fig:deg_window}
\end{figure}

As before, let $p_{H_T}$ denote the $p$-index of the last non-zero column of $E^0_{pq}(H_{T,N})$.
Take $p\leq p_{H_T}$, and let $q_p$ be the smallest integer such that $E^1_{pq_p}\neq 0$ if the column $E^1_{p*}$ does not vanish (put $q_p:=0$ otherwise). Such a $q_p$ is bounded from below and does not depend on $N$.
Now choose $N_0(T)$ such that $p+q_p+N_0(T)>M_0$ for $p=0,\ldots,p_{H_T}$.
With equation~\eqref{eq:equivariant_hom_approx} we see that the entries in the spectral sequence with total degree in the degree window $[-M_0,M_0]$ do not depend on $N$ provided $N\geq N_0(T)$.
This can be done for any slope, leading to an increasing sequence $\{ N_0(T) \}_T$.

We claim that the Euler characteristic in the degree window $[-M_0,M_0]$, defined by
$$
\chi_{[-M_0,M_0]}(SH^{+,S^1,N}(H_{T,N})\, ):=
\sum_{k=-M_0}^{M_0} (-1)^k \rk SH^{+,S^1,N}_k(H_{T,N})
$$
is independent of $T$ and $N$ provided $T$ is sufficiently large and $N>N_0(T)$.
To see this, choose $T>\frac{M+2n}{\Delta_m}T_0$.
By the second condition of convenient dynamics, those generators in the spectral sequence with total degree in the degree window $[-M_0-2n,M_0+2n]$ are given by covers of $\gamma_1^T,\ldots,\gamma_k^T$ up to a covering number bounded by $\frac{M+2n}{\Delta_m}$.
By condition (i) of convenient dynamics, these orbits are non-degenerate.
Furthermore, they have that property for all $T>\frac{M+2n}{\Delta_m}T_0$ (varying $T$ continuously), so their transverse Conley-Zehnder index does not depend on $T$ by the homotopy property of the Conley-Zehnder index.
This implies that those terms of the $E^1$-page with total degree in $[-M_0-2n,M_0+2n]$ form a $\Q$-vector space that independent of $T$ and $N$, provided these are chosen sufficiently large.
The Euler characteristic in the degree window $[-M_0,M_0]$ does not depend on differential, and is hence independent of $T$ and $N$ as well if $T$ and $N$ are sufficiently large.

Since equivariant symplectic homology is invariant under exact symplectomorphisms, \cite{Gutt:thesis} Section~3.2, it is independent of the choice of sequence of Hamiltonians (or contact forms) defining it, so in particular $\chi_{[-M_0,M_0]}(SH^{+,S^1}(\bar W)\, )$ is an invariant of the symplectic manifold $\bar W$.

To see that the mean Euler characteristic is an invariant of the contact structure rather than the symplectic filling, we claim that given another filling $W'$ we find an isomorphism between the bigraded vector spaces corresponding to the $E^1$-page \eqref{eq:SS_+_equi_Floer} with total degree in$[-M_0,M_0]$,
$$
\bigoplus_{p+q\in [-M_0,M_0]}E^1_{pq}(SH^{+,S^1,N}(\bar W,H_T)\, ) \stackrel{\cong}{\longrightarrow} \bigoplus_{p+q\in [-M_0,M_0]}E^1_{pq}(SH^{+,S^1,N}(\bar W',H_T')\, ).
$$
To see why, note that we can choose a sequence of Hamiltonians $H_T'$ on $\bar W'$ that coincide with the Hamiltonians $H_T$ on the set $P\times [1,\infty [$.
This isomorphism does not respect the differential in general, but the Euler characteristics of these bigraded vectors spaces coincide up to an error term that is bounded by a constant which is independent of $M_0$, $N$ and $T$ provided $N$ and $T$ are sufficiently large.
It follows that $\chi_m$ does not depend on the filling $W$, provided $W$ satisfies the assumptions of the lemma.
\end{proof}

\begin{remark}
\label{rem:why_convenient_dynamics}
Even though the conditions of convenient dynamics may be relaxed somewhat, we briefly explain why convenient dynamics seem to be necessary:
\begin{enumerate}
\item dropping the first condition altogether allows for prequantization bundles over Calabi-Yau manifolds. As a simple explicit example, we consider $\Sigma(4,4,4,4)$ which is a circle bundle over the quartic $K3$-surface.
With the Morse-Bott spectral sequence~\eqref{eq:MB_SS_SH+S1}  we can verify that $SH^{+,S^1}_0(W)$ is infinite-dimensional, so $\chi_m(W)$ is not defined.

\item a fixed number of essential orbits implies linear growth of the Floer chain complexes as a function of $T$, and that is needed to ensure that the mean Euler characteristic can be computed already on chain level. Although there may be cancellations on homology level such that the mean Euler characteristic is still defined, i.e.~the relevant limits exist, it is not obvious why it is still an invariant.
One may put a different weight in the definition to have a more general notion.

\item The third condition is a technical condition, used to get a sequence of increasing Hamiltonians.
\end{enumerate}
\end{remark}

\begin{proof}[of Lemma~\ref{lemma:periodic_flow_convenient}]
Let $(P,\alpha)$ be a contact manifold as in the lemma.
Wadsley's theorem, \cite{Wadsley}, tells us that a geodesible flow for which all orbits are periodic induces a circle action.
The Reeb flow is geodesible for any metric of the form $\alpha\otimes\alpha+d\alpha(\cdot, J\cdot)$,
where $J$ is a compatible complex structure for $\xi=\ker  \alpha$.
Hence we see that the Reeb flow induces a circle action.
By compactness we find minimal periods $T_1<\ldots<T_k$, where $T_k$ is the period of a principal orbit.
Define 
$$
\Sigma_{T_i}:=\{ x\in P ~|~Fl^R_{T_i}(x)=x \}
.
$$
Following the procedure from Bourgeois' thesis, \cite{Bourgeois:thesis} Chapter 2, we define Morse functions on the orbit spaces, which are in general symplectic orbifolds. 
We use the following notation. 
An orbit space $\Sigma_{T_i}/S^1$ is denoted by $Q_{T_i}$.
The steps of this procedure are as follows:
\begin{enumerate}
\item Consider $Q_{T_1}$, which is a symplectic \emph{manifold}. Choose a positive Morse function $g_{T_1}$.
\item Go to $T_{i+1}$. If $\Sigma_{T_j}\subset \Sigma_{T_{i+1}}$ for some $T_j<T_{i+1}$, then extend the Morse function $g_{T_j}$ to a positive Morse function on $Q_{T_{i+1}}$ such that $Hess g_{T_{i+1}} |_{Q_{T_j}}$ is positive definite. If not choose an independent positive Morse function $g_{T_{i+1}}$.
\item Repeat this procedure for all $T_i$.
\end{enumerate}
For us the point of this procedure is to have a nice formula for the Conley-Zehnder index of a perturbed Reeb flow we will define later.
We end up with a Morse function $g:=g_{T_k}$ on $Q_{\Sigma_k}$, which we extend to an $S^1$-invariant function $\bar g$.

Given $T$ define the contact form
$$
\alpha_T=(1+\frac{\epsilon_T \bar g}{T})\alpha.
$$
As pointed out in Bourgeois' thesis \cite{Bourgeois:thesis}, chapter 2, the $S^1$-fiber over a critical point of $g$ is a periodic Reeb orbit.
Moreover, such a Reeb orbit is non-degenerate provided we choose $\epsilon_T$ sufficiently small, and by an Arzela-Ascoli argument it follows that all Reeb orbits with period less than $T$ correspond to such an $S^1$-fiber or a multiple cover thereof.
By compactness of $P$ we can assume that $\epsilon_T \bar g<1/2$.

We verify now that these $\alpha_T$ satisfy the required condition of convenient dynamics.
\begin{enumerate}
\item The above argument shows that all periodic orbits of $\alpha_T$ with period less than $T$ are non-degenerate. 
Using Bourgeois' thesis \cite{Bourgeois:thesis}, chapter 2, we compute the Conley-Zehnder index of all periodic Reeb orbits with period less than $T$.
A detailed computation for prequantization bundles can also be found in \cite[Lemma 2.4]{vK:BW}.
Together with the assumption that the mean index of a principal orbit is not equal to $0$ and the fact that there are only finitely many orbit spaces we see that there is $\Delta_m>0$, independent of $T$, so for all periodic Reeb orbits $\gamma$ of $\alpha_T$ with period less than $T$ the mean index satisfies 
$$
{|\Delta(\gamma)|}>\Delta_m.
$$
\item the fixed number of essential orbits needed for condition (ii) of convenient dynamics is equal to the number of critical points of the Morse function $f$ on the orbit space of the principal orbits.
By construction every periodic Reeb orbit with period less than $T$ is a cover of an $S^1$-fiber over a critical point of $f$.
\item We directly verify the inequality for $f_T=(1+\frac{\epsilon_T \bar g}{T})$.
We have $T\cdot (1+\frac{\epsilon_T \bar g}{T})>T$, and
$(T-1)\cdot (1+\frac{\epsilon_{T-1} \bar g}{{T-1}})<(T-1)+{1/2}<T-1/2$,
so the third condition for convenient dynamics holds as well.
\end{enumerate}
\end{proof}

The idea of the proof of next lemma can be summarized as ``thinning the handle''. This idea was used by Cieliebak in \cite{Cieliebak} to show that symplectic homology does not change under subcritical surgery.
\begin{proof}[of Lemma~\ref{lemma:subcritical_preserves_convenient_dynamics}]
Choose a family of contact forms $\alpha_T$ on $\Sigma=\partial W$ that satisfies the conditions of convenient dynamics.
By a transversality argument for a submanifold of high codimension, namely the embedding of the isotropic sphere $S:=i(S^k)$, we can assume that there are no Reeb chords from $S$ to itself.

Now fix $T>0$.
We will perform surgery on a neighborhood $\nu^T(S,\epsilon)\cong S\times \R^{2n-1-k}$ (recall that the contactomorphism type  of the surgered manifold depends in general on the framing $\epsilon$). We will specify the size of the neighborhood $\nu^T(S,\epsilon)$ later. 
The goal is to construct a family $\tilde \alpha_T$ of contact forms on the surgered manifold $\tilde \Sigma$ that satisfies the conditions of convenient dynamics.

We claim that there is the neighborhood $\nu^T(S,\epsilon)$ such that all periodic Reeb orbits with action $\mathcal A_{\tilde \alpha_T}<T$ are either
\begin{itemize}
\item essential periodic Reeb orbits of $\alpha_T$
\item so-called Lyapunov orbits in the middle of the handle.
\end{itemize}
To see that this can be done we first observe that since we perform the handle attachment away from periodic orbits of $\alpha_T$, the essential periodic Reeb orbits of $\alpha_T$ are unaffected.
Furthermore, the middle of the handle contains Lyapunov orbits coming from the Lyapunov center theorem, see Ustilovsky's thesis, \cite{Ustilovsky:thesis} Chapter 5, for a description of them in the Weinstein model for the connected sum. 
To see that there are no other periodic orbits with action less than $T$ we use the fact that there are no Reeb chords from $S$ to itself.
We can therefore choose $\nu^T(S,\epsilon)$ so small that the return time of the $\alpha_T$-Reeb flow of $\nu^T(S,\epsilon)$ is larger than $T$.
This verifies condition (ii) of convenient dynamics.

By the convenient dynamics assumption on $\Sigma$, the mean indices of the essential periodic Reeb orbits of $\alpha_T$ remain the same after surgery.
Furthermore, for subcritical surgery, the mean index of any of the Lyapunov orbits can be chosen to be larger than $1$, see \cite{Ustilovsky:thesis} Chapter 5.
This verifies condition (ii) of convenient dynamics.

For condition (iii) we use the handle attachment model of Weinstein, \cite{Weinstein:surgery}.
We will verify the stronger claim that on the handle $\tilde f_T\geq \tilde f_{T-1}$.
Let $H_T$ denote the family of Hamiltonians as constructed in the proof of Lemma~\ref{lemma:mec_invariant}.
Fix a neighborhood $\nu_W^{T_0}(S)$ of $S$ in $W$ such that $\nu_W^{T_0}(S)\cap \partial W=\nu^{T_0}(S)$.
Define a new Liouville domain $\tilde W$ by attaching a subcritical $k$-handle along $S$ with framing $\epsilon$, so $\tilde W:=W\cup_{S,\epsilon} k-handle$.
Attach $(\tilde \Sigma\times [1,\infty[,d(t\tilde \alpha_{T_0}) \, )$ along a collar neighborhood of the boundary to obtain a complete Liouville manifold $\tilde W_c$.

We define $\tilde H_T:\tilde W\to \R$ by putting
\begin{itemize}
\item $\tilde H_T=H_T$ outside the handle and the neighborhood $\nu_W^{T_0}(S)$
\item inside $\nu_W^{T_0}(S)$ and on the handle we choose the function as indicated in Figure~\ref{fig:ham_levels}. 
\end{itemize}

\begin{figure}[htp]
\def\svgwidth{0.75\textwidth}%
\begingroup\endlinechar=-1
\resizebox{0.75\textwidth}{!}{%
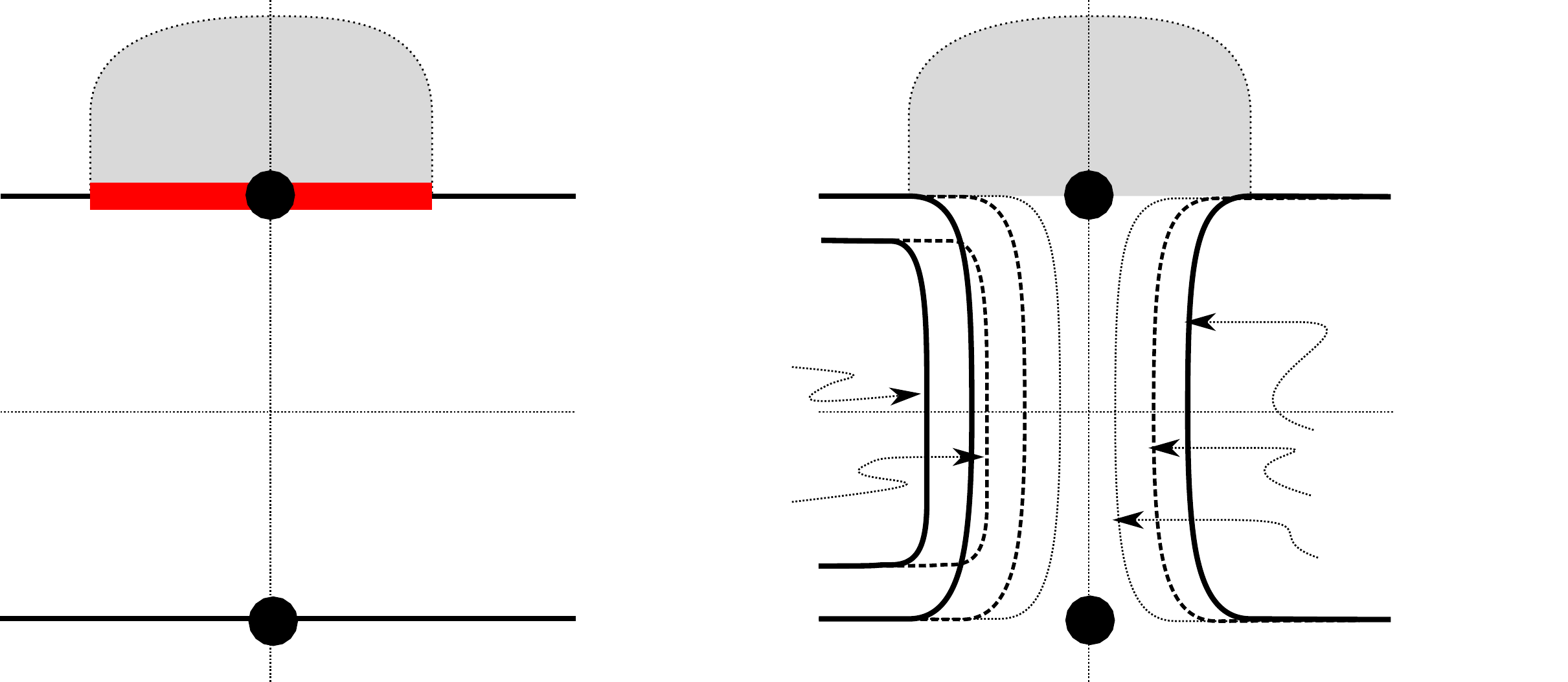%
}\endgroup
\caption{Level sets of the Hamiltonian $H_T$}
\label{fig:ham_levels}
\end{figure}

Note that for $T>T_1$ we have $\tilde H_{T}(x)\geq \tilde H_{T_1}(x)$ for all $x\in \tilde W$.
Now extend $\tilde H_T$ to $\tilde W_c$ as a function that is linear at infinity with slope $T$ with respect to the contact form $\tilde \alpha_T$; choose this extension such that $\tilde H_{T}(x)\geq \tilde H_{T_1}(x)$ for all $x\in \tilde W_c$.
\end{proof}

\subsection*{Acknowledgements}
This note grew out of some questions asked at the AIM workshop on ``Contact topology in higher dimensions'' in May 2012, Palo Alto. 
OvK would like to thank American Institute for Mathematics for their hospitality.
We also thank Patrick Massot and Urs Frauenfelder for helpful comments and suggestions.
MK and OvK are supported by the NRF Grant 2012-011755 funded by the Korean government.

\address{
Myeonggi Kwon and Otto van Koert\\
Department of Mathematical Sciences and Research Institute of Mathematics,\\
Seoul National University\\
Building 27, room 402, San 56-1,\\
Sillim-dong, Gwanak-gu, Seoul\\
South Korea\\
\email{kwunmk04@snu.ac.kr\\
okoert@snu.ac.kr}}

\end{document}